\numberwithin{equation}{subsection}
\newtheorem{theorem}[subsubsection]{Theorem}
\newtheorem{lemma}[subsubsection]{Lemma}
\newtheorem{prop}[subsubsection]{Proposition}
\newtheorem{cor}[subsubsection]{Corollary}
\theoremstyle{definition}
\newtheorem{defn}[subsubsection]{Definition}
\newtheorem{remark}[subsubsection]{Remark}
\newtheorem{exam}[subsubsection]{Example}
\newtheorem{assumption}[subsubsection]{Assumption}
\definecolor{humanbubble}{RGB}{235, 245, 255} 
\definecolor{aibubble}{RGB}{245, 245, 245}    
\definecolor{loggray}{RGB}{230, 230, 230}
\definecolor{indexbg}{RGB}{248, 248, 248}
\newtcolorbox{interactionlog}[2][]{
  enhanced,
  arc=0pt, outer arc=0pt,
  colback=white, colframe=black!60,
  boxrule=0.8pt,
  fonttitle=\bfseries\sffamily, coltitle=black, colbacktitle=loggray,
  title={Human-AI Interaction Card \if\relax\detokenize{#1}\relax\else for #1\fi},
  halign title=center, attach title to upper,
  after title={\vspace{4pt}\hrule\vspace{10pt}},
  lower separated=true,
  segmentation style={solid, black!60, line width=0.8pt},
  colbacklower=indexbg,
  after upper={\par\vfill
    \begin{tcolorbox}[
      enhanced, colback=indexbg, colframe=white, boxrule=0pt,
      top=0pt, bottom=0pt, fontupper=\footnotesize\sffamily,
      title=Model Index, coltitle=black!70, attach title to upper,
      after title={:\enskip}, sharp corners
    ]
    #2 
    \end{tcolorbox}
  }
}
\def\AA{\mathbb{A}}
\def\BB{\mathbb{B}}
\def\CC{\mathbf{C}}
\def\DD{\mathbb{D}}
\def\EE{\mathbb{E}}
\def\FF{\mathbb{F}}
\def\GG{\mathbb{G}}
\def\LL{\mathbb{L}}
\def\MM{\mathbb{M}}
\def\PP{\mathbb{P}}
\def\RR{\mathbf{R}}
\def\TT{\mathbb{T}}
\def\VV{\mathbb{V}}
\def\ZZ{\mathbb{Z}}
\newcommand\cC{\mathcal{C}}
\newcommand\cD{\mathcal{D}}
\newcommand\cE{\mathcal{E}}
\newcommand\cF{\mathcal{F}}
\newcommand\cH{\mathcal{H}}
\newcommand\cK{\mathcal{K}}
\newcommand\cL{\mathcal{L}}
\newcommand\cO{\mathcal{O}}
\newcommand\cP{\mathcal{P}}
\newcommand\cY{\mathcal{Y}}
\newcommand{\A}{\mathbf{A}}
\def\bD{\mathbf{D}}
\newcommand\frB{\mathfrak{B}}
\newcommand\frL{\mathfrak{L}}
\newcommand\frN{\mathfrak{N}}
\newcommand\frP{\mathfrak{P}}
\newcommand\frR{\mathfrak{R}}
\newcommand\frd{\mathfrak{d}}
\newcommand{\sL}{{\mathscr{L}}}
\newcommand\alg{\textup{alg}}
\newcommand\Av{\textup{Av}}
\newcommand{\Bun}{\textup{Bun}}
\newcommand{\ch}{\textup{char}}
\newcommand\ev{\textup{ev}}
\newcommand{\even}{\textup{even}}
\newcommand\Fr{\textup{Fr}}
\newcommand\Frob{\textup{Frob}}
\newcommand\Gal{\textup{Gal}}
\newcommand{\Gr}{\textup{Gr}}
\newcommand\Hk{\mathrm{Hk}}
\newcommand\id{\textup{id}}
\newcommand\Jac{\textup{Jac}}
\newcommand\Out{\textup{Out}}
\newcommand{\Pic}{\textup{Pic}}
\newcommand\pr{\textup{pr}}
\newcommand\pt{\textup{pt}}
\newcommand\Rep{\textup{Rep}}
\newcommand{\Res}{\textup{Res}}
\newcommand\sgn{\textup{sgn}}
\newcommand\Sht{\textup{Sht}}
\newcommand\Span{\textup{Span}}
\newcommand\Spec{\textup{Spec}\ }
\newcommand\Sym{\textup{Sym}}
\newcommand{\Tr}{\textup{Tr}}
\newcommand{\univ}{\textup{univ}}
\newcommand{\vol}{\textup{vol}}
\newcommand\Aut{\textup{Aut}}
\newcommand\Isom{\textup{Isom}}
\newcommand\Hom{\textup{Hom}}
\newcommand\End{\textup{End}}
\DeclareMathOperator\Pf{Pf}
\newcommand\GL{\textup{GL}}
\newcommand\PGL{\textup{PGL}}
\newcommand\SL{\textup{SL}}
\newcommand\SO{\textup{SO}}
\DeclareMathOperator\Spin{Spin}
\DeclareMathOperator{\U}{U}
\newcommand{\Gm}{\GG_m}
\newcommand{\ad}{\textup{ad}}
\newcommand{\Ad}{\textup{Ad}}
\renewcommand\sc{\textup{sc}}
\newcommand{\der}{\textup{der}}
\newcommand\xch{\mathbb{X}^*}
\newcommand\xcoch{\mathbb{X}_*}
\newcommand\upH{\textup{H}}
\newcommand{\Ql}{\Q_{\ell}}
\newcommand{\Qlbar}{\overline{\Q}_\ell}
\renewcommand{\j}[1]{\langle{#1}\rangle}
\newcommand{\wt}[1]{\widetilde{#1}}
\newcommand{\wh}[1]{\widehat{#1}}
\newcommand\quash[1]{}
\newcommand\un{\underline}
\newcommand{\bu}{\bullet}
\newcommand{\ov}{\overline}
\newcommand{\bs}{\backslash}
\newcommand{\nablaop}{\nabla^{\eta}_{\mu}}
\newcommand{\pderiv}[2]{\frac{\partial #1}{\partial #2}}
\newcommand\op{\oplus}
\newcommand\ot{\otimes}
\renewcommand\c\circ
\newcommand{\homog}[2]{\textup{H}_{#1}({#2})}  
\newcommand{\cohog}[2]{\textup{H}^{#1}({#2})}     
\newcommand{\cohoc}[2]{\textup{H}_{c}^{#1}({#2})}     
\newcommand\mt{\mapsto}
\newcommand{\incl}{\hookrightarrow}
\newcommand{\isom}{\stackrel{\sim}{\to}}
\newcommand{\surj}{\twoheadrightarrow}
\newcommand\xr{\xrightarrow}
\newcommand{\ari}{\ar@{^{(}->}} 
\renewcommand\a\alpha
\renewcommand\b\beta
\newcommand\g\gamma
\newcommand\G\Gamma
\renewcommand\d\delta
\newcommand\D\Delta
\newcommand{\e}{\epsilon}
\newcommand{\io}{\iota}
\renewcommand{\th}{\theta}
\newcommand{\ph}{\varphi}
\renewcommand\r\rho
\newcommand{\s}{\sigma}
\renewcommand{\t}{\tau}
\newcommand{\y}{\eta}
\newcommand{\z}{\zeta}
\newcommand{\ep}{\epsilon}
\renewcommand{\l}{\lambda}
\renewcommand{\L}{\Lambda}
\newcommand{\om}{\omega}
\newcommand{\Sig}{\Sigma}
\newcommand\na{\natural}
\newcommand\sh{\sharp}
\renewcommand\v{\vee}
\newcommand\aug{\textup{aug}}
\newcommand\PD{\textup{PD}}
\newcommand\AB{\textup{AB}}
\newcommand\bl{\blacktriangleright}
\newcommand\nb{\nabla}
\newcommand\pl{\partial}
\newcommand\qs{\textup{qs}}
\newcommand\frev{\mathfrak{ev}}
\newcommand\mnote[1]{\marginpar{\tiny #1}}
\newcommand{\inj}{\hookrightarrow}
\newcommand{\tw}[1]{\langle #1 \rangle}
\newcommand{\F}{\mathbf{F}}
\newcommand{\Z}{\mathbf{Z}}
\newcommand{\Q}{\mathbf{Q}}
\newcommand{\Qll}[1]{\Q_{\ell, #1}}
\newcommand{\ul}[1]{\underline{#1}}
\newcommand{\ol}[1]{\overline{#1}}
\newcommand{\mrm}[1]{\mathrm{#1}}
\newcommand{\co}{\colon}
\newcommand{\RGamma}{\rR \Gamma}
\DeclareMathOperator{\Id}{Id}
\newcommand{\rH}{\ensuremath{\mathrm{H}}\xspace}
\newcommand{\rR}{\ensuremath{\mathrm{R}}\xspace}
\title[Arithmetic volumes of moduli stacks of Shtukas]{Arithmetic volumes of moduli stacks of Shtukas}
\dedicatory{Dedicated to the memory of Dick Gross}
\author{Tony Feng}
\address{Department of Mathematics, University of California Berkeley, University Drive, Berkeley, CA 94720}
\email{fengt@berkeley.edu}
\author{Zhiwei Yun}
\address{Department of Mathematics, Massachusetts Institute of Technology, 77 Massachusetts Ave, Cambridge, MA 02139}
\email{zyun@mit.edu}
\author{Wei Zhang}
\address{Department of Mathematics, Massachusetts Institute of Technology, 77 Massachusetts Ave, Cambridge, MA 02139}
\email{weizhang@mit.edu}
\date{}
\begin{document}

\begin{abstract}
    We define and study ``tautological classes'' in the cohomology of moduli stacks of shtukas, pursuing two directions of applications. First, we prove a formula relating the ``arithmetic volume'' of tautological classes to higher derivatives of Artin $L$-functions, which can be viewed as an arithmetic analog of Hirzebruch's Proportionality principle. Second, we define and analyze the structure of the ``phantom tautological ring'',  using a general relation between Hecke correspondences and Vinberg's degeneration, and give applications to a function field analog of Colmez's Conjecture. 
\end{abstract}

\maketitle

\tableofcontents

\section{Introduction}

\subsection{Volume and Hirzebruch proportionality for locally symmetric spaces}
Hirzebruch's Proportionality Theorem \cite{Hir} (in the compact case), extended by Mumford to the non-compact case \cite{Mum}, asserts that the integral (also known as the ``volume") of any Chern class polynomial of automorphic vector bundles on a locally symmetric space is proportional to the integral of the corresponding Chern classes on a partial flag variety. Moreover, the proportionality constant is essentially the special value of a certain product of (shifts of) zeta or Hecke $L$-functions. 

More precisely, let $(G,D)$ be a Shimura datum, where $G$ is a reductive group over $\Q$ and $D\simeq G(\RR)/K$ is a Hermitian symmetric domain with $K$ a maximal compact subgroup of $G(\RR)$. Let $G_c\subset G(\CC)$ be a compact form containing $K$, and  $D^\vee=G_c/K$ the compact dual of $D$.
Note that $D^\v$ can be realized as the $\CC$-points of a partial flag variety $G(\CC)/P_{\mu}(\CC)$, where $P_\mu$ is a parabolic subgroup associated to the Shimura cocharacter $\mu$. We have an analytic open embedding $D\incl D^\vee$. 
For any discrete subgroup $\G\subset G(\RR)$, we have a  natural map, which we call the \emph{Hodge morphism}\footnote{Terminology based on \cite[Introduction]{WZ} where the authors call the target the \emph{Hodge stack}.}, 
\begin{equation}\label{eq: Hdg}
\xymatrix{  \ev:\G\bs D\ar[r] & G(\CC)\bs D^\vee\cong \BB P_\mu(\CC).}
\end{equation}
Automorphic vector bundles on the source are by definition the pull-back of vector bundles  on the target. 
Then the map $ \ev$ induces a homomorphism on the cohomology rings
\begin{equation}\label{eq:hodge-pullback}
    \xymatrix{\ev^*:\cohog{*}{\BB P_\mu(\CC),\Q}=R^{W_\mu}_\Q \ar[r]& \cohog{*}{\G\bs D,\Q}}.
\end{equation}
Here $R_{\Q}=\cohog{*}{\BB T(\CC),\Q}$, where $T$ is a maximal torus of $G$, and $W_{\mu}$ is the Weyl group of the Levi subgroup associated to $\mu$, acting on $T$ in the natural way. Chern--Weil theory implies that the cohomology classes coming from $\cohog{>0}{\BB G(\CC),\Q}=R^W_{\Q,+}$ (here $W$ is the Weyl group of $G$) are mapped to zero under $\ev^*$. Thus the ring homomorphism $\ev^*$ factors through a homomorphism
\begin{equation}\label{Herm rho}
    \xymatrix{\r:\cohog{*}{D^\vee,\Q}=R^{W_\mu}_\Q/(R^W_{\Q,+})\ar[r]& \cohog{*}{\G\bs D,\Q}}.
\end{equation}

Now suppose $\G\bs D$ is compact and let $N=\dim D=\dim D^\vee$. The map $\r$ then restricts to an isomorphism on the top cohomology
\begin{equation*}
    \r^{2N}: \cohog{2N}{D^\vee,\Q}\isom \cohog{2N}{\G\bs D,\Q}.
\end{equation*}
Now both sides are one-dimensional vector spaces, and they are each equipped with canonical trivialization (up to a Tate twist) given by pairing with the fundamental cycles of $D^\vee$ and of $\G\bs D$ respectively. Therefore, there is a unique constant $c\in \Q^\times$ (cf. \cite[Corollary 7.21]{WZ}) making the following diagram commutative
\begin{equation}\label{Hirz prop}
    \xymatrix{\cohog{2N}{D^\vee,\Q}\ar[d]_{\int_{D^\v}}\ar[r]^-{\r^{2N}}_-{\sim} & \cohog{2N}{\G\bs D,\Q}\ar[d]^{\int_{\G\bs D}}\\
    \Q(-N) \ar[r]^{c}  &\Q(-N)}
\end{equation}
This is the famous \emph{Hirzebruch Proportionality principle} \cite{Hir}. We may suggestively write the proportionality principle as
\begin{equation}\label{eq: H prop}
\int_{\G\bs  D} \ev^*\eta=c \int_{D^\vee}\wt\eta
\end{equation}
for any class  $\eta\in\cohog{2N}{\BB P_\mu(\CC),\Q}$, where $\wt\eta$ is the pull back of $\eta$ along the map $D^\vee\to G(\CC)\bs D^\vee\cong \BB P_\mu(\CC)$.  When $\G\bs D$ is non-compact, Mumford proved in \cite{Mum} that the same assertion goes through after replacing it by any smooth toroidal compactification.

 Moreover, the proportionality constant $c$ in \eqref{eq: H prop} can be interpreted as the special value of an $L$-function attached to the ``motive of $G$'' in the sense of Gross \cite{Gro97} (up to an index factor depending on $\G$).

\subsection{Arithmetic volumes of Shimura varieties}
When there are natural integral models for  Shimura varieties and their automorphic vector bundles, there is an ``arithmetic'' analogue of the volume of Chern classes, defined via the arithmetic intersection theory, which we call the \emph{arithmetic volume}. In this case, the arithmetic volume has only been computed for specific families of examples, such as in the case where $G$ is the isometry group of a quadratic space \cite{Hor} over $\Q$ or a hermitian space \cite{BH, Guo} with respect to an imaginary quadratic field extension of a totally real field  with signature $(1,n-1)$ at one place and definite in all other places. 
In these works, the arithmetic volumes are shown to be essentially a special value of the first derivative of the L-function of $G$.

\subsection{Arithmetic volumes of moduli of Shtukas}

In this paper we investigate the analogous question over function fields, namely the arithmetic volumes of ``automorphic vector bundles" on moduli spaces of shtukas for a general reductive group. Our initial motivation was to extract the intersection number out of the constant term of the arithmetic theta series in  the top degree case in our previous work \cite{FYZ2}. The main difficulty is that these moduli spaces are almost never proper and their compactifications (when the number of legs is more than one) are more complicated than the toroidal compactification of Shimura varieties. As one of the key steps of this paper, we will define a ``regularized" volume, via the trace of the endomorphism on the cohomology of $\Bun_G$ induced by certain natural correspondences.

Now let $X$ be a smooth, projective, geometrically connected curve over $\F_q$. For simplicity, in the Introduction we consider only split reductive groups $G$ over $\F_q$; the main text treats arbitrary quasisplit reductive group schemes over $X$ (and we explain in Remark \ref{rem:beyond-quasisplit} that we can reduce arbitrary reductive group schemes over $X$ to the quasisplit case, for the purpose of computing arithmetic volumes). Let $\Bun_G$ be the moduli stack of $G$-bundles on $X$. 
 
\subsubsection{Moduli of shtukas} 
Let $\mu=(\mu_{1},\cdots ,\mu_{r})$ be an admissible (see \eqref{eq:modification-condition}) $r$-tuple of modification types for $G$. There is an associated moduli stack of shtukas $\Sht_{G}^{\mu}$. It is neither proper nor of finite type, hence the Chern numbers of $\Sht^{\mu}_{G}$ are not a priori defined. We now outline a regularized definition of Chern numbers, and relate them to higher derivatives of the $L$-function attached to the motive of $G$ (in the sense of Gross \cite{Gro97}) over the curve $X$.

The general setup is as follows. We have a Hecke stack 
\begin{equation*}
\xymatrix{ & \Hk^{\mu}_{G}\ar[dl]_{h_{0}}\ar[dr]^{h_{r}} \\
\Bun_{G} && \Bun_{G}}
\end{equation*}
defined using the same modification data $\mu$ as $\Sht^{\mu}_{G}$. Then $\Sht^{\mu}_{G}$ is defined as the fibered product 
\begin{equation}\label{eq:intro-shtukas}
\begin{tikzcd}[column sep = huge]
\Sht^{\mu}_G \ar[r] \ar[d] & \Hk_G^{\mu} \ar[d, "{(h_0, h_r)}"] \\
\Bun_G \ar[r, "{(\Id, \Frob)}"] & \Bun_G \times \Bun_G    
\end{tikzcd}
\end{equation}
In other words, $\Sht^\mu_G$ is the ``Frobenius-twisted fixed points'' of a correspondence on $\Bun_G$. Therefore we regularize Chern numbers of $\Sht_G^\mu$
as the trace of a suitable endomorphism on the cohomology of $\Bun_G$.


\subsubsection{Endomorphisms of cohomology} We have $\dim \Sht^{\mu}_{G}=r+\sum_{i=1}^{r}\j{2\r,\mu_{j}}$. We write $D_{\mu_j} := \j{2\r,\mu_{j}}$ and $D_\mu := \sum_{j=1}^r D_{\mu_j}$.

Henceforth, $\rH^*(-)$ always means \emph{geometric} cohomology of $\ol \Q_\ell$ (i.e., cohomology over the algebraic closure of the ground field), where $\ell$ is invertible in $\F_q$. Let  $\y_j \in \cohog{2(D_{\mu_j}+1)}{\Hk^{\mu_j}_{G}}$. The numerology is such that $\y_j \cap [\Hk^{\mu_j}_{G}]$ is a Borel-Moore homology class in $\Hk^{\mu_j}_{G}$ of degree $2\dim \Bun_{G}$. For the natural correspondence 
\[
\Bun_G \xleftarrow{h_{j-1}} \Hk_G^{\mu_j} \xrightarrow{h_{j}} \Bun_G
\]
we may view $\y\cap [\Hk^{\mu_j}_{G}] $ as a cohomological correspondence for the constant sheaf on $\Bun_{G}$ supported on $\Hk^{\mu_j}_{G}$. Using that  $h_{j}$ is proper, it induces a degree-preserving map on cohomology (of the geometric fiber with $\Qlbar$-coefficients)
\begin{eqnarray*}
\G^{\mu_j}_{\y_j}&:& \cohog{*}{\Bun_{G}}\to \cohog{*}{\Bun_{G}}\\
&&\th\mt h_{j-1,*}(h_{j}^{*}(\th)\cup \y_j)
\end{eqnarray*}
Similarly, since $h_j$ is proper it induces a degree-preserving map on compactly supported cohomology
\begin{eqnarray*}
{}_{c}\G^{\mu_j}_{\y_j}&:& \cohoc{*}{\Bun_{G}}\to \cohoc{*}{\Bun_{G}}\\
&&\th\mt h_{j*}(h_{j-1}^{*}(\th)\cup \y_j)
\end{eqnarray*}
(The reversal of the roles of $h_{j-1}$ and $h_{j}$ in these definitions is due to the adjoint relationship of $\G^{\mu}_{\y}$ and ${}_{c}\G^{\mu}_{\y}$. This will be explained further in \S \ref{sss:Gamma c}.)

\subsubsection{Tautological classes}
In analogy with the Shimura variety case, there is a natural way to supply such $\y_j \in \cohog{2(D_{\mu_j}+1)}{\Hk^{\mu_j}_{G}}$, by taking Chern classes of automorphic vector bundles. We show in \S\ref{sssec: canonical parabolic reduction} that for each $i=1, \ldots, r$ there is a canonical morphism 
\begin{equation*}
\xymatrix{ \ev_{j}: \Hk^{\mu_j}_{G}\ar[r] & \BB P_{\mu_j}} 
\end{equation*}
 where $P_{\mu_j} \subset G$ is the parabolic subgroup defined as the attracting locus of $\mu_j:\GG_m\to G$. We also have the leg map $p_j:\Hk^{\mu_j}_{G}\to X$. We will call classes pulled back along the {\em enhanced Hodge morphism}
\begin{equation}\label{eq:enhanced-hodge}
(\ev_{j},p_j)^\ast:  \cohog{2(D_{\mu_j}+1)}{\BB P_{\mu_j}\times X} \to \cohog{2(D_{\mu_j}+1)}{\Hk^{\mu_j}_{G}}
\end{equation}
{\em tautological classes} on $\Hk^{\mu_j}_{G}$.

\subsubsection{Arithmetic Volume} We may now define the arithmetic volume of $\Sht_G^\mu$ with respect to tautological classes. For the sake of simplicity, we present a simplified definition in the Introduction for the case where $G$ is semisimple.
 
\begin{defn} Assume $G$ is semisimple. Let $\y_j \in  \cohog{2(D_{\mu_j}+1)}{\BB P_{\mu_j}\times X}$ for $j=1,\cdots, r$. We define the volume of $\Sht^{\mu}_{G}$ with respect to  $\y :=(\y_1,\cdots,\y_r) $ to be
\begin{equation}\label{def vol Sht}
\vol(\Sht^{\mu}_{G}, \eta):= \Tr(
{}_{c}\G^{\mu_r}_{\y_r}\c\cdots\c  {}_{c}\G^{\mu_1}_{\y_1}\c\Frob^*   \mid \rH_c^*(\Bun_G)).
\end{equation}
\end{defn}

As $\rH_c^*(\Bun_G)$ is infinite-dimensional, the convergence of the trace in the definition of $\vol(\Sht^{\mu}_{G}, \eta)$ needs to be justified, which we do in greater generality in Proposition \ref{p:trace conv}.

\begin{remark}For an $r$-tuple $\mu=(\mu_{1},\cdots ,\mu_{r})$, the $\ev_{j}$ assemble to a morphism 
 $$ \xymatrix{ \ev_{\mu}: \Hk^{\mu}_{G}\ar[r] & \prod_{j=1}^r \BB P_{\mu_j}} 
 $$
  The pre-composition of $\ev_{\mu}$ with $\Sht^{\mu}_{G}\to \Hk^{\mu}_{G}$ defines a morphism  $\Sht^{\mu}_{G}\to \prod_{j} \BB P_{\mu_j}$, which may be viewed as an analog of the Hodge morphism \eqref{eq: Hdg}. Vector bundles on $\Sht^{\mu}_{G}$ built from pullbacks of vector bundles on $\BB P_{\mu_j}$ are the analogue of ``automorphic vector bundles" on Shimura varieties. Therefore, our arithmetic volume may be viewed as the analog of the arithmetic volumes of automorphic vector bundles on Shimura varieties.
\end{remark}

\subsubsection{Main results}
Our main result is a formula of the arithmetic volume in terms of (mixed) higher derivative of $L$-functions attached to the Gross motive of a reductive group \cite{Gro97}. We present the formula in the simplified setting where $G$ is split semisimple, although in the main text we treat general quasisplit $G\rightarrow X$, which witnesses more interesting $L$-functions.

Choose a  maximal torus and Borel subgroup $T\subset B\subset G$ and denote the Weyl group $W(G,T)$ by $W$.
Denote $R=\cohog{*}{\BB T}$, which is canonically isomorphic to the graded ring $\Sym(\xch(T)_{\Qlbar}(-1))$, with the grading such that $\xch(T)_{\Qlbar}(-1)$ is concentrated in degree $2$. The {\em Gross motive of $G$} (or rather, its $\ell$-adic realization) from \S\ref{sss:motive} is the graded $\Ql$-vector space with $\Frob$-action
\begin{equation}
    \VV_G\simeq R^{W}_+/(R^{W}_+)^2
\end{equation}
where $R^{W}_+ \subset R^W$ is the augmentation ideal. Let $(d_1, \ldots, d_n)$ be the multiset of degrees of a homogeneous basis $f_1,\cdots, f_n$ of $\VV_G$. When $G$ is almost simple, the multiset $(d_1, \ldots, d_n)$ coincides with the multiset $\{e_{i}+1\}$, where $\{e_{i}\}$ are the exponents of $G$. We define the associated multivariate $L$-function to be\footnote{In the non-semisimple case the analogous definition needs to be slightly renormalized -- see \eqref{eq: L with pole}.}
\begin{equation*}
\sL_{X, G}(s_{1},\cdots, s_{n}) :=\prod_{i=1}^n \z_{X}(s_{i}+d_{i}).
\end{equation*}


For $j \in \{1, 2, \ldots, r\}$, let $W_{\mu_j}$ be the Weyl group of the Levi $L_{\mu_j}$ of $P_{\mu_j}$. Via the pull-backs from the maps $\BB T\to\BB P_{\mu_j} \to\BB G$, we have
 $$
 R^{W} \simeq \cohog{*}{\BB G} \subset \cohog{*}{\BB P_{\mu_j}}\simeq R^{W_{\mu_j}}.
$$
For each $\y_j\in R^{W_{\mu_j}}$ of degree $2(D_{\mu_j}+1)$, we define in \eqref{eq: nabla lambda eta}
an endomorphism $\nb^{\y_j}_{\mu_j}: R^W\to R^W$. It is a derivation that carries $R^{W}_{+}$ to $R^{W}_{+}$, and hence induces a graded  linear endomorphism of $\VV_G$:
\begin{equation*}
\ov\nb^{\y_j}_{\mu_j}\in \End^{gr}(\VV_G).
\end{equation*}

Now let $\mu  = (\mu_1, \ldots, \mu_r)$ and $\eta = (\eta_1, \ldots, \eta_r)$ be $r$-tuples as above.
We will assume (Commutativity Assumption \ref{assump:split-operators-commute}) that the operators $\ov\nb^{\y_j}_{ \mu_j}$ pairwise commute for $j=1, \ldots, r$.
The assumption holds automatically if the $d_i$'s are all distinct, which holds for all simple groups except those of type $D_{2n}$, and also holds for $\GL_n$. We may choose the homogeneous basis $f_1,\cdots, f_n$ of $\VV_G$ to consist of (generalized) eigenvectors of $\ov\nb^{\y_j}_{\mu_j}$ for each $j\in \{1,\cdots,r\}$. Let $\e_i(\y_j, \mu_j)$ be the generalized eigenvalue of $\ov\nb^{\y_j}_{\mu_j}$ on $f_i$. 
Consider the differential operator
\begin{equation*}
\frd_{j}:=-(\log q)^{-1}\sum_{i=1}^{n}\e_{i}(\y_{j},  \mu_j)\pl_{s_{i}}.
\end{equation*}

Some special cases of our main result for split semisimple $G$ are as follows. First we consider tautological classes which come wholly from $\cohog{2(D_{\mu_j}+1)}{\BB P_{\mu_j}}$. 
 
\begin{theorem}\label{th: main}
Let $\mu=(\mu_{1},\cdots, \mu_{r})$ be an admissible sequence of minuscule dominant coweights of $G$. Let $\eta = (\eta_1, \ldots, \eta_r)$, where $\y_{j}\in \cohog{2(D_{\mu_j}+1)}{\BB P_{\mu_j}}$ satisfying the Commutativity Assumption \ref{assump:split-operators-commute}. Then, with the notations introduced above, we have 
\begin{equation}\label{eq:vol eta}
\vol(\Sht^{\mu}_{G}, \eta) = \# \pi_1(G) \cdot q^{\dim \Bun_{G}}\left(\prod_{j=1}^{r}\frd_{j}\right)\sL_{X,G}(s_{1},\cdots, s_{n})\Big|_{s_{1}=s_{2}=\cdots=s_{n}=0}.
\end{equation}
\end{theorem}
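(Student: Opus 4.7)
The plan is to compute the trace in \eqref{def vol Sht} by combining a structural description of $\cohoc{*}{\Bun_G}$ as a $\Frob^*$-module with an action of the Hecke correspondences by derivations on tautological classes, and then to recognize the resulting expression as the prescribed derivative of $\sL_{X,G}$.

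The first main ingredient I would establish is a description of $\cohog{*}{\Bun_G}$ in terms of tautological classes, in the spirit of Atiyah--Bott--Harder and its $\ell$-adic refinements (due to Behrend--Dhillon, Heinloth, and others). The upshot is a $\Frob$-equivariant identification of $\cohog{*}{\Bun_G}$ as a graded algebra freely generated by classes built from $\VV_G \otimes \cohog{*}{X}$: each homogeneous generator $f_i \in \VV_G$ of degree $2d_i$ couples with a class in $\cohog{*}{X}$ to produce a tautological class on $\Bun_G$ whose $\Frob$-eigenvalue is the corresponding eigenvalue on $\cohog{*}{X}$ twisted by $q^{d_i}$. A compatible description of $\cohoc{*}{\Bun_G}$ follows from Poincar\'e--Verdier duality on $\Bun_G$.

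The key geometric input---and the main anticipated obstacle---is to show that ${}_c\G^{\mu_j}_{\y_j}$ acts on this description via the derivation $\ov\nb^{\y_j}_{\mu_j}$ on $\VV_G$, extended to the full symmetric algebra by the Leibniz rule. The argument should show that the push--pull of $\y_j$ along the Hecke correspondence $\Hk_G^{\mu_j} \to \Bun_G \times \Bun_G$, restricted to tautological classes, is computed by the algebraic derivation $\nb^{\y_j}_{\mu_j}\co R^W \to R^W$ from \eqref{eq: nabla lambda eta}. The minuscule hypothesis on $\mu_j$ ensures that $\Hk_G^{\mu_j}$ is smooth with proper and smooth projections $h_{j-1}, h_j$, so the push--pull reduces fiberwise to a computation on the affine Grassmannian $\Gr_G^{\mu_j}$, where one can match characteristic classes of the two universal bundles on either side of the modification using the canonical parabolic reduction underlying $\ev_j$. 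This matching should be the geometric incarnation of the algebraic definition of $\nb^{\y_j}_{\mu_j}$.

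Granting these two ingredients, the trace computation proceeds formally. By the Commutativity Assumption \ref{assump:split-operators-commute}, the $\ov\nb^{\y_j}_{\mu_j}$ are simultaneously diagonalized on $\VV_G$ with eigenvalues $\e_i(\y_j, \mu_j)$ on $f_1, \ldots, f_n$. The trace of $\Frob^* \circ \prod_j (\text{derivation})$ on the symmetric algebra factors generator-by-generator: each $f_i$ contributes the geometric-series factor $\zeta_X(s_i + d_i)$ (from Frobenius eigenvalues through the $\cohog{*}{X}$-tensor factor), and each derivation inserts as the differential operator $\mathfrak{d}_j = -(\log q)^{-1} \sum_i \e_i(\y_j, \mu_j) \pl_{s_i}$, with the $-(\log q)^{-1}$ prefactor coming from $\frac{d}{ds} q^{-s} = -(\log q) q^{-s}$. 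The overall $\#\pi_1(G) \cdot q^{\dim \Bun_G}$ accounts for the component structure $\pi_0(\Bun_G) \simeq \pi_1(G)$ (giving $\#\pi_1(G)$ as a sum over components) and the Tate twist relating $\cohoc{*}{\Bun_G}$ to $\cohog{*}{\Bun_G}$ (giving $q^{\dim \Bun_G}$). Convergence of the infinite trace, ensured by Proposition \ref{p:trace conv}, justifies the formal manipulations; the $r=0$ specialization recovers Harder's mass formula, providing a useful consistency check on the numerical constants.
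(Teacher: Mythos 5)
Your outline captures the correct global strategy (Atiyah--Bott description, Hecke correspondences acting via $\ov\nb$, trace as a multivariate $L$-function), but it elides a step that is both nontrivial and essential: the operator $\G^{\y_j}_{\mu_j}$ does \emph{not} act as a derivation (plus a scalar) on the Atiyah--Bott polynomial algebra. What is actually true --- and what occupies a substantial part of the paper's proof --- is that $\G^{\y_j}_{\mu_j}$ preserves an increasing ``Ran'' filtration $F_\bu\cohog{*}{\Bun_G^\omega}$ (Lemmas \ref{l:h0}, \ref{l:diff h0 h1}, \ref{l:h1 push}), and only on the associated graded (then passing further to the associated graded of the augmentation filtration) does it become the derivation $(\id_{\homog{\bu}{X}}\ot\ov\nb^{\y_j}_{\mu_j})^{\der}_+$ plus a scalar (Lemma \ref{lem: split case associated graded endomorphism} and Proposition \ref{prop: split action on graded}). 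If you assert the derivation action directly on $\cohog{*}{\Bun_G}$ as you propose, the statement is simply false; the correction terms encoded by $\D_\mu(\th)=h_1^*\th-h_0^*\th$ do not vanish, they are merely lower in the filtration. Since the trace of a filtered endomorphism equals the trace on the associated graded, this is sufficient, but the filtration machinery is the heart of the argument and cannot be bypassed.

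Two secondary differences from the paper's route. First, your proposed geometric input (reduce fiberwise to $\Gr^{\mu_j}_G$) is not what the paper does; instead it proves a universal identity for characteristic classes under modification (Theorem \ref{th:master gen}), via reduction to tori through the wonderful compactification, and specializes to the Hecke stack (Proposition \ref{p:master eqn}). Notice that the resulting identity involves $\PD(z)\pl_\mu f$ together with a curvature correction $(1-g)\j{z,\xi}\xi\pl^2_\mu f$ --- the latter coming from the normal bundle of the leg graph --- which a purely fiberwise affine-Grassmannian computation would not naturally produce. Second, when justifying the factor $\#\pi_1(G)$, you should note explicitly that the volumes $\vol({}^\omega\Sht^\mu_G,\eta)$ are independent of $\omega$; this is because the only $\omega$-dependent term in the differential operator is $d^\omega_{\mu_j}(\y_j)$, which vanishes when $G$ is semisimple (Example \ref{ex:semisimple-degrees}). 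Also the $r=0$ case is the Tamagawa number formula of Gaitsgory--Lurie, not Harder's mass formula (which is the analog over $\Q$).
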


This theorem computes an analog of arithmetic volumes for (integral models of) Shimura varieties mentioned earlier. It relates the arithmetic volume of $\Sht_G^{\mu}$ to higher derivatives of $L$-functions. We also prove a formula that may be viewed as an analog of the Hirzebruch Proportionality principle \eqref{eq: H prop} for locally symmetric spaces. It is obtained by looking at tautological classes with ``maximal contribution from the curve direction''. 

\begin{theorem}\label{thm:intro-2}Let $\mu=(\mu_{1},\cdots, \mu_{r})$ be an admissible sequence of minuscule dominant coweights of $G$. Let $\y'_{j}\in \cohog{2D_{\mu_j}}{\BB P_{\mu_j}}$, and $\xi \in \cohog{2}{X}$ be the class of a point. Let $\eta = (\y'_1 \xi, \ldots, \y'_r \xi)$. Then with the notations above, we have 
\begin{equation}\label{eq:vol eta'}
\vol(\Sht^{\mu}_{G} ,\eta) = \# \pi_1(G) \cdot q^{\dim \Bun_{G}}\sL_{X,G}(0,\cdots, 0)\left( \prod_{j=1}^r\int_{G/P_{\mu_j}} \eta_j'\right).
\end{equation}
\end{theorem}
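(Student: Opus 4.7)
The plan is to follow the same trace-formula framework used to prove Theorem \ref{th: main}, exploiting the crucial simplification afforded by the factor $\xi=[\mathrm{pt}]\in\cohog{2}{X}$: it pins the $j$-th leg of the shtuka to an $\F_q$-point of $X$, replacing the ``integration over the curve''—which produces the derivatives of $\sL_{X,G}$ in Theorem \ref{th: main}—with a point evaluation, so the $L$-function enters the answer purely at $0$. The first step is to reinterpret ${}_c\G^{\mu_j}_{\eta'_j\xi}$ at the level of correspondences: via the projection formula applied to the leg map $p_j:\Hk^{\mu_j}_G\to X$, the class $p_j^\ast\xi$ is the cycle class of the fiber $\Hk^{\mu_j}_{G,x}\subset\Hk^{\mu_j}_G$ over any $\F_q$-point $x\in X$. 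For minuscule $\mu_j$, both projections $\Hk^{\mu_j}_{G,x}\rightrightarrows\Bun_G$ are smooth proper with fibers $G/P_{\mu_j}$, and the enhanced Hodge morphism $\ev_j$ restricts on each fiber to the tautological classifying map $G/P_{\mu_j}\to\BB P_{\mu_j}$.

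The second step is to identify the trace as the weighted intersection pairing
\[
\vol(\Sht^\mu_G,\eta)=\int_{\Sht^\mu_{G,\un{x}}}\prod_{j=1}^r\ev_j^\ast\eta'_j
\]
via a Grothendieck--Lefschetz formula for correspondences applied to the Frobenius-twisted composition of pointwise Hecke correspondences at $\un{x}=(x_1,\ldots,x_r)$. The key observation is that, since $\eta'_j\in\cohog{2D_{\mu_j}}{\BB P_{\mu_j}}$ is top-degree on each fiber $G/P_{\mu_j}$, the integral decouples via the projection formula applied iteratively to the tower of Hecke correspondences (using that $\Sht^\mu_{G,\un{x}}$ is the fiber product of $\prod_j\Hk^{\mu_j}_{G,x_j}$ with the graph of Frobenius on $\Bun_G$): at each level, pushforward of the top-fiberwise class $\ev_j^\ast\eta'_j$ contributes the scalar $\int_{G/P_{\mu_j}}\eta'_j$, and the residual intersection pairing on $\Bun_G$ is $|\Bun_G(\F_q)|$ by Grothendieck--Lefschetz on $\rH_c^\ast(\Bun_G)$. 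Thus
\[
\vol(\Sht^\mu_G,\eta)=\prod_{j=1}^r\int_{G/P_{\mu_j}}\eta'_j\cdot|\Bun_G(\F_q)|.
\]
Finally, the Weil/Tamagawa number conjecture for function fields (Gaitsgory--Lurie) combined with Harder's mass formula gives $|\Bun_G(\F_q)|=\#\pi_1(G)\cdot q^{\dim\Bun_G}\cdot\sL_{X,G}(0,\ldots,0)$, completing \eqref{eq:vol eta'}.

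The main obstacle is the iterated projection-formula argument in the second step: decoupling the pushforward of $\prod_j\ev_j^\ast\eta'_j$ along $\Sht^\mu_{G,\un{x}}\to\Bun_G$ as $\prod_j\int_{G/P_{\mu_j}}\eta'_j$ times the trivial correspondence. While intuitively the top-degree class on each fiber ``integrates out'' the Hecke modification data, the careful argument must handle the asymmetry between the two projections $h_{j-1,x_j},h_{j,x_j}$ of each Hecke correspondence and the Frobenius twist entering through the graph of Frobenius. A natural approach is to use the canonical parabolic reduction of \S\ref{sssec: canonical parabolic reduction} to realize $\ev_j^\ast\eta'_j$ as Poincar\'e dual to a section of the Hecke correspondence (scaled by $\int_{G/P_{\mu_j}}\eta'_j$), and then reduce the iterated cohomological correspondence on $\Bun_G$ to the identity via standard manipulations. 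This scalar reduction is, in essence, the function-field arithmetic analog of the classical Hirzebruch proportionality \eqref{eq: H prop}.
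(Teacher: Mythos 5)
Your formula and overall plan are right, and your heuristic (``$\xi$ pins the leg so only $\sL_{X,G}$ at $0$ survives, and each top-degree $\eta'_j$ integrates out to $\int_{G/P_{\mu_j}}\eta'_j$'') matches the conceptual content of the paper's proof. However there is a real gap in the ``decoupling'' step, and the fix you sketch does not resolve it.

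The gap is the claim that $\Gamma^{\xi\eta'_j}_{\mu_j}:\theta\mapsto h_{0*}(p_X^*\xi\cdot\ev_j^*\eta'_j\cdot h_1^*\theta)$ is literally scalar multiplication by $\int_{G/P_{\mu_j}}\eta'_j$. It is not, because $h_1^*\theta$ is not pulled back from $\Bun_G$ via $h_0$, so the projection formula does not factor the pushforward as $h_{0*}(\xi\eta'_j)\cdot\theta$. Writing $h_1^*\theta = h_0^*\theta + \Delta_{\mu_j}(\theta)$, one gets
\[
\Gamma^{\xi\eta'_j}_{\mu_j}(\theta) = \Big(\int_{G/P_{\mu_j}}\eta'_j\Big)\theta + h_{0*}\bigl(\xi\eta'_j\cdot\Delta_{\mu_j}(\theta)\bigr),
\]
and the correction term is genuinely nonzero. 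Your suggested remedy---realizing $\ev_j^*\eta'_j$ as Poincar\'e dual to a section of the Hecke correspondence---does not help: first, a general $\eta'_j\in\cohog{2D_{\mu_j}}{\BB P_{\mu_j}}$ is not dual to (a rational multiple of) a single section; second, even for a section $s$ of $h_0$, the resulting operator is $\theta\mapsto s^*h_1^*\theta$, which is the identity only if $h_1\circ s=\id$, and that is exactly the asymmetry you flagged. Similarly, the framing ``identify the trace as $\int_{\Sht^\mu_{G,\un x}}\prod\ev_j^*\eta'_j$ via Grothendieck--Lefschetz'' is not literally available since $\Sht^\mu_{G,\un x}$ and $\Bun_G$ are not proper; the whole point of Definition \ref{def: arith vol} and Proposition \ref{p:trace conv} is that the regularized trace is the definition, not derived from an honest intersection number.

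What actually makes the computation go through in the paper is the Ran filtration $F_\bullet\cohog{*}{\Bun_G^\omega}$ of \S\ref{sss:Ran gr}--\S\ref{sssec:split-filtration}. Lemma \ref{lem: split case associated graded endomorphism} shows $\Gamma^{\eta_j+\xi\eta'_j}_{\mu_j}$ preserves $F_\bullet$, and that on $\Gr^F_\bullet$ it acts as $(d^\omega_{\mu_j}(\eta_j)+d_{\mu_j}(\eta'_j))\id + \gamma^{\eta_j}_{\mu_j}$; when $\eta_j=0$ this is exactly multiplication by $\int_{G/P_{\mu_j}}\eta'_j$. Since the trace of a filtration-preserving operator equals its trace on the associated graded, the correction term disappears from the trace and you recover your conclusion. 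The paper then obtains Theorem \ref{thm:intro-2} as the immediate specialization of Theorem \ref{th:vol gen} (set $\eta_j=0$, so $\epsilon_i(0,\mu_j)=0$ and $\frd_j = \int_{G/P_{\mu_j}}\eta'_j$), with the $\#\pi_1(G)$ factor coming from summing over $\omega\in\pi_0(\Bun_G)$ and the $r=0$ case reducing to the Tamagawa number formula, just as you indicate. In short: your endpoint is correct and the role of the $r=0$ case is exactly as you use it, but the ``decoupling'' needs the Ran-filtration argument rather than a bare projection formula.
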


Theorems \ref{th: main} and \ref{thm:intro-2} represent two extremes of tautological classes: those with ``minimal'' and ``maximal'' contribution from the cohomology of $X$. In general, we calculate $
\vol(\Sht^{\mu}_{G} , \eta)$ for \emph{all} tautological classes in the relevant degree (Theorem \ref{th:vol gen} for split $G$, and Theorem \ref{thm:non-split-volume} for quasisplit $G/X$). We give a concrete illustration of Theorem \ref{thm:non-split-volume} without spelling out all the definitions, deferring the full formulation to \S \ref{ssec:unitary-groups}. 

\begin{theorem}\label{th:intro-vol-U}
Let $\Sht^r_{\U(n)}$ be the moduli stack of Hermitian shtukas with respect to a finite \'etale double cover $X'/X$, and let $\cP_1, \ldots, \cP_r$ be the tautological line bundles on $\Sht^r_{\U(n)}$. Let $\cE$ be a vector bundle of rank $n$ on $X'$. With respect to these choices,
\begin{equation*}
\vol(\Sht_{\U(n)}^r, \prod_{j=1}^{r}c_{n}(p_{j}^{*}\cE^{*}\ot\cP_j)) =2\frac{q^{n^{2}(g-1)}}{(\log q)^{r}}\left(\frac{d}{ds}\right)^{r}\Big|_{s=0}(q^{-s\deg \cE}L_{X,\U(n)}(2s)).
\end{equation*}
\end{theorem}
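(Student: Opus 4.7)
The plan is to deduce Theorem \ref{th:intro-vol-U} by specializing the general quasisplit volume formula, Theorem \ref{thm:non-split-volume}, to $G = \U(n)$ over the double cover $X'/X$, with all $r$ legs carrying the minuscule coweight $\mu$ of the standard representation.

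First I would identify the Chern classes as tautological in the sense of \eqref{eq:enhanced-hodge}. The cocharacter $\mu$ has $D_\mu = n-1$, so $2(D_\mu + 1) = 2n$, matching the degree of $c_n$ of a rank-$n$ bundle. The line bundle $\cP_j$ is pulled back from $\BB P_{\mu}$ via $\ev_j$, and $p_j^*\cE$ from $X'$ via the leg map. Using the splitting principle together with the vanishing $c_i(\cE) = 0$ for $i \geq 2$ on a curve, one finds
\[
\eta_j := c_n(p_j^* \cE^* \otimes \cP_j) \;=\; c_1(\cP_j)^n \;-\; p_j^* c_1(\cE) \cdot c_1(\cP_j)^{n-1},
\]
exhibiting $\eta_j$ as the pullback of a class in $\cohog{2n}{\BB P_\mu \times X}$ with both a pure-$\BB P_\mu$ and a curve-direction component.

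Next I would compute the Gross motive of quasisplit $\U(n)$. The Galois involution on $X^*(T) = \ZZ^n$ acts by negation composed with reversal, and modding out by $W = S_n$ makes the elementary symmetric polynomial $e_i \in R^W$ transform by $\chi^i$, where $\chi$ is the quadratic character of $X'/X$. Thus $\VV_{\U(n)}$ has graded basis $\{e_i\}_{i=1}^n$ with degrees $d_i = 2i$ twisted by $\chi^i$, giving
\[
\sL_{X,\U(n)}(s_1,\ldots, s_n) = \prod_{i=1}^n L(s_i + i,\, \chi^i), \qquad L_{X,\U(n)}(s) := \sL_{X,\U(n)}(s,\ldots, s).
\]
Since all legs carry identical data, Commutativity Assumption \ref{assump:split-operators-commute} is automatic and $\frd_1 = \cdots = \frd_r =: \frd$. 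A symmetric-function computation of \eqref{eq: nabla lambda eta} on the pure $\BB P_\mu$ part $c_1(\cP)^n$ and the curve part $-p^*c_1(\cE)\cdot c_1(\cP)^{n-1}$ shows
\[
\frd \;=\; -\deg\cE \;+\; \tfrac{2}{\log q}\sum_{i=1}^n \partial_{s_i},
\]
where the coefficient $2$ reflects the eigenvalue of $\ov\nb^{c_1(\cP)^n}_\mu$ on each $e_i$.

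The final step is a chain-rule calculation: expanding $\frd^r$ binomially and using $(\sum_i \partial_{s_i})^k F(s_1,\ldots,s_n)|_{s_i=0} = (d/dt)^k F(t,\ldots,t)|_{t=0}$, together with the substitution $t = 2s$ to absorb the coefficient $2$, yields
\[
\frd^r \sL_{X,\U(n)}\Big|_{s_i = 0} = (\log q)^{-r}\bigl(\tfrac{d}{ds}\bigr)^r\Big|_{s=0}\bigl[q^{-s\deg\cE}\,L_{X,\U(n)}(2s)\bigr].
\]
Multiplying by the prefactor $q^{\dim \Bun_{\U(n)}} = q^{n^2(g-1)}$ and the appropriate analog of $\#\pi_1(G)$ (which equals $2$ for quasisplit $\U(n)$, coming from the coinvariants of $\pi_1 = \ZZ$ under the Galois involution $-1$) as dictated by Theorem \ref{thm:non-split-volume} gives the stated identity.

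The main obstacle is the eigenvalue computation in Step 2: verifying that $\ov\nb^{c_1(\cP)^n}_\mu$ acts with uniform eigenvalue $-2$ (in the sign convention $\frd = -(\log q)^{-1}\sum \epsilon_i\partial_{s_i}$) on each basis vector $e_i$ of $\VV_{\U(n)}$. This is the origin of the ``$2s$'' in $L_{X,\U(n)}(2s)$ -- reflecting the quadratic nature of $X'/X$ -- and together with correctly tracking the overall prefactor $2$ requires careful bookkeeping of the Galois twists in the $L$-function normalizations for quasisplit unitary groups, as detailed in \S\ref{ssec:unitary-groups}.
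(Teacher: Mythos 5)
Your proposal follows the same route as the paper's proof of Theorem~\ref{th: vol U}: specialize Theorem~\ref{thm:non-split-volume} to $G=\U(n)$ with all legs carrying $\mu^\flat$, identify $c_n(p_j^*\cE^*\otimes\cP_j)$ as a tautological class, compute the eigenweights, and finish with the chain rule. However, two of your attributions are off and would derail the computation if carried out literally. First, the overall factor of $2$ is not ``dictated by Theorem~\ref{thm:non-split-volume}'': that theorem is a per-component statement for ${}^\omega\Sht^r_{\U(n)}$ and carries no $\#\pi_1(G)$-type prefactor. The factor $2 = \#\pi_0(\Bun_{\U(n)})$ appears only because the undecorated $\vol(\Sht^r_{\U(n)},-)$ is the sum of the per-component volumes over the two geometric connected components, each contributing the same amount. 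Second, the ``$2$'' in $L_{X,\U(n)}(2s)$ does not come from an eigenvalue $-2$ of $\ov\nb^{c_1(\cP)^n}_{\mu}$: that eigenvalue is $(-1)^{n-1}$ (so $\pm 1$), exactly as in the split $\GL_n$ case of Proposition~\ref{prop:GLn-minimal-eps}. The operator that actually enters the quasisplit differential operator $\frd_j$ is $\DD^{\eta_j}_{\mu_j}$, not $\ov\nb^{\eta_j}_{\mu_j}$; the extra factor of $2$ is precisely $|\Gamma|$, produced by the sum $\sum_{\gamma\in\Gamma}$ in the definition of $\DD^{\eta_j}_{\mu_j}$. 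Finally, you should note that $r$ must be even (forced by admissibility, since $\ov\mu^\flat$ generates $\pi_0(\Bun_{\U(n)})\cong\Z/2$); this parity is what neutralizes the leftover sign $(-1)^{n-1}$ in each $\frd_j$.
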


\begin{remark}
    One initial motivation for this work was to investigate the \emph{singular} terms in a conjectural Higher Siegel--Weil formula, generalizing \cite{FYZ} which proved such a formula for the ``non-singular terms''. On the other end of the spectrum, the ``most singular term'', contributing to the constant Fourier coefficient, is the arithmetic volume of certain tautological Chern classes, and Theorem \ref{th:intro-vol-U} should provide the comparison to the corresponding piece of the higher derivative of the Eisenstein series. In fact, our method was already used in the paper \cite{FHM25} which extends the higher Siegel--Weil formula to the corank-1 case, the ``least degenerate'' of the singular terms. Concretely, \cite[\S 7]{FHM25} can be viewed as specializing our proof of Theorem \ref{th: vol U} to the case $n=1$. 
\end{remark}

\subsubsection{The case $r=0$}
When $r = 0$, $\mu$ is empty and $\Sht_G^\mu$ identifies with the groupoid $\Bun_G(\F_q)$ viewed as a discrete stack. In this case, our main result specializes to the Tamagawa number formula of Gaitsgory--Lurie \cite{GL14}\footnote{They assume a simply connected hypothesis, but it is easy to deduce the general semisimple case from this.}, generalizing earlier work such as \cite{BD09}. Our proof of this special case is the same as that of \cite{GL14}: both are based on the Atiyah--Bott description for the cohomology of $\Bun_G$. We prove this description for general reductive group schemes over $X$, using the simply connected case proved in \cite{GL14} as input.

\subsubsection{Calculation of eigenweights}

To apply Theorem \ref{th: main} in practice, we need to calculate the eigenweights $\epsilon_i(\eta_j, \mu_j)$. 
In \S\ref{sec:more-examples}, we work out the eigenweights in many examples for $G$ of Type $A,B,D$. More precisely, we obtain the eigenweights when $G=\GL_n$ or the special orthogonal 
groups, and $\mu_i$ are all special minuscule coweights corresponding to the standard representation  of the Langlands dual group of $G$. For $G=\GL_n$ we are also able to calculate the eigenweights for $\mu=(1,1,0,\cdots,0)$, for which the answer takes a much more complicated shape, see Proposition \ref{prop: GL len=2}.\footnote{This result was first found by a brutal calculation of the authors, and then proved by a more elegant argument found by Gemini Deep Think (\emph{IMO Gold version}); we will present Gemini's argument instead of our original one.} The determination of the remaining minuscule coweights in Type A, as well as the spin coweights in Types C and D  will be explained in the separate paper \cite{FAI}, using different methods (also devised by AI). Finally, a work-in-progress of Zeyu Wang will generalize the calculation of both the arithmetic volume and eigenweights to all (not necessarily minuscule) coweights in all types.

\subsection{Tautological rings}
It is natural to consider the cohomology classes on $\Sht^\mu_G$ obtained by pulling back along the enhanced Hodge morphism \eqref{eq:enhanced-hodge}
\begin{equation*}
  (\ev_{j},  p_j): \Sht^\mu_G\to \BB P_{\mu_j} \times X
\end{equation*}
 for $1\le j\le r$. From this we obtain a ring homomorphism
\begin{equation*}
    \wt\rho:\wt C^\mu_G:=\bigotimes_{j=1}^r(\cohog{*}{X}\ot R^{W_{\mu_j}})\to \cohog{*}{\Sht^\mu_G}.
\end{equation*}
The {\em tautological ring} for $\Sht^\mu_G$ is the image of $\wt\rho$. When $G$ is a constant reductive group scheme over $X$, we define an ideal $I_G^\mu$ with explicit generators given by \eqref{rel in C}, and we prove in Theorem \ref{th:taut hom quot} that the pull-back map $\wt\rho$ factors through the quotient ring $C^\mu_G:=\wt C^\mu_G/I_G^\mu$, which we call the {\em phantom} tautological ring.   

\subsubsection{Structure of the phantom tautological ring}
The phantom tautological ring has nice properties. In Proposition \ref{p:taut ring free over HX} we show that the ring $C^\mu_G$ is a flat deformation of $\otimes_{j=1}^r \cohog{*}{G/P_{\mu_j}}$ over the Artinian ring $\cohog{*}{X^r}$. As a corollary, the top degree of $C^\mu_G$ is canonically isomorphic to the top degree cohomology of $\prod_{j=1}^r (X\times G/P_{\mu_j})$,
 $$
(C^\mu_G)_{2N}\isom \bigotimes_{j=1}^r(\cohog{2}{X}\ot \cohog{2D_{\mu_j}}{G/P_{\mu_j}}),
$$
where $N=\sum_{j=1}^{r}(D_{\mu_j}+1)=\dim \Sht^\mu_G$.
With this isomorphism, the pairing with the fundamental class of $\prod_{j=1}^r (X\times G/P_{\mu_j})$ defines a volume functional on $(C^\mu_G)_{2N}$, denoted by $\int_{\prod_{j=1}^r (X\times G/P_{\mu_j})}$. 
On the other hand, we also have the volume functional  $\vol(\Sht_G^\mu, -)$ defined earlier by \eqref{def vol Sht} (applicable to all elements in $(\wt C^\mu_G)_{2N}$). Proposition \ref{p:vol factor thru C} shows that  $\vol(\Sht_G^\mu, -)$  factors through the quotient $(C^\mu_G)_{2N}$ of $(\wt C^\mu_G)_{2N}$.
Then a special case of our main result (i.e., \eqref{eq:vol eta'}) compares these two linear functionals on $(C^\mu_G)_{2N}$ (see Proposition \ref{p:vol on taut}) 
    \begin{equation*}
        \vol(\Sht_G^\mu, -) =\# \pi_1(G) q^{\dim \Bun_G}\prod_{i=1}^n\z_X(d_i)\cdot \int_{\prod_{j=1}^r (X\times G/P_{\mu_j})}(-).
    \end{equation*}
     This may be viewed as the analogue of Hirzebruch Proportionality for the moduli stack of shtukas. 
     
\begin{remark}\label{rem hk=1}
Due to the non-properness of $\Sht^\mu_G$, the induced map $\rho:C^\mu_G  \to \cohog{*}{\Sht^\mu_G}$  is not injective (e.g., the top degree map vanishes). Hence the tautological ring of $\Sht^\mu_G$ is a further quotient of $C^\mu_G$. However, we have reasons to believe that $C^\mu_G$ should be viewed as the ``correct'' tautological ring for $\Sht^\mu_G$, that ``amends" the non-properness of $\Sht^\mu_G$. In fact, we show that $C^\mu_G$ satisfies Poincar\'e duality under the volume form and hence carries the structure of a Frobenius algebra (Proposition \ref{p:taut duality}).
We expect that the image of $C^\mu_G$ is large enough to account for the Hecke (generalized) eigenspace for the most non-tempered representation, namely the trivial representation ${\bf 1}$, in the cohomology:
$$
C^\mu_G /\ker(\rho) \simeq \cohog{*}{\Sht^\mu_G}[{\bf 1}].
$$
\end{remark}

\subsubsection{The tautological ring of Shimura varieties} Our results should also give a conceptual framework for the tautological ring of Shimura varieties (i.e., the subring of the Chow ring generated by the Chern classes of all automorphic bundles), a topic which has seen much study, for example in \cite{vdG,EH,WZ}. In \cite{EH}, Esnault--Harris proved that the positive degree $\ell$-adic Chern classes of flat automorphic bundles  vanish and conjectured that the same vanishing result should hold in Chow groups. We prove an analog of this result as a Corollary \ref{c:taut hom gen} to  Theorem \ref{th:taut hom quot}, where the $\ell$-adic Chern classes of ``flat automorphic bundle" in \cite{EH} correspond to the pull-back from $\ev_i$ of classes in $R^W=\cohog{*}{\BB G}$. It is amusing to note that in \cite{EH} the proof uses the Hecke eigen-property of the Chern classes of flat automorphic bundles, while our proof of the function field case avoids the expected Hecke eigen-property in view of Remark \ref{rem hk=1}.

The right analog of the volume given by \eqref{eq:vol eta} in the number field case should be the arithmetic volume
of integral models of Shimura varieties with respect to (the arithmetic Chern classes of) Hermitian metrized bundles. In the case of Siegel moduli space, K\"ohler \cite{Koh} has obtained a proportionality principle, which resembles our Theorem \ref{th:taut hom quot}. However, since the Siegel moduli space is non-proper, it does not have an immediate definition of arithmetic volumes. A conjecture of Maillot and Roessler \cite{MR}
may be viewed as the analogous relation \eqref{rel in C} for certain PEL type Shimura varieties.
In a future work we will pursue the analogous questions for more general Shimura varieties, guided by Theorem \ref{th:taut hom quot}.

\subsection{An analog of the Colmez conjecture}
In \cite{Col1} Colmez conjectured a formula relating the stable Faltings height of an abelian variety with Complex Multiplication (by the ring of integers of a CM field) to the special value at $s=1$ of the logarithmic Artin $L$-function attached to a class function arising from the corresponding CM type. Though the averaged (over all CM type for a fixed CM field) version of the Colmez conjecture has been proved \cite{AGHMP,YZ18}, the general case remains wide open. In \S\ref{s:Col} we obtain a function field analog of the Colmez conjecture. 
Let $\pi: X\to Y$ be a finite Galois \'etale covering with Galois group $\Sig$.
For an $r$-tuple $\s\in\Sig^r= \Sig\times\cdots\times \Sig$, consider the twisted diagonal map: 
\begin{equation*}
  \s= (\s_1,\s_2,\cdots,\s_r): X \to X^r
\end{equation*}
sending $x\in X$ to $(\s_i(x))_{i=1}^r$. We restrict $\Sht^\mu_G$ to the twisted diagonal above:
\begin{equation*}
    \Sht^\mu_{G,\s}:=\Sht^\mu_G\times_{X^r,\s}X.
\end{equation*} 
This may be viewed as an analog of a Shimura variety of Hilbert-Blumenthal type. The element  $\Phi=\sum_{i=1}^r\s_i\in \Qlbar[\Sig]$ in  the group algebra is an analog of the CM type. 
In principle, Theorem  \ref{th:taut hom quot} could allow us to compute the volume of $ \Sht^\mu_{G, \s}$ with respect to any top degree form. In Theorem \ref{th:vol 1-leg PGL}, we treat the case $G=\PGL_n$,  where the volume involves special values of (logarithmic) Artin $L$-functions attached to the CM type. In the simplest case $G=\PGL_2$, the formula takes the following shape (we defer the unexplained notation to \S\ref{s:Col}):
  \begin{eqnarray*}
    \vol(\Sht^\mu_{\PGL_2,\s}, \eta) &=&-\frac{(r+1)!}{3!}q^{3g_X} \z_X(2) \cdot \# \pi_1(G) \cdot  \\ 
   && \left( 2\frac{\z'_X(2)}{\log q\,\z_X(2)}
   +  |\Sig|^2 \frac{L'_{Y, (\Phi\ast \Phi^\vee)^\natural}(2)}{\log q\, L_{Y, (\Phi\ast \Phi^\vee)^\natural}(2)} 
   +(g_Y-1)|\Sig|^2\Big((\Phi\ast\Phi^\vee)(1)-\frac{r}{|\Sig|}\Big)\right).
   \end{eqnarray*}
The recipe of the class function in Colmez conjecture (cf. \cite[\S2]{Col2} for a more explicit formula involving $\Phi\ast\Phi^\vee$) is completely analogous to the one above, except that here we have the special value at $s=2$. 
In a future work we will pursue a similar question for Shimura varieties.
   
\subsection*{Acknowledgments}
We thank Dori Bejleri for asking an inspiring question after the second author talked about formulas for higher derivatives of $L(s,\eta)$ at the University of Maryland in February 2024.

We thank Peter Sarnak for useful comments, and Zeyu Wang for spotting an error in an early draft.

T.F.~was supported by the NSF (grants DMS-2302520 and DMS-2441922), the Simons Foundation, and the Alfred P. Sloan Foundation. Z.Y.~was supported by a Simons Investigator grant. W.Z. ~was supported by NSF DMS-2401548 and a Simons Investigator grant.

\section{Notation}  

\subsection{Cohomology}For an algebraic stack $\cY$ over a field $k$ and a prime number $\ell$ not equal to $\ch(k)$, we follow the formalism of \cite{LiuZheng} for $\ol \Q_\ell$-sheaves on $\cY$. We write $\cD(\cY)$ for the constructible derived category of \'etale $\ol \Q_\ell$-sheaves on $\cY$ in the sense of \emph{loc. cit.}.

When talking about $\ell$-adic cohomology of $\cY$, we always mean \emph{geometric cohomology}, i.e., the cohomology of the base change $\cY_{\ov k}$. We will abbreviate $\rH^*(\cY) := H^*(\cY_{\ol k}; \ol\Q_\ell)$. 

\subsection{The curve} Fix a prime $p$ and finite field $\F_q$ of characteristic $p$. Let $X$ be a smooth, projective, geometrically connected curve over $\F_q$. We set some notation for invariants of $X$. 

\subsubsection{Homology and cohomology of $X$}
Let $\xi\in \cohog{2}{X}(1)$ be the cycle class of a point. Let $[X]\in \homog{2}{X}(-1) \cong \rH^0(X)$ be the fundamental class. We use the notation
\begin{equation*}
\int_{X}: \cohog{2}{X}(1)\to \Qlbar
\end{equation*}
to denote the pairing with $[X]$. The cup product on $\cohog{1}{X}$ gives a symplectic pairing 
\begin{eqnarray*}
\j{-,-}: \cohog{1}{X}\times \cohog{1}{X}&\to& \Qlbar(-1)\\
(\z_{1}, \z_{2})&\mt& \int_{X}\z_{1}\z_{2}. 
\end{eqnarray*}

For $z\in \homog{i}{X}$ we often write its homological degree $i$ as $|z|$; similarly, for $\z\in \cohog{i}{X}$ we write its cohomological degree $i$ as $|\z|$.

We use 
\begin{equation*}
\PD: \homog{*}{X}\isom \cohog{2-*}{X}(1)
\end{equation*}
to denote the Poincar\'e duality isomorphism. In particular, 
\begin{equation}\label{eq:xi}
\PD(1)=\xi \quad \text{and} \quad  \PD([X])=1.
\end{equation}

Writing $\j{-, -}$ also for the evaluation pairing induced by $\rH_i(X) \cong \rH^i(X)^*$, the isomorphism $\PD$ is characterized by the identity
\begin{equation*}
\int_{X}\PD(z)\cdot\z=\j{z,\z}
\end{equation*}
for any $z\in \homog{|z|}{X}$ and $\z\in\cohog{|z|}{X}$. In other words, we have $\j{z, \z} = \j{\PD(z), \z}$, justifying the abuse of notation. 

\subsection{Reductive groups}\label{sssec:reductive-groups-notation}
For a torus $T$, we denote by $\xch(T)$ and $\xcoch(T)$ the character and cocharacter groups of $T$, respectively. For a cocharacter $\lambda \in \xcoch(T)$, we denote by $L_\lambda = C_G(\lambda)$ the corresponding Levi subgroup of $G$, and $P_\lambda$ the attracting parabolic to $L_\lambda$ under the adjoint action of $\G_m$ on $G$ via $\lambda$.

Given a character $\chi$ of $L_\lambda$, the associated line bundle $\cO(\chi)$ on the flag variety $G/P_\lambda$ is $G \times^{P_\lambda} \A^1$ where the action of $P_\lambda$ on $G \times \A^1$ is so that $(gp, t) = (g, \chi(p) t)$. Under this normalization, anti-dominant weights correspond to semiample line bundles on $G/P_\lambda$.

\begin{exam}\label{ex:parabolic-line}
Let $G = \GL_n$, $T$ be the standard diagonal torus, and $\lambda = (1, 0, \ldots, 0)$. Then $P_\lambda$ is the subgroup of matrices of the form 
\[
\begin{pmatrix} 
* & * & \ldots & * \\
0 & * & \ldots & * \\
\vdots & \vdots & \vdots & \vdots \\
0 & * & \ldots & * 
\end{pmatrix} 
\]
and $G/P_\lambda$ identifies with $\PP^{n-1}$. Under our conventions the character $\chi = (1, 0, \ldots, 0) \in \xch(T)$ corresponds to $\cO(-1)$ on $G/P_\lambda$.
\end{exam}

Let $G$ be a split (connected) reductive group over a field $k$. Choose a maximal torus and Borel subgroup $T\subset B\subset G$. This choice induces a notion of dominant coweights in $\xcoch(T)$. Denote the Weyl group $W(G,T)$ by $W$.

Fix a prime $\ell \neq \ch{k}$. Let $V_{T}:=\homog{1}{T,\Qlbar}:=\xcoch(T)_{\Qlbar}(1)$ be the $\Qlbar$ version of the Tate module of $T$. 
Let
\begin{equation*}
R := \Sym(\xch(T)_{\Qlbar}(-1))=\Qlbar[V_{T}]
\end{equation*}
with the grading such that $\xch(T)_{\Qlbar}(-1)$ is concentrated in degree $2$. Then there is a  canonical isomorphism of graded $\Qlbar$-algebras
\begin{equation*}
R\cong \cohog{*}{\BB T}.
\end{equation*}
Under this isomorphism, the subring of $W$-invariants is identified with $\cohog{*}{\BB G}$,
\begin{equation}\label{coho BG}
\cohog{*}{\BB G}\cong R^W =\Qlbar[V_{T}]^{W}.
\end{equation}

\subsection{Graded algebras}\label{sss:aug gr} Let $E$ be a field and  $A$ be an $E$-algebra with an augmentation $\e: A\xr{\e}E$. The \emph{augmentation ideal} is $\ker(\e)$ and its associated $\ker(\e)$-adic filtration is denoted $F_{\aug}^{n}A=\ker(\e)^{n}$. The \emph{associated graded algebra} of $A$ (for its augmentation filtration) is  
\begin{equation*}
\Gr^{\bu}_{\aug}(A)=\op_{n\ge0}\ker(\e)^{n}/\ker(\e)^{n+1}.
\end{equation*}
In case $A$ is $\ZZ_{\ge0}$-graded, and $\e$ is also a graded map, $\Gr^{\bu}_{\aug}(A)$ inherits the $\ZZ_{\ge0}$-grading from $A$, in addition to the natural grading as the associated graded algebra.

The same discussion works more generally in the setting of an augmented algebra object $(A,\e)$ in a symmetric monoidal abelian $E$-linear category.

\section{Characteristic classes under modification}\label{s:char class under mod}

In this section, we prove a general result about the behavior of characteristic classes under ``modifications'' of $G$-bundles on a stack. We fix an algebraically closed base field $k$.

\subsection{Modifications of $G$-bundles}\label{sss:setup mod along div} 
Let $S$ be a stack over $k$ and $\cF_{0}, \cF_{1}$ be two  $G$-torsors over $S$. Let $D\subset S$ be a Cartier divisor. Let 
\begin{equation}\label{modif Gbun}
\ph: \cF_{0}|_{S-D}\isom \cF_{1}|_{S-D}
\end{equation}
be an isomorphism of $G$-bundles. We call $\ph$ a \emph{modification (of the $G$-bundles $\cF_0, \cF_1$) along $D$}. 

For any $V\in\Rep_k(G)$, let $\cF_{i,V}$ be the associated vector bundle $\cF_{i}\times^{G}V$ over $S$. The isomorphism $\ph$ induces an isomorphism  of vector bundles over $S-D$,
\begin{equation*}
\ph_{V}: \cF_{0,V}|_{S-D}\isom \cF_{1,V}|_{S-D}.
\end{equation*}

For a $T$-bundle $\cF$ on $S$ and $\l\in \xcoch(T)$, there is a unique $T$-bundle $\cF(\l D)$ such that for any $\chi\in \xch(T)$, the associated line bundle $\cF(\l D)_{\chi}$ satisfies 
\[
\cF(\l D)_{\chi}  \cong \cF_{\chi}(\j{\chi,\l}D).
\]
This generalizes the construction of twisting a line bundle by a Cartier divisor. 

\begin{defn}\label{defn: modification} Let $\l\in \xcoch(T)$ be a dominant coweight. We say the modification $\ph$ from \eqref{modif Gbun} is {\em of type $\l$} if the following condition holds fppf locally on $S$: there are $T$-reductions $\cF_{0,T}$ and $\cF_{1,T}$ of $\cF_{0}$ and $\cF_{1}$ respectively, such that the map $\ph$ respects the $T$-reductions, and extends to an isomorphism of $T$-bundles over $S$
\begin{equation}\label{phT ext}
\wt \ph_{T}: \cF_{0,T}(\l D)\isom \cF_{1,T}.
\end{equation}
\end{defn}

\begin{exam}\label{ex: GL modif}
For $G=\GL_n$ and a $G$-bundle $\cF$, we consider the rank $n$ vector bundle $\cE:=\cF_V$ associated to the standard representation $V$ of $\GL_n$. Let $\l=(1,0,\cdots, 0)\in \ZZ^n=\xcoch(T)$.  A modification $\ph: \cF_0|_{S-D}\isom \cF_1|_{S-D}$ of $G$-bundles has type $\l$ if the isomorphism $\ph_V: \cE_0|_{S-D}\isom \cE_1|_{S-D}$ (where $\cE_i=\cF_{i,V}$ ) extends to a map of coherent sheaves $\cE_0\to\cE_1$ on $S$ that fits into an exact sequence
\begin{equation}\label{upper 1}
0\to \cE_0 \xr{\ph} \cE_1 \to i_{D*}\cP\to 0
\end{equation}
where $\cP$ is a line bundle on $D$. We call such a modification $\ph$ an ``upper modification of $\cE_0$ of colength $1$ along $D$''. 
\end{exam}

\subsubsection{Canonical parabolic reduction}\label{sssec: canonical parabolic reduction}
Let 
\begin{equation}\label{modif Gbun lambda}
\ph: \cF_{0}|_{S-D}\isom \cF_{1}|_{S-D}
\end{equation}
be a modification along $D$, of type $\lambda$. Let $P_{\l}$ and $P_{-\l}$ be the parabolic subgroups of $G$ defined as the attracting and repelling loci to the Levi $L_{\l}=C_{G}(\l)$, under the adjoint action of $\Gm$ on $G$ via $\l$.

\begin{prop}\label{prop:canonical-parabolic-red} The restriction $\cF_{0}|_{D}$ carries a canonical $P_{\l}$-reduction.
\end{prop}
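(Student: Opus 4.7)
The construction of a $P_\lambda$-reduction of $\cF_0|_D$ is fppf-local on $S$ and Zariski-local along the divisor $D$, so by Definition~\ref{defn: modification} we may assume the existence of $T$-reductions $\cF_{0,T}$, $\cF_{1,T}$ globally on $S$ (and with respect to which $\varphi$ extends to an isomorphism $\widetilde{\varphi}_T: \cF_{0,T}(\lambda D) \isom \cF_{1,T}$). Given such data, the tautological $P_\lambda$-reduction is $\cF_{0,P_\lambda} := \cF_{0,T} \times^T P_\lambda$, obtained from the inclusion $T \subset P_\lambda$. Its restriction to $D$ will be the canonical object. By fppf descent it suffices to show that $\cF_{0,P_\lambda}|_D$ is independent of the choice of pair $(\cF_{0,T}, \cF_{1,T})$ satisfying \eqref{phT ext}.

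The verification is a direct calculation in the formal neighborhood of $D$. Fix trivializations of $\cF_0, \cF_1$ on the formal completion $\widehat{S}_D$, and let $s$ be a local equation for $D$. Two choices of $T$-reduction of $\cF_0$ become, on $\widehat{S}_D$, cosets $g_0 T$ and $g_0' T = g_0 h T$ with $h := g_0^{-1} g_0' \in G(\widehat{\cO}_{S,D})$, and similarly $g_1 T$, $g_1' T$ for $\cF_1$. Unraveling the meaning of \eqref{phT ext} (namely that the induced map of trivialized $T$-bundles over the punctured formal disc has prescribed vanishing order $\langle \chi, \lambda\rangle$ at $D$ on every $\chi$-component), the condition becomes
\[
\varphi = g_1 \cdot s^\lambda u \cdot g_0^{-1}, \qquad u \in T(\widehat{\cO}_{S,D}),
\]
and analogously with primed data. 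Comparing these two factorizations and using that $u, u'$ commute with $s^\lambda$, one arrives at the requirement $\mathrm{Ad}(s^\lambda)(h) \in G(\widehat{\cO}_{S,D})$.

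The main step, and the essential content of the proposition, is analyzing this last condition root-by-root. Decomposing $h$ into components with respect to a root stratification of $G$ (e.g.\ via the big cell, or via a faithful representation), the factor $\mathrm{Ad}(s^\lambda)$ scales the $\mathfrak{g}_\alpha$-component by $s^{\langle\alpha,\lambda\rangle}$. The condition is therefore automatic for $\langle\alpha,\lambda\rangle \ge 0$, and forces the $\mathfrak{g}_\alpha$-component of $h$ to vanish along $D$ for $\langle\alpha,\lambda\rangle < 0$. Reducing mod $s$, this is exactly the statement that $h|_D \in P_\lambda \subset G$. Consequently $g_0' P_\lambda|_D = g_0 h P_\lambda|_D = g_0 P_\lambda|_D$, so the two $P_\lambda$-reductions of $\cF_0|_D$ coincide.

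I expect the hardest part to be not the geometry but bookkeeping: getting the conventions straight (left vs.\ right torsor actions, dominance/attracting conventions for $P_\lambda$, and the relationship between the multiplicative formulation of $\widetilde{\varphi}_T$ and the vanishing-order formulation at the level of associated line bundles), so that the final computation lands in $P_\lambda$ rather than its opposite. A sanity check is Example~\ref{ex: GL modif}: for $G = \GL_n$ and $\lambda = (1, 0, \dots, 0)$, the construction must recover the canonical line $\ker(\cE_0|_D \to \cE_1|_D) \subset \cE_0|_D$ arising from the upper modification \eqref{upper 1}, which is precisely a $P_\lambda$-reduction in the sense of Example~\ref{ex:parabolic-line}.
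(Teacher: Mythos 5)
Your proof is correct; it defines the candidate reduction locally the same way the paper does (as $\cF_{0,T}|_D \times^T P_\lambda$), but proves well-definedness by a different mechanism. The paper first produces a choice-free characterization: a $P_\lambda$-reduction $\cP$ of $\cF_0|_D$ is determined by its adjoint sub-bundle $\Ad(\cP)\subset \Ad(\cF_0)|_D$, and this sub-bundle is identified with the image of $(\Ad(\cF_0)\cap\Ad(\cF_1))|_D\to\Ad(\cF_0)|_D$, where the intersection is taken inside $j_*\Ad(\cF_1)$ for $j\colon S-D\hookrightarrow S$; independence of the $T$-reduction, and hence descent, is then automatic. Your argument instead checks independence by explicit coordinates on the formal neighborhood of $D$: two $T$-reductions differ by $h\in G(\widehat{\cO}_{S,D})$, compatibility with $\varphi$ forces $\Ad(s^\lambda)(h)$ to be regular, and expanding by root spaces (or passing to a faithful representation) gives $h|_D\in P_\lambda$. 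This is essentially the same mechanism unwound: regularity of $\Ad(s^\lambda)(h)$ is precisely the coordinate expression of the statement that $h$ preserves the lattice $\Ad(\cF_0)\cap\Ad(\cF_1)$, whose restriction to $D$ has image the $\mathfrak{p}_\lambda$-bundle. You rightly flag the sign/convention bookkeeping as the delicate part. The paper's version wins on robustness: by isolating the intrinsic invariant first, it avoids that bookkeeping, makes descent trivial, and incidentally supplies the concrete description of the reduction that is reused implicitly in the examples (e.g.\ the intersection $\cF_{0,V}\cap\cF_{1,V}$ in the Pfaffian example); the explicit version is more elementary but needs additional care about trivializing $\cF_0,\cF_1$ on $\widehat{S}_D$ and then gluing.
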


\begin{proof} By definition, locally for the fppf topology on $S$ we have $T$-reductions $\cF_{0,T}$ and $\cF_{1,T}$ such that $\ph$ respects the $T$-reductions and extends to an isomorphism 
\[
\wt\ph_{T}: \cF_{0,T}(\l D)\isom \cF_{1,T}.
\]

We claim that the $P_{\l}$-reduction $\cP:=(\cF_{0,T}|_{D})\times^{T}P_{\l}$ is independent of choices of the $T$-reductions. To see this, we observe that a $P_\l$-reduction $\cP'$ of $\cF_{0}|_D$ is determined by its adjoint bundle $\Ad(\cP')\subset \Ad(\cF_0)|_D$. Now $\Ad(\cP)\subset \Ad(\cF_0)|_D$ is equal to the image of the vector bundle map over $D$
\begin{equation*}
(\Ad(\cF_{0})\cap \Ad(\cF_1))|_{D}\to \Ad(\cF_{0})|_{D}.
\end{equation*}
Here, $\Ad(\cF_{0})\cap \Ad(\cF_1)$ denotes the preimage of $\Ad(\cF_1)$ under the map $\Ad(\cF_0)\to j_*\Ad(\cF_1)$ induced by $\ph$ (where $j:S-D\incl S$).

This characterization of $\cP$ proves its independence of the choice of the $T$-reduction of $\cF_0$, which allows us to descend it from a fppf cover of $D$, as desired. 

\end{proof}
  
We thus obtain a canonical $P_{\l}$-reduction of $\cF_{0}|_{D}$, which we denote by $\cF_{0}|_{D, P_{\l}}$. Similarly, the restriction $\cF_{1}|_{D}$ carries a canonical $P_{-\l}$-reduction that we denote by $\cF_{1}|_{D, P_{-\l}}$. 

Denote the induced $L_{\l}$-torsor of $\cF_{i}|_{D,P_{\pm \l}}$ by $\cF_{i}|_{D,L_{\l}}$. Then there is a canonical isomorphism of $L_{\l}$-torsors 
\begin{equation}\label{psi D}
\psi_{D}: \cF_{0}|_{D,L_{\l}}(\l D)\cong \cF_{1}|_{D,L_{\l}}
\end{equation}
where, observing that $\lambda$ factors tautologically through $Z(L_\lambda)$, $\cE(\l D)$ denotes the \emph{central twist} of an $L_{\l}$-bundle $\cE$ by $D$. Explicitly, $\cE(\l D)$ can be described as the unique $L_{\l}$-bundle such that for any representation $V$ of $L_{\l}$ where the center $Z(L_{\l})$ acts through a character $\om:Z(L_{\l})\to \Gm$, the associated bundle $\cE(\l D)_{V}=\cE_{V}(\j{\om,\l}D)$, where $\j{\om, \l}\in \ZZ=\End(\Gm)$ is the composition 
\[
\Gm\xr{\l}Z(L_{\l})\xr{\om}\Gm.
\]
This definition is evidently consistent with  Definition \ref{defn: modification}. 

\begin{exam}\label{ex: GL par red}
We continue with Example \ref{ex: GL modif} where $\cE_0\incl \cE_1$ is an upper modification of colength $1$ along $D$. Restricting \eqref{upper 1} to $D$, we get a map 
\begin{equation*}
\ph_V|_{D}: \cE_{0}|_{D}\to \cE_{1}|_{D}
\end{equation*}
of constant rank $n-1$ between rank $n$ bundles over $D$. Denote its image by $\cH\subset \cF_{1,V}|_{D}$, a rank $n-1$ subbundle. This gives the $P_{-\l}$-reduction of $\cF_{1}|_{D}$. Similarly, the $P_{\l}$-reduction of $\cF_{0}|_{D}$ is given by the line subbundle $\ker(\ph|_{D})$, which is isomorphic to $\cP\ot \cO(-D)|_D$ with $\cP$ as in \eqref{upper 1}.
\end{exam}

\subsection{Formulation of the theorem}
Recall from \eqref{coho BG} that characteristic classes of $G$-bundles are given by $W$-invariant polynomial functions on $V_{T}$.  In particular, for $f\in R^{W}=\Qlbar[V_{T}]^{W}$ and a $G$-torsor $\cF$ on $S$ corresponding to $b_{\cF}: S\to \BB G$, we denote its $f$-characteristic class to be
\begin{equation*}
f(\cF):=b_{\cF}^{*}f\in \cohog{*}{S}.
\end{equation*}

Let $W_{\lambda}$ be the Weyl group of $L_\lambda$, viewed as a subgroup of $W$.  A coweight $\l\in \xcoch(T)$ can be viewed as a vector in $V_{T}(-1)$. It thus makes sense to define the partial derivative
\begin{equation*}
\pl_{\l}f\in \Qlbar[V_{T}]^{W_{\lambda}}(-1)=R^{W_{\lambda}}(-1).
\end{equation*}
Similarly we define higher derivatives in the direction of $\l$:
\begin{equation*}
\pl^{n}_{\l}f\in R^{W_{\lambda}}(-n).
\end{equation*}
Since $\cohog{*}{\BB L_{\l}}=\Qlbar[V_{T}]^{W_{\lambda}}$, we may view $\pl^{n}_{\l}f$ as an element in $\cohog{*}{\BB L_{\l}}(-n)$.

\begin{theorem}\label{th:master gen} Consider a modification of $G$-bundles 
$\ph: \cF_{0}|_{S-D}\isom \cF_{1}|_{S-D}$
of type $\l$. Let 
\[
\nu_{D}:=c_{1}(\cO(D))|_{D}\in \cohog{2}{D}(1)
\]
be the Chern class of the normal bundle of $D$, and 
\[
i_{D!}: \cohog{*}{D}(-1)\to \cohog{*+2}{S}
\]
be the Gysin map for the regular embedding $i_{D}: D\incl S$.\footnote{It is induced from the canonical map $\Qlbar[-2](-1)\to i^{!}_{D}\Qlbar$ by taking global sections.} 

Then for any characteristic class $f\in \cohog{*}{\BB G}=R^{W}$, we have 
\begin{equation}\label{diff char classes}
f(\cF_{1})-f(\cF_{0})=i_{D!}\left(\sum_{n\ge1}\frac{1}{n!}(\pl^{n}_{\l}f)(\cF_{0}|_{D, L_{\l}})\cdot \nu_{D}^{n-1}\right)\in \cohog{*}{S}.
\end{equation}
Note that $\pl^{n}_{\l}f$ can be viewed as a characteristic class for $L_{\l}$-torsors (up to a Tate twist), hence $(\pl^{n}_{\l}f)(\cF_{0}|_{D, L_{\l}})$ makes sense as an element in $\cohog{*}{D}(-n)$.
\end{theorem}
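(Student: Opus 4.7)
The plan is to reduce to the case $G=T$ using the local $T$-reductions guaranteed by Definition~\ref{defn: modification}, prove the formula for tori via a Taylor expansion in $\cohog{*}{S}$, and then descend using the canonical $L_\lambda$-structure from Proposition~\ref{prop:canonical-parabolic-red} together with the $W_\lambda$-invariance of $\partial_\lambda^n f$.

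First, fppf-locally on $S$ I may choose $T$-reductions $\cF_{0,T},\cF_{1,T}$ of $\cF_0,\cF_1$ that are compatible with $\varphi$ and satisfy $\cF_{1,T}\cong \cF_{0,T}(\lambda D)$. Since $f\in R^W$, its pullback along $\BB T\to \BB G$ is $f$ itself, so $f(\cF_i)=f(\cF_{i,T})$ for $i=0,1$. As both sides of \eqref{diff char classes} are intrinsic to $(\cF_0,\cF_1,\varphi)$ on $S$, it suffices to verify the identity on the cover.

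For the torus computation, the defining property $\cF_{1,T,\chi}\cong \cF_{0,T,\chi}(\langle \chi,\lambda\rangle D)$ for each $\chi\in \xch(T)$ gives
\[
b_{1,T}^*(\chi)=b_{0,T}^*(\chi)+\langle \chi,\lambda\rangle\,\delta,\qquad \delta:=c_1(\cO(D)),
\]
where $b_{i,T}^*:R\to \cohog{*}{S}$ is the classifying ring homomorphism. Since $f\in R$ is a polynomial, Taylor expansion of $f$ in the shift $\chi\mapsto \chi+\langle \chi,\lambda\rangle\delta$ is a finite sum and yields
\[
f(\cF_{1,T})-f(\cF_{0,T})=\sum_{n\ge 1}\frac{1}{n!}(\partial_\lambda^n f)(\cF_{0,T})\cdot \delta^n
\]
in $\cohog{*}{S}$. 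Using $\delta=i_{D*}(1)$ and $i_D^*\delta=\nu_D$, iterated application of the projection formula rewrites each term as
\[
(\partial_\lambda^n f)(\cF_{0,T})\cdot \delta^n=i_{D!}\bigl((\partial_\lambda^n f)(\cF_{0,T}|_D)\cdot \nu_D^{n-1}\bigr).
\]

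Finally I descend. For $f\in R^W$ one has $\partial_\lambda^n f\in R^{W_\lambda}=\cohog{*}{\BB L_\lambda}$, so $(\partial_\lambda^n f)(\cF_{0,T}|_D)$ depends only on the induced $L_\lambda$-bundle; by the explicit description in the proof of Proposition~\ref{prop:canonical-parabolic-red}, this $L_\lambda$-bundle equals the canonical $\cF_0|_{D,L_\lambda}$. Hence the right-hand side of \eqref{diff char classes} is intrinsic to $S$, agrees with the fppf-local calculation after pullback, and equality on $S$ follows from fppf descent of $\ell$-adic cohomology. The main point to watch — and the only genuine subtlety — is precisely this descent step: the torus formula is proved with respect to a specific $T$-reduction, and it is the $W_\lambda$-invariance of $\partial_\lambda^n f$ together with the canonicity of Proposition~\ref{prop:canonical-parabolic-red} that make the right-hand side well-defined on $S$ independent of that choice.
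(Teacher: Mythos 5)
The torus computation via Taylor expansion is correct and is morally the same reduction the paper performs, but the final step of your argument — deducing the identity on $S$ from the identity on an fppf cover $S'$ by ``fppf descent of $\ell$-adic cohomology'' — is a genuine gap. You are implicitly using that the pullback map $\pi^*\co \cohog{*}{S}\to \cohog{*}{S'}$ is injective for a surjective fppf morphism $\pi$. This is false in general: take $S=\PP^1$, $S'=\AA^2\setminus\{0\}$ and $\pi$ the natural $\Gm$-bundle. Then $\pi$ is smooth surjective (hence fppf), $\cohog{2}{\PP^1}\neq 0$, but $\cohog{2}{\AA^2\setminus\{0\}}=0$, so $\pi^*$ kills the fundamental class. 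Thus agreement of your two sides after pullback to the cover over which the compatible $T$-reductions exist does not, by itself, imply agreement on $S$.

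The paper's proof avoids this problem by never invoking descent along the fppf cover at all. After the multiplicativity and central-isogeny reductions (which let it assume $G$ is either $\Gm$ or simple adjoint), it constructs for every modification of type $\l$ a canonical classifying map $S\to [\ov G_\l/(G\times G)]$ into a Vinberg-type degeneration built from the wonderful compactification (Lemma \ref{l:map to Vin}), so the statement reduces to the universal case. In that universal case, both sides of \eqref{diff char classes} are determined by their restrictions to the toric closure $\ov T_\l$, because the restriction map $\upH^*_{G\times G}(\ov G_\l)\to \upH^*_{T\times T}(\ov T_\l)$ is shown to be \emph{injective} by comparing the two excision short exact sequences \eqref{shex G} and \eqref{shex T}. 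That explicit injectivity statement in a controlled universal situation is the substitute for the descent step you tried to do; it is the crucial ingredient your argument is missing. Your calculation for $G=T$ is also what the paper uses on the toric closure, and your observation that $\pl^n_\l f\in R^{W_\l}$ makes the right-hand side well-defined on $S$ via Proposition \ref{prop:canonical-parabolic-red} is correct and important — but it only shows the right-hand side is intrinsic, not that the identity holds; for the latter you still need a mechanism to move the verification from the local $T$-level to $S$ without losing information.
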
 


\begin{exam}\label{ex: GL} 
We continue with Example \ref{ex: GL modif} and \ref{ex: GL par red}. Identify $R$ with $\Qlbar[x_1,\cdots, x_n]$, with the action of $S_n=W$ permuting the variables. 
Consider $f:=e_{i}(x_{1},\cdots, x_{n})\in R^W$, the degree $i$ elementary symmetric polynomial in $x_{1},\cdots, x_{n}$, so that $f(\cF)=c_{i}(\cE)$ for rank $n$ vector bundles $\cE$. We have
\begin{equation*}
\pl_{\l}e_{i}(x_{1},\cdots, x_{n})=\pl_{x_{1}}e_{i}(x_{1},\cdots, x_{n})=e_{i-1}(x_{2},\cdots, x_{n}).
\end{equation*}
The higher derivatives $\pl^{>1}_{x_{1}}e_{i}(x_{1},\cdots, x_{n})$ are zero. According to Example \ref{ex: GL par red}, the induced $L_\l=\Gm\times \GL_{n-1}$ bundle from the $P_\l$-reduction of $\cF_0|_D$ is the pair $(\cP\ot \cO(-D)|_D, \cH)$. In this case, Theorem \ref{th:master gen} reads
\begin{equation}\label{Chern class diff}
c_{i}(\cF_{1})-c_{i}(\cF_{0})=i_{D!}c_{i-1}(\cH), \quad 1\le i\le n.
\end{equation}

\end{exam}

\begin{exam}[Pfaffians] Consider the case $G=\SO_{2m}$ and $f\in \cohog{*}{\BB G}=\Qlbar[x_{1},\cdots, x_{m}]^{W}$ is the Pfaffian $f=x_{1}\cdots x_{m}$.  (See \S\ref{ss: H BG SO} for more on the group theoretical description.)

Consider $\l=(1,0,\cdots, 0)$ and an isomorphism $\ph: \cF_{0}|_{S-D}\isom \cF_{1}|_{S-D}$ of $G$-torsors of type $\l$. Let $V$ be the standard representation of $G$. Using $\ph$ to view $\cF_{0,V}, \cF_{1,V}$ as being subsheaves of their identified rationalizations, define the coherent sheaf $\cF^{\flat}_{1/2}: =\cF_{0,V}\cap \cF_{1,V}$. The image of $\cF^{\flat}_{1/2}|_{D}\to \cF_{1,V}|_{D}$ gives a co-isotropic hyperplane $\cH\subset \cF_{1,V}|_{D}$, hence corresponds to an $\SO_{2m-2}$-bundle $\cH/\cH^{\bot}$ over $D$. 

Note that
\begin{equation*}
\pl_{\l}f=\pl_{x_{1}}(x_{1}\cdots x_{m})=x_{2}\cdots x_{m}
\end{equation*}
is the Pfaffian for $\SO_{2m-2}$. The higher derivatives $\pl^{>1}_{x_{1}}(x_{1}\cdots x_{m})=0$. In this case, Theorem \ref{th:master gen} reads
\begin{equation*}
\Pf(\cF_{1})-\Pf(\cF_{0})=i_{D!}\Pf(\cH/\cH^{\bot}).
\end{equation*}
\end{exam}

\subsection{Proof of Theorem \ref{th:master gen}} We prove Theorem \ref{th:master gen} using a series of reductions.

\subsubsection{Product compatibility}\label{sss:prod} We show that if Theorem \ref{th:master gen} holds for two elements $f, g\in R^{W}$, then it holds for their product $fg$.  

Abbreviate $\cH=\cF_{0}|_{D, L_{\l}}$. By assumption, we have
\begin{align}\label{fg der}
fg(\cF_{1})-fg(\cF_{0})&=\left(f(\cF_{0})+\sum_{n\ge1}i_{D!}(\frac{1}{n!}\pl_{\l}^{n}f(\cH)\cdot \nu_{D}^{n-1})\right)\left(g(\cF_{0})+\sum_{n\ge1}i_{D!}(\frac{1}{n!}\pl_{\l}^{n}g(\cH)\cdot \nu_{D}^{n-1})\right) \\
&\hspace{1cm} -f(\cF_{0})g(\cF_{0}). \nonumber
\end{align}
Expanding this expression, we encounter two types of terms:
\begin{enumerate}
\item $f(\cF_{0})i_{D!}(h)$ for some $h\in \cohog{*}{D}$. We use projection formula to rewrite it as
\begin{equation*}
f(\cF_{0})i_{D!}(h)=i_{D!}(i_{D}^{*}f(\cF_{0})\cdot h).
\end{equation*}
Note that $i_{D}^{*}f(\cF_{0})=f(\cF_{0}|_{D})=f(\cH)$, we then have
\begin{equation*}
f(\cF_{0})i_{D!}(h)=i_{D!}(f(\cH)\cdot h).
\end{equation*}
Similarly we have
\begin{equation*}
i_{D!}(h)g(\cF_{0})=i_{D!}( h\cdot g(\cH)).
\end{equation*}

\item $i_{D!}(h_{1}\cdot \nu^{n_{1}-1}_{D})\cdot i_{D!}(h_{2} \cdot \nu^{n_{2}-1}_{D})$, where $h_{1},h_{2}\in \cohog{*}{D}$. Again using the projection formula, and the identity $i_{D}^{*}i_{D!}(-)=(-)\cdot \nu_{D}$, we have
\begin{equation*}
i_{D!}(h_{1}\nu^{n_{1}-1}_{D})\cdot i_{D!}(h_{2}\nu^{n_{2}-1}_{D})=i_{D!}(h_{1}h_{2}\nu^{n_{1}+n_{2}-1}_D).
\end{equation*}

\end{enumerate}
Using these observations, the expansion of \eqref{fg der} can be written as
\begin{equation*}
\sum_{n\ge1}i_{D!}\left(\left(\sum_{n_{1}+n_{2}=n}\frac{1}{n_{1}!}\pl_{\l}^{n_{1}}f(\cH)\frac{1}{n_{2}!}\pl_{\l}^{n_{2}}g(\cH)\right)\cdot \nu_{D}^{n-1}\right)
\end{equation*}
and we recognize the inner summation gives $\frac{1}{n!}\pl_{\l}^{n}(fg)(\cH)$ by the Leibniz rule. This shows that 
\[
fg(\cF_1) - fg (\cF_0) = i_{D!} \left( \sum_{n \geq 1} \frac{1}{n!}\pl_{\l}^{n}(fg)(\cH) \cdot \nu_D^{n-1} \right) ,
\]
as desired. \qed

\begin{cor}\label{c:prod} If Theorem \ref{th:master gen} holds for two reductive groups $G_{1}$ and $G_{2}$, then it holds for $G=G_{1}\times G_{2}$. 
\end{cor}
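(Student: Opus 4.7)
The plan is to reduce to the single-factor case via the Künneth decomposition of $\cohog{*}{\BB G}$ combined with the product compatibility established in \S\ref{sss:prod}. First, I would unpack the product structure: a $G$-bundle $\cF$ on $S$ decomposes canonically as a pair $(\cF^{(1)},\cF^{(2)})$ of $G_i$-bundles. Choosing compatible maximal tori $T=T_1\times T_2$ and writing $\l=(\l_1,\l_2)$, the Levi $L_\l=L_{\l_1}\times L_{\l_2}$, the parabolics $P_{\pm\l}=P_{\pm\l_1}\times P_{\pm\l_2}$, and the Weyl group $W=W_1\times W_2$ all split compatibly. A modification $\ph:\cF_0|_{S-D}\isom \cF_1|_{S-D}$ of type $\l$ restricts to modifications $\ph^{(i)}:\cF_0^{(i)}|_{S-D}\isom \cF_1^{(i)}|_{S-D}$ of type $\l_i$, and by Proposition \ref{prop:canonical-parabolic-red} the canonical $L_\l$-reduction $\cF_0|_{D,L_\l}$ factors as $(\cF_0^{(1)}|_{D,L_{\l_1}},\cF_0^{(2)}|_{D,L_{\l_2}})$.

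Next, I would use the Künneth identification $R^W=R_1^{W_1}\ot R_2^{W_2}$ together with $\Qlbar$-linearity of both sides of \eqref{diff char classes} in $f$ to reduce to the case $f=f_1\cdot f_2$, where $f_i\in R_i^{W_i}$ is viewed as an element of $R^W$ via the projection $G\to G_i$. By the product compatibility proven in \S\ref{sss:prod}, it then suffices to verify Theorem \ref{th:master gen} separately for such pulled-back factor classes $f_1$ and $f_2$. For $f=f_1$ pulled back from $\BB G_1$, one has $f(\cF)=f_1(\cF^{(1)})$; moreover, since $f$ is constant in the $\l_2$-direction, $\pl^n_\l f=\pl^n_{\l_1}f_1$ viewed as an element of $\cohog{*}{\BB L_\l}(-n)$ pulled back from $\cohog{*}{\BB L_{\l_1}}(-n)$. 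Consequently $(\pl^n_\l f)(\cF_0|_{D,L_\l})=(\pl^n_{\l_1}f_1)(\cF_0^{(1)}|_{D,L_{\l_1}})$, and the desired identity \eqref{diff char classes} for $f$ follows by applying Theorem \ref{th:master gen} for $G_1$ to the modification $\ph^{(1)}$. The case of $f=f_2$ is symmetric.

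The only step that requires any real care is the verification that the canonical parabolic reductions of \S\ref{sssec: canonical parabolic reduction} respect the product decomposition, but this is essentially tautological: the attracting/repelling loci for $\l$ in $G$ are the products of those for $\l_i$ in $G_i$, and the fppf-local $T$-reductions witnessing Definition \ref{defn: modification} split componentwise. The main "obstacle", such as it is, is simply keeping the bookkeeping of Tate twists and degrees consistent when passing between $R^W$, $R_i^{W_i}$, and their $L_\l$, $L_{\l_i}$ versions, but no geometric input beyond what is already available is needed.
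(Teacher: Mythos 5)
Your proof is correct and follows essentially the same route as the paper: reduce via the Künneth isomorphism $\cohog{*}{\BB G}\cong \cohog{*}{\BB G_1}\ot\cohog{*}{\BB G_2}$ to classes of the form $f_1\ot f_2=(f_1\ot 1)(1\ot f_2)$, invoke the multiplicativity of \S\ref{sss:prod} to handle each factor separately, and deduce the formula for $f_i\ot 1$ from the single-factor case applied to the induced $G_i$-bundle modification. The paper compresses the verification that the canonical parabolic reductions, Levi quotients, and partial derivatives all split along the product into the phrase ``obtained by pullback,'' whereas you spell out those compatibilities explicitly; the extra care is welcome but not a different argument.
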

\begin{proof} Since $\cohog{*}{\BB G}\cong \cohog{*}{\BB G_{1}}\ot \cohog{*}{\BB G_{2}}$, it suffices to treat elements of the form $f_{1}\ot f_{2}=(f_{1}\ot 1) (1\ot f_{2})$, where $f_{i}\in\cohog{*}{\BB G_{i}}$. By the discussion in \S\ref{sss:prod}, it suffices to consider the case of $f_{1}\ot 1$ and $1\ot f_{2}$ separately. The equality \eqref{diff char classes} for $f_{1}\ot 1$ is obtained by pullback from the same equality for the modification of the induced $G_{1}$-bundles for the class $f_{1}$, and similarly for $f_2$. 
\end{proof}

\subsubsection{Reduction: central isogeny}
Let $\th: G\to  G'$ be a central isogeny of connected reductive groups over $k$. We show that if Theorem \ref{th:master gen} holds for $G'$, then it holds for $G$. 

Indeed, if $\ph: \cF_{0}|_{S-D}\isom \cF_{1}|_{S-D}$ is a modification of type $\l$, letting $\cF'_{i}=\cF_{i}\times^{G}G'$, it induces a modification $\ph': \cF'_{0}|_{S-D}\isom \cF'_{1}|_{S-D}$ of $G'$-bundles of type $\l$ (now viewed as a dominant coweight of $T'$, a maximal torus of $G'$ containing $\th(T)$). The pullback map $\th^{*}: \cohog{*}{\BB G'}\to \cohog{*}{\BB G}$ is an isomorphism, so we may view $f\in R^{W}$ also as a class  $f\in \cohog{*}{\BB G'}$, and we have $f(\cF_{i})=f(\cF'_{i})\in \cohog{*}{S}$, and $\pl^{n}_{\l}f(\cF_{i}|_{D,L_{\l}})=\pl^{n}_{\l}f(\cF'_{i}|_{D,L'_{\l}})\in \cohog{*}{D}$. Formula \eqref{diff char classes} now holds for $f$ and the modification $\ph'$, which can then be viewed as the same formula for $f$ and $\ph$.

This reduction allows us to reduce to the case where $G$ is a product of an adjoint group and a torus. In view of Corollary \ref{c:prod}, we further reduce the statement to the case where $G$ is either simple and adjoint, or $G=\Gm $.

\subsubsection{Case $G = \Gm$}\label{sss:master Gm}
In this case, $\cF_{0}$ and $\cF_{1}$ are line bundles over $S$, and $\l\in \xcoch(\Gm)=\ZZ$ is an integer. A modification $\ph: \cF_{0}|_{S-D}\isom \cF_{1}|_{S-D}$ is of type $\l$ if and only if $\ph$ extends to an isomorphism $\cF_{0}(\l D) \cong \cF_{1}$. Now $\cohog{*}{\BB\Gm}=\Qlbar[c_1]$, where $c_1\in \cohog{2}{\BB\Gm}$ is the first Chern class of the universal line bundle on $\BB\Gm$. In view of \S\ref{sss:prod}, it suffices to prove \eqref{diff char classes} for $f=c_1$. Then 
$$f(\cF_{1})-f(\cF_{0})=c_{1}(\cF_{1})-c_{1}(\cF_{0})=c_{1}(\cO(\l D))=i_{D!}(\l)$$
which gives \eqref{diff char classes} since $\pl_{\l}c_1 =\l\in \cohog{0}{\BB\Gm}$ and higher derivatives of $c_1$ vanish.

It remains to deal with the case where $G$ is of adjoint type, for which we need some preparation.

\subsubsection{Wonderful compactification} Let $G$ be a simple adjoint group. Choose a Borel subgroup $B\subset G$ and a maximal torus $T\subset B$. Let $\{\a_{i}\}_{i\in I}$ be the set of simple roots. Let $G\incl \ov G$ be the wonderful compactification \cite{DP}. The $G\times G$-orbits on $\ov G$ are given by intersections of simple normal crossing divisors $\{D_{i}\}_{i\in I}$ indexed by $I$. Each divisor classifies a map $\ov G \rightarrow [\AA^1/\Gm]$, so all of them together give a smooth map 
\begin{equation}\label{pi from wc}
\pi: \ov G\to  [\AA^{I}/\Gm^{I}].
\end{equation}

For each subset $J\subset I$, let $e_{J}\in \AA^{I}$ be the point with $j$-coordinate $1$ if $j\in J$ and $0$ otherwise. Let $P_{J}$ be the standard parabolic subgroup of $G$ whose Levi $L_{J}$ containing $T$ has simple roots $\{\a_{j}\co j\in J\}$. Let $P^{-}_{J}$ be the parabolic subgroup opposite to $P_{J}$ containing $L_{J}$. Then there is a canonical $G\times G$-equivariant isomorphism
\begin{equation}\label{J fiber}
\pi^{-1}(e_{J})\cong (G\times G)/(P_{J}\times_{L_{J}}P^{-}_{J}).
\end{equation}

Let $\l\in \xcoch(T)$ be a dominant coweight. Then it extends to a map $a_{\l}: \AA^{1}\to \AA^{I}$. Base changing \eqref{pi from wc} along $a_{\l}: [\AA^{1}/\Gm]\to [\AA^{I}/\Gm^{I}]$, we get a map
\begin{equation}\label{eq:pi_lambda}
\pi_{\l}: \ov G_{\l}:=\ov G\times_{[\AA^{I}/\Gm^{I}], a_{\l}}[\AA^{1}/\Gm]\to [\AA^{1}/\Gm]
\end{equation}
whose fiber over $1$ is $G$ and fiber over $0$ is $\pi^{-1}(e_{J})$, where $J:=\{i\in I|\j{\a_{i},\l}=0\}$. The Levi subgroup $L_{J}$ is the fixed point subgroup $L_{\l}$ of the adjoint action of $\l$, and the parabolic subgroup $P_{J}=P_{\l}$ (resp. $P^{-}_{J}=P_{-\l}$) is the attracting (resp. repelling) locus of $L_{\l}$ under this action. Hence we can rewrite \eqref{J fiber} as a $G\times G$-equivariant isomorphism
\begin{equation*}
\wt D_{\l}:=\pi^{-1}_{\l}(0)\cong G\times G/(P_{\l}\times_{L_{\l}} P_{-\l}).
\end{equation*}
The preimage $D_\lambda := \pi^{-1}_{\l}(\{0\}/\Gm)=\ov G_{\l}-G \subset \ov G_{\l}$ is a smooth divisor of $\ov G_{\l}$. Let $L^{\flat}_{\l}:=L_{\l}/\Gm$ where $\Gm$ acts by multiplication via $\l:\Gm\to Z(L_{\l})\subset T$ (which is not necessarily injective, so $L^{\flat}_{\l}$ may be a group stack). We can describe the $G\times G$-variety  
$D_{\l}$ as 
\begin{equation}\label{eq: D_lambda}
D_{\l}\cong (G\times G)/(P_{\l}\times_{L^{\flat}_{\l}} P_{-\l}).
\end{equation}

\begin{lemma}\label{l:map to Vin} For a modification of $G$-bundles $\cF_{0}|_{S-D}\isom \cF_{1}|_{S-D}$ as in \S\ref{sss:setup mod along div}, there is a canonical map 
\begin{equation*}
h: S\to [\ov G_{\l}/(G\times G)]
\end{equation*}
with the following additional structures:
\begin{enumerate}
\item The composition $S\to [\ov G_{\l}/(G\times G)]\to \BB G\times \BB G$ is classified by the $G$-bundles $(\cF_{0},\cF_{1})$.
\item The composition $S\to [\ov G_{\l}/(G\times G)]\xr{\pi_{\l}}[\AA^{1}/\Gm]$, which is the datum of a line bundle $\cL$ on $S$ and a section of $\cL^{-1}$, is given by $\cL=\cO(-D)$ and the tautological section $1\in \cL^{-1}=\cO(D)$.
\item In particular, we have
\begin{equation*}
h|_{D}: D\to [D_{\l}/(G\times G)]\cong \BB(P_{\l}\times_{L^{\flat}_{\l}}P_{-\l})
\end{equation*}
which is the same datum of a $P_{\l}$-bundle $\cP_{0}$ on $D$, a $P_{-\l}$-bundle $\cP_{1}$ on $D$ and an isomorphism $\psi: \cP_{0,L_{\l}}\cong \cP_{1,L_{\l}}$ of their induced $L_{\l}^{\flat}$-bundles. Then referring to the canonical parabolic reductions of \S \ref{sssec: canonical parabolic reduction}, we have a canonical isomorphism of $P_{\l}$-bundles $\cP_{0}\cong \cF_{0}|_{D, P_{\l}}$ and a canonical isomorphism of $P_{-\l}$-bundles $\cP_{1}\cong \cF_{1}|_{D, P_{-\l}}$, under which $\psi$ is the isomorphism induced by $\psi_{D}$ in \eqref{psi D} (noting that the central twisting is trivial under our assumption that $G$ is of adjoint type).
\end{enumerate}
\end{lemma}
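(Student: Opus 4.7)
I would construct $h$ first on $S\setminus D$ and on $D$ separately, and then verify they glue into a global morphism via an fppf-local construction using the $T$-reduction data provided by Definition \ref{defn: modification}. On $S\setminus D$, the isomorphism $\phi$ identifies $(\cF_0,\cF_1)$ with a $G$-torsor along the diagonal $G\subset G\times G$. Since $G\subset \ov G_\l$ is the $(G\times G)$-orbit sitting over the ``non-vanishing section'' locus of $[\AA^1/\Gm]$, this data naturally defines
\[
h|_{S\setminus D}: S\setminus D\longrightarrow [G/(G\times G)]\hookrightarrow [\ov G_\l/(G\times G)],
\]
covering the map $S\setminus D\to [\AA^1/\Gm]$ that records the trivialization of $\cO(-D)|_{S\setminus D}$ by the tautological section $1\in \cO(D)$. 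On $D$, I would use Proposition \ref{prop:canonical-parabolic-red} to obtain the canonical $P_\l$-reduction $\cF_0|_{D,P_\l}$ and $P_{-\l}$-reduction $\cF_1|_{D,P_{-\l}}$, together with the isomorphism $\psi_D$ of \eqref{psi D}. Descending to $L^\flat_\l$-bundles (where the central $\l$-twist becomes trivial) packages this into a $(P_\l\times_{L^\flat_\l}P_{-\l})$-torsor on $D$, i.e., a map
\[
h|_D: D\longrightarrow \BB(P_\l\times_{L^\flat_\l}P_{-\l})=[D_\l/(G\times G)]\hookrightarrow [\ov G_\l/(G\times G)].
\]

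\textbf{Gluing via fppf-local $T$-reductions.} To fuse $h|_{S\setminus D}$ and $h|_D$ into a single morphism on $S$, I would work fppf-locally on $S$ using the $T$-reductions and the extension $\wt\phi_T:\cF_{0,T}(\l D)\isom \cF_{1,T}$ provided by Definition \ref{defn: modification}. The key geometric input is that the closure $\ov T_\l\subset \ov G_\l$ of $T$ is a $(T\times T)$-equivariant compactification of $T$ over $[\AA^1/\Gm]$, whose special fiber is the $L^\flat_\l$-orbit in $D_\l$ arising from the central embedding $\l$. Given this, the data $(\cF_{0,T}, \cF_{1,T}, \wt\phi_T,\, \cO(-D),\, 1)$ tautologically defines a map $U\to [\ov T_\l/(T\times T)]$, which I would then compose with the natural induction morphism $[\ov T_\l/(T\times T)]\to [\ov G_\l/(G\times G)]$ (induced by $T\subset G$) to obtain the local lift $h_U$.

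\textbf{Descent and verification.} Independence of $h_U$ from the choice of local $T$-reduction should follow from the fact that any two such reductions differ by a $(G\times G)$-automorphism of the pair $(\cF_0,\cF_1)$ compatible with $\phi$, and the resulting two local maps to $\ov G_\l$ then lie in the same $(G\times G)$-orbit. This will give fppf-descent of $h_U$ to a canonical global map $h:S\to [\ov G_\l/(G\times G)]$. Properties (1) and (2) are built into the construction. For property (3), one must check that the $P_\l$-reduction of $\cF_0|_D$ induced from any local $T$-reduction (via $T\subset P_\l$) coincides with the canonical one from Proposition \ref{prop:canonical-parabolic-red}: in a local $T$-frame this reduces to checking that the attracting Lie subalgebra for the $\l$-action equals the image of $\Ad(\cF_0)\cap \Ad(\cF_1)\to \Ad(\cF_0)|_D$, and similarly for $\cF_1$ with $P_{-\l}$; the isomorphism $\psi_D$ is then recovered from $\wt\phi_T|_D$ via $T\to L_\l\to L^\flat_\l$.

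\textbf{Main obstacle.} The hardest step will be verifying that the parabolic reductions induced by a local $T$-frame match the intersection-of-adjoint-bundles description of Proposition \ref{prop:canonical-parabolic-red}; this is the only non-tautological identification needed, and ensures that the local construction is compatible with both $h|_{S\setminus D}$ and $h|_D$. Everything else is formal unwinding of the moduli interpretation of $[\ov G_\l/(G\times G)]$ as the $(G\times G)$-equivariant degeneration of $G$ parametrizing triples $(\cF_0,\cF_1,\text{generalized isomorphism})$.
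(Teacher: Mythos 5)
Your proposal identifies the right local construction — fppf-local $T$-reductions of $(\cF_0,\cF_1)$, the extension $\wt\ph_T$, and the toric compactification $\ov T_\l\subset\ov G_\l$ — which is exactly what the paper uses. However, there is a genuine gap in your descent step, and the framing of the opening paragraph is slightly misleading.

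The gap is in the descent argument. You claim that $h_U$ is independent of the choice of local $T$-reduction ``because any two such reductions differ by a $(G\times G)$-automorphism of the pair $(\cF_0,\cF_1)$ compatible with $\ph$, and the resulting two local maps to $\ov G_\l$ then lie in the same $(G\times G)$-orbit.'' This is not an adequate argument. Two $T$-reductions of a $G$-torsor need not differ by a global automorphism of the torsor, and even if they did, saying the two maps to $\ov G_\l$ ``lie in the same $(G\times G)$-orbit'' does not by itself produce the 2-isomorphism between the two maps to the quotient stack $[\ov G_\l/(G\times G)]$ that fppf descent requires. The paper's proof circumvents this entirely by first observing that $[\ov G_\l/(G\times G)]\to\BB G\times\BB G$ is \emph{separated}, so that an extension of $h_0:S-D\to[\ov G_\l/(G\times G)]$ across $D$, compatible with the fixed map $S\to\BB G\times\BB G$, is unique if it exists; then fppf-local existence automatically implies global existence with no further compatibility check. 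This separatedness observation is the key simplification you are missing, and without it your descent argument does not close.

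Two smaller points. First, your ``Strategy'' paragraph frames the construction as gluing a map on $S\setminus D$ with a map on $D$; this is not a gluing along a cover (the two loci are complementary open and closed, not overlapping), and what is actually needed is an \emph{extension} of $h|_{S\setminus D}$ across $D$ — your subsequent fppf-local argument is the right way to produce it, but the initial framing could mislead. Second, the ``main obstacle'' you flag (matching the $T$-frame-induced parabolic reduction with the intersection-of-adjoint-bundles description of Proposition \ref{prop:canonical-parabolic-red}) is a legitimate point that the paper dispatches with ``the claimed properties then follow easily from the construction''; your instinct to verify it explicitly is sound, and the check is indeed straightforward in a $T$-frame where both descriptions reduce to the attracting subalgebra of the $\l$-action.
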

\begin{proof} The data of $(\cF_{0}, \cF_{1}, \ph)$ gives a map $h_{0}: S-D\to [G/(G\times G)]$. Since $[\ov G_{\l}/(G\times G)]$ is  separated over $\BB(G\times G)$, the extension of $h_{0}$ to $h: S\to [\ov G_{\l}/(G\times G)]$ is unique if it exists. Therefore it suffices to check the existence of $h$ fppf locally on $S$. 

We may thus assume there are $T$-reductions $\cF_{i,T}$ of $\cF_{i}$ such that $\ph$ respects the $T$-reductions and induces an isomorphism $\wt\ph_{T}$ as in \eqref{phT ext}. This is equivalent to a map $h_{0,T}: S-D\to [T/(T\times T)]$. It is well-known that $\ov G$ contains an open subset $\ov G^{\c}$ isomorphic to $(G\times G)/(B\times_{T}B^{-})$, where $B^{-}$ is the opposite Borel to $B$ containing $T$, and the closure $\ov T^{\c}$ of $T$ in $\ov G^{\c}$ can be identified with $\AA^{I}$ via coordinates given by simple roots, so that the projection 
\[
\pi|_{\ov T^{\c}}: \ov T^{\c}\cong \AA^{I}\to [\AA^{I}/\Gm^{I}]
\]
is the tautological projection.  Let $\ov T^{\c}_{\l}\subset \ov G_{\l}$ be the base change of $\ov T^{\c}$ along $a_{\l}: [\AA^{1}/\Gm]\to [\AA^{I}/\Gm^{I}]$. Then $\ov T^{\c}_{\l}$ contains $T$ as an open subset. Our goal is to extend $h_{0,T}$ to a map
\begin{equation}\label{hT}
h_{T}: S\to [\ov T^{\c}_{\l}/(T\times T)]. 
\end{equation}
A map $S\to [\ov T^{\c}/(T\times T)]$ is the same datum as a collection of effective Cartier divisors $\{D_{i}\}_{i\in I}$ on $S$,  two $T$-bundles $\cE_{0}$ and $\cE_{1}$ and maps $\ph_{i}: \cE_{0, \alpha_i}(D_i) \rightarrow \cE_{1, \alpha_i}$ of coherent sheaves on $S$ for each $i\in I$. Such a map has a canonical factorization into $[\ov T^{\c}_{\l}/(T\times T)]$ if we are given a single effective Cartier divisor $D\subset S$ such that $D_{i}=\j{\a_{i},\l}D$ for all $i\in I$. In other words, a map $h_{T}$ as in \eqref{hT} is the same datum as an effective Cartier divisor $D\subset S$, a pair of  $T$-bundles $\cE_{0}$ and $\cE_{1}$ and maps $\ph_{i}: \cE_{0,\a_{i}}(\j{\a_{i}, \l}D)\to \cE_{1,\a_{i}}$ of coherent sheaves on $S$ for each $i\in I$.  Now our $D$, the $T$-bundles $\cE_{0}=\cF_{0,T}, \cE_{1}=\cF_{1,T}$ and $\ph_{i}: \cF_{0,T}(\j{\a_{i}, \l}D)\to \cF_{1,T}$ induced from the $\wt\ph_{T}$ give such a datum, hence inducing a map $h_{T}$ as in \eqref{hT} extending $h_{0,T}$. This proves the existence of $h$. 

The claimed properties of $h$ then follow easily from the construction. 
\end{proof}


\subsubsection{Adjoint case}
Now we show Theorem \ref{th:master gen} for $G$ adjoint by reducing it to the torus case. By Lemma \ref{l:map to Vin}, it suffices to prove the Theorem for the universal case $S=[\ov G_{\l}/(G\times G)]$, $D=[D_{\l}/(G\times G)]\cong \BB(P_{\l}\times_{L_{\l}}P_{-\l})$, and the canonical modification between the pullbacks of the tautological $G$-bundles via the first and second projection $h_{i}: S\to \BB G$, $i=0,1$. Since the map \eqref{eq:pi_lambda} $\pi_{\l}$ is smooth, $D_{\l}$ is a regularly embedded divisor in $S$, hence by purity we have $i_{D}^{!}\Qlbar\cong \Qlbar[-2](1)$. We thus have an excision distinguished triangle
\begin{equation}\label{excision G lam}
R\G_{G\times G}(D_{\l})[-2](-1)\to R\G_{G\times G}(\ov G_{\l})\to R\G_{G\times G}(G)\to 
\end{equation}

From \eqref{eq: D_lambda}, we see that $R\G_{G\times G}(D_{\l})\cong R\G(\BB L_{\l})$ and $R\G_{G\times G}(G)\cong R\G(\BB G)$ are both concentrated in even degrees, hence the long exact sequence attached to the cohomology groups of \eqref{excision G lam} splits into short exact sequences in each even degree $2n$
\begin{equation}\label{shex G}
0\to \upH^{2n-2}_{G\times G}(D_{\l})(-1)\xr{i_{D_{\l}!}} \upH^{2n}_{G\times G}(\ov G_{\l})\to \upH^{2n}_{G\times G}(G)\to 0
\end{equation}
Let $f\in \cohog{2n}{\BB G}$. We need to show that 
\begin{equation}\label{master eqn univ}
h_{1}^{*}f-h_{0}^{*}f=i_{D_{\l}!}\Big(\sum_{i\ge1}\frac{1}{i!}\pl^{i}_{\l}f\cdot \nu_{D_{\l}}^{i-1}\Big)\in \upH^{2n}_{G\times G}(\ov G_{\l}).
\end{equation}

On the other hand, consider the closure $\ov T_{\l}$ of $T\subset G$ in $\ov G_{\l}$, which is stable under the action of $T\times T$. It is well-known that the closure $\ov T$ of $T$ in $\ov G$ is a smooth toric compactification of $T$, and the projection $\pi|_{\ov T}: \ov T\to [\AA^{I}/\Gm^{I}]$ is also smooth. Therefore $\ov T_{\l}$ is also smooth over $[\AA^{1}/\Gm]$. In particular, the divisor $D_{T,\l}=\ov T_{\l}-T$ is regularly embedded into $\ov T_{\l}$. We have a counterpart of the short exact sequence \eqref{shex G} for $T$
\begin{equation}\label{shex T}
0\to \upH^{2n-2}_{T\times T}(D_{T,\l})(-1)\to \upH^{2n}_{T\times T}(\ov T_{\l})\to \upH^{2n}_{T\times T}(T)\to 0.
\end{equation}
Restriction gives a map from \eqref{shex G} to \eqref{shex T} that is easily seen to be injective on both ends, therefore the restriction map $\upH^{*}_{G\times G}(\ov G_{\l})\to \upH^{*}_{T\times T}(\ov T_{\l})$ is also injective. Therefore, to check  \eqref{master eqn univ}, it suffices to check that its image in $\upH^{2n}_{T\times T}(\ov T_{\l})$ holds. This reduces to the case of the tautological modification between the two universal $T$-bundles on $[\ov T_{\l}/(T\times T)]$. Note that the $D_{T,\l}$ is not necessarily connected but has connected components $D_{T,\l}(\l')$ in bijection with coweights $\l'\in \xcoch(T)$ in the $W$-orbit $W\cdot \l$, such that the modification type of the universal $T$-torsors along $D_{T,\l}(\l')$ is $\l'\in\xcoch(T)$.  The torus case of Theorem \ref{th:master gen} has been proved already (by Corollary \ref{c:prod} and \S\ref{sss:master Gm}). Therefore \eqref{master eqn univ} holds. This finishes the proof of Theorem \ref{th:master gen} in the adjoint case, hence in general. \qed

\section{The Atiyah-Bott formula for reductive group schemes}\label{sec: atiyah-bott}

In this section we work over an algebraically closed base field $k=\ov k$. For a smooth projective curve $X/k$ and reductive group scheme $G \rightarrow X$, we prove a formula for the $\ell$-adic cohomology of the associated stack $\Bun_G$ that resembles the famous formula of Atiyah--Bott \cite{AB83} for Betti cohomology when $X$ is replaced by a Riemann surface, and $G$ is split and semisimple. It has since been generalized to various other settings, but we require greater generality (where $G$ may fail to be semisimple, or split) than previously treated in the literature.

\subsection{The classifying stack}

Let $S$ be a scheme over $k$. Let $G\to S$ be a connected reductive group scheme, i.e., a flat relatively affine group scheme all of whose geometric fibers are connected reductive. 

We have the classifying stack $\pi \co \BB G \rightarrow S$ and the direct image complex $\rR \pi_* (\Ql)$ on $S$.  We first show that, roughly speaking, $\BB G$ only depends on the outer class of $G$.

\subsubsection{Canonical quasisplit form}\label{sssec:canonical-quasisplit-form}
Assume that $S$ is connected. Then there is a unique connected reductive group $\GG$ over $k$ such that each $k$-fiber of $G$ is isomorphic to $\GG$. Also fix a pinning of $\GG$, which identifies $\Out(\GG)=\Aut(\GG)/\GG_{\ad}$ with the subgroup of pinned automorphisms of $\GG$. Let $\cP=\un{\Isom}_{S}(\GG, G)$, which is a right $\Aut(\GG)$-torsor over $S$. Let $\cP_{\Out}=\cP/\GG_{\ad}$ be the induced $\Out(\GG)=\Aut(\GG)/\GG_{\ad}$-torsor. We have a canonical isomorphism over $S$,
\begin{equation}\label{BG}
\BB G\cong \cP\times^{\Aut(\GG)}\BB\GG.
\end{equation}

\begin{defn}\label{defn:quasisplit-form} The \emph{canonical quasisplit} form in the inner class of $G$ is
\begin{equation*}
G_{\qs}:=\cP_{\Out}\times^{\Out(\GG)}\GG.
\end{equation*}
\end{defn}

\begin{lemma}\label{l:BG qs}
Assume that $\GG_{\der}$ is of adjoint form, i.e., $\GG_{\der}\to \GG_{\ad}$ is an isomorphism. Then there is a canonical isomorphism over $S$
\begin{equation*}
\BB G\cong \BB G_{\qs}.
\end{equation*}
Here $\Out(\GG)$ acts on $\GG$ (and hence on $\BB\GG$) via pinned automorphisms.
\end{lemma}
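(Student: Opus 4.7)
The plan is to exploit the pinning-induced splitting $\Aut(\GG) = \GG_{\ad} \rtimes \Out(\GG)$ in order to rewrite \eqref{BG} as a two-step quotient
\[
\BB G = \cP \times^{\Aut(\GG)} \BB\GG \cong \bigl(\cP \times^{\GG_{\ad}} \BB\GG\bigr)\big/\Out(\GG).
\]
The whole content then reduces to comparing the inner $\GG_{\ad}$-quotient with $\cP_{\Out} \times \BB\GG$ in an $\Out(\GG)$-equivariant manner; taking the residual $\Out(\GG)$-quotient will then yield $\cP_{\Out} \times^{\Out(\GG)} \BB\GG = \BB G_{\qs}$ on the nose.

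The main observation I would invoke is that a group-scheme section $s\co \GG_{\ad} \to \GG$ of the adjoint quotient produces a canonical $2$-trivialization of the inner-automorphism action of $\GG_{\ad}$ on $\BB\GG$. Indeed, for any $g \in \GG$ the automorphism $\Ad(g)$ of $\BB\GG$ is canonically $2$-isomorphic to the identity via right-translation by $g$, and this is functorial in $g$ with respect to the group law on $\GG$; only the dependence on the choice of lift from $\GG_{\ad}$ to $\GG$ obstructs a canonical trivialization of the $\GG_{\ad}$-action. Under the hypothesis that $\GG_{\der}$ is adjoint, we have $\GG_{\der} \isom \GG_{\ad}$, and the composition $\GG_{\ad} \isom \GG_{\der} \hookrightarrow \GG$ supplies such a section $s$. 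Moreover $s$ is $\Out(\GG)$-equivariant, since the pinned automorphisms preserve $\GG_{\der}$ (which is characteristic) and the adjoint projection $\GG \to \GG_{\ad}$.

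With this trivialization, and using that $\cP \to \cP_{\Out}$ is a $\GG_{\ad}$-torsor, I would deduce a canonical equivalence $\cP \times^{\GG_{\ad}} \BB\GG \cong \cP_{\Out} \times \BB\GG$ by replacing the anti-diagonal $\GG_{\ad}$-action (right-translation on $\cP$, inner automorphism on $\BB\GG$) by the action on $\cP$ alone. The $\Out(\GG)$-equivariance of $s$ then ensures that this equivalence descends through the further $\Out(\GG)$-quotient, producing $\BB G \cong \BB G_{\qs}$.

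The principal obstacle I anticipate is the coherence bookkeeping for the $2$-trivialization. One must verify that the $2$-isomorphism $\Ad(s(g)) \Rightarrow \id_{\BB\GG}$ given by right-translation by $s(g)$ respects composition in $\GG_{\ad}$ (so that it genuinely descends to an action-trivialization rather than a merely pointwise statement), and is $\Out(\GG)$-equivariant. Both compatibilities ultimately follow from $s$ being a group homomorphism and an $\Out(\GG)$-equivariant one, but the honest verification requires working in the appropriate $(\infty,1)$-categorical framework for actions of group schemes on stacks.
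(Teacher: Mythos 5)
Your proof is correct and rests on essentially the same observation as the paper's. The paper packages your section $s\co\GG_{\ad}\isom\GG_{\der}\hookrightarrow\GG$ as the canonical direct product decomposition $\GG=\GG_{\ad}\times\AA$ (with $\AA = Z^{\circ}(\GG)$), so that $\BB\GG=\BB\GG_{\ad}\times\BB\AA$, and then invokes the canonical triviality of the conjugation action of $\GG_{\ad}$ on $\BB\GG_{\ad}$ in place of the explicit right-translation $2$-morphism, but the two are the same trivialization.
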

\begin{proof}
The assumption $\GG_{\der} \xrightarrow{\sim} \GG_{\ad}$ implies that $\GG=\GG_{\ad}\times \AA$ for some torus $\AA$. The action of $\GG_{\ad}$ on $\BB\GG=\BB\GG_{\ad}\times\BB\AA$ (induced by the conjugation action on $\GG_{\ad}$) is {\em canonically} trivial. Therefore we may rewrite the right side of \eqref{BG} as
\begin{equation*}
\cP_{\Out}\times^{\Out(\GG)}\BB\GG
\end{equation*}
which is $\BB G_{\qs}$.
\end{proof}

\subsubsection{Comparison under central isogenies}  Let $\theta \colon G \rightarrow G'$ be a central isogeny of reductive group schemes over $S$, which induces a map $\BB\th: \BB G\to \BB G'$.
\[
\begin{tikzcd}
\BB G \ar[rr, "\theta"] \ar[dr, "\pi"'] & & \BB G' \ar[dl, "\pi'"] \\
& S
\end{tikzcd}
\]

\begin{lemma}\label{lem: classifying cohomology pullback}
The pullback map along $\BB\th$ 
\begin{equation*}
\rR\pi'_* \Qll{\BB G'} \rightarrow \rR \pi_* \Qll{\BB G}
\end{equation*}
is an isomorphism. 
\end{lemma}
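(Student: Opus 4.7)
The plan is to prove the stronger statement $R\theta_* \Qll{\BB G} \cong \Qll{\BB G'}$, from which the lemma follows at once since $\pi = \pi'\circ\theta$ gives $R\pi_* \Qll{\BB G} = R\pi'_* R\theta_* \Qll{\BB G}$.

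First I analyze $\theta$ as a map of stacks. The kernel $K := \ker(\theta)$ is a finite flat closed subgroup scheme of $G$, central by hypothesis; centrality in the reductive group scheme $G$ forces $K$ to be of multiplicative type. The fppf-exact sequence $1\to K\to G\to G'\to 1$ exhibits $\theta:\BB G\to\BB G'$ as a $\BB K$-gerbe. Concretely, pulling back along the smooth atlas $u:S\to\BB G'$ given by the trivial $G'$-torsor yields a Cartesian square whose left vertical arrow is the projection $q:\BB K\times_k S\to S$. By smooth descent along $u$, showing $R\theta_* \Qll{\BB G}\cong \Qll{\BB G'}$ reduces to showing $R q_* \Qll{\BB K\times_k S}\cong \Qll{S}$.

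Next I would check this fiberwise on $S$: for every geometric point $\ov s$ of $S$, one needs $R\Gamma(\BB K_{\ov s},\Qlbar)=\Qlbar$, concentrated in degree $0$. Over the algebraically closed residue field at $\ov s$, decompose $K_{\ov s}=K^0\times K^{\textup{\'et}}$ into its infinitesimal and \'etale parts, which is possible for a finite group scheme of multiplicative type over an algebraically closed field. For $K^{\textup{\'et}}$, the cohomology $R\Gamma(\BB K^{\textup{\'et}},\Qlbar)$ is the group cohomology of a finite abelian group with $\Qlbar$-coefficients, hence equals $\Qlbar$ concentrated in degree $0$ by the standard averaging argument. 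For $K^0$, which is a product of $\mu_{p^{r_i}}$'s, the Kummer fibration
\[
\BB\mu_{p^r}\to\BB\Gm\xrightarrow{[p^r]}\BB\Gm
\]
combined with the fact that $[p^r]^*$ acts by multiplication by $p^{rk}$ on $\cohog{2k}{\BB\Gm,\Qlbar}$ (invertible in $\Qlbar$ since $\ell\neq p$) forces $R\Gamma(\BB\mu_{p^r},\Qlbar)=\Qlbar$ via the Serre spectral sequence. A K\"unneth argument then gives $R\Gamma(\BB K_{\ov s},\Qlbar)=\Qlbar$.

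The main obstacle will be rigorously justifying the base-change steps, since $\theta$ is neither representable nor proper in the conventional sense. This should be handled by working consistently in the smooth/fppf topology on $\BB G'$, and by exploiting that $Rq_*\Qlbar$ is a lisse complex on $S$ (reflecting that $K\to S$ has constant type), so that the pointwise identification at every geometric point of $S$ upgrades to the global identification $Rq_*\Qlbar\cong\Qlbar_S$, and hence $R\theta_*\Qll{\BB G}\cong \Qll{\BB G'}$ by smooth descent.
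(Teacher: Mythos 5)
Your argument is correct and follows the same high-level strategy as the paper: both exploit that $\theta\colon \BB G\to \BB G'$ is a gerbe banded by the finite multiplicative-type group scheme $K=\ker(\theta)$. The difference is in how the key cohomological input is handled. The paper reduces immediately to $S=\Spec k$ and then simply cites the general fact that pullback along a gerbe banded by a finite group scheme is an isomorphism on cohomology with $\Ql$-linear coefficients. You instead unpack this fact from scratch: after reducing by smooth descent to the triviality of $Rq_*\Qll{\BB K\times_k S}$, you split $K_{\ov s}$ over an algebraically closed field into its \'etale and infinitesimal parts, dispose of $\BB K^{\textup{\'et}}$ by the characteristic-zero group-cohomology averaging argument, and dispose of $\BB\mu_{p^r}$ via the Kummer fibration $\BB\mu_{p^r}\to\BB\Gm\xrightarrow{[p^r]}\BB\Gm$ and the invertibility of $p^{rk}$ on $\cohog{2k}{\BB\Gm,\Qlbar}$. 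This buys you a self-contained, more elementary proof at the cost of length; the paper's citation is leaner and also sidesteps the base-change bookkeeping you flag (since once one is over a point, there is nothing to descend). One minor point worth making explicit in a writeup: the splitting $K_{\ov s}=K^0\times K^{\textup{\'et}}$ and the claim $K^0\cong\prod\mu_{p^{r_i}}$ both use that $K_{\ov s}$ is a \emph{diagonalizable} group over the algebraically closed field $\ov s$, which follows because multiplicative type groups over a separably closed field are diagonalizable; this is exactly where the centrality-in-a-reductive-group hypothesis is being cashed in.
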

\begin{proof} It suffices to check on stalks, so we may reduce to the case $S=\Spec k$. Let $\Gamma=\ker(\theta)$.
The presentation $\BB G' = [\BB G/ \BB\Gamma]$ exhibits $\BB G$ as a gerbe over $\BB G'$ banded by the group scheme $\Gamma$. Since $\Gamma$ is finite, pullback induces an isomorphism on cohomology with $\Ql$-linear coefficients for any $\Gamma$-gerbe. 
\end{proof}

Let $\pi^{\qs}: \BB G_{\qs}\to S$ be the structure map. 
\begin{prop}\label{p:inv under isog} There is a canonical isomorphism in $D(S)$
\begin{equation*}
\rR \pi_* \Qll{\BB G}\cong \rR \pi^{\qs}_* \Qll{\BB G_{\qs}}.
\end{equation*}
\end{prop}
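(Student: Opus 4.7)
The plan is to reduce to the adjoint-derived case handled by Lemma \ref{l:BG qs} via a canonical central isogeny, and then invoke Lemma \ref{lem: classifying cohomology pullback} to transport cohomology across both ends.

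First I would construct an intermediate reductive group scheme $G'\to S$ whose derived subgroup is of adjoint type. For any connected reductive group $\GG$ over $k$, the center $Z(\GG_{\der})$ of the derived subgroup is a finite central subgroup of $\GG$ itself: it commutes with $\GG_{\der}$ tautologically and with the connected center $Z(\GG)^{0}$ (trivially), and $\GG=Z(\GG)^{0}\cdot\GG_{\der}$. Set $\GG':=\GG/Z(\GG_{\der})$; this is a connected reductive group with $\GG'_{\der}=\GG_{\der}/Z(\GG_{\der})$ of adjoint type, and the map $\GG\to\GG'$ is a central isogeny. The construction is manifestly functorial in $\GG$ and in particular preserved by automorphisms (and compatible with pinnings). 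Performing the analogous operation on the group scheme $G$, i.e.\ quotienting by the canonical finite central subgroup scheme $Z(G_{\der})\subset G$, yields a central isogeny $G\to G':=G/Z(G_{\der})$ with $G'_{\der}\xrightarrow{\sim}G'_{\ad}$.

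Next I would check that this construction is compatible with the canonical quasisplit form. Since $\GG\mapsto \GG'$ is natural, every element of $\Aut(\GG)$ descends to $\Aut(\GG')$, giving a homomorphism $\Aut(\GG)\to\Aut(\GG')$ which fits into a commutative diagram with $\GG_{\ad}=\GG'_{\ad}$ and induces an isomorphism $\Out(\GG)\xrightarrow{\sim}\Out(\GG')$. The natural map $\cP=\uIsom_{S}(\GG,G)\to \cP'=\uIsom_{S}(\GG',G')$ is equivariant for these homomorphisms, and upon quotienting by $\GG_{\ad}=\GG'_{\ad}$ it produces a canonical identification $\cP_{\Out}\cong \cP'_{\Out}$ of $\Out(\GG)=\Out(\GG')$-torsors. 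Consequently $G_{\qs}\to G'_{\qs}$ is a canonical central isogeny fitting into a commutative square with $G\to G'$, where $G'_{\qs}$ is the canonical quasisplit form of $G'$ in the sense of Definition \ref{defn:quasisplit-form}.

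Finally I would assemble the isomorphisms. Write $\pi':\BB G'\to S$ and $\pi'^{\qs}:\BB G'_{\qs}\to S$ for the structure maps. Applying Lemma \ref{lem: classifying cohomology pullback} to both central isogenies yields canonical isomorphisms
\begin{equation*}
\rR\pi_{*}\Qll{\BB G}\cong \rR\pi'_{*}\Qll{\BB G'}, \qquad \rR\pi^{\qs}_{*}\Qll{\BB G_{\qs}}\cong \rR\pi'^{\qs}_{*}\Qll{\BB G'_{\qs}}.
\end{equation*}
Since $G'_{\der}\to G'_{\ad}$ is an isomorphism, Lemma \ref{l:BG qs} applied to $G'$ supplies a canonical isomorphism $\BB G'\cong \BB G'_{\qs}$ over $S$, hence $\rR\pi'_{*}\Qll{\BB G'}\cong \rR\pi'^{\qs}_{*}\Qll{\BB G'_{\qs}}$. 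Chaining these three isomorphisms yields the claim.

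The main obstacle is the second step: verifying rigorously that the formation of the canonical quasisplit form is compatible with the central isogeny $G\to G'$. Once one unwinds that the passage $\GG\leadsto \GG'$ preserves pinnings, identifies $\Out(\GG)=\Out(\GG')$, and identifies the two $\Out$-torsors $\cP_{\Out}=\cP'_{\Out}$, everything else is a direct application of the preceding two lemmas. The first and third steps are essentially formal.
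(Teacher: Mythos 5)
Your proof is correct and essentially identical in strategy to the paper's: the paper takes $G' = G_{\ad} \times A$ (where $A = G/G_{\der}$), which is canonically isomorphic to your $G/Z(G_{\der})$ via the natural central isogeny with kernel $Z(G)\cap G_{\der} = Z(G_{\der})$, and then chains the same three isomorphisms through Lemmas \ref{lem: classifying cohomology pullback} and \ref{l:BG qs}. The only substantive difference is that you spell out the compatibility of the canonical quasisplit form with the central isogeny (functoriality of $\Out$ and the identification of $\Out$-torsors), which the paper simply asserts by claiming both $G\to G'$ and $G_{\qs}\to G'_{\qs}$ are central isogenies; this is a reasonable expansion of a point the paper leaves implicit.
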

\begin{proof}
Let $G'=G_{\ad}\times A$, where $A=G/G_{\der}$ is the quotient torus of $G$. We have maps
\begin{equation*}
\BB G\to\BB G'\cong \BB G'_{\qs}\leftarrow \BB G_{\qs}.
\end{equation*}
Here the middle isomorphism is given by Lemma \ref{l:BG qs}. Both $G\to G'$ and $G_{\qs}\to G'_{\qs}$ are central isogenies, hence by Lemma \ref{lem: classifying cohomology pullback} they induce isomorphisms on the direct images of classifying stacks. Composing these isomorphisms gives the desired canonical isomorphism.
\end{proof}

\subsubsection{Canonical splitting of $\rR \pi_* \Qll{\BB G}$}\label{sss:splitting}
Choose a connected component of $\cP_{\Out}$ and denote it by $S'$. It is a torsor for a finite subgroup $\Out(\GG)^{\na}\subset \Out(\GG)$. Let 
$\nu: S'\to S$ be the projection. Then $G_{\qs}$ becomes the constant group $\GG\times S'$ after base change along $\nu$. Therefore, with $R^W = \RGamma(\BB \GG)$ as in \S\ref{sssec:reductive-groups-notation}, we have a canonical isomorphism
\begin{equation}\label{nu pullback coho BG}
\nu^{*}\rR \pi_* \Qll{\BB G}\cong \nu^{*}\rR \pi_* \Qll{\BB G_{\qs}}\cong \RGamma(\BB \GG)\ot \Qll{S'}=R^{W}\ot \Qll{S'}.
\end{equation}
 The descent datum of the left side is given by the natural $\Out(\GG)^{\na}$-action on $R^{W}$. In particular, we conclude that $\rR \pi_* \Qll{\BB G}$  is canonically isomorphic to the direct sum of its cohomology sheaves
\begin{equation}\label{canon split coho BG}
\rR \pi_* \Qll{\BB G}\cong \bigoplus_{n\ge0}\rR^{2n} \pi_* \Qll{\BB G}[-2n].
\end{equation}
Each local system $\rR^{2n} \pi_* \Qll{\BB G}$ is equipped with an isomorphism 
\begin{equation*}
\nu^{*}\rR^{2n} \pi_* \Qll{\BB G}\cong \upH^{2n}(\BB \GG)\ot\Qll{S'}
\end{equation*}
whose descent datum is the natural $\Out(\GG)^{\na}$-action on the right side.

\subsubsection{The Gross motive}\label{sss:motive}

Using notation from \S\ref{sss:aug gr}, we have the augmentation filtration of $R^{W}=\rR\Gamma(\BB\GG)$, and in particular a graded vector space
\begin{equation}\label{def M}
\VV_{\GG}=\Gr^{1}_{\aug}R^{W}=R^{W}_{+}/(R^{W}_{+})^{2}.
\end{equation}
When $\GG$ is almost simple, the multiset of degrees of $\VV_{\GG}$ (i.e., $d$ repeated $\dim \VV_{\GG, 2d}$ times) coincides with the multiset $\{e_{i}+1\}$, where $\{e_{i}\}$ are the exponents of $\GG$.

Under the isomorphism \eqref{nu pullback coho BG}, the augmentation filtration on $R^{W}$ descends to a filtration $F_{\aug}\rR \pi_* \Ql$ on $\rR \pi_* \Ql$ (by direct summands), and $\VV_{\GG}\ot \Qll{S'}$ descends to
\begin{equation*}
\VV_{G}:=\Gr^1_{\aug} \rR \pi_* (\Ql) \in \cD(S).
\end{equation*}
The canonical splitting \eqref{canon split coho BG} induces a splitting of $\VV_{G}$,
\[
\VV_{G} = \bigoplus_{d\in \ZZ_{>0}} \VV_{G,2d}[-2d]
\]
where each $\VV_{G,2d}$ is a local system on $S$.

\begin{remark}In \cite{Gro97}, B.~Gross introduced the motive $\MM_{G}$ of a reductive group $G$ over any base field $K$. In our setup, if $S=\Spec K$, then $\VV_{G}$ as a $\Gal(K^{s}/K)$-module is the $\ell$-adic realization of $\MM_{G_{K}}(-1)$. Comparing with Gross's definition \cite[(1.5)]{Gro97}, our grading on $\VV_{G}$ has been doubled, and each $\VV_{2d}$ is already Tate twisted by $\Qlbar(-d)$. 
Our $\VV_G$ coincides with what Gaitsgory--Lurie denote $M(G)$ in \cite[Remark 6.4.10]{GL14}.
\end{remark}


\begin{exam} Let $\nu: S'\to S$ be an \'etale double cover. Let $G$ be a unitary group over $S$ defined as the isometries of a $S'/S$-Hermitian rank $n$ vector bundle $\cF$ over $S'$. Let $\cL=(\nu_{*}\cO_{S'})^{\s=-1}$ where $\s:S'\to S'$ is the nontrivial involution over $S$. Then we have a canonical isomorphism 
\begin{equation*}
\VV_{G}\cong \bigoplus_{i=1}^{n}\cL^{\ot i}[-2i].
\end{equation*}
\end{exam}

The following is an immediate consequence of Proposition \ref{p:inv under isog}.

\begin{lemma}\label{l:V under isog}
\begin{enumerate}
\item Let $\th: G\to G'$ be a central isogeny. Then $\th^{*}$ induces a canonical isomorphism
\begin{equation*}
\VV_{G'}\cong \VV_{G}.
\end{equation*}

\item Let $A=G/G_{\der}$ and $\th: G\to G_{\ad}\times A$ be the canonical central isogeny. Then $\th$ induces a canonical isomorphism
\begin{equation*}
\VV_{G}\cong \VV_{G_{\ad}}\op \VV_{A}.
\end{equation*}
Moreover, $\VV_{A}$ is concentrated in degree $2$, and $\VV_{G_{\ad}}$ is concentrated in even degrees $\ge4$.
\end{enumerate}
\end{lemma}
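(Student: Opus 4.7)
\medskip

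\noindent\textbf{Proof proposal for Lemma \ref{l:V under isog}.}

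The plan is to deduce both statements from Proposition \ref{p:inv under isog} and Lemma \ref{lem: classifying cohomology pullback}, together with a compatibility of the augmentation filtration with (i) pullback along central isogenies and (ii) Künneth.

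For part (1), the central isogeny $\theta \co G \to G'$ induces $\BB \theta \co \BB G \to \BB G'$ over $S$, hence a ring map
\begin{equation*}
(\BB\theta)^{*} \co \rR\pi'_{*}\Qll{\BB G'} \to \rR\pi_{*}\Qll{\BB G}
\end{equation*}
in $\cD(S)$. By Lemma \ref{lem: classifying cohomology pullback} this map is an isomorphism. It is manifestly a map of augmented commutative algebra objects (both sides are augmented over $\Qll{S}$ via the unit section), so it preserves the augmentation filtration $F^{\bullet}_{\aug}$ defined in \S\ref{sss:aug gr}, and induces an isomorphism of associated graded algebras. Taking $\Gr^{1}_{\aug}$ gives the canonical isomorphism $\VV_{G'} \cong \VV_{G}$.

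For part (2), the canonical central isogeny $\theta \co G \to G_{\ad} \times A$ reduces us, by part (1), to establishing the decomposition for $G_{\ad} \times A$. Since $\BB(G_{\ad} \times A) \cong \BB G_{\ad} \times_{S} \BB A$, the Künneth formula gives
\begin{equation*}
\rR\pi_{*}\Qll{\BB(G_{\ad}\times A)} \cong \rR\pi_{*}\Qll{\BB G_{\ad}} \otimes_{\Qll{S}} \rR\pi_{*}\Qll{\BB A}
\end{equation*}
as augmented commutative algebra objects of $\cD(S)$. For any two augmented $\Qll{S}$-algebras $R_{1}, R_{2}$ with augmentation ideals $R_{i,+}$, one has the identity $(R_{1}\otimes R_{2})_{+} = R_{1,+}\otimes R_{2} \;+\; R_{1}\otimes R_{2,+}$, and a direct computation shows $\Gr^{1}_{\aug}(R_{1}\otimes R_{2}) \cong \Gr^{1}_{\aug}(R_{1})\oplus \Gr^{1}_{\aug}(R_{2})$ canonically. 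Applying this with $R_{i}$ the two tensor factors gives the canonical isomorphism
\begin{equation*}
\VV_{G_{\ad}\times A} \cong \VV_{G_{\ad}} \oplus \VV_{A}.
\end{equation*}

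It remains to establish the degree statements, which can be checked after pulling back along the étale cover $\nu \co S' \to S$ of \S\ref{sss:splitting} that trivializes the outer class of $G$. For the torus $A$, the group $A_{S'}$ is a split torus, so $\rR\pi_{*}\Qll{\BB A}|_{S'} \cong \Sym(\xch(A)_{\Qlbar}(-1))$ is a polynomial algebra on generators in cohomological degree $2$; its augmentation ideal modulo its square is precisely the space of generators, concentrated in degree $2$. For $G_{\ad}$, after pullback we may replace $G_{\ad}$ by a constant split adjoint semisimple group $\GG_{\ad}$, and then $R^{W}_{+}/(R^{W}_{+})^{2}$ has a homogeneous basis whose degrees are $\{2(e_{i}+1)\}$ by the Chevalley--Shephard--Todd theorem applied to $R^{W}$. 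Since every exponent of a semisimple group satisfies $e_{i}\ge 1$, each such degree is an even integer $\ge 4$, as claimed. The descent to $S$ is automatic since the action of $\Out(\GG)^{\natural}$ preserves the grading.

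The main obstacle is the bookkeeping in the last paragraph, in particular making sure that the augmentation filtration (a filtration, not a grading, in general) descends compatibly under $\nu$ so that the degree statements defined locally on $S'$ are well-defined on $S$; this is built into the setup of \S\ref{sss:splitting}, where the canonical splitting \eqref{canon split coho BG} is $\Out(\GG)^{\natural}$-equivariant, so no additional work is required.
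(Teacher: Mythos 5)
Your proposal is correct, and the route you take — via Lemma \ref{lem: classifying cohomology pullback} for central isogenies, Künneth for products, and the Chevalley--Shephard--Todd description of $R^W$ for the degree bounds — is exactly the argument the paper leaves implicit when it calls the lemma an ``immediate consequence of Proposition \ref{p:inv under isog}.'' You have simply unpacked what the paper treats as obvious, including the compatibility of the augmentation filtration with the algebra map $(\BB\theta)^*$ and with the Künneth decomposition, both of which check out.
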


\subsubsection{More on central isogenies}\label{ssec: classifying space preliminaries} 

The rest of the discussion in this subsection is only used in the reduction step \S\ref{sss:AB central isog} in the proof of the Atiyah-Bott formula.

Let $\theta \colon G \rightarrow G'$ be a central isogeny of reductive group schemes over a stack $S$ with kernel $\Gamma$.  Since $\Gamma$ is abelian, its classifying stack $\BB \Gamma$ is an abelian group stack over $S$. Furthermore, since $\Gamma$ is central in $G$, the multiplication map $a \co \Gamma \times_S G \rightarrow G$ is a group homomorphism, and induces a map of classifying stacks 
\begin{equation}\label{eq: action map BG}
a \co \BB \Gamma \times_S \BB G \rightarrow \BB G
\end{equation}
which extends to an action of the abelian group stack $\BB\Gamma$ on $\BB G$. This realizes $\BB G'$ as the quotient stack $\BB G' \cong [\BB G/\BB\Gamma]$.

\begin{lemma}\label{lem: Gamma action on BG}
With $a$ the action map from \eqref{eq: action map BG} and $\pr_{1}: \BB G \times_S \BB \Gamma\to \BB G$ the projection, we have $\pr_1^* = a^*$ as maps 
\[
\rR \pi_* \Qll{\BB G}  \rightarrow \rR \Pi_* \Qll{\BB G \times_S \BB \Gamma},
\]
where $\Pi: \BB G \times_S \BB \Gamma\to S$ is the structure map.
\end{lemma}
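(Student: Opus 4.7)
The plan is a section argument. Let $e \co S \to \BB\Gamma$ classify the trivial $\Gamma$-torsor, and set $\iota := (\id_{\BB G}, e) \co \BB G \to \BB G \times_S \BB\Gamma$. Then $\pr_1 \circ \iota = \id_{\BB G}$ tautologically, and, crucially, $a \circ \iota = \id_{\BB G}$: this is the statement that the action of the identity section of $\Gamma$ on $G$ is the identity automorphism of $G$, which is the sole group-theoretic input. Taking pushforwards to $S$, these two equalities give
$$ \iota^{*} \circ \pr_{1}^{*} = \id \qquad \text{and} \qquad \iota^{*} \circ a^{*} = \id $$
as endomorphisms of $\rR\pi_{*} \Qll{\BB G}$. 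Therefore, once we know that $\pr_{1}^{*}$ is an isomorphism, $\iota^{*}$ must be its two-sided inverse, and the desired equality will follow from
$$ a^{*} = \pr_{1}^{*} \circ (\iota^{*} \circ a^{*}) = \pr_{1}^{*}. $$

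To prove that $\pr_{1}^{*}$ is an isomorphism, the plan is to invoke base change along the Cartesian square of stacks over $S$ with vertices $\BB G \times_{S} \BB\Gamma$, $\BB G$, $\BB\Gamma$, and $S$. In the six-functor formalism of \cite{LiuZheng} one obtains
$$ \rR \pr_{1*} \Qll{\BB G \times_{S} \BB\Gamma} \cong \pi^{*} \, \rR \pi'_{*} \Qll{\BB\Gamma}, $$
where $\pi' \co \BB\Gamma \to S$. Since $\Gamma$ is finite, Lemma \ref{lem: classifying cohomology pullback} (applied to the trivial isogeny $\Gamma \to 1$) gives $\rR\pi'_{*} \Qll{\BB\Gamma} \cong \Qll{S}$. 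Hence $\rR\pr_{1*} \Qll{\BB G \times_{S} \BB\Gamma} \cong \Qll{\BB G}$, and pushing forward by $\pi$ identifies $\pr_{1}^{*}$ with the identity on $\rR\pi_{*} \Qll{\BB G}$, which in particular is an isomorphism.

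The main obstacle is simply the bookkeeping around base change and the identification of adjunction units at the level of Artin stacks. This should be formal given the six-functor formalism of \cite{LiuZheng}, but verifying that the relevant hypotheses apply to $\BB G \to S$ and $\BB\Gamma \to S$ is the one place where care is needed. Alternatively, since the question is local on $S$ and the relevant sheaves split canonically as in \S \ref{sss:splitting}, one can reduce to the case $S = \Spec k$ with $k$ separably closed, where the necessary Künneth formula and vanishing of higher $\Ql$-cohomology of $\BB\Gamma$ are classical.
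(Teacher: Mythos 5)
Your proof is correct and follows essentially the same route as the paper: both hinge on composing with the neutral section $\iota = (\id_{\BB G}, e)$ and exploiting that $\BB\Gamma$ has trivial $\Ql$-linear cohomology relative to $S$ because $\Gamma$ is finite. The paper deduces directly that $\iota^{*}$ is an isomorphism (so the identity need only be checked after pullback along $\iota$), while you prove $\pr_{1}^{*}$ is an isomorphism via base change and then conclude; these are equivalent once $\iota^{*}\pr_{1}^{*}=\id$ is in hand, though note that Lemma~\ref{lem: classifying cohomology pullback} as stated concerns central isogenies of \emph{reductive} group schemes, so your citation of it for $\Gamma\to 1$ is slightly loose (the needed fact is really the vanishing used inside its proof).
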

\begin{proof}
Since $\Gamma$ is finite, pullback along the neutral section $e \co S \rightarrow \BB \Gamma$ induces an isomorphism on cohomology with $\Ql$-linear coefficients. Hence it suffices to check the statement after pullback along the neutral section $\BB G \xrightarrow{\Id \times e} \BB G \times_S \BB \Gamma$. But the maps $a$ and $\pr_1$ agree when composed with the neutral section, so $a^*$ and $\pr_1^*$ obviously agree after such pullback.
\end{proof}

\subsubsection{$G$-characteristic classes} 
For any map of stacks $S' \rightarrow S$, a $G$-torsor on $S'$ is classified by a map $S' \rightarrow \BB G$. Any cohomology class $c \in \rH^*(\BB G)$ thus gives rise to a system of compatible cohomology classes associated to $G$-torsors on $S' \rightarrow S$, which we call \emph{$G$-characteristic classes}. 

In the situation of \S\ref{ssec: classifying space preliminaries}, let $\cL$ be a $\Gamma$-torsor on $S$. Then $\cL$ can be interpreted as a section of $\BB \Gamma$, and composing it with the action map \eqref{eq: action map BG} gives a map $\BB G \rightarrow \BB G$, which we refer to as ``twisting a $G$-torsor by $\cL$''.

\begin{cor}\label{cor: twisting Chern classes}
The action of twisting by $\cL$ has a trivial effect on the $G$-characteristic classes. 
\end{cor}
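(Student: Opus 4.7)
The plan is to deduce the corollary directly from Lemma \ref{lem: Gamma action on BG} by a one-line formal computation, once the twisting map is written in the right form. First I would make explicit the twisting endomorphism $t_\cL \co \BB G \to \BB G$ attached to $\cL$: the torsor $\cL$ corresponds to a section $s_\cL \co S \to \BB\Gamma$, and $t_\cL$ is the composition
\[
t_\cL \co \BB G \xrightarrow{\sigma} \BB G \times_S \BB\Gamma \xrightarrow{a} \BB G, \qquad \sigma := (\Id_{\BB G},\, s_\cL \circ \pi).
\]
The crucial feature of this factorization is that $\pr_1 \circ \sigma = \Id_{\BB G}$, where $\pr_1$ is the projection of Lemma \ref{lem: Gamma action on BG}.

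Next I would reduce the corollary to the statement that $t_\cL^*$ acts as the identity on $\rH^*(\BB G)$. Indeed, an arbitrary $G$-torsor $\cF$ on some $S' \to S$ is classified by a map $\phi \co S' \to \BB G$, and its $G$-characteristic class attached to $c \in \rH^*(\BB G)$ is $\phi^* c$, while the corresponding characteristic class of the twisted torsor $\cF \cdot \cL$ is $(t_\cL \circ \phi)^* c = \phi^*(t_\cL^* c)$. So it is enough to check $t_\cL^* c = c$.

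Finally, taking global sections on $S$ of the equality $a^* = \pr_1^*$ from Lemma \ref{lem: Gamma action on BG} yields the same equality $a^* = \pr_1^*$ as maps $\rH^*(\BB G) \to \rH^*(\BB G \times_S \BB\Gamma)$. Applying this and then pulling back along $\sigma$, I get
\[
t_\cL^* c \;=\; \sigma^* a^* c \;=\; \sigma^* \pr_1^* c \;=\; (\pr_1 \circ \sigma)^* c \;=\; c,
\]
which is what was to be shown. There is no substantive obstacle here: all the real content is in Lemma \ref{lem: Gamma action on BG}, and once the twisting map is factored as $a \circ \sigma$ with $\pr_1 \circ \sigma = \Id_{\BB G}$, the desired cancellation is automatic.
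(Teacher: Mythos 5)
Your argument is correct and is essentially the same as the paper's: the paper reduces to the universal case over $\BB\Gamma \times_S \BB G$ and invokes Lemma \ref{lem: Gamma action on BG}, while you make the reduction explicit by factoring $t_\cL = a \circ \sigma$ with $\pr_1 \circ \sigma = \Id_{\BB G}$, which is precisely the pullback from the universal situation along the section classifying $\cL$. Both proofs place all the real content in Lemma \ref{lem: Gamma action on BG}.
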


\begin{proof}
It suffices to check in the universal case, where the base is $\BB \Gamma \times_S \BB G$. In this case, the assertion amounts to Lemma \ref{lem: Gamma action on BG}. 
\end{proof}

\subsection{Formulation of Atiyah-Bott formula}

Let $X$ be a smooth connected and projective curve over $k$. Let $G\to X$ be a connected reductive group scheme.

Let $\Bun_{G}$ be the moduli stack of $G$-bundles over $X$. Since $\Bun_{G}$ classifies sections to the structure map $\pi: \BB G\to X$, it only depends on $\BB G$.

When $G$ is the constant group $\GG\times X$, we also write $\Bun_{G}$ as $\Bun_{\GG}$.

\subsubsection{Components of $\Bun_{G}$} 


Let $\pi_{0}(\Bun_{G})$ be the set of connected components of $\Bun_{G}$. When $G=\GG\times X$ is constant, $\pi_{0}(\Bun_{\GG})$ can be identified with the algebraic fundamental group $\pi_{1}^{\alg}(\GG)$, which in particular has a group structure. For a maximal torus $\TT\subset \GG$, there is a canonical surjection of abelian groups
\begin{equation}\label{eq: pi0}
\xcoch(\TT)\surj \pi_{0}(\Bun_{\GG})
\end{equation}
whose kernel is the coroot lattice. 

For general $G$, we have a local system of abelian groups over $X$
\begin{equation*}
\pi^{\alg}_{1}(G/X):=\cP_{\Out}\times^{\Out(\GG)}\pi_{1}^{\alg}(\GG).
\end{equation*}
Then there is a canonical isomorphism
\begin{equation}\label{eq:bunG_connected_components}
\pi_{0}(\Bun_{G})\cong \homog{0}{X, \pi^{\alg}_{1}(G/X)}.
\end{equation}
This is a reformulation of \cite[Theorem 2]{Hei10}: the right side above is the same as the coinvariants of $\pi^{\alg}_{1}(G/X)$ under the monodromy action of $\pi_{1}(X)$. 

For $\om\in \pi_{0}(\Bun_{G})$ we denote by $\Bun_G^\omega$ the corresponding 
connected component of $\Bun_G$.

\subsubsection{The evaluation map} The universal $G$-bundle is classified by the evaluation map 
\begin{equation}\label{eq: AB evaluation}
\ev\co X \times \Bun_G \rightarrow \BB G.
\end{equation}
Then \eqref{eq: AB evaluation} induces a map  
\begin{equation}\label{eq: AB evaluation 2}
\rR \pi_* \Qll{\BB G}   \rightarrow \rR \Gamma(\Bun_G) \otimes \Qll{X}.
\end{equation}
Tensoring \eqref{eq: AB evaluation 2} with the dualizing complex $\bD_X$ and taking cohomology, we obtain a map 
\[
\RGamma (X; \rR \pi_* \Qll{\BB G}  \otimes \bD_X) \rightarrow  \RGamma(X; \bD_X) \otimes \RGamma(\Bun_G).
\]
Composing this with the trace map $\RGamma(X, \bD_X) \rightarrow \Ql$, we obtain in particular a canonical map 
\begin{equation}\label{eq: AB map complex}
\RGamma(X; \rR \pi_* \Qll{\BB G} \otimes \bD_X) \rightarrow  \RGamma(\Bun_G).
\end{equation}

For $\cK\in \cD(X)$, the \emph{homology} of $X$ with coefficients in $\cK$ is defined\footnote{A priori, the relative homology of a map $f$ should be defined to be the left adjoint $f_{\sh}$ of $f^*$, if it exists. In this case, Verdier duality identifies $f_{\sh}(\cK) \cong \rR f_{*} (\cK \otimes \bD_X)$.} to be
\[
\rH_*(X; \cK) := \rH^*(X; \cK \otimes \bD_X).
\]
In these terms, we can interpret the map obtained by passing to cohomology in \eqref{eq: AB map complex} as
\begin{equation}\label{eq: AB map homology}
\frev_{G}: \rH_{*}(X; \rR \pi_* \Qll{\BB G}) \rightarrow  \rH^{*}(\Bun_G).
\end{equation}
We denote its further restriction to $\Bun_{G}^{\om}$ by $\frev^{\om}_{G}: \rH_{*}(X; \rR \pi_* \Qll{\BB G}) \rightarrow  \rH^{*}(\Bun^{\om}_G)$.

\subsubsection{The constant group case}\label{sss:fz} We make the above discussions more explicit when $G=\GG\times X$ is a constant group scheme. The map \eqref{eq: AB map homology} now reads
\begin{equation*}
\frev_{G}: \homog{*}{X}\ot \cohog{*}{\BB \GG}=\homog{*}{X}\ot R^{W}\to \cohog{*}{\Bun_{G}}.
\end{equation*}
For $z\in \homog{*}{X}$ and $f\in R^{W}$, we write 
\begin{equation}\label{fz}
f^{z}=\frev_{G}(z\ot f)\in \cohog{*}{\Bun_{G}}.
\end{equation}

\subsubsection{Graded evaluation map}
We define  
\begin{equation}\label{eq: hom positive part}
\rH_*(X; \VV_{G})_+  := \bigoplus_{i<2j} \rH_i(X; \VV_{G,2j})[i-2j] \subset \rH_*(X; \VV_{G})
\end{equation}
to be the subspace of $\rH_*(X; \VV_{G})$ with cohomological degree $>0$. For any $\om\in \pi_{0}(\Bun_{G})$, we will construct a canonical ``evaluation'' map
\begin{equation}\label{eq: AB map cohomology 2}
\frev^{\om}_{\aug, G}: \rH_*(X; \VV_{G})_{+} \rightarrow \Gr^1_{\aug} \rH^*(\Bun_G^\omega).
\end{equation}

We first make some reductions. If $\th:G\to G'=G_{\ad}\times A$ (where $A=G/G_{\der}$) is the canonical map, it induces a map $\th_{\Bun}: \Bun^{\om}_{G}\to \Bun^{\om'}_{G'}$. If $\frev^{\om'}_{\aug, G'}$ is defined, then we let $\frev^{\om}_{\aug, G}$ be the composition
\begin{equation*}
\rH_*(X; \VV_{G})_{+}\cong \rH_*(X; \VV_{G'})_{+}\xr{\frev^{\om'}_{\aug, G'}}\Gr^{1}_{\aug}\rH^*(\Bun_{G'}^{\omega'})\xr{\Gr^{1}_{\aug}\th_{\Bun}^{*}}\Gr^{1}_{\aug}\rH^*(\Bun_G^\omega).
\end{equation*}
Here we use Lemma \ref{l:V under isog}, and the last map is the map on $\Gr^{1}_{\aug}$ induced by the ring homomorphism $\th_{\Bun}^{*}$. Using the additivity $\VV_{G'}\cong \VV_{G_{\ad}}\op \VV_{A}$, we thus reduce to consider separately the case $G$ is adjoint (or more generally semisimple) and $G$ is a torus.

\subsubsection{The torus case} Let $G=A$ be a torus. In this case, $\VV_{A}=\rR^{2}\pi_* \Qll{\BB A}[-2]$, which is canonically a summand of $\rR\pi_* \Qll{\BB A}$ by \eqref{canon split coho BG}. We define $\frev^{\om}_{\aug, A}$ to be the composition
\begin{equation*}
\rH_*(X; \VV_{A})_{+}\subset \rH_*(X; \rR \pi_* \Qll{\BB A})_{+}\xr{\frev^{\om}_{A}} \rH^{>0}(\Bun_A^\omega)\surj \Gr^{1}_{\aug}\rH^{*}(\Bun_A^\omega).
\end{equation*}

\subsubsection{The semisimple case} Let $G$ be semisimple. Let $\rR^{>0} \pi_* \Qll{\BB G}=F^{1}_{\aug}(\rR\pi_* \Qll{\BB G})$ be the direct sum of the positive degree summands under the canonical splitting \eqref{canon split coho BG}. By construction we have a canonical map that is a surjection in each degree,
\begin{equation*}
\rR^{>0} \pi_* \Qll{\BB G}\to \VV_{G}.
\end{equation*}
The desired map \eqref{eq: AB map cohomology 2} is characterized by the following lemma.

\begin{lemma}
There is a unique map $\frev^{\om}_{\aug, G}$ making the following diagram commute:
\begin{equation*}
\xymatrix{ \rH_{*}(X; \rR^{>0} \pi_* \Qll{\BB G})_{+} \ar[r] ^-{\frev^{\om}_{G}} \ar@{->>}[d] & \rH^{>0}(\Bun_G^\omega)\ar@{->>}[d]\\
\rH_{*}(X; \VV_{G})_{+} \ar[r]^-{\frev^{\om}_{\aug,G}} &
\Gr^{1}_{\aug}\rH^{*}(\Bun_G^\omega)}
\end{equation*}
Here $\rH_{*}(X; \rR^{>0} \pi_* \Qll{\BB G})_{+}\subset \rH_{*}(X; \rR^{>0} \pi_* \Qll{\BB G})$ is the positive degree part.
\end{lemma}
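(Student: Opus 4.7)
The plan is to verify the commutativity of the diagram by showing that $\frev^{\om}_G$, followed by the right vertical surjection, annihilates the kernel of the left vertical arrow. Uniqueness of $\frev^{\om}_{\aug,G}$ will then be automatic since the left vertical map will turn out to be surjective. Thus the only real content is to exhibit the required factoring.

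First, I would identify the kernel of the left vertical arrow. The canonical surjection $\rR^{>0}\pi_*\Qll{\BB G} \twoheadrightarrow \VV_G$ fits into a short exact sequence
\[
0 \to F^2_{\aug}\rR\pi_*\Qll{\BB G} \to \rR^{>0}\pi_*\Qll{\BB G} \to \VV_G \to 0
\]
of sheaves on $X$ (with $F^2_{\aug}\rR\pi_*\Qll{\BB G}$ well-defined as a direct summand by descent from the $\Out(\GG)^{\natural}$-stable subspace $(R^{W}_{+})^{2} \subset R^{W}$, as in the paragraph after \eqref{def M}); the canonical splitting \eqref{canon split coho BG} refines the augmentation filtration, so this sequence is in fact split. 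Applying $\rH_*(X;-)$ and restricting to positive-degree parts therefore preserves the split exactness, so the left vertical arrow of the diagram is surjective, and its kernel is the image of $\rH_*(X; F^2_{\aug}\rR\pi_*\Qll{\BB G})_{+}$. On the other hand, the kernel of the right vertical arrow is $F^2_{\aug}\rH^*(\Bun_G^\omega)$, which is automatically contained in $\rH^{>0}(\Bun_G^\omega)$ since a product of positive-degree classes is positive-degree.

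The heart of the argument is then to show that
\[
\frev^{\om}_G\bigl(\rH_*(X; F^2_{\aug}\rR\pi_*\Qll{\BB G})\bigr) \subset F^2_{\aug}\rH^*(\Bun_G^\omega).
\]
This is essentially the multiplicativity of pullback along the evaluation $\ev \co X \times \Bun_G \to \BB G$. Since $\ev$ is a morphism over $X$, adjunction induces a map of sheaves of augmented commutative algebra objects on $X$,
\[
\rR\pi_*\Qll{\BB G} \to \rR p_*\Qll{X \times \Bun_G} \cong \Qll{X} \otimes \rH^*(\Bun_G),
\]
which preserves the augmentation filtration and thus carries $F^2_{\aug}\rR\pi_*\Qll{\BB G}$ into $\Qll{X} \otimes F^2_{\aug}\rH^*(\Bun_G)$. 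Applying $\rH_*(X;-)$ and pairing with the trace $\rH_*(X) \to \Ql$ recovers the restriction of $\frev_G$ to the $\omega$-component and yields the desired inclusion.

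The main obstacle is to make the multiplicativity of this sheaf-theoretic pullback precise when $G$ is not a constant reductive group scheme over $X$. I would circumvent this by first pulling back to the \'etale cover $\nu \co S' \to X$ of \S\ref{sss:splitting}, over which $G_{\qs}$ becomes the constant group $\GG \times S'$. There, after unwinding the definitions in the constant-group case \S\ref{sss:fz}, the claim is transparent: for $g, h \in R^{W}_{+}$, the Künneth expansion of $\ev^*(gh) = \ev^*(g) \cdot \ev^*(h) \in \rH^*(X \times \Bun_G)$ exhibits the slant product $\ev^*(gh)/z$ as a linear combination of products of two classes in $\rH^{>0}(\Bun_G^\omega)$, hence in $F^2_{\aug}\rH^*(\Bun_G^\omega)$. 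The general case then descends from this by $\Out(\GG)^{\natural}$-equivariance, since the augmentation filtration on $R^{W}$ is preserved by (pinned) outer automorphisms of $\GG$.
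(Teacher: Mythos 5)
Your overall reduction strategy matches the paper's: uniqueness follows from surjectivity of the vertical arrows, the kernel of the left arrow is (the image of) $\rH_*(X; F^2_{\aug}\rR\pi_*\Qll{\BB G})_+$, and the content is to show this is carried into $F^2_{\aug}\rH^*(\Bun_G^\omega)$ by $\frev^{\om}_G$, which one checks in the constant-group case via the \'etale cover $X' = \cP_{\Out}$ and then descends. That much is the same route the paper takes.

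However, the step you label ``transparent'' is the only non-formal point in the lemma, and you assert it without proof. Contracting against the diagonal $\Delta_*(z) = \sum_i z'_i \otimes z''_i$ gives $(f_1 f_2)^z = \sum_i f_1^{z'_i} f_2^{z''_i}$, and the assertion that each factor $f_j^{z'_i}$ lies in $\rH^{>0}(\Bun_G^\omega)$ requires the degree inequality $\deg(f_j) > |z'_i|$. This inequality is where the standing hypothesis that $G$ is semisimple does all the work: semisimplicity forces $R^W_+$ to begin in cohomological degree $\geq 4$ (since $\xch(T)_{\Qlbar}^W = 0$), while $|z'_i| \leq 2$ because $X$ is a curve. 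You never invoke semisimplicity, and without it the claim is simply false --- for $\GG = \Gm$, $R^W_+$ starts in degree $2$ and $f^{[X]}$ with $\deg f = 2$ lands in $\rH^0$, which is exactly why the paper treats the torus case by a separate argument. So there is a genuine gap: the degree bookkeeping underpinning the inclusion $\rH^{>0} \cdot \rH^{>0} \subset F^2_{\aug}$ must be made explicit, and flagged as the place where the semisimple hypothesis enters.

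A secondary, more minor point: ``pulling back to the \'etale cover'' is not by itself a construction relating $\rH^*(\Bun_G)$ (for $G/X$) to $\rH^*(\Bun_{\GG, X'})$; the paper builds an explicit comparison map $\ph\colon \Bun_G \to \Bun_{\GG_{\ad}, X'}$ via pullback along $\nu$ followed by pushout to $\GG_{\ad}$, and uses compatibility of augmentation filtrations under $\ph^*$ together with surjectivity of the quotient to $\Out(\GG)$-coinvariants. Your sketch gets the idea across but elides this construction; it is worth spelling out, since one cannot literally base-change the stack $\Bun_G$ along $\nu$.
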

\begin{proof}
Uniqueness is clear because the vertical maps are surjective. It remains to check that
\begin{equation}\label{F2}
\mbox{$\frev^{\om}_{G}$ sends $\rH_{*}(X; F^{2}_{\aug}\rR \pi_* \Qll{\BB G})_{+}$ to $F^{2}_{\aug}\rH^{*}(\Bun_G^\omega)$.}
\end{equation}
For this we compare $G$ with the constant group situation.

Let $\nu: X'=\cP_{\Out}\to X$ as in \S\ref{sss:splitting}, so that $\nu^{*}\BB G_{\ad}$ is canonically isomorphic to $\BB \GG_{\ad}\times X'$. This induces a canonical map $\ph: \Bun_{G}\to \Bun_{\GG_{\ad}, X'}$ by pulling back $G$-bundles along $\nu$ and pushout along $\BB G \to \BB G_{\ad} \cong \BB\GG_{\ad}$. We have a commutative diagram 
\begin{equation}\label{eq:AB-diag-1}
\xymatrix{X' \times \Bun_G \ar[r]^-{\wt\ev}  \ar[d]^{\id_{X'}\times \ph} & \nu^{*}\BB G \ar[d] \\
X' \times \Bun_{\GG_{\ad},X'}\ar[r]^-{\ev'}  &  \BB \GG_{\ad}
}
\end{equation}
Here $\wt\ev$ is the base change of $\ev: X \times \Bun_G \to \BB G$ along $\nu$. Applying the discussion of the constant group case in \S \ref{sss:fz}, \eqref{eq:AB-diag-1} induces a commutative diagram
\begin{equation*}
\xymatrix{\rH_{*}(X')\ot R^{W} \ar[r] ^-{\frev^{\om'}_{\GG_{\ad},X'}} \ar@{->>}[d]_{\a} & \rH^{>0}(\Bun^{\om'}_{\GG_{\ad},X'})\ar[d]^{\ph^{*}}\\
\rH_{*}(X; \rR \pi_* \Qll{\BB G}) \ar[r] ^-{\frev^{\om}_{G}} & \rH^{>0}(\Bun^{\om}_G)
}
\end{equation*}
Here the map $\a$ is the canonical quotient to $\Out(\GG)$-coinvariants.  Since $\ph^{*}$ respects the augmentation filtrations, and $\a$ maps $\rH_{*}(X')\ot F^{2}_{\aug}R^{W}$ surjectively to $\rH_{*}(X; F^{2}_{\aug}\rR \pi_* \Qll{\BB G})$, it suffices to check that \eqref{F2} holds for the constant group $\GG\times X'$ over $X'$. 

We henceforth rename $X'$ to $X$ and let $G=\GG\times X$, where $\GG$ is semisimple. In this case, \eqref{F2} is equivalent to: for any $f_{1}, f_{2}\in R^{W}_{+}$ and  any $z\in \homog{*}{X}$, we have $(f_{1}f_{2})^{z}\in F^{2}_{\aug}\rH^{*}(\Bun_G^\omega)$. Indeed, let $\D_{*}(z)\in \homog{*}{X\times X}\cong \homog{*}{X}\ot \homog{*}{X}$ be the image of $z$ under the diagonal map for $X$. Write $\D_{*}(z)=\sum_{i} z'_{i}\ot z''_{i}$ for $z'_{i}, z''_{i}\in \homog{*}{X}$. Then
\begin{equation}\label{contraction f1f2}
(f_{1}f_{2})^{z}=\sum_{i}f_{1}^{z'_{i}}f_{2}^{z''_{i}}.
\end{equation}
Since $G$ is semisimple, $f_{1},f_{2}$ have cohomological degrees $\ge4$ while  $z'_{i},z''_{i}$ have homological degrees $\le 2$, so $f_{1}^{z'_{i}}$ and $f_{2}^{z''_{i}}$ both have positive degrees, hence belong to $\cohog{>0}{\Bun_{G}^{\om}}$. Therefore $(f_{1}f_{2})^{z}\in F^{2}_{\aug}\cohog{*}{\Bun_{G}^{\om}}$. This proves \eqref{F2} and finishes the proof.
\end{proof}

Now we can state the Atiyah-Bott formula for the cohomology of $\Bun_{G}$.

\begin{theorem}\label{thm: atiyah-bott formula}
For any $\om\in \pi_{0}(\Bun_{G})$, the $\Qlbar$-algebra map
\begin{equation}\label{eq: atiyah-bott formula}
\Sym_{\Qlbar}^{\bl} (\rH_*(X; \VV_G)_+) \rightarrow \Gr^{\bl}_{\aug} \rH^*(\Bun_G^\omega)
\end{equation}
induced by \eqref{eq: AB map cohomology 2} is an isomorphism.  
\end{theorem}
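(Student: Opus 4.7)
My strategy is to propagate the Atiyah--Bott formula for constant simply-connected semisimple groups, proved by Gaitsgory--Lurie \cite{GL14}, through a sequence of reductions: first via central isogenies to separate the cases of tori and semisimple groups, then for the semisimple part via a further central isogeny to simply-connected semisimple groups, and finally by descent along the cover $\nu \co X' = \cP_{\Out} \to X$ of \S\ref{sss:splitting} to reduce to the constant case.

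The central isogeny reduction rests on the following: for any central isogeny $\theta \co G \to G'$ over $X$ with finite multiplicative-type kernel $\Gamma$, the induced morphism $\theta_{\Bun} \co \Bun_G^{\om} \to \Bun_{G'}^{\om'}$ is a gerbe banded by the classifying stack of $\Gamma$, so pullback on $\Qlbar$-cohomology is an isomorphism, compatibly with the augmentation filtration, analogously to Lemma \ref{lem: classifying cohomology pullback}. Combined with Lemma \ref{l:V under isog} and the isogeny-compatibility of $\frev^{\om}_{\aug, G}$ built into its construction, this reduces the theorem to the cases where $G$ is a torus or $G$ is simply-connected semisimple.

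For the torus case $G = A$, I compute directly: after descending along a cover trivializing the cocharacter local system of $A$, it suffices to treat a constant torus $A = A_0 \times X$, in which case $\Bun_{A_0}^{\om}$ is isomorphic (up to a finite \'etale cover) to a product of $\BB A_0$ with a torsor under $\Jac(X) \otimes \xcoch(A_0)$. The K\"unneth formula yields $\rH^*(\Bun_{A_0}^{\om}) \cong \Sym^{\bl}(\rH_*(X; \VV_{A_0})_+)$, and the fact that this matches \eqref{eq: atiyah-bott formula} can be verified by tracing through the construction of $\frev^{\om}_{\aug, A_0}$.

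For $G$ simply-connected semisimple but possibly twisted, the pullback of $G$ along $\nu$ becomes the constant group $\GG \times X'$ with $\GG$ simply-connected semisimple, so the constant simply-connected case of \cite{GL14} applies to $\GG \times X'$. The comparison diagram \eqref{eq:AB-diag-1}, applied to the full source and target of \eqref{eq: atiyah-bott formula}, identifies the map for $G$ with the $\Out(\GG)^{\na}$-invariants of the analogous map for $\GG \times X'$, summed over components matched via \eqref{eq:bunG_connected_components}. Since $\Out(\GG)^{\na}$ is finite and $\Qlbar$ has characteristic zero, taking invariants is exact and commutes with $\Sym^{\bl}$, so the isomorphism descends to $G$. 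The main obstacle I anticipate is this final descent: one must precisely match $\Bun_G^{\om}$ with an $\Out(\GG)^{\na}$-stable union of components of $\Bun_{\GG, X'}$, and verify that the evaluation maps, augmentation filtrations, and symmetric algebra constructions are all $\Out(\GG)^{\na}$-equivariantly compatible. This will likely proceed via a Hochschild--Serre argument for the $\Out(\GG)^{\na}$-cover, which degenerates because the relevant group has order coprime to $\ell$ so that higher group cohomology vanishes on $\Qlbar$-coefficients.
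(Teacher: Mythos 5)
Your overall reduction strategy—central isogeny, then tori plus semisimple, then simply connected—matches the paper's in outline, but the heart of the isogeny step is not what you describe, and the gap is serious enough to sink the argument as written.

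The map $\theta_{\Bun} \co \Bun_G \to \Bun_{G'}$ is \emph{not} a gerbe banded by $\BB\Gamma$, and it is not even surjective in general. Over a $G'$-bundle $\cF'$, the groupoid of lifts to a $G$-bundle is either empty (there is an obstruction class in $H^2(X;\Gamma)$) or a torsor under $\Bun_\Gamma(k)$, the groupoid of $\Gamma$-torsors on $X$. The point is that $\Bun_\Gamma$ is not $\BB\Gamma$: it has nontrivial $\pi_0$. For instance, for $\Gamma = \mu_n$, $\pi_0(\Bun_{\mu_n})$ contains the $n$-torsion of $\Pic_X(k)$, which has $n^{2g}$ elements. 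So the fibers of $\theta_{\Bun}$ over its image are torsors under a finite abelian group stack with a large finite $\pi_0$, and one cannot conclude the pullback on $\Qlbar$-cohomology is an isomorphism by the ``gerbe with finite band'' argument that works for $\BB G \to \BB G'$. Concretely, for $\SL_2 \to \PGL_2$, the action of $\Bun_{\mu_2}$ on $\Bun_{\SL_2}^0$ is tensoring by $2$-torsion line bundles, and showing this acts trivially on $\rH^*(\Bun_{\SL_2}^0)$ is a nontrivial statement. The paper proves it (Lemma \ref{lem: trivial action}) by invoking the Atiyah--Bott formula for $G$ to know that $\rH^*(\Bun_G^0)$ is generated by K\"unneth components of $G$-Chern classes, which are insensitive to $\Gamma$-twisting (Corollary \ref{cor: twisting Chern classes}). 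That is why the propagation is strictly one-directional—you need the theorem for $G$ as input to conclude it for $G'$—which your gerbe heuristic obscures. Additionally, before any of this the paper has to establish that $\Bun_G^0 \to \Bun_{G'}^0$ is even surjective (Proposition \ref{prop:map-bun-surjective}), which involves killing the $H^2$ obstruction via factoring through a group with induced-torus kernel. Your proposal has no analogue of this, and no treatment of the non-neutral components (the paper handles these via pure inner twisting, Lemma \ref{lem: propagate pure inner twist}).

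Two smaller mismatches. First, the descent step you propose for the simply-connected case is unnecessary: the Gaitsgory--Lurie-based results cited in \S\ref{sssec: AB simply connected} already cover nonconstant simply-connected group schemes $G\to X$, so no reduction to the constant case is needed there. Second, for general tori the paper does not ``descend along a trivializing cover'' directly; it goes through Weil restriction and a norm-map argument to produce an isogeny $T' \to T \times S$ with $T'$ induced (\S\ref{sss:induced tori}), precisely so it can reuse the isogeny propagation machinery rather than confront coinvariants head-on.
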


\subsubsection{The constant group case}\label{sss:concrete AB} When $G=G_{0}\times X$, choose a basis of $\homog{*}{X}$:
\begin{equation*}
z_{0}=1\in \homog{0}{X}, \quad z_{1},\cdots,z_{2g}\in \homog{1}{X} \mbox{ and } z_{2g+1}=[X]\in \homog{2}{X}.
\end{equation*}
Choose also homogeneous free generators $f_{1},\cdots, f_{n}$ of $R^{W}$ as a polynomial ring. Then Theorem \ref{thm: atiyah-bott formula} implies that $\cohog{*}{\Bun_{G}^{\om}}$ is the polynomial ring over $\ol \Ql$ with free generators
\begin{equation}\label{eq:constant-AB-generators}
f_{i}^{z_{j}}|_{\Bun_{G}^{\om}}, \quad 0\le j\le 2g+1, 1\le i\le n, \mbox{ such that $(\deg f_{i},|z_{j}|)\ne (2,2)$.}
\end{equation}
(Here $|z_j|$ is the homological degree of $z_j$.) In particular, we get canonical isomorphisms between the cohomology rings of different components of $\Bun_{G}$, under which the classes with the same name $f^{z}$ (in the constant group case) correspond to each other.

\begin{remark} 
When $X$ and $G$ are defined over $\F_{q}$, $\Bun_G$ is also defined over $\F_q$.  The Frobenius $\Fr$ acts on $\pi_0(\Bun_{G,\ov\F_{q}})$ and induces a map $\Bun_{G,\ol\F_{q}}^\omega \rightarrow \Bun_{G,\ol\F_{q}}^{\Fr(\omega)}$. From the construction of \eqref{eq: atiyah-bott formula}, we obtain a commutative diagram 
\begin{equation}\label{eq: AB formula Frob equivariant}
\begin{tikzcd}
\Sym_{\Qlbar}^{\bl} (\rH_*(X_{\ol \F_{q}}; \VV_G)_+) \ar[r] \ar[d, "\Fr^*"] &  \Gr_{\aug}^{\bl} \rH^*(\Bun_{G,\ov\F_{q}}^{\Fr(\omega)})  \ar[d, "\Fr^*"] \\
\Sym_{\Qlbar}^{\bl} (\rH_*(X_{\ol \F_{q}}; \VV_G)_+) \ar[r]  &  \Gr_{\aug}^{\bl} \rH^*(\Bun_{G,\ol\F_{q}}^\omega) 
\end{tikzcd}
\end{equation}
In particular, if $\Fr(\omega) = \omega$ then the isomorphism \eqref{eq: atiyah-bott formula} is automatically $\Fr$-equivariant. 
\end{remark}

\subsubsection{Prior results}\label{sssec:AB-prior-results} There have been many partial results towards Theorem \ref{thm: atiyah-bott formula} in the literature. The formula is inspired by a formula for closely related moduli spaces of $G$-bundles on a Riemann surface, for split semisimple $G$, due to Atiyah--Bott \cite{AB83}. The analogous formula for $\Bun_G$ on a complex algebraic curve, again for split semisimple $G$, was established by Teleman in \cite{Tel98}. 

Turning towards $X/\F_q$, in the case where $G \rightarrow X$ is a constant semisimple group scheme (i.e., pulled back from a semisimple group $G_0/\F_q$), Theorem \ref{thm: atiyah-bott formula} is proved in \cite{HS10}. In the case where $G \rightarrow X$ has simply connected semisimple generic fiber (which implies that $\Bun_G$ is geometrically connected), it is proved in \cite{Gai19} based on the work of Gaitsgory--Lurie \cite{GL14}; the same result appears in \cite[Theorem 6.1.3]{Ho21}. We will bootstrap from this last result to prove Theorem \ref{thm: atiyah-bott formula} in the general reductive case.

\subsection{Propagation of the Atiyah--Bott formula}

We prove that Theorem \ref{thm: atiyah-bott formula} is stable under certain operations. 
We denote by $\Bun_G^0$ the neutral component (i.e., the connected component of $\Bun_{G}$ containing the trivial $G$-bundle).

\subsubsection{Products}\label{sssec: AB products} Let $G_{1}, G_{2}$ be reductive group schemes over $X$, and $G=G_{1}\times _{X}G_{2}$. Then Theorem \ref{thm: atiyah-bott formula} holds for both $G_1$ and $G_2$, if and only if it holds for $G$. Indeed, we have $\BB G\cong (\BB G_1) \times_X (\BB G_2)$ and $\Bun_{G} \cong \Bun_{G_1} \times \Bun_{G_2}$. Thus both sides of \eqref{eq: atiyah-bott formula} for $G$ can be written canonically as the tensor product of their counterparts for $G_{1}$ and $G_{2}$.

\subsubsection{Central isogenies}\label{sss:AB central isog}
Let $\theta \colon G \rightarrow G'$ be a central isogeny, with kernel $\Gamma$. It induces a map $v: \Bun_G \rightarrow \Bun_{G'}$.

\begin{prop}\label{prop:map-bun-surjective}
The map $v^0\colon\Bun_G^0 \rightarrow \Bun_{G'}^0$ is surjective.
\end{prop}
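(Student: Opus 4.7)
The plan is to interpret the statement via the non-abelian fppf cohomology sequence associated to the central extension $1 \to \Gamma \to G \to G' \to 1$, or equivalently via the fiber sequence of classifying stacks $\BB\Gamma \to \BB G \to \BB G'$. Applying $\Maps(X, -)$ yields a fiber sequence of Artin stacks $\Bun_\Gamma \to \Bun_G \xr{v} \Bun_{G'}$, whose long exact sequence on connected components takes the form
\[
\pi_0(\Bun_\Gamma) \xr{\bar\alpha} \pi_0(\Bun_G) \xr{\pi_0(v)} \pi_0(\Bun_{G'}) \xr{\delta} \rH^2_{\fppf}(X, \Gamma),
\]
where $\bar\alpha([\cT]) = [\cT \times^\Gamma G]$ and $\delta$ is the connecting obstruction (well-defined as a pointed map because $\Gamma$ is abelian).

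The first step is to show that $\delta$ vanishes on $\Bun_{G'}^0$. The universal $G'$-bundle over $X \times \Bun_{G'}$ produces a universal obstruction class landing in $\rH^2_{\fppf}(X \times \Bun_{G'}, \Gamma)$, and pointwise restriction shows $\delta$ is locally constant on $\Bun_{G'}$ (since $\rH^2_{\fppf}(X, \Gamma)$ is discrete). Because the trivial $G'$-bundle lifts to the trivial $G$-bundle, $\delta(0)=0$, so $\delta$ kills $\Bun_{G'}^0$. Consequently, every $\cF' \in \Bun_{G'}^0(k)$ admits some lift $\cF \in \Bun_G(k)$ with $v(\cF) \cong \cF'$.

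The second step is to correct the component of this lift using the twisting action of the commutative group stack $\Bun_\Gamma$ on $\Bun_G$ given by $(\cT, \cF) \mapsto (\cT \times \cF)/\Gamma$ (which makes sense because $\Gamma$ is central in $G$). Since $\Gamma$ acts trivially on $G'$, this action preserves $v$, and on $\pi_0$ it translates a class $\omega \in \pi_0(\Bun_G)$ by $\bar\alpha([\cT])$. Now if $\cF$ above lies in component $\omega$, then $\pi_0(v)(\omega) = [\cF'] = 0$, so $\omega \in \ker(\pi_0(v))$; by exactness we can write $\omega = \bar\alpha(\tau)$ for some $\tau \in \pi_0(\Bun_\Gamma)$. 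Picking a representative $\cT$ in component $\tau$, the twist $\cT^{-1} \cdot \cF$ is another lift of $\cF'$, now sitting in component $\omega - \bar\alpha(\tau) = 0$, so it belongs to $\Bun_G^0$.

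The main obstacle is rigorously justifying the exactness of the $\pi_0$-sequence at $\pi_0(\Bun_G)$ in the Artin-stack setting. The cleanest way to bypass the abstract homotopy theory is to verify it directly via flatness: for $\cF'$ in the image of $v$, any two lifts differ by a unique $\Gamma$-torsor (this is where centrality of $\Gamma$ is essential), so $v^{-1}(\cF')$ is a $\Bun_\Gamma$-torsor. In particular the $\Bun_\Gamma$-action on the nonempty fiber is transitive, which is exactly what is needed for the component-correction argument above to go through, and which yields the required exactness at $\pi_0(\Bun_G)$.
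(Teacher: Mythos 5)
Your proposal takes a genuinely different route from the paper, via the connecting obstruction of the nonabelian fppf cohomology sequence; the paper deliberately factors the isogeny through an intermediate group $G_1$ with induced--torus kernel (Lemma~\ref{lem:factorization}) precisely to sidestep the need to analyze the $H^2$--obstruction.

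The serious gap is in your first step. You assert that $\delta$ is locally constant on $\Bun_{G'}$ ``since $\rH^2_{\fppf}(X,\Gamma)$ is discrete,'' but that parenthetical is not an argument. You are using a relative fppf $H^2$ with coefficients in a finite flat group scheme $\Gamma$ of multiplicative type, and in positive characteristic $\Gamma$ need not be \'etale (e.g.\ $\mu_p$), so proper base change for $R^2 p_*\Gamma$ is not automatic. Even granting that the answer over a geometric fiber is a finite group, you still need a specialization/base-change statement identifying these finite groups along a connected component of $\Bun_{G'}$ in a way compatible with the universal obstruction class; that requires a real input such as flat Poincar\'e duality for the curve, which you neither cite nor prove. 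The paper avoids this entirely: after inserting $G_1$, the lifting obstruction lives in $H^2(X_\kappa, T)$ for an induced torus $T$, and this vanishes by Shapiro plus $\mathrm{Br}(X_\kappa)=0$, with no local-constancy question at all.

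There is also a gap in the component-correction step. Transitivity of $\Bun_\Gamma$ on a single fiber $v^{-1}(\cF')$ does not imply $\ker(\pi_0(v)) = \operatorname{image}(\bar\alpha)$; the image of $\bar\alpha$ sits in components of $v^{-1}(\text{trivial})$, and comparing those with the component of your lift $\cF$ (which lives over a different point $\cF'$) requires moving across the base. The correct mechanism is exactly what the paper uses in its last paragraph: $v'$ being a $\Bun_\Gamma$--torsor is \emph{finite flat}, so its restriction to $\Bun_G^0$ has open and closed image, hence is surjective. If you replace your ``exactness'' paragraph by this flatness argument, the second gap disappears, but you still need to supply the base-change input for the first one (or adopt the paper's intermediate $G_1$ device, at which point the two proofs converge).
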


We record some preliminary Lemmas which will be used in the proof. 

\begin{lemma}\label{lem:torsors-fiber} Let $f \co G_1 \rightarrow G'$ be any homomorphism of groups over $S$ whose kernel $F := \ker(f)$ is central in $G_1$. (In particular, $F$ is abelian, so that the category $\BB F(S)$ of $F$-torsors on $S$ has a natural abelian group structure in groupoids). Then the fiber of the category $\BB G_1(S)$ over a given $G'$-torsor $\cF' \in \BB G'(S)$, is itself a torsor for $\BB F(S)$. 
\end{lemma}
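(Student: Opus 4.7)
The plan is to exhibit a canonical simply transitive action of the Picard groupoid $\BB F(S)$ on the fiber. First I would construct an action of $\BB F$ on $\BB G_1$ that is compatible with the projection $v \co \BB G_1 \to \BB G'$. Since $F$ is central in $G_1$, the multiplication map $\mu \co F \times_S G_1 \to G_1$ is a group homomorphism, hence induces a morphism $\BB F \times_S \BB G_1 \to \BB G_1$, which I would verify defines a module structure for the group stack $\BB F$. On points this sends $(\cL, \cE)$ to the contracted product $\cL \wedge^F \cE$, where $F$ acts on $\cE$ through its central inclusion into $G_1$; centrality is precisely what makes the residual right action of $G_1$ turn this into a $G_1$-torsor. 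Because $F \subset \ker f$, the composition $f\circ \mu$ factors through $\pr_2$, so the action is compatible with $v$ and thus restricts to an action of $\BB F(S)$ on the fiber $v^{-1}(\cF')$.

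Next I would verify simple transitivity. Given two objects $(\cE_1, \phi_1)$ and $(\cE_2, \phi_2)$ of the fiber, where $\phi_i \co f_*\cE_i \isom \cF'$, define the sheaf $\cL$ on $S$ whose sections over $U \to S$ are the $G_1$-equivariant isomorphisms $\psi \co \cE_1|_U \isom \cE_2|_U$ satisfying $\phi_2 \circ f_*\psi = \phi_1$. Working fppf-locally, both $\cE_i$ can be trivialized compatibly with their $\phi_i$, and $\cL$ becomes identified with the preimage of $e \in G'$ under $f$, which is $F$ by definition; hence $\cL$ is an $F$-torsor on $S$, and evaluation of $\psi$ on the trivialization exhibits a canonical isomorphism $\cL \wedge^F \cE_1 \isom \cE_2$ in the fiber. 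Applied to the case $(\cE_1, \phi_1) = (\cE_2, \phi_2)$, the same construction identifies the group of $G_1$-automorphisms of a single lift fixing $\phi$ with the group of global sections of $F$.

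The main obstacle will be carrying the full $2$-categorical structure through these identifications, i.e., making sure the equivalence between the fiber and a $\BB F(S)$-torsor respects \emph{morphisms} and not only isomorphism classes of objects. This reduces to the same fppf-local comparison, and ultimately rests on the fact that centrality of $F$ yields a canonical embedding of the constant group sheaf $F$ into $\underline{\Aut}_{G_1}(\cE)$ for every $G_1$-torsor $\cE$; once this is in place, the action and the candidate equivalence are manifestly compatible on both objects and arrows, completing the proof.
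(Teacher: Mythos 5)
Your proposal is correct and follows essentially the same route as the paper's proof: construct the $\BB F$-action on $\BB G_1$ via the multiplication map $F \times_S G_1 \to G_1$ (the paper takes this as understood when it writes ``Clearly $F$ acts on it''), and verify simple transitivity by identifying the sheaf of compatible isomorphisms between two lifts as an $F$-torsor, reducing fppf-locally to trivial torsors. Your write-up is somewhat more explicit about the $2$-categorical bookkeeping, but the underlying argument is the same.
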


\begin{proof}
Let $\cF_0$ be a $G_1$-torsor equipped with an isomorphism $\tau_0 \co \cF_0 \times^{G_1} G' \cong \cF'$. If $\cF$ is another $G_1$-torsor equipped with an isomorphism $\tau \co \cF \times^{G_1} G' \cong \cF'$, then we claim that the sheaf $\ul{\mrm{Isom}}((\cF_0, \tau_0), (\cF, \tau))$ of isomorphisms from $\cF_0$ to $\cF$ intertwining the identifications $\tau_0$ and $\tau$, is an $F$-torsor. Clearly $F$ acts on it; to check that it is an $F$-torsor, we can work locally, thus reducing to the case where $\cF, \cF_0$ are trivial, where the assertion is clear. 
\end{proof}

\begin{lemma}\label{lem:factorization}
Let $f \co G \rightarrow G'$ be a central isogeny of reductive groups over a curve. Then there is a factorization $G \rightarrow G_1 \rightarrow G'$ such that $\ker(G_1 \rightarrow G')$ is an induced torus.
\end{lemma}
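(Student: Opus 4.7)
Plan: The desired factorization will be constructed as $G_1 := (G\times_X T)/\Gamma$, where $\Gamma := \ker(f)$ and $T$ is an induced torus containing $\Gamma$, with $\Gamma$ embedded anti-diagonally.

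Since $\Gamma$ is a finite central subgroup of the reductive group scheme $G/X$, it is a finite group scheme of multiplicative type. The key step is to embed $\Gamma$ into an induced torus $T \to X$, which I would do via Cartier duality. In the étale case (which is what is needed for applications), the dual $\Gamma^\vee$ corresponds to a finite continuous $\pi_1(X,\bar x)$-module $M$. Choosing a normal open subgroup $H \trianglelefteq \pi_1(X,\bar x)$ of finite index through which the action on $M$ factors, and lifting a finite generating set of $M$, one obtains a surjection of $\pi_1(X,\bar x)$-modules
\[
\Z[\pi_1(X,\bar x)/H]^{\oplus k} \twoheadrightarrow M.
\]
The left-hand side is the character lattice of $T := (\Res_{Y/X}\Gm)^k$, where $Y \to X$ is the finite étale cover corresponding to $H$, so $T$ is an induced torus by construction. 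Dualizing yields the desired embedding $\iota \co \Gamma \hookrightarrow T$.

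Now set $G_1 := (G \times_X T)/\Gamma$ with $\Gamma$ acting via $\gamma \cdot (g,t) = (g\gamma, \iota(\gamma)^{-1} t)$; since $\Gamma$ is central in $G \times_X T$, the quotient $G_1$ is a connected reductive group scheme over $X$. The natural maps $G \to G_1$, $g \mapsto [g,1]$, and $G_1 \to G'$, $[g,t]\mapsto f(g)$, are well-defined group-scheme homomorphisms whose composition equals $f$. An element $[g,t]$ lies in the kernel of $G_1 \to G'$ iff $g \in \Gamma$, in which case the relation $[g,t] = [1,\iota(g)\cdot t]$ shows that this kernel is the image of the (easily seen to be injective) map $T \to G_1$, $t \mapsto [1,t]$. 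Hence we obtain a short exact sequence $1 \to T \to G_1 \to G' \to 1$ with $T$ an induced torus, as desired.

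The only non-formal step is the embedding of $\Gamma$ into an induced torus; the contracted-product construction and verification of the sequence are then purely formal. The potential subtlety lies in handling the case where $\Gamma$ has an inseparable part (when $\ch k$ divides $|\Gamma|$); in that case one replaces continuous $\pi_1$-modules by fppf sheaves of finite flat multiplicative-type group schemes, but the same recipe of lifting generators of the Cartier dual produces an induced torus containing $\Gamma$.
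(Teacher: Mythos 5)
Your proof is correct and takes essentially the same approach as the paper: embed $\Gamma = \ker(f)$ into an induced torus $T$ via Cartier duality and a surjection from a permutation $\pi_1$-module, then form $G_1 := (G \times_X T)/\Gamma$ with the anti-diagonal action and check that $\ker(G_1 \to G') \cong T$. The only minor remark is that your closing caveat about the inseparable case is unnecessary: since $\Gamma$ is of multiplicative type, its Cartier dual is automatically finite \'etale, so the permutation-module argument applies directly without any modification.
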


\begin{proof}
Let $K := \ker(f)$, a finite flat group scheme over $X_{\ol k}$ of multiplicative type. Then the Cartier dual $K^D = \ul{\Hom}(K, \GG_m)$ is a finite \'etale group scheme over $X_{\ol k}$. By choosing a surjection onto $K^D$ from a permutation module for $\pi_1(X_{\ol k})$ and then forming Cartier duals, we can embed $K$ in an induced torus $T$. Now we let $G_1 := (G \times_X T)/K$, where $K$ acts diagonally as 
\[
k \cdot (g,t) = (gk, k^{-1} t). 
\]
We then have a map $G_1 \rightarrow G'$ given by $(g, t) \mapsto f(g)$. Clearly the kernel is $(K \times_X T)/K \cong T$. 
\end{proof}

\begin{proof}[Proof of Proposition \ref{prop:map-bun-surjective}] In the proof, we base change the situation to $\ov k$ without changing notation.

Choose a factorization $G \rightarrow G_1 \rightarrow G'$ as in Lemma \ref{lem:factorization}. Let $T := \ker(G_1 \rightarrow G')$, which by assumption is an induced torus. First we claim that the map $\Bun_{G_1} \rightarrow \Bun_{G'}$ is surjective. It suffices to show this on geometric points, so let $\kappa$ be a separably closed field over $k$. Then the map $\Bun_{G_1}(\kappa) \rightarrow \Bun_{G'}(\kappa)$ is identified with 
\[
H^1(X_\kappa; G_1) \rightarrow H^1(X_\kappa; G').
\]
The obstruction classes for this map lie in $H^2(X_\kappa; T)$, which vanishes because $T$ is an induced torus (by Shapiro's Lemma and the vanishing of Brauer groups of curves over separably closed fields \cite[Corollary 5.8]{Gro68}). 
This shows that $\Bun_{G_1} \surj \Bun_{G'}$. Lemma \ref{lem:torsors-fiber} shows that this map is a $\Bun_T$-torsor, so we obtain an isomorphism $\Bun_{G_1}/\Bun_T \xrightarrow{\sim} \Bun_{G'}$. Also, we observe that the map $\Bun_{G_1}^0 \rightarrow \Bun_{G'}^0$ is surjective. Indeed, the exact sequence $\pi_1^{\alg}(T/X) \rightarrow \pi_1^{\alg}(G_1/X) \rightarrow \pi_1^{\alg}(G'/X) \rightarrow 0$ plus \eqref{eq:bunG_connected_components} show that for a geometric point of $\Bun_{G'}^0$, any lift to $\Bun_{G_1}$ can be translated to $\Bun_{G_1}^0$ by the action of $\Bun_T$. 

Let $D := G_1/G$, so that $G \rightarrow G_1 \rightarrow D$ is an exact sequence. Then we have a Cartesian square
\begin{equation}\label{eq:cart-sq-1}
\begin{tikzcd}
\Bun_G \ar[r] \ar[d]  & \Bun_{G_1} \ar[d] \\
\pt \ar[r] & \Bun_{D} 
\end{tikzcd}
\end{equation}
where the bottom horizontal arrow is the map induced by trivial $D$-bundle. Let $\Bun_G^\na\subset \Bun_G$ be the preimage of $\Bun^0_{G_1}$ under $\Bun_G\to \Bun_{G_1}$. Then $\Bun_G^\na$ is a union of connected components of $\Bun_G$. We have a Cartesian diagram
\begin{equation*}
\begin{tikzcd}
\Bun_G^\na \ar[r] \ar[d]  & \Bun_{G_1}^0 \ar[d, "d"] \\
\pt \ar[r] & \Bun_{D}^0 
\end{tikzcd}
\end{equation*}
The map $d$ is equivariant under the translation actions of $\Bun_T^0$. Since $T\to D$ is an isogeny, the induced map $\Bun_T^0\to \Bun_D^0$ is surjective, hence the translation action of $\Bun_T^0$ on $\Bun_D^0$ is  transitively. This shows that $d$ is surjective, and that $\Bun_G^\na\to \Bun^0_{G_1}/\Bun_T^0$ is surjective. We observed earlier that $\Bun_{G_1}^0 \rightarrow \Bun_{G'}^0$ is surjective; since this map factors through $\Bun_{G_1}^0/\Bun_{T}^0$, we deduce that $\Bun_{G_1}^0/\Bun_{T}^0 \surj \Bun_{G'}^0$ is also surjective. Composing this with the surjection $\Bun_G^\na \surj \Bun_{G_1}^0/\Bun_{T}^0$, we conclude that
\begin{equation}\label{Bun na surj}
    \mbox{The map $v^\na: \Bun^\na_G\to \Bun^0_{G'}$ is surjective.}
\end{equation}
In particular, we learn that the map $v' \co v^{-1}(\Bun^0_{G'}) \rightarrow \Bun^0_{G'}$ is surjective. By Lemma \ref{lem:torsors-fiber}, $v'$ is a torsor for the abelian group stack $\Bun_{\Gamma}$. In particular, it is flat. At the same time, we see that $\Bun_{\Gamma}$ is finite by presenting $\Gamma$ as the kernel of an isogeny of tori. Therefore, $v'$ is also finite. Restricting $v'$ to the connected component $\Bun_G^0$, we see that the map $v^0\colon \Bun^0_G\rightarrow\Bun^0_{G'}$ is also finite and flat. This implies that the image of $v^0$ is both open and closed, and clearly non-empty, hence must be everything.

\end{proof}

Return to the central isogeny $\theta \co G \rightarrow G'$ with kernel $\Gamma$. Note that $\Bun_\Gamma$ is an abelian group stack. By the constructions of \S \ref{ssec: classifying space preliminaries}, $\Bun_\Gamma$ acts on $\Bun_G$ such that the map $\Bun_G\to \Bun_{G'}$ induced by $\theta$ is $\Bun_\Gamma$-invariant. Let $\Bun_{\Gamma}^{\na}$ be the subgroup stack of $\Bun_{\Gamma}$ that preserves the neutral component $\Bun^{0}_{G}$, then we have a canonical map
\begin{equation}\label{Bun G 0 to G'}
\Bun_G^0/\Bun^{\na}_{\Gamma}\to \Bun_{G'}^0.
\end{equation}

\begin{cor} 
The map \eqref{Bun G 0 to G'} is an isomorphism. 
\end{cor}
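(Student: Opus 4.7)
The plan is to upgrade the two main facts already established in the proof of Proposition \ref{prop:map-bun-surjective}: (a) the map $v'\co v^{-1}(\Bun_{G'}^0)\to \Bun_{G'}^0$ is a $\Bun_{\Gamma}$-torsor (Lemma \ref{lem:torsors-fiber} applied to $\theta$), and (b) $v^0\co \Bun_G^0\to \Bun_{G'}^0$ is surjective. From these I expect to deduce that $v^0$ is in fact a $\Bun_{\Gamma}^{\na}$-torsor, which immediately gives the asserted isomorphism $\Bun_G^0/\Bun_{\Gamma}^{\na}\isom \Bun_{G'}^0$.

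First I would note that $\Bun_G^0$ is a connected component of $v^{-1}(\Bun_{G'}^0)$: since $\Bun_{G'}^0$ is open and closed in $\Bun_{G'}$, its preimage $v^{-1}(\Bun_{G'}^0)$ is open and closed in $\Bun_G$, and $\Bun_G^0$ is contained in it because $v$ sends the neutral component to the neutral component. Next, fix a geometric point $\cF'\in \Bun_{G'}^0$ and consider the fiber $F_{\cF'}\co= v^{-1}(\cF')\cap v^{-1}(\Bun_{G'}^0)$, which by (a) is a $\Bun_{\Gamma}$-torsor. I would then verify that
\[
F_{\cF'}\cap \Bun_G^0 \text{ is a $\Bun_{\Gamma}^{\na}$-torsor.}
\]
Non-emptiness is exactly the surjectivity in (b). Freeness of the $\Bun_{\Gamma}^{\na}$-action is inherited from the $\Bun_{\Gamma}$-torsor structure on $F_{\cF'}$. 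For transitivity, given $x_1,x_2\in F_{\cF'}\cap \Bun_G^0$, there is a unique $\gamma\in \Bun_{\Gamma}$ with $\gamma\cdot x_1=x_2$; the translation automorphism of $\Bun_G$ given by $\gamma$ either carries the connected component $\Bun_G^0$ onto itself or onto a disjoint component, and since $\gamma\cdot x_1=x_2\in \Bun_G^0$, the former must hold. By definition of $\Bun_{\Gamma}^{\na}$, this forces $\gamma\in \Bun_{\Gamma}^{\na}$.

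Once the fiberwise claim is in hand, the conclusion is essentially formal: the inclusion $\Bun_{\Gamma}^{\na}\hookrightarrow \Bun_{\Gamma}$ and the compatible actions give a Cartesian diagram
\[
\xymatrix{\Bun_G^0 \ar[r]\ar[d] & v^{-1}(\Bun_{G'}^0) \ar[d] \\ \Bun_{G'}^0 \ar[r] & \Bun_{G'}^0}
\]
realizing $\Bun_G^0\to \Bun_{G'}^0$ as the pullback of a $\Bun_{\Gamma}$-torsor along the identity, but restricted to the stabilizer subgroup; the fiberwise analysis shows this restriction is a $\Bun_{\Gamma}^{\na}$-torsor, whence $\Bun_G^0/\Bun_{\Gamma}^{\na}\isom \Bun_{G'}^0$.

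The main obstacle I anticipate is bookkeeping at the stacky level: one must be careful that ``$\Bun_{\Gamma}$ acts by automorphisms permuting connected components'' makes rigorous sense (both $\Bun_G$ and $\Bun_{\Gamma}$ are stacks, and $\Bun_{\Gamma}^{\na}$ is defined as a subgroup stack via this action on $\pi_0$). Once one has unwound this so that the argument reduces to a statement about geometric points plus the $\Bun_{\Gamma}$-torsor structure of $v'$, the rest is straightforward.
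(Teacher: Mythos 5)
Your proof takes the same route as the paper: recognize via Lemma \ref{lem:torsors-fiber} that $v$ is a $\Bun_\Gamma$-pseudotorsor, identify $\Bun_\Gamma^{\na}$ as the subgroup acting simply transitively on the fibers of $v^0$ (your transitivity step makes explicit a step the paper leaves implicit), and invoke surjectivity from Proposition \ref{prop:map-bun-surjective}. One cosmetic correction: the square you draw at the end, with the identity map along the bottom, is not Cartesian (that would force $\Bun_G^0 = v^{-1}(\Bun_{G'}^0)$, which fails in general); the clean conclusion is instead that $v^0$, being open and closed in the flat $\Bun_\Gamma$-torsor $v'$, is itself flat, and together with surjectivity and your fiberwise claim this makes $v^0$ an fppf $\Bun_\Gamma^{\na}$-torsor, whence $\Bun_G^0/\Bun_\Gamma^{\na} \isom \Bun_{G'}^0$.
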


\begin{proof}
Lemma \ref{lem:torsors-fiber} shows that $\Bun_G$ is a $\Bun_\Gamma^{\natural}$-torsor over its image. Then it suffices to see that the map $\Bun_G^0 \rightarrow \Bun_{G'}^0$ is surjective, which is Proposition \ref{prop:map-bun-surjective}.
\end{proof}

We have a corresponding classifying map 
\[
f \co \Bun_{G'}^0 \rightarrow \BB \Bun^{\na}_\Gamma.
\]
Each direct image $\rR^i f_* (\Ql)$ is a local system on $\BB \Bun^{\na}_{\Gamma}$ with stalk $\rH^i(\Bun^{0}_G)$, together with the action of $\pi_0(\Bun^{\na}_{\Gamma})$ coming from the action of $\Bun^{\na}_{\Gamma}$ on $\Bun^{0}_{G}$. 

\begin{lemma}\label{lem: trivial action} Suppose that Theorem \ref{thm: atiyah-bott formula} holds for $G$. Then the action of $\pi_0(\Bun^{\na}_{\Gamma})$ on $\rH^i(\Bun_G^0)$ is trivial. 
\end{lemma}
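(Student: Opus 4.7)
The plan is to use the Atiyah--Bott formula (Theorem \ref{thm: atiyah-bott formula}) for $G$ to reduce the claim to the triviality of $\Gamma$-twisting on $G$-characteristic classes, which is Lemma \ref{lem: Gamma action on BG}.

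Fix $\sigma \in \pi_0(\Bun^{\na}_{\Gamma})$, choose a $\Gamma$-torsor $\cL_\sigma$ on $X$ representing $\sigma$, and let $a_\sigma: \Bun_G^0 \to \Bun_G^0$ be the associated twisting automorphism; the action we wish to trivialize on $\rH^i(\Bun_G^0)$ is $a_\sigma^*$. Since we are assuming Theorem \ref{thm: atiyah-bott formula} holds for $G$, the associated graded $\Gr^\bullet_{\aug}\rH^*(\Bun_G^0)$ is a free $\Qlbar$-polynomial algebra on homogeneous generators of strictly positive cohomological degree, namely $\rH_*(X;\VV_G)_+$. I would choose homogeneous lifts of these generators of the form
\[
y_\alpha := \frev_G^0(\tilde v_\alpha), \qquad \tilde v_\alpha \in \rH_*(X; \rR\pi_*\Ql)_+,
\]
using any splitting of the surjection $\rR\pi_*\Ql \twoheadrightarrow \VV_G$ induced by \eqref{def M}. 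A routine induction on cohomological degree (using that $(\rH^{>0}(\Bun_G^0))^n \subseteq \rH^{\geq n}(\Bun_G^0)$, so the augmentation filtration on each $\rH^d(\Bun_G^0)$ is bounded) then upgrades this to a graded ring isomorphism $\Qlbar[y_\alpha] \isom \rH^*(\Bun_G^0)$. Since $a_\sigma^*$ is a graded ring automorphism, it suffices to check $a_\sigma^* y_\alpha = y_\alpha$ for each $\alpha$.

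Each $y_\alpha$ is built from the evaluation map $\ev: X \times \Bun_G^0 \to \BB G$ by pulling back a class $\psi_\alpha \in \rH^*(\BB G)$ and pairing with $z_\alpha \in \rH_*(X)$. Letting $c_{\cL_\sigma}: X \to \BB\Gamma$ classify $\cL_\sigma$, the definition of $a_\sigma$ as twisting by $\cL_\sigma$ unpacks into a strictly commutative square of stacks over $X$:
\[
\begin{tikzcd}[column sep=huge]
X \times \Bun_G^0 \ar[r, "\id_X \times a_\sigma"] \ar[d, "{(c_{\cL_\sigma}\,,\, \ev)}"'] & X \times \Bun_G^0 \ar[d, "\ev"] \\
\BB \Gamma \times_X \BB G \ar[r, "a_{\BB G}"] & \BB G
\end{tikzcd}
\]
where $a_{\BB G}$ is the action map \eqref{eq: action map BG}. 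By Lemma \ref{lem: Gamma action on BG}, $a_{\BB G}^* = \pr_2^*$ on $\rH^*(\BB G)$. Chasing the diagram yields $(\id_X \times a_\sigma)^*\ev^*\psi_\alpha = \ev^*\psi_\alpha$ in $\rH^*(X \times \Bun_G^0)$, and since the pairing with $z_\alpha$ is natural in the $\Bun_G^0$ factor, this gives $a_\sigma^* y_\alpha = y_\alpha$ as required.

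The main obstacle is essentially bookkeeping: (i) identifying the geometric twisting on $\Bun_G^0$ with the $\BB\Gamma$-action on $\BB G$ post-evaluation, so that Lemma \ref{lem: Gamma action on BG} actually applies; and (ii) upgrading the associated-graded Atiyah--Bott isomorphism to a polynomial-ring presentation of $\rH^*(\Bun_G^0)$ itself, so that trivial action on the chosen generators implies trivial action on the whole ring. No input beyond Lemma \ref{lem: Gamma action on BG} and the Atiyah--Bott hypothesis is needed.
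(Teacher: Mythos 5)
Your proposal is correct and follows essentially the same route as the paper's own proof: both invoke the Atiyah--Bott formula to reduce to checking that the (K\"unneth components of) $G$-characteristic classes are invariant under $\Gamma$-twisting, which is supplied by Lemma~\ref{lem: Gamma action on BG} (the paper phrases this step via Corollary~\ref{cor: twisting Chern classes}, an immediate consequence of that lemma). You merely make two implicit steps explicit — lifting the associated-graded polynomial presentation to an actual polynomial presentation of $\rH^*(\Bun_G^0)$, and drawing the $2$-commutative square identifying the twisting automorphism on $\Bun_G^0$ with the $\BB\Gamma$-action on $\BB G$ post-evaluation — so the content is the same.
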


\begin{proof} According to the Atiyah--Bott formula for $G$, $\rH^*(\Bun^{0}_G)$ is generated by the K\"unneth components of $G$-Chern classes of the universal $G$-bundle on $X \times \Bun_G$. But we saw in Corollary \ref{cor: twisting Chern classes} that twisting by $\Gamma$-torsors has trivial effect on $G$-Chern classes. 
\end{proof}

\begin{exam}
Consider the universal cover $\SL_2  = G \rightarrow G' = \PGL_2$. Then $\Gamma = \mu_2$ and $\Bun_{\Gamma} = \Bun_{\Gamma}^{\na}$ can be identified with the moduli stack of 2-torsion line bundles. The action on $\Bun_{\SL_2}$ is via tensoring with 2-torsion line bundles. Those all have trivial Chern class, so the action on the Atiyah--Bott description is evidently trivial.
\end{exam}

\begin{cor}\label{cor: propagate central isogeny} Suppose that Theorem \ref{thm: atiyah-bott formula} holds for $\Bun_G^0$.
Then $f$ induces an isomorphism 
\[
\rH^*(\Bun_{G'}^0 ) \xrightarrow{\sim} \rH^*(\Bun_G^0),
\]
and Theorem \ref{thm: atiyah-bott formula} holds for $\Bun_{G'}^0$. 
\end{cor}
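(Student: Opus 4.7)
The plan is to split the proof into two steps. First I will show that $v^{0,*}: \rH^*(\Bun_{G'}^0) \to \rH^*(\Bun_G^0)$ is an isomorphism, realized concretely via the classifying map $f$. Second, I will transport the Atiyah--Bott isomorphism for $\Bun_G^0$ across this isomorphism to obtain the Atiyah--Bott isomorphism for $\Bun_{G'}^0$, using Lemma \ref{l:V under isog}(1).

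For the first step, I invoke that $v^0$ exhibits $\Bun_G^0$ as a $\Bun_\Gamma^\na$-torsor over $\Bun_{G'}^0$, giving a fibration $\Bun_G^0 \to \Bun_{G'}^0 \xrightarrow{f} \BB \Bun_\Gamma^\na$, and run the Leray spectral sequence
\[
E_2^{p,q} = \rH^p(\BB \Bun_\Gamma^\na; \rR^q f_* \Ql) \Longrightarrow \rH^{p+q}(\Bun_{G'}^0).
\]
Here $\rR^q f_* \Ql$ is the local system on $\BB \Bun_\Gamma^\na$ with stalk $\rH^q(\Bun_G^0)$ and monodromy given by the action of $\Bun_\Gamma^\na$. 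The identity component of $\Bun_\Gamma^\na$ is connected and hence acts trivially on cohomology; $\pi_0(\Bun_\Gamma^\na)$ acts trivially by Lemma \ref{lem: trivial action} (which is where the hypothesis on $G$ enters). Hence $\rR^q f_* \Ql$ is constant with value $\rH^q(\Bun_G^0)$. Combining this with the computation $\rH^*(\BB \Bun_\Gamma^\na; \Ql) = \Ql$, which follows because $\Bun_\Gamma^\na$ is a finite commutative DM group stack so that its double classifying stack has only finite homotopy information (invisible to $\Ql$-coefficients), the spectral sequence degenerates to the desired isomorphism.

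For the second step I will assemble the commutative square
\[
\begin{tikzcd}
\Sym_{\Qlbar}^\bl(\rH_*(X; \VV_{G'})_+) \ar[r] \ar[d, "\cong"'] & \Gr^\bl_{\aug} \rH^*(\Bun_{G'}^0) \ar[d, "\cong"] \\
\Sym_{\Qlbar}^\bl(\rH_*(X; \VV_{G})_+) \ar[r, "\cong"] & \Gr^\bl_{\aug} \rH^*(\Bun_{G}^0)
\end{tikzcd}
\]
where the left vertical is induced by the canonical identification of Lemma \ref{l:V under isog}(1), the right vertical is induced by $v^{0,*}$ via step one (passing to associated graded of the augmentation filtration), the bottom row is the hypothesized isomorphism, and the top row is the Atiyah--Bott map for $\Bun_{G'}^0$. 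Commutativity is the naturality of the evaluation map $\frev_{\aug,-}^{0}$ under the morphism $\BB \theta: \BB G \to \BB G'$, which is immediate from the construction in \S\ref{sss:fz} together with the reductions to the semisimple and torus cases, since $v$ is by definition the induction functor along $\theta$. The top row is then an isomorphism.

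The main obstacle is establishing $\rH^*(\BB \Bun_\Gamma^\na; \Ql) = \Ql$ rigorously. One can decompose $\Bun_\Gamma^\na$ into its Postnikov pieces: it has finite $\pi_0$, and its identity component is $\BB A_0$ for some finite abelian group scheme $A_0$. Then $\BB \Bun_\Gamma^\na$ admits a fibration over $\BB \pi_0(\Bun_\Gamma^\na)$ with fibers $\BB\BB A_0$, and each factor contributes only $\Ql$ in degree $0$ by the general principle that Eilenberg--MacLane spaces with finite coefficients are rationally trivial. A Leray argument then yields the desired vanishing.
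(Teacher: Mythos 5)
Your proposal is correct and essentially reproduces the paper's argument: both run the Leray/descent spectral sequence for the presentation $\Bun_{G'}^0 = [\Bun_G^0/\Bun_\Gamma^\na]$, use Lemma~\ref{lem: trivial action} to see the coefficient local system is trivial, use that $\BB\Bun_\Gamma^\na$ has trivial $\Ql$-cohomology (the paper takes this for granted from $\Bun_\Gamma^\na$ being a finite abelian group stack, whereas you spell out a Postnikov argument), and then transport the Atiyah--Bott isomorphism via $\theta^*$ and Lemma~\ref{l:V under isog}(1), which the paper compresses to a citation of Lemma~\ref{lem: classifying cohomology pullback}. A small caveat: you call $\Bun_\Gamma^\na$ a DM stack, which can fail in characteristic $p$ when $\Gamma$ has infinitesimal pieces (e.g.\ $\mu_p$), but this is immaterial since $\Ql$-cohomology is insensitive to such gerbe structure.
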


\begin{proof}From the presentation $\Bun_{G'}^0 = [\Bun_G^0/\Bun^{\na}_{\Gamma}]$ we have a spectral sequence 
\[
\rH^i(\BB\Bun^{\na}_{\Gamma}; \rH^j(\Bun_G^0)) \implies \rH^{i+j}(\Bun_{G'}^0).
\]
Since $\Gamma$ is finite and of multiplicative type, $\Bun^{\na}_{\Gamma}$ is a finite abelian group stack, so the higher cohomology of $\BB \Bun^{\na}_{\Gamma}$ vanishes with rational coefficients. The spectral sequence therefore degenerates. Furthermore, since $\rH^j(\Bun_G^0)$ is the trivial local system on $\Bun^{\na}_{\Gamma}$ by Lemma \ref{lem: trivial action}, we obtain that the pullback map 
\[
\rH^*(\Bun_{G'}^0) \rightarrow \rH^*(\Bun_G^0)
\]
is an isomorphism. Moreover, it is clear from construction and Lemma \ref{lem: classifying cohomology pullback} that this isomorphism is compatible with the formulation of Theorem \ref{thm: atiyah-bott formula}. 
\end{proof}

\subsubsection{Pure inner twisting}\label{sssec:pure-inner-twist}
Let $G \rightarrow S$ be a reductive group scheme and $\cP \in \BB G(S)$. We may view $\cP$ as a $G$-torsor in the \'etale site of $S$. Let $G' = \ul{\Aut}_S(\cP)$ be the (internal) automorphism group of $\cP$ (as a $G$-torsor) in the \'etale site of $S$. Then by general nonsense, $G'$ is a twist of $G$ in the \'etale site of $S$. Furthermore, the \emph{pure inner twisting} construction $\cF \mapsto \ul{\mrm{Isom}}_S^G(\cF, \cP)$ carries $G$-torsors to $G'$-torsors, and induces an equivalence of groupoids
\[
\BB G(S) \xrightarrow{\sim} \BB G'(S).
\]

Now let $\Bun_G^\omega$ be a connected component of $\Bun_G$. Fix a $G$-bundle $\cP \in \Bun_G^\omega (k)$. Letting $G' = \ul{\Aut}_X(\cP)$ be the corresponding pure inner twist of $G$, the pure inner twisting construction $\cF \mapsto \ul{\mrm{Isom}}_S^G(\cF, \cP)$ induces an isomorphism 
\[
\Bun_G \xrightarrow{\sim} \Bun_{G'}
\]
carrying $\cP$ to the trivial $G'$-torsor, hence in particular carrying
\[
\Bun_G^\omega \xrightarrow{\sim} \Bun_{G'}^0.
\]

\begin{lemma}\label{lem: propagate pure inner twist}
Let $\cC$ be a class of connected reductive group schemes over $X$ preserved by pure inner twisting, e.g., $\cC = \{\text{semisimple } G/X\}$ or $\cC = \{\text{connected reductive } G/X\}$. Suppose Theorem \ref{thm: atiyah-bott formula} holds for the neutral components $\Bun_{G}^{0}$ of all groups $G \in \cC$. Then Theorem \ref{thm: atiyah-bott formula} holds for all $G \in \cC$. 
\end{lemma}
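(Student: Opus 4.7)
The plan is to deduce the Atiyah--Bott formula for an arbitrary component $\Bun_G^\omega$ with $G \in \cC$ from the assumed formula for the neutral component of a suitable pure inner twist $G' \in \cC$. Since $k$ is algebraically closed and every component of $\Bun_G$ is nonempty, I first pick a $k$-point $\cP \in \Bun_G^\omega(k)$; that is, $\cP$ is a $G$-torsor on $X$ lying in the component indexed by $\omega$. Set $G' := \uAut_X(\cP)$, the pure inner twist of $G$ determined by $\cP$. By hypothesis, $\cC$ is preserved by pure inner twisting, so $G' \in \cC$. As recorded in \S\ref{sssec:pure-inner-twist}, pure inner twisting by $\cP$ yields an isomorphism of stacks
\[
\Phi \co \Bun_G \xrightarrow{\sim} \Bun_{G'}
\]
carrying $\cP$ to the trivial $G'$-bundle, and hence restricting to $\Bun_G^\omega \xrightarrow{\sim} \Bun_{G'}^0$.

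Next I upgrade this to a compatibility at the level of classifying stacks over $X$. Since $\cP$ is naturally a $(G, G')$-bitorsor on $X$, the functor $\cF \mapsto \ul{\Isom}^G(\cF, \cP)$ (applied fiberwise over any $X$-scheme) defines an equivalence of $X$-stacks
\[
\Psi \co \BB G \xrightarrow{\sim} \BB G'.
\]
Tracing the universal $G$-torsor on $X \times \Bun_G$ through the definitions, one checks that the square
\[
\begin{tikzcd}
X \times \Bun_G \ar[r, "\ev_G"] \ar[d, "\Id_X \times \Phi"] & \BB G \ar[d, "\Psi"] \\
X \times \Bun_{G'} \ar[r, "\ev_{G'}"] &  \BB G'
\end{tikzcd}
\]
commutes. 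Since $\Psi$ is an equivalence of $X$-stacks, it induces a canonical isomorphism $\rR \pi_{G, *} \ol{\Q}_\ell \xrightarrow{\sim} \rR \pi_{G', *} \ol{\Q}_\ell$ of complexes on $X$ that is compatible with the augmentation filtration, and hence supplies a canonical isomorphism of graded local systems $\VV_G \xrightarrow{\sim} \VV_{G'}$.

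Combining these ingredients, the commutative square identifies $\frev^\omega_{\aug, G} \co \rH_*(X; \VV_G)_+ \to \Gr^1_{\aug} \rH^*(\Bun_G^\omega)$ with $\frev^0_{\aug, G'} \co \rH_*(X; \VV_{G'})_+ \to \Gr^1_{\aug} \rH^*(\Bun_{G'}^0)$ under the isomorphisms above. The assumption that Theorem \ref{thm: atiyah-bott formula} holds for $\Bun_{G'}^0$ then delivers the desired isomorphism \eqref{eq: atiyah-bott formula} for $\Bun_G^\omega$. The principal technical point in this outline is the commutativity of the square involving $\Phi$ and $\Psi$, together with the compatibility of $\Psi$ with the augmentation filtrations on $\rR \pi_{G,*} \ol{\Q}_\ell$ and $\rR \pi_{G',*} \ol{\Q}_\ell$; I expect both to reduce to an unwinding of the bitorsor definitions and to present no serious obstacle.
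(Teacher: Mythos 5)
Your proof is correct and is essentially the same as the paper's: both fix a $G$-bundle $\cP$ in the component $\omega$, use it to produce a pure inner twist $G'$ and an isomorphism $\BB G \xrightarrow{\sim} \BB G'$ of classifying stacks over $X$, observe that this fits into a commutative square with the evaluation maps on $X \times \Bun_G^\omega$ and $X \times \Bun_{G'}^0$, and then transport the Atiyah--Bott isomorphism from $\Bun_{G'}^0$ to $\Bun_G^\omega$. The extra details you supply — the bitorsor description of $\Psi$ and the compatibility with augmentation filtrations — are exactly the routine unwinding the paper leaves implicit.
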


\begin{proof}
Pure inner twisting by $\cP$ induces an isomorphism $\BB G \xrightarrow{\sim} \BB G'$ of classifying stacks over $X$, such that the diagram 
\[
\begin{tikzcd}
X \times \Bun_G^\omega \ar[r, "\ev"]  \ar[d, "\wr"] & \BB G \ar[d, "\wr"] \\
X \times \Bun_{G'}^0 \ar[r, "\ev"]  &  \BB G'
\end{tikzcd}
\]
commutes. Then it is clear that Theorem \ref{thm: atiyah-bott formula} for the bottom row implies Theorem \ref{thm: atiyah-bott formula} for the top row. 
\end{proof}

\subsection{The simply connected case}\label{sssec: AB simply connected}

In the case where $G$ is semisimple with simply connected generic fiber, $\Bun_G$ is connected and Theorem \ref{thm: atiyah-bott formula} is already proved in \cite[(0.7)]{Gai19}, and later is reproved by similar methods in \cite[Theorem 6.13]{Ho21}.

\subsection{The semisimple case}

We prove that Theorem \ref{thm: atiyah-bott formula} holds for semisimple groups.

Let $G$ be a semisimple group scheme over $X$. Let $G_{\sc} \rightarrow G$ be the simply connected cover. Then we saw in \S \ref{sssec: AB simply connected} that Theorem \ref{thm: atiyah-bott formula} holds for $\Bun_{G_{\sc}}$. From Corollary \ref{cor: propagate central isogeny} we obtain that Theorem \ref{thm: atiyah-bott formula} holds for $\Bun_G^0$, for all semisimple groups $G$. Then from Lemma \ref{lem: propagate pure inner twist}, we obtain that Theorem \ref{thm: atiyah-bott formula} holds for all semisimple groups. 

\subsection{The torus case} We will prove Theorem \ref{thm: atiyah-bott formula} when $G = T$ is a torus over $X$. Applying Lemma \ref{lem: propagate pure inner twist} to $\cC=\{T\}$, it suffices to prove Theorem \ref{thm: atiyah-bott formula} for the neutral component $\Bun^{0}_{T}$.

\subsubsection{Split tori}
First suppose $T \cong \TT\times X$ is split. Then the result is classical, but let us spell it out. By the additive nature of both sides of \eqref{eq: atiyah-bott formula}, it suffices to treat the case $\TT=\Gm$, in which case $\Bun^{0}_{T}=\Pic^{0}_{X}$. Upon choosing $x_{0}\in X(k)$, we have
\begin{equation*}
\Pic^{0}_{X} \cong \Jac_{X} \times_k \BB \Gm.
\end{equation*}
Here $\Jac_{X}$ is the moduli of degree zero line bundles on $X$ with a trivialization at $x_{0}$. It follows that
\begin{equation}\label{Gr1 aug Pic}
\Gr^{1}_{\aug}\upH^{*}(\Pic^{0}_{X})\cong \upH^{1}(\Jac_{X})[-1]\op \upH^{2}(\BB\Gm)[-2],
\end{equation}
and the natural map
\begin{equation*}
\Sym^{\bu}\Gr^{1}_{\aug}\upH^{*}(\Pic^{0}_{X})\to \Gr^{\bu}_{\aug}\upH^{*}(\Pic^{0}_{X})
\end{equation*}
is an isomorphism. Thus, to prove Theorem \ref{thm: atiyah-bott formula} in this case, it suffices to show that
\begin{equation*}
\frev^{0}_{\aug, \Gm}: (\upH_{*}(X)\ot \VV_{\Gm})_{+}\to \Gr^{1}_{\aug}\upH^{*}(\Pic^{0}_{X})
\end{equation*}
is an isomorphism.

We have $\VV_{\Gm}=\Ql[-2](-1)$, so that
\begin{equation}\label{VGm}
(\upH_{*}(X)\ot \VV_{\Gm})_{+}=\rH_1(X)[-1](-1) \op \rH_{0}(X)[-2](-1).
\end{equation}
Under the isomorphisms \eqref{Gr1 aug Pic} and \eqref{VGm}, the map $\frev^{0}_{\aug, \Gm}$ respects each direct summand. On the first summand, as a map $\rH_1(X)(-1)\to \upH^{1}(\Jac_{X})$, or equivalently, an element in $\upH^{1}(X)\ot \upH^{1}(\Jac_{X})(1)$, it is the K\"unneth component of $c_{1}(\cL^{\univ})$ for the  universal line bundle $\cL^{\univ}$ on $X\times \Jac_{X}$. This shows that $\frev^{0}_{\aug, \Gm}$ is an isomorphism on the first summand. On the second summand, if we compose with restriction to $x_{0}$, the evaluation map becomes the identity map $\upH^{2}(\BB\Gm)[-2](-1)\to \Ql[-2](-1)=\VV_{\Gm}$. This shows $\frev^{0}_{\aug, \Gm}$ is an isomorphism, and completes the proof for split tori.

\subsubsection{Induced tori}\label{sss:induced tori}
Let $\nu: Y\to X$ be a finite \'etale map, $T_{0}$ a torus over $Y$, and $T=\Res_{Y/X}T_{0}$ be the Weil restriction, a torus over $X$. We claim that if Theorem \ref{thm: atiyah-bott formula} holds for $T_{0}$ and $Y$, then it holds for $T$ and $X$. Indeed, $\rH_{*}(Y,\VV_{T_{0}})$ is canonically isomorphic to $\rH_{*}(X,\VV_{T})$, and $\Bun_{T,X}$ is canonically isomorphic to $\Bun_{T_{0},Y}$. 

In particular, by the split tori case already proved, Theorem \ref{thm: atiyah-bott formula} holds for any induced torus $T=\Res_{Y/X}(\TT\times Y)$, where $Y$ is finite \'etale over $X$.

\subsubsection{General tori}
In general, $T$ splits over some finite \'etale Galois $\nu \co  Y \rightarrow X$, so that $\nu^{*}T\cong \TT\times Y$ for some torus $\TT$ over $\Spec k$. Then we have a canonical map $\a: T\to T'=\Res_{Y/X}(\TT\times Y)$ and a norm map $\b: T'\to T$ such that the composition $\b\circ\a=[N]$, where $N=\deg(\nu)$. Therefore one can find a torus $S$ over $X$ together with an isogeny $T'\to T\times_{X} S$. Since Theorem \ref{thm: atiyah-bott formula} holds for $\Bun^{0}_{T'}$ by \S\ref{sss:induced tori}, it also holds for $\Bun_{T\times_{X} S}^{0}$ by Corollary \ref{cor: propagate central isogeny}, hence also for $\Bun^{0}_{T}$ by \S\ref{sssec: AB products}. This finishes the proof in the torus case.

\subsection{The reductive case}
If $G$ is reductive, let $Z^\circ$ be the connected center of $G$ and $G_{\der}$ the derived subgroup of $G$. Then the map $Z^\circ \times_X G_{\der} \rightarrow G$ is a central isogeny. Theorem \ref{thm: atiyah-bott formula} has been established for $Z^\circ$ and $G_{\der}$, hence also for $Z^\circ \times_X G_{\der}$. By Corollary \ref{cor: propagate central isogeny}, Theorem \ref{thm: atiyah-bott formula} holds for $\Bun_G^0$.

At this point we know that Theorem \ref{thm: atiyah-bott formula} holds for the geometric neutral component $\Bun_G^0$ for every reductive $G$. Then by its propagation under pure inner twisting construction (Lemma \ref{lem: propagate pure inner twist}) it holds for all the geometric connected components of $\Bun_G$ for every reductive $G$.

\section{Arithmetic volume of shtukas for split groups}\label{sec:split-arithmetic-volume}

Fix a prime $p$. Henceforth our base field will be $k := \F_q$ where $q$ is a power of $p$. Let $X$ be a smooth, projective, geometrically connected curve over $k$, of genus $g$. 

In this section, we formulate and prove our main formula for the ``arithmetic volume'' of the moduli stack of shtukas for a \emph{split} reductive group scheme over $X$. (The nonsplit case will be treated later, in \S \ref{sec:nonsplit-volume}.) As we only work with constant group schemes over $X$ in this section, we will denote by $G$ a connected reductive group over $\F_{q}$ (which was denoted by $\GG$ in \S\ref{sec: atiyah-bott}); the discussions in \S\ref{sec: atiyah-bott} are applied to the constant group scheme $G\times X$ over $X$.

\subsection{Hecke correspondences}

Below we write $\VV_G$ as $\VV$.
Let $\om\in \pi_{0}(\Bun_{G})$. Theorem \ref{thm: atiyah-bott formula} gives a bigraded isomorphism of algebras
\begin{equation}\label{eq: split AB}
\AB_{\om}: \Sym^{\bl}((\homog{*}{X}\ot \VV)_{+})\to\Gr^{\bl}_{\aug}\cohog{*}{\Bun^{\om}_{G}}.
\end{equation}
We recall the notation $f^{z}\in \cohog{*}{\Bun^{\om}_G}$ from \S\ref{sss:fz} for $f\in R^{W}$ and $z\in \homog{*}{X}$. 

We will apply the general result in \S\ref{s:char class under mod} to the Hecke correspondence for $\Bun_{G}$. Let $\mu \in \xcoch(T)^{+}$ be a dominant coweight. Consider the Hecke correspondence
\begin{equation*}
\xymatrix{ & \Hk^{\mu}_G\ar[dl]_{h_{0}}\ar[dr]^{h_{1}}\ar[rr]^-{p_{X}} & & X\\
\Bun_{G} && \Bun_{G}}
\end{equation*}
that classifies modifications $(x, \cF_{0}\dashrightarrow\cF_{1})$ at one moving point $x\in X$ of type \emph{equal} to $\mu$ (rather than $\le\mu$).\footnote{Our main theorems will treat the case where $\mu$ is minuscule, in which case there is no distinction between these notions.} 

For a connected component $\Bun_G^{\om}$, let ${}^{\om} \Hk^{\mu}_G\subset \Hk^{\mu}_G$ be the preimage of $\Bun_{G}^{\om}$ under $h_{0}$. Then ${}^{\om} \Hk^{\mu}_G$ has a natural correspondence structure
\begin{equation}\label{Hk om}
\xymatrix{& {}^{\om} \Hk^{\mu}_G\ar[rr]^-{p_{X}}\ar[dl]_{h_{0}}\ar[dr]^{h_{1}} & & X\\
\Bun^{\om}_{G} & & \Bun^{\om'}_{G}}
\end{equation}
where $\om'=\om+\ov\mu$, and $\ov\mu$ denotes the image of $\mu$ in $\pi_{0}(\Bun_{\GG})$, the quotient of $\xcoch(\TT)$ by the coroot lattice, cf \eqref{eq: pi0}.

\subsubsection{Tautological classes}\label{sssec:tautological-classes} Consider $X\times \Hk^{\mu}_G$ with projection to the two factors denoted $q_{X}: X\times \Hk^{\mu}_G \to X$ and $p_{\Hk}: X\times \Hk^{\mu}_G \to \Hk^{\mu}_G$. Let $\G(p_{X})\subset X\times \Hk^{\mu}_G$ be the graph of $p_{X}: \Hk^{\mu}_G\to X$. Let $\cF^{\univ}$ be the universal $G$-bundle over $X\times \Bun_{G}$. We have two $G$-bundles on $X\times \Hk^{\mu}_G$ 
\begin{equation*}
\cF_{0}:=(q_{X}\times h_{0})^{*}\cF^{\univ}, \quad \cF_{1}:=(q_{X}\times h_{1})^{*}\cF^{\univ}
\end{equation*}
together with the universal Hecke modification of type $\mu$ along $\G(p_{X})$
\begin{equation}\label{univ modif} 
\ph^{\univ}: \cF_{0}\dashrightarrow \cF_{1}.
\end{equation}
By Proposition \ref{prop:canonical-parabolic-red}, the $G$-torsor $\cF_{0}|_{\G(p_{X})}$ carries a canonical $P_{\mu}$-reduction classified by a map
\begin{equation}\label{Hk ev L}
\ev_{\mu}:\Hk^{\mu}_G\to \BB P_{\mu}.
\end{equation}
We emphasize that the $P_\mu$-reduction comes from $\cF_0|_{\G(p_X)}$, not $\cF_1|_{\G(p_X)}$. 

Restricting to the $\om$-component, which we denote by $\ev^{\om}_{\mu}$, it induces a ring homomorphism
\begin{equation}\label{eq: parabolic embedding}
\ev^{\om,*}_{\mu}: R^{W_{\mu}}\cong \cohog{*}{\BB P_{\mu}}\to \cohog{*}{{}^{\om} \Hk^{\mu}_G}
\end{equation}
which is easily seen to be injective. Combining this with pullback $p_X^*$ along the leg map, we get a ring homomorphism
\begin{equation}\label{taut class Hk}
    \cohog{*}{X}\ot R^{W_\mu}\to \cohog{*}{{}^{\om} \Hk^{\mu}_G}.
\end{equation} 
The image of this map can be regarded as ``tautological classes on $\cohog{*}{{}^{\om} \Hk^{\mu}_G}$''.

\subsubsection{Tautological endomorphisms of cohomology}\label{ssec:split-Gamma}

Now assume that $\mu \in \xcoch(T)$ is minuscule and dominant. Let $D_\mu :=\j{2\rho,\mu} = \dim G/P_{\mu}$.  We consider classes in $\cohog{2D_\mu+2}{\Hk^\mu_G}$ which are the image under \eqref{taut class Hk} of 
\begin{equation*}
    \y+\xi\y', \mbox{ where } \y\in R^{W_\mu}_{2(D_\mu+1)}, \y'\in R^{W_\mu}_{2D_\mu}.
\end{equation*}

Let $\om'=\om+\ov \mu \in \pi_{0}(\Bun_{G})$, where we recall that $\ov \mu$ is the image of $\mu\in \xcoch(T)$ in $\pi_{0}(\Bun_{G})$. Consider the degree-preserving map 
\begin{align}\label{eq: g^yy'}
\G_{\mu}^{\y+\xi\y'}: \cohog{*}{\Bun^{\om'}_{G}}&\to \cohog{*}{\Bun^{\om}_{G}}\\
\th &\mt h_{0*}((\y+\xi\y')\cdot h_{1}^{*}\th).
\end{align}
The pushforward map on cohomology $h_{0*}$ is defined because $h_0$ is smooth and proper (recall that $\mu$ is assumed to be minuscule). The map $\G_{\mu}^{\y+\xi\y'}$ is degree-preserving because the relative dimension of $h_0$ is $D_\mu+1$.

\subsubsection{Variant on compactly supported cohomology}\label{sss:Gamma c}
With the same notation, we define 
\begin{align}\label{Gamma c}
{}_{c}\Gamma^{\eta + \xi \eta'}_\mu: \cohoc{*}{\Bun^{\om}_{G}}&\to \cohoc{*}{\Bun^{\om'}_{G}}\\
\th &\mt h_{1*}((\eta + \xi \eta')\cdot h_0^*\th).
\end{align}

The definition is arranged so that under the Poincar\'e duality pairing
\begin{equation}\label{PD}
\j{-,- }: \cohoc{i}{\Bun^\omega_G}\times \cohog{2\dim \Bun_G-i}{\Bun^{\omega}_{G}}\to \cohoc{2\dim \Bun_G}{\Bun^{\omega}_{G}}\cong \Qlbar(-\dim \Bun_G)
\end{equation}
the map ${}_c\Gamma^{\eta + \xi \eta'}_\mu$ is adjoint to $\Gamma^{\eta + \xi \eta'}_\mu$, i.e.,
\begin{equation}\label{Gamma adj}
\j{{}_{c}\Gamma^{\eta + \xi \eta'}_\mu (\alpha), \beta}=\j{\alpha, \Gamma^{\eta + \xi \eta'}_{\mu}(\beta)}
\end{equation}
for all $\alpha\in \cohoc{i}{\Bun_{G}^\omega}$ and $\beta\in \cohog{2\dim \Bun_G-i}{\Bun^{\omega'}_{G}}$.

\subsection{Arithmetic volume of the moduli of shtukas}\label{ss:vol split} We can now define the moduli spaces of shtukas and the notion of arithmetic volume of their tautological classes.

\subsubsection{Moduli of shtukas}\label{sss:Sht split}
Let $r\in \ZZ_{\ge0}$ and $\mu=(\mu_{1},\cdots, \mu_{r})$ be a sequence of minuscule dominant coweights of $G$ that is {\em admissible} in the sense that 
\begin{equation}\label{eq:modification-condition}
\mu_{1}+\mu_{2}+\cdots +\mu_{r} \text{ lies in the coroot lattice of $G$.}
\end{equation}
In other words, we have $\sum_{i=1}^r\ov\mu_i=0\in \pi_0(\Bun_G)$,  cf. \eqref{eq: pi0}.

Let $\Hk^{\mu}_G$ be the fiber product
\begin{equation*}
\Hk_G^{\mu_{1}}\times_{\Bun_{G}}\Hk_G^{\mu_{2}}\times_{\Bun_{G}}\cdots\times_{\Bun_{G}}\Hk_G^{\mu_{r}}
\end{equation*}
that classifies iterated modifications $(x_{1},\cdots, x_{r}, \cF_{0}\dashrightarrow \cF_{1}\dashrightarrow\cdots \dashrightarrow \cF_{r})$, where $\cF_{i-1}\dashrightarrow \cF_{i}$ is a modification along $x_{i}$ of type $\mu_{i}$. For $0\le i\le r$, let $h_{i}: \Hk^{\mu}_G\to \Bun_{G}$ record $\cF_{i}$. Let ${}^{\om} \Hk^{\mu}_G\subset \Hk^{\mu}_G$ be the preimage of $\Bun_{G}^{\om}$ under $h_{0}$.  

The \emph{moduli stack $\Sht^{\mu}_{G}$ of $G$-shtukas with $r$ legs and modification type $\mu$} is the fiber product 
\begin{equation}\label{def Sht}
\xymatrix{\Sht^{\mu}_{G}\ar[r]\ar[d] & \Hk^{\mu}_G\ar[d]^{(h_{0}, h_{r})}\\
\Bun_{G}\ar[r]^-{(\id,\Fr)} & \Bun_{G}\times \Bun_{G}}
\end{equation}

We decompose $\Sht^{\mu}_{G}$ according to which connected component of $\Bun_{G}$ the first bundle $\cE_{0}$ lies in. This gives a decomposition
\begin{equation*}
\Sht^{\mu}_{G}=\coprod_{\om\in \pi_{0}(\Bun_{G})}{}^{\om}\Sht^{\mu}_{G}
\end{equation*}
so that ${}^{\om}\Sht^{\mu}_{G}$ is the preimage of ${}^{\om} \Hk^{\mu}_G$. Let
\begin{equation}\label{eq:omega_j}
\om_{j}=\om+\ov \mu_{1}+\cdots+\ov \mu_{j}\in \pi_{0}(\Bun_{G}), \quad j=1,2,\cdots, r.
\end{equation}
Then for $(x_{1},\cdots, x_{r}, \cF_{0}\dashrightarrow \cF_{1}\dashrightarrow\cdots \dashrightarrow \cF_{r}\cong {}^{\t}\cF_{0})\in {}^{\om}\Sht^{\mu}_{G}$,  we have $\cF_{j}\in \Bun_{G}^{\om_{j}}$.

\subsubsection{Definition of Arithmetic volume}
For $j = 1, 2, \ldots, r$, let $D_{\mu_j}=\dim (G/P_{\mu_{j}})=\j{2\r, \mu_{j}}$ and choose a pair of classes
\begin{equation*}
\y_{j}\in R^{W_{\mu_{j}}}_{2(D_{\mu_j}+1)}, \quad \y'_{j}\in R^{W_{\mu_{j}}}_{2D_{\mu_j}}.
\end{equation*}
We have a composite morphism 
\begin{equation*}
\Sht^{\mu}_{G}\to \Hk_G^\mu \to \Hk_G^{\mu_{j}}.
\end{equation*}
Composing this further with the map $\Hk_G^{\mu_{j}} \xrightarrow{\ev_{j}} \BB P_{\mu_{j}}$ from \eqref{Hk ev L} (corresponding to the canonical parabolic reduction of $\cF_{j-1}|_{\G(x_j)}$), we obtain the map 
\begin{equation*}
\ev_{j}: \Sht^{\mu}_{G} \rightarrow \Hk_G^{\mu_{j}}  \xr{\ev_{j}} \BB P_{\mu_{j}}.
\end{equation*}
For $1 \leq j \leq r$, let $p_{j}: \Sht^{\mu}_{G}\to X$ be the map recording the $j$th leg. 

We let $\eta$ be the $r$-tuple $(\eta_1 + \xi \eta_1', \ldots, \eta_r + \xi \eta_r')$. Recall the operators ${}_c\G^{\eta_j + \xi \eta'_j}_{\mu_j}$ defined in \eqref{Gamma c}. Write
\[
{}_{c}\Gamma^{ \eta}_{ \mu}  :=  {}_{c}\Gamma^{\eta_r + \xi \eta'_r}_{\mu_r} \circ \ldots \circ {}_{c}\Gamma^{\eta_1+ \xi  \eta'_1}_{\mu_1} \co \rH_c^*(\Bun_G^\omega) \rightarrow \rH_c^*(\Bun_G^{\omega_r}) =   \rH_c^*(\Bun_G^\omega)
\]
for the composition of the ${}_{c}\Gamma^{\eta_j + \xi \eta'_j}_{\mu_j}$; note that the target component $\omega$ agrees with the source component $\omega$ by the assumption \eqref{eq:modification-condition}. 

\begin{defn}\label{def: arith vol}
Let $\mu = (\mu_1, \ldots, \mu_r)$ and $\eta = (\eta_1 + \xi \eta_1', \ldots, \eta_r + \xi \eta_r')$. The \emph{arithmetic volume} of ${}^{\om}\Sht^{\mu}_{G}$ with respect to $\eta$ is the \emph{graded} trace
\begin{equation}\label{gen int over ShtG}
\vol({}^{\omega}\Sht_G^\mu, \eta) := \Tr({}_{c}\Gamma^{ \eta}_{ \mu}\circ\Frob  \mid \rH_c^*(\Bun_G^\omega)).
\end{equation}
Since $\rH_c^*(\Bun_G^\omega)$ is infinite-dimensional, it is not clear that the RHS is well-defined; this will be justified in Proposition \ref{p:trace conv}.
\end{defn}

\begin{remark}
We justify why the definition \eqref{gen int over ShtG} deserves to be viewed as an ``arithmetic volume''. The numerics are arranged so that $\prod_{j=1}^{r}(\ev_{j}^{*}\y_{j}+p_{j}^{*}\xi\cdot \ev_{j}^{*}\y'_{j})$ is a top-degree cohomology class of ${}^\omega\Sht^{\mu}_{G}$. If ${}^\omega\Sht^{\mu}_G$ were proper, then 
\begin{equation}\label{eq:literal-fundamental-class}
\int_{{}^{\om}\Sht^{\mu}_{G}}\prod_{j=1}^{r}(\ev_{j}^{*}\y_{j}+p_{j}^{*}\xi\cdot \ev_{j}^{*}\y'_{j}) 
\end{equation}
would be well-defined, as the evaluation of the fundamental class of ${}^\omega\Sht^{\mu}_G$ on the top-degree cohomology class, and we would take this to be $\vol(\Sht_G^\mu, \eta)$. But ${}^\omega\Sht^{\mu}_G$ is almost never proper, so \eqref{eq:literal-fundamental-class} has no a priori meaning.

 If $\Bun_G^\omega$ were proper, then a suitable version of the Grothendieck--Lefschetz trace formula (e.g., \cite[Proposition 11.8]{FYZ}) would identify \eqref{eq:literal-fundamental-class} with $\Tr(\Frob\circ
{}_{c}\Gamma^{ \eta}_{ \mu}  \mid \rH_c^*(\Bun_G^\omega))$. The latter expression has the potential at least to make sense when $\Bun_G^\omega$ is not proper (there is a convergence question, because $\rH_c^*(\Bun_G^\omega)$ is infinite-dimensional), and we will show that it indeed is always well-defined. 
\end{remark}

Our main theorem (Theorem \ref{th:vol gen}) computes the arithmetic volume in terms of differential operators applied to the $L$-function of the Gross motive of $G$ (cf. \S\ref{sss:motive}). These differential operators are determined by local data, which we describe next.

\subsection{Local operators on characteristic classes}\label{ssec:nabla-operator} We define some ``local'' operators on $R^W = \cohog{*}{\BB G}$. 

\subsubsection{The operator $\nb^{\y}_{\mu}$}\label{sssec:nabla-operator}
Let $\mu \in \xcoch(T)$. Then we have the partial derivative $\pl_\mu$ on $R = \Sym(\xch(T)_{\Ql}(-1))$ which lowers the grading by $2$ and twist by $1$. It carries the subring $R^W$ to $R^{W_{\mu}}$, so we may view it as a derivation 
\begin{equation*}
\pl_{\mu}: R^{W}\to R^{W_{\mu}}[-2](-1).
\end{equation*}

Let $P_\mu \subset G$ be the parabolic subgroup of $G$ corresponding to $\mu$. Pushforward along the proper smooth map $\BB P_\mu \rightarrow \BB G$ induces an $R^{W}$-linear map
\begin{equation}\label{eq:integration-map}
\int_{G/P_{\mu}}: R^{W_{\mu}}=\cohog{*}{\BB P_\mu} \to \cohog{*}{\BB G}[-2D_\mu](-D_\mu)=R^{W}[-2D_\mu](-D_\mu)
\end{equation}
where $D_\mu := \dim (G/P_{\mu}) = \tw{2\rho, \mu}$. 

Let $\y\in R^{W_{\mu}}$ be homogeneous of degree $2(D_\mu+1)$. Consider the map
\begin{eqnarray}\label{eq: nabla lambda eta}
\nb^{\y}_{\mu}&:& R^{W}\to R^{W}\\
&& f\mt \int_{G/P_{\mu}} \y \cdot \pl_{\mu}f.
\end{eqnarray}
Then $\nb^{\y}_{\mu}$ is a degree-preserving derivation.

\subsubsection{The operator $\ov\nb^{\y}_{\mu}$}\label{sssec:ov-nabla}

Since $\nb^{\y}_{\mu}$ is a derivation that carries $R^{W}_{+}$ to $R^{W}_{+}$, it also sends $R^{W}_{+}\cdot R^{W}_{+}$ to itself, hence induces a graded linear endomorphism of $\VV$
\begin{equation*}
\ov\nb^{\y}_{\mu}\in \End^{gr}(\VV).
\end{equation*}
In other words, $\ov\nb^{\y}_{\mu}$ restricts to a linear endomorphism of each $\VV_{d}$. The eigenvalues of $\ov\nb^{\y}_{\mu}$ are the crucial ``structure constants'' for the upcoming formulas for arithmetic volumes. We refer to them as \emph{eigenweights}. 

\begin{exam}\label{exam:eps}
In many examples of interest, $\VV_{2d}$ has dimension at most one, and then the eigenweights can be calculated in the following way. By the assumption, we can choose homogeneous free generators $f_{1},f_{2},\cdots, f_{n}$ of $R^{W}$ (which we know is a polynomial ring) of increasing degrees
\begin{equation*}
2d_{1}< 2d_{2}<\cdots < 2d_{n}.
\end{equation*}
This means $\VV=\bigoplus_{i=1}^{n}\VV_{2d_{i}}$ and $\VV_{2d_{i}}$ is one-dimensional spanned by the image of $f_{i}$. Then, for degree reasons, we necessarily have for each $i=1,\cdots, n$,
\begin{equation*}
\nb_{\mu}^{\y} (f_{i})=\e_{i}(\eta, \mu) f_{i}+\mbox{(polynomial in $f_{1},\cdots, f_{i-1}$)}.
\end{equation*}
The numbers $\{\e_{i}(\eta, \mu)\}_{i=1}^n$ are the eigenweights of $\ov\nb^{\y}_{\mu}$.
\end{exam}

\subsection{Example of calculating eigenweights}\label{ssec:GLn-lambda-minimal} We will calculate the operator $\nb^{\y}_{\mu}$ in the example $G=\GL_{n}$ and $\mu=(1,0,\cdots,0)$, so that $D_{\mu} = n-1$. We identify $R=\Qlbar[x_{1},\cdots, x_{n}]$ (cf. Example \ref{ex: GL}) with the obvious action of the Weyl group $W\cong S_n$ by permutations on the variables, so that 
\[
R^W  = \Qlbar[e_1, \ldots, e_n]
\]
where the $e_i$ are the elementary symmetric polynomials in the $x_i$.

\begin{prop}\label{prop:GLn-minimal-eps}
Let $G = \GL_n$ and $\mu = (1, 0, \ldots, 0)$. Choose $\eta = x_1^n \in R^{W_\mu}$. Then we have
\begin{equation*}
\nb_{\mu}^\eta(e_{i})= (-1)^{n-1} e_{i} \text{ for all } i=1, 2, \ldots, n.
\end{equation*}
In particular, we have
 $\epsilon_i(\eta, \mu) = (-1)^{n-1}$ for each $i = 1, \ldots, n$. 
\end{prop}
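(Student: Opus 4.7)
The plan is to evaluate the derivation $\nabla^\eta_\mu$ via Atiyah--Bott localization on the flag variety $G/P_\mu \cong \PP^{n-1}$. Since $\mu = (1, 0, \ldots, 0)$, one has $\partial_\mu = \partial_{x_1}$, and the decomposition $e_i(x_1, \ldots, x_n) = x_1 \cdot e_{i-1}(x_2, \ldots, x_n) + e_i(x_2, \ldots, x_n)$ yields $\partial_{x_1} e_i = e_{i-1}(x_2, \ldots, x_n) \in R^{W_\mu}$. Writing $e'_j := e_j(x_2, \ldots, x_n)$, the claim reduces to
\begin{equation*}
\int_{G/P_\mu} x_1^n \cdot e'_{i-1} \;=\; (-1)^{n-1} e_i, \qquad 1 \le i \le n.
\end{equation*}

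To compute this pushforward, I would apply the Atiyah--Bott localization formula on $\PP^{n-1}$ with respect to the standard $T$-action, whose fixed points are the coordinate lines $p_\ell = [e_\ell]$ for $\ell = 1, \ldots, n$. At $p_\ell$: the tautological line has $T$-weight $x_\ell$, so $x_1|_{p_\ell} = x_\ell$; the rank-$(n-1)$ quotient bundle has Chern roots $\{x_j : j \ne \ell\}$, so $e'_{i-1}|_{p_\ell} = e_{i-1}(x_j : j \ne \ell)$; and the tangent Euler class is $\prod_{j \ne \ell}(x_j - x_\ell)$. Localization thus gives
\begin{equation*}
\int_{G/P_\mu} x_1^n \cdot e'_{i-1} \;=\; (-1)^{n-1} \sum_{\ell = 1}^n \frac{x_\ell^n \cdot e_{i-1}(x_j : j \ne \ell)}{\prod_{j \ne \ell}(x_\ell - x_j)}.
\end{equation*}

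To identify this sum with $e_i$, I would introduce the generating function $E(t) := \prod_{j=1}^n(1 + t x_j) = \sum_k e_k t^k$, noting the identity $\prod_{j \ne \ell}(1 + t x_j) = E(t)/(1 + t x_\ell)$. Then $\sum_{i \ge 1} \big(\int_{G/P_\mu} x_1^n \cdot e'_{i-1}\big) t^{i-1}$ reduces, after pulling $E(t)$ outside the $\ell$-sum, to the residue identity
\begin{equation*}
\sum_{\ell = 1}^n \frac{x_\ell^n}{(1 + t x_\ell)\, \prod_{j \ne \ell}(x_\ell - x_j)} \;=\; \frac{E(t) - 1}{t\, E(t)},
\end{equation*}
which itself follows by substituting $z = -1/t$ into the partial-fraction decomposition of the rational function $z^n / \prod_j(z - x_j)$ (using that this rational function tends to $1$ as $z \to \infty$). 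Multiplying through by $E(t)$ leaves $(E(t) - 1)/t = \sum_{i=1}^n e_i\, t^{i-1}$, and extracting the coefficient of $t^{i-1}$ recovers the claim. The only non-bookkeeping step is this residue identity, which is elementary; assembling the local contributions and tracking the sign $(-1)^{n-1}$ from the tangent Euler classes is the main thing to be careful with.
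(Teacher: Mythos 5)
Your proof is correct and follows essentially the same route as the paper's. The Atiyah--Bott localization formula you invoke on $\PP^{n-1}$ is precisely the $W$-averaging formula $\int_{G/P_\mu}f = \sum_{w \in W/W_\mu} w(f/\frR_\mu)$ that the paper establishes as Lemma \ref{lem:w-average-integration}, and the partial-fraction expansion of $z^n/\prod_j(z-x_j)$ that you use (followed by the substitution $z = -1/t$) is algebraically the same identity as the Lagrange-interpolation identity \eqref{eq:gln-lagrange-interpolation} that the paper applies directly; dividing the paper's identity by $\prod_j(t-x_j)$ recovers your residue expansion.
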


\begin{exam}
For $i=1$, we want to calculate 
\[
\nb_{\mu}^\eta(e_{1}) = \int_{G/P_\mu} x_1^n.
\]
An $R^W$-module basis for $R^{W_\mu}$ is given by the powers $x_1^0, \ldots, x_1^{n-1}$, and $\int_{G/P_\mu}$ extracts the coefficients of $(-x_1)^{n-1}$, by our convention that $-x_1$ corresponds to $\cO(1)$ on $G/P_\mu$ (cf. Example \ref{ex:parabolic-line}). We have the characteristic relation
\[
x_1^n - e_1 x_1^{n-1} + \ldots + (-1)^n e_n = 0.
\]
Thus 
\[
\int_{G/P_\mu} x_1^n = \int_{G/P_\mu} (e_1 x_1^{n-1} + \ldots )  = (-1)^{n-1} e_1 .
\]
For $i>1$, the combinatorics become much more complicated to analyze directly in this way, so we will first develop some machinery. 
\end{exam}

\subsubsection{Combinatorial formula for the integration map}
We will first give a general description of the integration map \eqref{eq:integration-map} in more concrete terms, which will be useful for explicit calculations. 

We maintain the notation of \S \ref{sssec:nabla-operator}, so $G$ is an arbitrary split reductive over $\F_q$ and $\mu \in \xcoch(T)$. Consider the $G$-equivariant top Chern class of the tangent bundle of $G/P_{\mu}$
\begin{equation}\label{eq:equivariant-top-Chern}
c_{D_\mu}(T_{G/P_{\mu}})=\frR_{\mu} :=\prod_{\substack{\alpha \in \Phi(G)  \\ \j{\a,\mu} < 0}}\a\in R^{W_{\mu}}
\end{equation}
where $\Phi(G)$ is the set of roots of $G$.

\begin{lemma}\label{lem:w-average-integration}For any $f\in R^{W_{\mu}}$, we have
\begin{equation}\label{Av w f/R}
\int_{G/P_{\mu}}f=\sum_{w\in W/W_{\mu}}w(f/\frR_{\mu}).
\end{equation}
Here the sum is over a set of representatives of $W/W_{\mu}$, which is well-defined since $f/\frR_{\mu}$ is $W_{\mu}$-invariant. (The element $f/\frR_{\mu}$ is understood in the fraction field of $R$, yet the sum above will lie in $R^{W}$.)
\end{lemma}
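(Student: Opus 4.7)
The plan is to apply the Atiyah--Bott--Berline--Vergne equivariant localization formula to the natural $T$-action on $G/P_\mu$. Since $G/P_\mu$ is smooth, projective, with finite $T$-fixed locus, the localization formula is available in $\ell$-adic cohomology after inverting the $T$-equivariant Euler classes at the fixed points, via the standard formalism of $T$-equivariant \'etale cohomology (or equivalently, equivariant Chow groups with $\Qlbar$-coefficients). All objects in the statement admit natural $T$-equivariant refinements: the isomorphism $R^{W_\mu} \cong \cohog{*}{\BB P_\mu}$ is the $G$-equivariant cohomology $\rH_G^*(G/P_\mu)$, which includes into $\rH_T^*(G/P_\mu)$ by forgetting equivariance, and correspondingly \eqref{eq:integration-map} sits inside the $T$-equivariant pushforward $\rH_T^*(G/P_\mu) \to \rH_T^*(\pt) = R$.

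First I would compute the two local ingredients in the localization sum. The $T$-fixed locus is $(G/P_\mu)^T = W/W_\mu$, with fixed point $wP_\mu$ for each coset. The tangent space at $wP_\mu$ identifies with $\mathrm{Lie}(G)/\mathrm{Ad}(w)\mathrm{Lie}(P_\mu)$, whose $T$-weights are $\{w\alpha : \alpha \in \Phi(G),\ \langle \alpha, \mu\rangle < 0\}$, giving $T$-equivariant Euler class $w(\frR_\mu) \in R$; this is independent of the coset representative because $W_\mu$ fixes $\mu$ and thus preserves the set $\{\alpha : \langle\alpha,\mu\rangle<0\}$. For the restriction of $f \in R^{W_\mu}$, viewed as a $G$-equivariant class, to the fixed point $wP_\mu$: the inclusion classifies the $T$-equivariant $P_\mu$-torsor $wP_\mu$, on which $T$ acts through the map $t \mapsto w^{-1}tw \in T \subset P_\mu$. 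Hence the induced map $R^{W_\mu} = \cohog{*}{\BB P_\mu} \to \cohog{*}{\BB T} = R$ is the standard $W$-action $f \mapsto w(f)$, again well-defined on $W/W_\mu$ by the $W_\mu$-invariance of $f$.

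Assembling these ingredients, the Atiyah--Bott--Berline--Vergne formula yields
\[
\int_{G/P_\mu} f \;=\; \sum_{w \in W/W_\mu} \frac{w(f)}{w(\frR_\mu)} \;=\; \sum_{w \in W/W_\mu} w\!\left(\frac{f}{\frR_\mu}\right),
\]
which is exactly \eqref{Av w f/R}. A priori the right-hand side only lies in $\Frac(R)$, but the left-hand side lies in $R^W$, so the sum is automatically integral. The main obstacle is really just bookkeeping: matching the tangent-weight computation with the sign convention in the definition \eqref{eq:equivariant-top-Chern} of $\frR_\mu$, which rests on the convention (spelled out in \S\ref{sssec:reductive-groups-notation}) that $P_\mu$ is the \emph{attracting} parabolic to $L_\mu$. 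Should one prefer to avoid invoking equivariant localization directly, the same identity can be extracted by factoring the pushforward through the smooth fibration $G/B \to G/P_\mu$ and applying the classical divided-difference formula for $\int_{G/B}$, but the localization approach is considerably more direct.
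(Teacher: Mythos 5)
Your proof is correct, and it takes a genuinely different route from the paper's. You invoke the Atiyah--Bott--Berline--Vergne localization formula for the $T$-action on $G/P_\mu$, with finite fixed locus $(G/P_\mu)^T = W/W_\mu$, computing the tangent weights at $wP_\mu$ to get Euler class $w(\frR_\mu)$ and the restriction of $f \in R^{W_\mu}$ to the fixed point to get $w(f)$; the formula then falls out in one line. The paper instead first reduces to the Borel case by factoring $G/B \to G/P_\mu$ and using the tower of pushforwards, and then establishes the $P_\mu = B$ case from scratch: it observes that $\int_{G/B}$ is sign-equivariant under the $W$-action (via an $\SL_2$ check on simple reflections), uses that $R^{\sgn}$ is a free $R^W$-module of rank one generated by $\frR$, and pins down the normalization by the Euler characteristic computation $\int_{G/B}\frR = \chi(G/B) = |W|$. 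Your approach is more direct and conceptually transparent, at the cost of invoking the localization theorem in equivariant $\ell$-adic cohomology (or equivariant Chow) as a black box; you correctly flag the one place where care is needed, namely matching the tangent-weight computation against the convention that $P_\mu$ is the \emph{attracting} parabolic, which is what makes the weights at the base point exactly $\{\alpha : \j{\a,\mu} < 0\}$ so that the Euler class is $\frR_\mu$ without a spurious sign. The paper's argument is more elementary and self-contained, never leaving $G$-equivariant cohomology, at the cost of the extra reduction step and the separate $\SL_2$ computation. Your closing remark about the alternative via the divided-difference formula for $\int_{G/B}$ is essentially a description of what the paper actually does, so you had both routes in view.
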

\begin{proof}
Suppose the Lemma is proved whenever the parabolic subgroup is a Borel. Let $B$ be the standard Borel subgroup contained in $P_\mu$. Let $B_\mu := B \cap L_\mu$, a Borel subgroup of $L_\mu$. Let $\frR^\mu$ be the analogous construction for $B_\mu < L_\mu$ (i.e., the $L_\mu$-equivariant top Chern class of $L_\mu/B_\mu)$, so that 
\[
\frR :=\prod_{\substack{\alpha \in \Phi(G) \\ \a < 0 }} \a  =  \frR_\mu \frR^\mu \in R.
\]
By assumption, we have a commutative diagram
\[
\begin{tikzcd}
\rH_G^*(G/B) \ar[d] \ar[r, equals] & \rH_{L_\mu}^*(L_\mu/B_\mu) \ar[d, twoheadrightarrow] \ar[r,"\sim"] & R \ar[d, twoheadrightarrow, "\sum_{w \in W_\mu} w((\cdot) /\frR^\mu)"] \\
\rH_G^*(G/P_\mu) \ar[d] \ar[r, equals] &  \rH_{L_\mu}^*(\pt) \ar[r, "\sim"] & R^{W_\mu} \ar[d, twoheadrightarrow]  \\
\rH_G^*(\pt) \ar[rr, "\sim"] & & R^W
\end{tikzcd}
\]
where the composite right vertical arrow is $\sum_{w \in W} w((\cdot)/\frR)$ (which is the content of the lemma for $P_{\mu}=B$). The description of the map $R\to R^{W_{\mu}}$ in the above diagram is the content of the lemma for $G=L_{\mu}$ with its Borel $B_{\mu}$. This implies that the second right vertical arrow must be $\sum_{w \in W/W_\mu} w((\cdot)/\frR_\mu)$.

So it suffices to verify the Lemma in the case where $P_\mu = B$ is a Borel subgroup. By computing on simple reflections, which reduces to an $\SL_{2}$ calculation, one checks that $\int_{G/B}$ is sign-equivariant under the $W$-action:
\begin{equation*}
\int_{G/B}wf=(-1)^{\ell(w)}\int_{G/B}f, \quad \forall f\in R.
\end{equation*}
Let $R^{\sgn}$ be the sign-isotypic component of $R$ under the $W$-action. Let
\begin{eqnarray*}
\Av_{\sgn}: R&\to& R^{\sgn}\\
f &\mapsto& \frac{1}{|W|}\sum_{w\in W}(-1)^{\ell(w)}wf
\end{eqnarray*}
be the projector to $R^{\sgn}$. Since $\int_{G/B}$ is sign-equivariant, it has a factorization as a composition
\begin{equation*}
\int_{G/B}: R\xr{\Av_{\sgn}}R^{\sgn}\xr{\io}R^{W}
\end{equation*}
for a unique $R^{W}$-linear map $\io$. It is well-known that $R^{\sgn}$ is a free $R^{W}$-module with basis $\frR$; hence, to determine $\io$, it suffices to compute $\io(\frR)$. 

By the definition of $\int_{G/B}$ as a pushforward, we have
\begin{equation*}
\int_{G/B}\frR=\int_{G/B}c_{\dim G/B}(T_{G/B})=\chi(G/B)=|W|.
\end{equation*}
This implies $\io(\frR)=|W|$, which in turn implies that
\begin{equation*}
\int_{G/B}f=\io(\Av_{\sgn}(f))=|W|\Av_{\sgn}(f)/\frR=\sum_{w\in W} w(f/\frR),
\end{equation*}
proving \eqref{Av w f/R}.
\end{proof}

\subsubsection{Proof of Proposition \ref{prop:GLn-minimal-eps}}\label{sssec:GLn-minimal-eps-proof} Now we return to the setup of Proposition \ref{prop:GLn-minimal-eps}, so $G = \GL_n$ and $\mu = (1,0, \ldots, 0) \in \xcoch(T)$.

We have arranged that $\pl_{\mu}=\pl_{x_{1}}$. We choose $\eta = x_1^n$. In this case, $\BB P_\mu \rightarrow \BB G$ is a $\PP^{n-1}$-bundle with $x_1$ being the first Chern class of the tautological bundle $\cO(-1)$ in our conventions (cf. Example \ref{ex:parabolic-line}).

Each $e_i$ spans the (1-dimensional) eigenspaces $\VV_{2d_i}$, and we want to calculate 
\begin{equation*}
\nb_{\mu}^\eta (e_i)= \int_{G/P_\mu} x_1^n \partial_{x_1} e_i.
\end{equation*}

Let $\wh{e}_i \in R$ be the $i$th elementary symmetric polynomial in $x_2, \ldots, x_n$ (omitting $x_1$). Note that $\partial_{x_1} (e_i) = \wh{e}_{i-1}$, so that we are interested in calculating $\int_{G/P_\mu} x_1^n \cdot \wh{e}_{i-1}$. 

Let $t$ be a formal variable. Then we have 
\begin{equation}\label{eq:elementary-symmetric-factorize}
\prod_{j=2}^n (t-x_j) = \sum_{i=0}^{n-1} (-1)^{i} t^{n-1-i} \wh{e}_i .
\end{equation}
Identifying $W = S_n$ with the permutations of the $\{x_i\}$, the subgroup $W_\mu = S_{n-1}$ is the stabilizer of $x_1$. We claim that 
\begin{equation}\label{eq:gln-lagrange-interpolation}
\sum_{w \in S_n/S_{n-1}} w \left(x_1^n \frac{\prod_{j=2}^n (t-x_j)}{\prod_{j=2}^n (x_1-x_j) } \right) =  t^n -  \prod_{i=1}^n (t-x_i).
\end{equation}
Indeed, the left side is the Lagrange interpolation formula for the unique degree $n-1$ polynomial $P(t)$ such that $P(x_i) = x_i^n$ for each $i=1, \ldots, n$, and the right side visibly has this same property, so they must agree. 

Comparing coefficients of $t^{n-i}$ in \eqref{eq:gln-lagrange-interpolation}, and using \eqref{eq:elementary-symmetric-factorize}, yields
\begin{equation}\label{eq:eigenweight-minimal-example}
\sum_{w \in S_n/S_{n-1}} w \left(\frac{x_1^n \cdot  \wh{e}_{i-1}}{\prod_{j=2}^n (x_1-x_j) } \right) = e_i.
\end{equation}
Note that the denominator $\prod_{j=2}^n (x_1-x_j)$ is $(-1)^{n-1} \frR_\mu$ from \eqref{eq:equivariant-top-Chern}. Invoking Lemma~\ref{lem:w-average-integration}, we find that 
\begin{equation}\label{eq:integration-GLn-colength-one}
\nb_{\mu}^\eta(e_{i}) = \int_{G/P_\mu} x_1^n \wh{e}_{i-1} {=} (-1)^{n-1} \sum_{w \in S_n/S_{n-1}} w \left(\frac{x_1^n \cdot  \wh{e}_{i-1}}{\prod_{j=2}^n (x_1-x_j) } \right) \stackrel{\eqref{eq:eigenweight-minimal-example}}= (-1)^{n-1}e_i,
\end{equation}
as desired. \qed

\subsection{Convergence of trace} 

Since the definition of arithmetic volume involves taking trace on an infinite-dimensional vector space, some analysis is required to see that the trace actually converges to a well-defined number; we undertake this analysis now in a more general context.

\subsubsection{More general arithmetic volumes}

Fix an admissible sequence $\mu=(\mu_1,\cdots, \mu_r)$ of dominant minuscule coweights of $T$. Recall $N=\sum_{j=1}^r(D_{\mu_j}+1)$. For any $\th\in \cohog{2N}{{}^\om\Hk^\mu_G}$, we can define the operator $\G^\th_\mu$ on $\cohog{*}{\Bun^\om_G}$ similarly as in \S\ref{ssec:split-Gamma} as the composition
\begin{equation*}
    \G^\th_\mu:=h_{0*}(h_r^*(-)\cup \th): \cohog{*}{\Bun^\om_G}\to \cohog{*}{\Bun^\om_G}.
\end{equation*}
Similarly we have the version with compact support
\begin{equation*}
    {}_c\G^\th_\mu:=h_{r*}(h_0^*(-)\cup \th): \cohoc{*}{\Bun^\om_G}\to \cohoc{*}{\Bun^\om_G}.
\end{equation*}

\begin{lemma}\label{l:trace cGamma vs Gamma}
    For each $i\in \ZZ$, we have
    \begin{equation*}
        \Tr({}_c\G^\th_\mu\circ \Frob\mid \cohoc{i}{\Bun^\om_G})=q^{\dim \Bun_G}\Tr(\Frob^{-1}\circ\G^\th_\mu\mid\cohog{2\dim \Bun_G-i}{\Bun^\om_G}).
    \end{equation*}
\end{lemma}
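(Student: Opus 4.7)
The plan is to deduce this identity from the interplay between Poincaré duality and Frobenius--equivariance. First, I would extend the adjointness relation \eqref{Gamma adj} from single-leg operators to the composite operators $\G^\th_\mu$ and ${}_c\G^\th_\mu$ attached to a general class $\th\in \cohog{2N}{{}^\om\Hk^\mu_G}$. Since $\mu$ is admissible with each $\mu_j$ minuscule, each $h_j\co \Hk_G^{\mu_j}\to \Bun_G$ is smooth and proper, hence so are the two legs $h_0,h_r\co \Hk_G^\mu\to \Bun_G$ of the composite Hecke stack. The projection formula then gives, for $\alpha\in \cohoc{i}{\Bun_G^\om}$ and $\beta\in \cohog{2\dim \Bun_G-i}{\Bun_G^\om}$,
\begin{equation*}
\j{{}_c\G^\th_\mu(\alpha),\beta} = \j{h_{r*}(h_0^*\alpha\cup \th),\beta} = \j{h_0^*\alpha\cup \th, h_r^*\beta} = \j{\alpha, h_{0*}(\th\cup h_r^*\beta)} = \j{\alpha, \G^\th_\mu(\beta)}.
\end{equation*}

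Second, I would invoke that the Poincaré pairing $\j{-,-}\co \cohoc{i}{\Bun_G^\om}\ot \cohog{2\dim \Bun_G-i}{\Bun_G^\om}\to \Qlbar(-\dim \Bun_G)$ is Frobenius--equivariant. Writing $d:=\dim \Bun_G$ and noting that geometric Frobenius acts on $\Qlbar(-d)$ by $q^d$, this equivariance is equivalent to $\j{\Frob^*\alpha,\beta} = q^d\j{\alpha,(\Frob^*)^{-1}\beta}$. Combining with the adjointness from the previous step,
\begin{equation*}
\j{{}_c\G^\th_\mu\c \Frob^*(\alpha),\beta} = \j{\Frob^*\alpha, \G^\th_\mu(\beta)} = q^d\j{\alpha,(\Frob^*)^{-1}\G^\th_\mu(\beta)},
\end{equation*}
so under the Poincaré pairing, ${}_c\G^\th_\mu\c\Frob^*$ on $\cohoc{i}{\Bun_G^\om}$ corresponds to $q^d\cdot (\Frob^*)^{-1}\c\G^\th_\mu$ on $\cohog{2d-i}{\Bun_G^\om}$, and the desired identity is then formal (as the trace of an operator equals that of its transpose under a perfect duality).

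The main obstacle is the infinite-dimensionality of these cohomology groups: the identification ``trace of an operator equals trace of its transpose'' requires justification beyond the formal manipulation. To resolve this, I would reduce to the finite-dimensional situation by writing both sides as absolutely convergent sums of traces on mutually dual finite-dimensional subquotients. Concretely, the Atiyah-Bott description of $\cohog{*}{\Bun_G^\om}$ (Theorem \ref{thm: atiyah-bott formula}) provides a natural $\Frob$--stable exhaustive filtration with finite-dimensional graded pieces on which $(\Frob^*)^{-1}\c\G^\th_\mu$ acts in a block-triangular fashion; Poincaré duality transposes this to a dual filtration of $\cohoc{*}{\Bun_G^\om}$ compatible with ${}_c\G^\th_\mu\c\Frob^*$. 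This reduces the trace identity to its elementary finite-dimensional counterpart on each graded piece and makes the convergence transparent, matching what will be established in Proposition \ref{p:trace conv}.
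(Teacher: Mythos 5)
Your argument is essentially the same as the paper's: establish that ${}_c\G^\th_\mu$ is adjoint to $\G^\th_\mu$ under the Poincar\'e pairing, that $\Frob$ on $\cohoc{i}$ is adjoint to $q^{\dim\Bun_G}\Frob^{-1}$ on $\cohog{2\dim\Bun_G-i}$, and conclude by transposition of traces. Your third paragraph, however, addresses a non-issue for this particular lemma: since the statement fixes the degree $i$, the spaces $\cohoc{i}{\Bun_G^\omega}$ and $\cohog{2\dim\Bun_G-i}{\Bun_G^\omega}$ are already finite-dimensional (as follows from the Atiyah--Bott description in each fixed degree), so the equality of a trace with that of its transpose under a perfect pairing is immediate; the infinite-dimensional convergence question only arises when forming the alternating sum over $i$, which is exactly what Proposition \ref{p:trace conv} separately handles.
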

\begin{proof}
    By the analog of \eqref{Gamma adj}, under the Poincar\'e duality pairing \eqref{PD}, the endomorphism ${}_{c}\Gamma^{\th}_{\mu}$ of $\cohoc{i}{\Bun^\om_G}$ is adjoint to the endomorphism $\Gamma^{\th}_{\mu}$ of $\cohog{2\dim \Bun_G-i}{\Bun_G^\om}$. Also, the automorphism $\Frob$ on $\cohoc{i}{\Bun^\om_G}$ is adjoint to the automorphism $q^{\dim \Bun_G}\Frob^{-1}$ on $\cohog{2\dim \Bun_G-i}{\Bun^\om_G}$. The conclusion follows.
\end{proof}

\begin{prop}\label{p:trace conv} Fix an embedding of fields $\io: \Qlbar\incl \CC$. For any $\th\in \cohog{2N}{{}^\om\Hk^\mu_G}$, the two series of complex numbers
    \begin{equation}\label{alt trace converge}
        \sum_{i\in \ZZ}(-1)^i\io\left(\Tr(\Frob^{-1}\c \G^\th_\mu|\cohog{i}{\Bun_G^\om})\right) \mbox{ and }\sum_{i\in \ZZ}(-1)^i\io\left(\Tr({}_c\G^\th_\mu\circ\Frob|\cohoc{i}{\Bun_G^\om})\right)
    \end{equation}
    are absolutely convergent. We denote their sums by 
    \begin{equation*}
        \Tr_\io(\Frob^{-1}\c \G^\th_\mu|\cohog{*}{\Bun_G^\om})\mbox{ and }\Tr_\io({}_c\G^\th_\mu\circ\Frob|\cohoc{*}{\Bun_G^\om})
    \end{equation*}
    respectively.
\end{prop}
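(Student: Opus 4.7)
The plan is as follows. Lemma \ref{l:trace cGamma vs Gamma} shows the two series in \eqref{alt trace converge} are termwise proportional after the reindexing $i\leftrightarrow 2\dim \Bun_G - i$, so it suffices to prove absolute convergence of the first series, i.e., of
\[
\sum_{j\in\ZZ}(-1)^j\, \io\!\left(\Tr\bigl(\Frob^{-1}\circ \G^\th_\mu\mid \cohog{j}{\Bun_G^\om}\bigr)\right).
\]

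I will extract structural information from the Atiyah--Bott formula (Theorem \ref{thm: atiyah-bott formula}), which exhibits $\cohog{*}{\Bun_G^\om}$ as a polynomial algebra on finitely many homogeneous generators, each a K\"unneth component of a characteristic class of the universal $G$-bundle. Two consequences are crucial: (a) $\dim_{\ol{\Q}_\ell}\cohog{j}{\Bun_G^\om}$ grows at most polynomially in $j$; and (b) each generator is pure of Frobenius weight equal to its cohomological degree, so $\cohog{j}{\Bun_G^\om}$ is itself pure of Frobenius weight $j$, meaning under $\io$ every Frobenius eigenvalue on $\cohog{j}{\Bun_G^\om}$ has absolute value $q^{j/2}$. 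In particular, $\Frob^{-1}$ has all eigenvalues of absolute value $q^{-j/2}$ on $\cohog{j}{\Bun_G^\om}$.

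The main estimate is a weight analysis of $\G^\th_\mu$. Decompose $\th=\sum_\a \th_\a$ into Frobenius generalized eigencomponents, where $|\io(\a)|\le q^N$ by Deligne's weight bound on the smooth stack $\Hk^\mu_G$. Naturality of Frobenius gives the conjugation identity $\Frob\circ \G^{\th_\a}_\mu = \a\cdot \G^{\th_\a}_\mu\circ \Frob$. In a basis of $\cohog{j}{\Bun_G^\om}$ diagonalizing $\Frob$ (with eigenvalues $\l_i$ of $\io$-absolute value $q^{j/2}$), this equivariance forces the $(i,k)$-entry of $\G^{\th_\a}_\mu$ to vanish unless $\l_k/\l_i=\a$. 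In particular, only the weight-zero component $\th_1$ contributes to the diagonal of $\Frob^{-1}\circ \G^\th_\mu$. Bounding the diagonal entries via the operator norm of $\G^{\th_1}_\mu$ on $\cohog{j}{\Bun_G^\om}$---which should grow at most polynomially in $j$ via the Atiyah--Bott presentation---yields
\[
\left|\io\bigl(\Tr(\Frob^{-1}\circ \G^\th_\mu\mid \cohog{j}{\Bun_G^\om})\bigr)\right| \;\le\; P(j)\cdot q^{-j/2}
\]
for some polynomial $P$ depending on $\th$. Summing over $j$ then gives absolute convergence.

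The main obstacle will be controlling the operator norm of $\G^{\th_1}_\mu$ on $\cohog{j}{\Bun_G^\om}$ uniformly in $j$. This requires transferring the operator through the Atiyah--Bott description: because $\th_1$ has bounded Frobenius weight and $\G^{\th_1}_\mu$ preserves cohomological degree, it should act as a bounded-complexity endomorphism of the polynomial algebra, whose operator norm on the degree-$j$ piece grows polynomially. Verifying this rigorously may require either an appeal to Deligne's semisimplicity of Frobenius on the pure cohomology of the smooth stack $\Bun_G^\om$, or a direct computation leveraging the explicit Atiyah--Bott generators.
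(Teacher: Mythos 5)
Your argument correctly reduces to the first series and correctly extracts two facts from the Atiyah--Bott formula: polynomial growth of $\dim\cohog{j}{\Bun_G^\om}$ and purity (weight $j$) of $\cohog{j}{\Bun_G^\om}$. However, there are two problems with what follows.

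First, the conjugation identity is wrong: because $h_{0*}$ is a Gysin pushforward along a smooth proper map of relative dimension $N$, it introduces a Tate twist. The correct identity is $\Frob\circ\G^{\th_\a}_\mu = q^{-N}\a\cdot\G^{\th_\a}_\mu\circ\Frob$, not $\Frob\circ\G^{\th_\a}_\mu = \a\cdot\G^{\th_\a}_\mu\circ\Frob$. Consequently, the diagonal entries of $\G^{\th_\a}_\mu$ (in a $\Frob$-eigenbasis) vanish unless $\a = q^N$, not $\a = 1$. This matters: since $\cohog{2N}{{}^\om\Hk^\mu_G}$ is pure of weight $2N$ (by the same Atiyah--Bott purity applied to a fiber-bundle description of $\Hk^\mu_G$ over $X^r\times\Bun_G$), all Frobenius eigenvalues $\a$ on it satisfy $|\io(\a)| = q^N$; there is no weight-$0$ piece $\th_1$ when $N > 0$, and taking your claim literally would give a vanishing trace, contradicting Theorem \ref{th:vol gen}.

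Second, and more fundamentally, the proposed route to the polynomial bound does not close. You want to bound $\bigl|\Tr(\Frob^{-1}\circ\G^\th_\mu\mid\cohog{j})\bigr|$ by $q^{-j/2}$ times the sum of absolute values of the diagonal entries of $\G^\th_\mu$ in a $\Frob$-diagonalizing basis, and propose to control these by the operator norm. But an operator norm does not bound the diagonal entries in an arbitrary basis — that would require the basis to be well-conditioned, which is not available here. The paper's proof of Proposition \ref{p:trace conv} resolves exactly this: it introduces the Ran grading on $\cohog{*}{\Bun_G^\om}$ (Lemma \ref{l:Ran}), constructs a concrete monomial basis $\frB$ of Frobenius eigenvectors, shows via Proposition \ref{p:master eqn} and Lemmas \ref{l:h0}--\ref{l:h1 push} that $\G^\th_\mu$ raises Ran degree by at most $2$, and from this combinatorial structure derives (Lemmas \ref{l:bound trunc Gamma}, \ref{l:bound diag entry}) a polynomial bound on the diagonal entries of $\G^\th_\mu$ in the basis $\frB$, as a function of the naive degree. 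Your appeal to purity of $\Frob$, once corrected for the Tate twist, is a valid and arguably illuminating way to isolate the component $\th_{q^N}$ that actually contributes to the diagonal, but it does not supply the quantitative estimate — this is precisely the step your "main obstacle" remark flags, and it is the content of the Ran-filtration machinery in the paper.
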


By Lemma \ref{l:trace cGamma vs Gamma}, it suffices to prove the absolute convergence of the first series in \eqref{alt trace converge}, whose proof occupies the rest of the subsection.


\subsubsection{The Ran grading}\label{sss:Ran gr}

For $m\ge0$ define $\frR_m\cohog{*}{\Bun^{\om}_{G}}\subset \cohog{*}{\Bun_G^\om}$ to be the subspace spanned by elements of the form
\begin{equation*}
f_{1}^{z_{1}}f_{2}^{z_{2}}\cdots f_{s}^{z_{s}} \quad \mbox{such that $\sum_{i}|z_{i}|=m$ and $\deg(f_{i})>|z_{i}|$ for each $i=1,\cdots, s$.}
\end{equation*}
Here the $f_{i}\in R^{W}_{+}$ are homogeneous. We note that $\deg(f_{i})>|z_{i}|$ automatically holds unless $\deg(f_{i})=2=|z_{i}|$, which is not allowed in the description of the Atiyah--Bott formula \S\ref{sss:concrete AB} (for in this case $f_{i}^{z_{i}}\in \Qlbar$). 

From the definition, it is clear that $\frR_m\cohog{*}{\Bun^{\om}_{G}}$ are multiplicative under cup product in that
\begin{equation*}
    \frR_m\frR_l\subset \frR_{m+l}.
\end{equation*}

\begin{lemma}\label{l:Ran}
    We have a decomposition
    \begin{equation}\label{eq:cohog-ran-decomposition}
        \cohog{*}{\Bun_G^\om}\cong \bigoplus_{m\ge0}\frR_m\cohog{*}{\Bun_G^\om}.
    \end{equation}
    We call the resulting $\ZZ_{\ge0}$-grading on $\cohog{*}{\Bun_G}$ the {\em Ran grading} \footnote{The nomenclature comes from the fact that $\cohog{*}{\Bun^\om_G}$ is the factorization homology of $\cohog{*}{\BB G}$, a key observation of \cite{GL14}.}.
\end{lemma}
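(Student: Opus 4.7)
The plan is to identify the subspaces $\frR_m$ with the homogeneous pieces of a natural $\ZZ_{\geq 0}$-grading on $\cohog{*}{\Bun_G^\omega}$ coming from the Atiyah--Bott formula. By the explicit description recorded in \S\ref{sss:concrete AB}, $\cohog{*}{\Bun_G^\omega}$ is a polynomial algebra over $\Qlbar$ on the generators $\{f_i^{z_j}\}$ indexed by pairs $(i,j)$ with $1\le i\le n$ and $0\le j\le 2g+1$, subject to the exclusion $(\deg f_i, |z_j|)\neq (2,2)$. Equipping this polynomial algebra with the $\ZZ_{\geq 0}$-grading that declares each generator $f_i^{z_j}$ to have Ran degree $|z_j|$ produces an a priori direct sum decomposition $\cohog{*}{\Bun_G^\omega}=\bigoplus_m A_m$ as graded vector spaces. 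The task is to match $A_m=\frR_m$.

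The inclusion $A_m\subseteq \frR_m$ is direct: a monomial of Ran weight $m$ in the Atiyah--Bott generators is tautologically a product of classes $f_{i_k}^{z_{j_k}}$ with $\sum_k|z_{j_k}|=m$, and the strict inequality $\deg f_{i_k}>|z_{j_k}|$ holds automatically because $R^W$ is evenly graded, $|z_{j_k}|\le 2$, and the case $\deg f_i=|z_j|=2$ is precisely the $(2,2)$ exclusion. For $\frR_m\subseteq A_m$, I would take a general spanning element $f_1^{z_1}\cdots f_s^{z_s}$ (with $f_i\in R^W_+$ homogeneous and $\deg f_i>|z_i|$), express each $f_i$ as a polynomial in the chosen generators of $R^W$, and iteratively invoke the coproduct identity \eqref{contraction f1f2}, $(fg)^z=\sum_\alpha f^{z'_\alpha} g^{z''_\alpha}$, to distribute each $z_i$ across the resulting atomic factors $f_\alpha^w$. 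Since the coproduct preserves total homological degree, each resulting monomial carries Ran weight $\sum_i|z_i|=m$; atomic factors with $|w|>\deg f_\alpha$ vanish for degree reasons, and the remaining factors $f_\alpha^{w}$ with $|w|<\deg f_\alpha$ are Atiyah--Bott generators of the corresponding weight.

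The main technical obstacle is the possible appearance of atomic factors $f_\alpha^{w}$ with $|w|=\deg f_\alpha$: these live in $\rH^0(\Bun_G^\omega)$ and are scalars, which could a priori shift the Ran weight of the residual monomial. Since $R^W$ is evenly graded, this can only happen when $\deg f_\alpha=|w|=2$, i.e., from degree-$2$ generators arising from the central torus of $G$. In the semisimple case all generators of $R^W$ have degree $\geq 4$, so no such scalar factor can arise and the argument above directly gives $\frR_m=A_m$. For general reductive $G$, I would reduce to the semisimple and torus cases by factoring through the canonical central isogeny $Z^\circ\times_X G_{\der}\to G$ (using Proposition \ref{p:inv under isog} and Corollary \ref{cor: propagate central isogeny}) to compare the two decompositions across the isogeny. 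The torus contribution would then be verified directly using the K\"unneth decomposition of $\Bun_T^\omega$ implicit in the split torus and induced torus reductions of \S\ref{sss:induced tori}, together with the explicit basis of $\cohog{*}{\Pic^0_X}$ via $\cohog{*}{\Jac_X}\otimes \cohog{*}{\BB\Gm}$.
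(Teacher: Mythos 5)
Your proof follows the same strategy as the paper's: introduce the Ran-weight grading $A_m$ (the paper's $\frR'_m$) on the polynomial algebra of Atiyah--Bott generators, observe $A_m\subseteq\frR_m$ tautologically, and establish $\frR_m\subseteq A_m$ by writing $f\in R^W$ as a polynomial in the chosen generators and iterating the coproduct identity $(fg)^z=\sum_\alpha f^{z'_\alpha}g^{z''_\alpha}$, which preserves the total homological weight.

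Where you diverge from the paper is in explicitly flagging the obstacle posed by degree-$2$ scalar factors $f_\alpha^{w}$ with $\deg f_\alpha=|w|=2$, and this is a genuine subtlety. The paper's induction opens with ``the case $\deg(P)\le 1$ is clear,'' which silently skips exactly this edge case: for a degree-$2$ generator $f_1$ and $z=[X]$, the class $f_1^{[X]}$ is a scalar lying in $\frR'_0$, not $\frR'_2$, and this scalar is generically nonzero on non-neutral components (e.g.\ on $\Pic^d_X$ one has $(c_1)^{[X]}=d$). Already for $f=f_1^2$ with $\deg f=4>|z|=2$ one then finds $(f_1^2)^{[X]}=2d\,(f_1)^1+(\text{mid-terms})$, with the first summand in $A_0$, so for $d\neq 0$ the intersection $\frR_0\cap\frR_2$ is nonzero and the asserted direct-sum decomposition is delicate. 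This tension is present in the paper's own proof and is the real content behind the exclusion $(\deg f_i,|z_j|)\neq(2,2)$ in the monomial basis.

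Your proposed repair, however, does not close the gap. You reduce to the semisimple and torus cases via central isogenies — fine, the semisimple case is clean since all generators have degree $\geq 4$ — but the residual torus case is precisely where the problem lives. Your appeal to $\cohog{*}{\Pic^0_X}\cong\cohog{*}{\Jac_X}\otimes\cohog{*}{\BB\GG_m}$ only addresses the neutral component, where $(f_1)^{[X]}=0$ and there is nothing to prove; it says nothing about $\Pic^d_X$ with $d\neq 0$, which is exactly where the scalar $(f_1)^{[X]}=d$ is nonzero and your own ``main technical obstacle'' materializes. A complete argument would either restrict the definition of $\frR_m$ to products of the chosen generators (making $\frR_m=A_m$ definitional), or else retreat to the filtration statement $F_m=\bigoplus_{m'\le m}A_{m'}$, which is all that the downstream convergence and associated-graded computations actually require. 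As written, both your proof and the paper's leave this reductive, non-neutral-component case unresolved.
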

\begin{proof} 

    Abbreviate $\frR_m\cohog{*}{\Bun_G^\om}$ simply by $\frR_m$. Let $f_1,\cdots, f_n$ be a set of free homogeneous generators of $R^W$. Let $z_0=1 \in \homog{0}{X}$, $z_1,\cdots, z_{2g}\in\homog{1}{X}$, and $z_{2g+1}=[X]\in \homog{2}{X}$ be a homogeneous basis of $\homog{*}{X}$. Then by Atiyah-Bott formula (see \S\ref{sss:concrete AB}), $\cohog{*}{\Bun_G^\om}$ has a $\Qlbar$-basis consisting of
    \begin{equation}\label{monomial basis}
        \prod_{i=1}^n\prod_{j=0}^{2g+1} (f_i^{z_j})^{e_{ij}} \quad \text{ where } \quad 
        e_{ij} \begin{cases}
            \in \ZZ_{\ge0} & \mbox{if $j=0$ or $j=2g+1$},\\
            \in \{0,1\} & \mbox{if $j=1,\cdots,2g$}.
        \end{cases}
    \end{equation}
    Let $\frR'_m$ be the subspace spanned by those monomials of the form \eqref{monomial basis} where 
    \[
    \sum_{i=1}^n\Big( \sum_{j=1}^{2g} e_{ij} + 2e_{i,2g+1} \Big)=m.
    \]
    Clearly $\cohog{*}{\Bun_G^\om}$ is the direct sum of $\frR'_m$. It remains to show that $\frR_m=\frR'_m$. The inclusion $\frR'_m\subset \frR_m$ is clear. Conversely, to show that $\frR_m\subset \frR'_m$, we observe that $\frR'_m$ are also multiplicative, so it suffices to show that
    \begin{equation}\label{fz in R'}
        f^z\in \frR'_{|z|}
    \end{equation}
    for any $f\in R^W$ and homogeneous $z\in \homog{*}{X}$. 
    
    Any $f\in R^W$ can be written as  $f=P(f_1,\cdots, f_n)$ for a unique polynomial $P$ in $n$ variables. We prove \eqref{fz in R'} by induction on the degree of $P$. The case $\deg (P)\le 1$ is clear. Suppose \eqref{fz in R'} holds for all $f$ whose degree in $f_1,\cdots, f_n$ is $<d$. Let $f=P(f_1,\cdots, f_n)$ for $\deg(P)=d$. By linearity, it suffices to treat the case where $P$ is a monomial of degree $d>1$. In this case we write $P=P_1P_2$ where $P_i$ are monomials of degree $<d$. Let $g_1=P_1(f_1,\cdots, f_n)$ and $g_2=P_2(f_1,\cdots, f_n)$. Then by \eqref{contraction f1f2} we have
    \begin{equation}\label{fz decomp}
        f^z=(g_1g_2)^z=\sum_{j}(g_1)^{z'_j}(g_2)^{z''_j}
    \end{equation}
    where $\D_*(z)=\sum_jz'_j\ot z''_j\in \homog{|z|}{X\times X}$. By induction hypothesis $(g_1)^{z'_j}\in \frR'_{|z'_j|}$ and $(g_2)^{z''_j}\in \frR'_{|z''_j|}$. Therefore \eqref{fz decomp} implies $f^z\in \frR'_{|z|}$.  This shows $\frR_m=\frR'_m$, completing the proof of the lemma. 
\end{proof}

For each $m$, define the {\em Ran filtration} $F_\bu\cohog{*}{\Bun_G^\om}$ on $\cohog{*}{\Bun_G^\om}$ by
    \begin{equation}\label{eq:filtrant-ran-decomposition}
F_m\cohog{*}{\Bun_G^\om} \cong \bigoplus_{m'\le m}\frR_{m'}\cohog{*}{\Bun_G^\om}.
\end{equation}

\subsubsection{Monomial basis for $\cohog{*}{\Bun^\om_G}$} As in the proof of Lemma \ref{l:Ran}, we now fix a choice of homogeneous free generators $f_1,\cdots, f_n$ for $R^W$. Also choose a homogeneous basis $z_0=1,z_1,\cdots, z_{2g}, z_{2g+1}=[X]$ for $\homog{*}{X}$ that are also eigenvectors for the Frobenius action on $\homog{*}{X}$.\footnote{We are using here the fact that $\Frob$ acts semisimply on $\homog{1}{X}$; however, without using this fact, the argument can still be made with slight modification towards the end.} Let $\frB$ be the set of elements of the form \eqref{monomial basis}; they form a basis for $\cohog{*}{\Bun_G^\om}$ that we call \emph{the monomial basis}.
We use the same notation $\frB$ for varying components $\om\in \pi_0(\Bun_G)$.


For $\a\in \frB$ in the form \eqref{monomial basis}, we call 
\begin{equation}\label{naive deg}
d:=\sum_{1\le i\le n, \ 0\le j\le 2g+1}e_{ij}
\end{equation}
the {\em naive degree} of $\a\in \frB$.

\subsubsection{Hecke action on tautological classes} 
From here until the end of Lemma \ref{l:bound trunc Gamma}, we will fix a dominant minuscule coweight $\mu$ of $G$, and consider the one-step Hecke stack ${}^{\om} \Hk^{\mu}_G$.  

First we need to prove a technical result about the behavior of tautological classes under Hecke correspondences. We follow the notation of \S \ref{sssec:tautological-classes}. 

\begin{prop}\label{p:master eqn}
For $f\in R^{W}$ and $z\in \homog{|z|}{X}$, we have
\begin{equation*}
h_{1}^{*}(f^{z})-h_{0}^{*}(f^{z})=\PD(z)\pl_{\mu}(f)+\begin{cases}(1-g)\j{z,\xi}\xi\pl_{\mu}^{2}(f) & |z|=2, \\ 
0 & \mbox{otherwise}.\end{cases}
\end{equation*}

\end{prop}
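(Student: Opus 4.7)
The plan is to reduce this to Theorem \ref{th:master gen} applied to the universal modification on $X \times \Hk_G^\mu$, and then carry out the bookkeeping using the Poincar\'e dual description of $f^z$.

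First I would recall that for the universal $G$-bundle $\cF^\univ$ on $X\times\Bun_G$, the class $f^z$ is computed by
\[
f^z = p_{\Bun,*}\!\left(\PD(z)\cdot f(\cF^\univ)\right),
\]
where $p_\Bun\co X\times \Bun_G\to \Bun_G$ is the projection (which is proper since $X$ is projective). Applying proper base change to the Cartesian squares
\[
\begin{tikzcd}
X\times \Hk_G^\mu \ar[r,"\id\times h_j"]\ar[d,"p_{\Hk}"'] & X\times \Bun_G \ar[d,"p_\Bun"] \\
\Hk_G^\mu \ar[r,"h_j"'] & \Bun_G
\end{tikzcd}
\]
for $j=0,1$, I obtain $h_j^*(f^z)=p_{\Hk,*}(\PD(z)\cdot f(\cF_j))$, with $\cF_j=(\id_X\times h_j)^*\cF^\univ$. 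Therefore
\[
h_1^*(f^z)-h_0^*(f^z) = p_{\Hk,*}\!\left(\PD(z)\cdot\bigl(f(\cF_1)-f(\cF_0)\bigr)\right).
\]

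Next, I would apply Theorem \ref{th:master gen} to the setup $S=X\times\Hk_G^\mu$, $D=\Gamma(p_X)$, with the universal modification $\varphi^\univ\co \cF_0|_{S-D}\isom \cF_1|_{S-D}$ of type $\mu$ from \eqref{univ modif}. This gives
\[
f(\cF_1)-f(\cF_0)=i_{D!}\!\left(\sum_{n\ge 1}\frac{1}{n!}(\partial_\mu^n f)(\cF_0|_{D,L_\mu})\cdot \nu_D^{n-1}\right).
\]
The canonical $L_\mu$-reduction of $\cF_0|_D$ is classified by $\ev_\mu\co \Hk_G^\mu\to \BB L_\mu$ (using the identification $D\cong \Hk_G^\mu$ given by $\alpha\co y\mapsto (p_X(y),y)$), so $(\partial_\mu^n f)(\cF_0|_{D,L_\mu})$ pulls back to $\ev_\mu^*(\partial_\mu^n f)$ on $\Hk_G^\mu$. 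The normal bundle of the graph $\Gamma(p_X)\subset X\times \Hk_G^\mu$ is naturally $p_X^*T_X$, hence
\[
\nu_D = (2-2g)\,p_X^*\xi.
\]
Since $\xi^2=0$ in $\cohog{*}{X}$, the factor $\nu_D^{n-1}$ vanishes for $n\ge 3$, so only the terms $n=1,2$ contribute.

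Finally, I would substitute and use $p_{\Hk}\circ \alpha = \id_{\Hk_G^\mu}$, which yields $p_{\Hk,*}\circ i_{D!}=\id$ (via the identification $D\cong \Hk_G^\mu$). Combined with the projection formula, this gives
\[
h_1^*(f^z)-h_0^*(f^z) = p_X^*\PD(z)\cdot\ev_\mu^*(\partial_\mu f) + (1-g)\,p_X^*(\PD(z)\cdot\xi)\cdot \ev_\mu^*(\partial_\mu^2 f).
\]
The first summand is $\PD(z)\partial_\mu(f)$ in the notation of the proposition. For the second, since $\xi^2=0$ and $\cohog{>2}{X}=0$, the product $\PD(z)\cdot \xi$ vanishes unless $|z|=2$, in which case $\PD(z)=\langle z,\xi\rangle$, yielding the stated correction term. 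The only delicate step is bookkeeping the degree shifts and Tate twists in the two Gysin maps $i_{D!}$ and $p_{\Hk,*}$ to confirm that they cancel; this is routine given that $i_D$ is a regular closed embedding of codimension $1$ and $p_{\Hk}$ is proper and smooth of relative dimension $1$.
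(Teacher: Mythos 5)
Your proof is correct and follows essentially the same route as the paper: both apply Theorem \ref{th:master gen} to the universal modification along $D=\Gamma(p_X)\subset X\times\Hk_G^\mu$, use $\nu_D=(2-2g)\xi$ together with $\xi^2=0$ to truncate the sum at $n=2$, and then contract with $z$ to extract K\"unneth components. The paper manipulates $i_{D!}$ via the projection formula and the identity $i_{D!}i_D^*=[D]\cdot(-)$ before contracting, whereas you push forward through $p_{\Hk,*}\circ i_{D!}=\id$; these are cosmetic variants of the same bookkeeping.
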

\begin{proof} We apply Theorem \ref{th:master gen} to $S:=X\times {}^{\om} \Hk^{\mu}_G$, the two $G$-bundles $\cF_{0}$ and $\cF_{1}$ with the modification \eqref{univ modif} along the divisor $D=\G(p_{X})$. Note that $D\incl S$ is the pullback of the diagonal $\D_{X}\incl X\times X$ under $(q_{X},p_{X}\c p_{\Hk}): S\to X\times X$. Therefore the normal bundle of $D$ is the pullback $p_{X}^{*}T_{X}$. In particular, its first Chern class is $\nu_{D}=(2-2g)\xi$, and $\nu_{D}^{2}=0$. 

Let $\cH$ be the $L_\mu$-torsor on $\Gamma(p_X)$ obtained from the restriction $\cF_0|_{\Gamma(p_X)}$ (cf. \S\ref{sssec: canonical parabolic reduction}). Since $\nu_D^2  =0 $ as noted above, Theorem \ref{th:master gen} says in this case that 
\begin{equation}\label{fF1 fF0}
f(\cF_{1})-f(\cF_{0})=i_{D!}\left((\pl_{\mu}f)(\cH)+\frac{1}{2}(\pl^{2}_{\mu}f)(\cH)\cdot \nu_{D}\right).
\end{equation}
By the Cartesian square
\begin{equation*}
\begin{tikzcd}[column sep = huge]
D \ar[d, "i_D"] \ar[r] & \Delta_X \ar[d] \\
X \times_k {}^{\om} \Hk^{\mu}_G   \ar[r, "\Id \times p_X"] & X \times X
\end{tikzcd}
\end{equation*}
we have 
\begin{equation*}
i_{D!}(\pl_{\mu}f)(\cH)=i_{D!}i_{D}^{*}(p_{\Hk}^{*}(\pl_{\mu}f)(\cH))=[D]\cdot p_{\Hk}^{*}((\pl_{\mu}f)(\cH))=[\D_{X}]\cdot p_{\Hk}^{*}((\pl_{\mu}f)(\cH)).
\end{equation*}
By our convention \eqref{eq: parabolic embedding}, we are viewing $R^W \inj \rH^*(\Hk_\mu^\om)$ embedded via the classifying map for $\cH$, so we can simply write $\pl_{\mu}f(\cH)$ as $\pl_{\mu}f\in \cohog{*}{{}^{\om} \Hk^{\mu}_G}$, which we will do below. Similarly, using $\nu_D = (2-2g) \xi$ we have
\begin{align*}
i_{D!}\Big(\frac{1}{2}(\pl^{2}_{\mu}f)\nu_{D} \Big) & =i_{D!}i_{D}^{*}(p_{\Hk}^{*}((1-g)\xi\pl^{2}_{\mu}f))=[D]\cdot p_{\Hk}^{*}((1-g)\xi\pl^{2}_{\mu}f)\\
&= (1-g)[\D_{X}]\cdot(1\ot\xi) p_{\Hk}^{*}(\pl^{2}_{\mu}f)=(1-g)(\xi\ot\xi)p_{\Hk}^{*}(\pl^{2}_{\mu}f)
\end{align*}
where in the last step we used that $[\Delta_X] \cdot (1 \otimes \xi) = (\xi \otimes \xi ) \in \cohog{4}{X \times X}(2)$. Plugging these into \eqref{fF1 fF0} we get
\begin{equation*}
f(\cF_{1})-f(\cF_{0})=[\D_{X}]\cdot p_{\Hk}^{*}(\pl_{\mu}f)+(1-g)(\xi\ot\xi)p_{\Hk}^{*}(\pl^{2}_{\mu}f).
\end{equation*}
Now we expand both sides using K\"unneth formula for $S=X\times {}^\omega \Hk^{\mu}_G$, and use $[\D_{X}]=\xi\ot 1-\b+1\ot \xi$; then contracting with $z \in \rH_*(X)$ gives the desired formula.
\end{proof}

\subsubsection{Ran filtration for Hecke stack}\label{sssec:split-filtration}
We also define an increasing filtration on $\cohog{*}{{}^{\om} \Hk^{\mu}_G}$ as follows. Let
\begin{align*}
F_{m}\cohog{*}{{}^{\om} \Hk^{\mu}_G} & =0, \quad m<-2,\\
F_{-2}\cohog{*}{{}^{\om} \Hk^{\mu}_G}& =p_{X}^{*}\cohog{2}{X}\cdot R^{W_{\mu}},\\
F_{-1}\cohog{*}{{}^{\om} \Hk^{\mu}_G}& =p_{X}^{*}\cohog{\ge1}{X}\cdot R^{W_{\mu}},\\
F_{0}\cohog{*}{{}^{\om} \Hk^{\mu}_G}&=p_{X}^{*}\cohog{*}{X}\cdot R^{W_{\mu}}, \quad \mbox{ which is the image of \eqref{taut class Hk}}\\
F_{m}\cohog{*}{{}^{\om} \Hk^{\mu}_G}&=F_{0}\cohog{*}{{}^{\om} \Hk^{\mu}_G}\cdot h_{0}^{*}F_{m}\cohog{*}{\Bun^{\om}_{G}}, \quad m \geq 1.
\end{align*}
Both filtrations $F_\bu\cohog{*}{\Bun_G^\om}$ and $F_\bu\cohog{*}{{}^\om\Hk^\mu_G}$ are also multiplicative, i.e.,
\begin{equation*}
F_{i}\cdot F_{j}\subset F_{i+j}.
\end{equation*}
In particular, $F_{0}\cohog{*}{\Bun^{\om}_{G}}$ (resp. $F_{0}\cohog{*}{{}^{\om} \Hk^{\mu}_G}$) is a subring of $\cohog{*}{\Bun^{\om}_{G}}$ (resp. $\cohog{*}{{}^{\om} \Hk^{\mu}_G}$).

\begin{lemma}\label{l:h0}
The map $h_{1}^{*}$ sends  $F_{m}\cohog{*}{\Bun^{\om'}_{G}}$ to $F_{m}\cohog{*}{{}^{\om} \Hk^{\mu}_G}$ for all $n\in\ZZ$.
\end{lemma}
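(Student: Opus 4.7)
The plan is to reduce, via multiplicativity of both filtrations and the fact that $h_1^*$ is a ring homomorphism, to the case of a single Atiyah--Bott generator $f^z$ (with $f\in R^W_+$ homogeneous and $z\in\{1, z_1,\ldots, z_{2g}, [X]\}$), and then apply Proposition \ref{p:master eqn} to compare $h_1^*(f^z)$ with $h_0^*(f^z)$. For $m<0$ the source $F_m\cohog{*}{\Bun_G^{\om'}}$ vanishes, so I focus on $m\ge 0$; by Lemma \ref{l:Ran}, every class in $F_m\cohog{*}{\Bun_G^{\om'}}$ is a $\Qlbar$-linear combination of monomials $\prod_i f_i^{z_i}$ with $\sum_i|z_i|\le m$, so it suffices to establish the claim on each such generator.

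For $|z|\ge 1$, the inclusion $h_0^*(f^z)\in F_{|z|}\cohog{*}{{}^{\om}\Hk^\mu_G}$ is immediate from $f^z\in F_{|z|}\cohog{*}{\Bun_G^{\om'}}$ together with the definition $F_m(\Hk)=F_0(\Hk)\cdot h_0^*F_m(\Bun_G)$ for $m\ge 1$. Proposition \ref{p:master eqn} writes
\[
h_1^*(f^z)-h_0^*(f^z) \;=\; p_X^*\PD(z)\cdot \pl_\mu f \;+\; \begin{cases}(1-g)\,p_X^*\xi\cdot \pl_\mu^2 f, & z=[X],\\ 0, & \text{otherwise},\end{cases}
\]
and both terms lie in $p_X^*\cohog{\ge 2-|z|}{X}\cdot R^{W_\mu}\subset F_{|z|-2}(\Hk)\subset F_{|z|}(\Hk)$, so $h_1^*(f^z)\in F_{|z|}(\Hk)$ in this range.

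The case $z=1$ (so $|z|=0$) is the main obstacle, since $F_0\cohog{*}{{}^{\om}\Hk^\mu_G}$ is defined as the image of \eqref{taut class Hk} and does not a priori contain $h_0^*\cohog{*}{\Bun_G^{\om'}}$. To handle it, I would K\"unneth-expand $\ev^*f\in \cohog{*}{X\times\Bun_G}$ in a basis of $\cohog{*}{X}$ dual to $\{1, z_1,\ldots, z_{2g},[X]\}$, obtaining $\ev^*f=1\otimes f^1+\sum_j z_j^*\otimes f^{z_j}+\xi\otimes f^{[X]}$. Pulling this back along $\id_X\times h_0$ and then restricting along the graph section $(p_X,\id)\co\Hk^\mu_G\hookrightarrow X\times\Hk^\mu_G$ identifies the result with $\ev_\mu^*f$ (which factors through the canonical $P_\mu$-reduction of $\cF_0|_{\Gamma(p_X)}$), yielding
\[
\ev_\mu^*f \;=\; h_0^*(f^1)+\sum_j p_X^*z_j^*\cdot h_0^*(f^{z_j}) + p_X^*\xi\cdot h_0^*(f^{[X]}).
\]
Solving for $h_0^*(f^1)$ and using the previously established $h_0^*(f^{z_j})\in F_1(\Hk)$, $h_0^*(f^{[X]})\in F_2(\Hk)$ together with $\ev_\mu^*f\in R^{W_\mu}\subset F_0(\Hk)$, $p_X^*z_j^*\in F_{-1}(\Hk)$, $p_X^*\xi\in F_{-2}(\Hk)$ and the asserted multiplicativity $F_i\cdot F_j\subset F_{i+j}$, we place each corrective term in $F_0(\Hk)$, whence $h_0^*(f^1)\in F_0(\Hk)$. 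Proposition \ref{p:master eqn} then gives $h_1^*(f^1)-h_0^*(f^1)=p_X^*\xi\cdot \pl_\mu f\in F_{-2}(\Hk)\subset F_0(\Hk)$, so $h_1^*(f^1)\in F_0\cohog{*}{{}^{\om}\Hk^\mu_G}$. Combining all cases via the multiplicativity of $F_\bu\cohog{*}{{}^{\om}\Hk^\mu_G}$ completes the proof.
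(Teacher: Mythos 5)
Your proof follows the paper's own strategy—reduce to Atiyah--Bott generators $f^z$, invoke Proposition \ref{p:master eqn} to place $h_1^*(f^z)-h_0^*(f^z)$ in $F_{|z|-2}$, and close by multiplicativity—but you supply a justification for the step the paper labels ``by definition,'' namely $h_0^*(f^z)\in F_{|z|}\cohog{*}{{}^\om\Hk^\mu_G}$. For $|z|\ge 1$ that phrase is accurate: it follows directly from $F_m(\Hk)=F_0(\Hk)\cdot h_0^*F_m(\Bun^{\om}_G)$. For $z=1$, however, the claim that $h_0^*(f^1)$ lies in $F_0(\Hk)=p_X^*\cohog{*}{X}\cdot R^{W_\mu}$ is not a definitional consequence: $h_0^*(f^1)$ is the characteristic class of $\cF_0$ evaluated at a fixed auxiliary point of $X$, which does not obviously factor through the parabolic reduction attached to the moving leg. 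Your K\"unneth rearrangement—using the Cartesian square \eqref{eq: hecke reduction cartesian} to identify $(p_X,h_0)^*\ev^{\om,*}f$ with $\ev_\mu^*\bar f\in R^{W_\mu}\subset F_0(\Hk)$ and then isolating $h_0^*(f^1)$ against the $|z|\ge 1$ terms—is exactly the right way to establish this. It is also genuinely needed: the bound $h_0^*(f^1)\in F_1(\Hk)$ that one gets for free from $f^1\in F_1(\Bun_G)$ would cost an extra filtration level per $|z_i|=0$ factor when multiplicativity is applied, so the lemma would not close without your refinement. Your proof is correct.
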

\begin{proof}
Consider $f^{z}\in F_{|z|}\cohog{*}{\Bun^{\om'}_{G}}$. Then by Proposition \ref{p:master eqn} we have
\begin{equation}\label{diff fz}
h_{1}^{*}(f^{z})-h_{0}^{*}(f^{z})\in p_{X}^{*}\cohog{\ge 2-|z|}{X}\cdot R^{W_{\mu}}=F_{|z|-2}\cohog{*}{{}^{\om} \Hk^{\mu}_G}.
\end{equation}
Since $h_{0}^{*}(f^{z}) \in F_{|z|}\cohog{*}{{}^{\om} \Hk^{\mu}_G}$ by definition, \eqref{diff fz} implies that $h_{1}^{*}(f^{z})\in F_{|z|}\cohog{*}{{}^{\om} \Hk^{\mu}_G}$.

By construction, both filtrations on $\cohog{*}{\Bun^{\om'}_{G}}$ and on $\cohog{*}{{}^{\om} \Hk^{\mu}_G}$ are multiplicative. Since $h_{1}^{*}$ preserves the filtrations on a set of ring generators, it preserves the filtrations on all elements of $\cohog{*}{\Bun^{\om'}_{G}}$.
\end{proof}

Theorem \ref{thm: atiyah-bott formula} gives a canonical identification of the cohomology rings of different components of $\Bun_{G}$. Using this identification, it makes sense to define a map
\begin{eqnarray*}
\D_{\mu}: \cohog{*}{\Bun^{\om}_{G}}&\to& \cohog{*}{{}^{\om} \Hk^{\mu}_G}\\
\th &\mt& h_{1}^{*}\th-h_{0}^{*}\th.
\end{eqnarray*}

\begin{lemma}\label{l:diff h0 h1}
The map $\D_{\mu}$ sends $F_{m}\cohog{*}{\Bun^{\om}_{G}}$ to $F_{m-2}\cohog{*}{{}^{\om} \Hk^{\mu}_G}$ for all $n\in\ZZ$. The induced map on the associated graded 
\begin{equation*}
\Gr^F_\bu\D_{\mu}: \Gr^F_{\bu}\cohog{*}{\Bun^{\om}_{G}}\to \Gr^F_{\bu-2}\cohog{*}{{}^{\om} \Hk^{\mu}_G}
\end{equation*}
is a derivation with respect to the ring homomorphism 
\begin{equation*}
\Gr^F_\bu(h_{0}^{*})=\Gr^F_\bu(h_{1}^{*}): \Gr^F_{\bu}\cohog{*}{\Bun^{\om}_{G}}\cong\Gr^F_{\bu}\cohog{*}{\Bun^{\om'}_{G}} \to \Gr^F_{\bu}\cohog{*}{{}^{\om} \Hk^{\mu}_G}.
\end{equation*}
It is characterized by the property that
\begin{equation*}
(\Gr^F_{|z|}\D_{\mu})(f^{z})=\PD(z)\pl_{\mu}(f)\in \Gr^F_{|z|-2}\cohog{*}{{}^{\om} \Hk^{\mu}_G}.
\end{equation*}
\end{lemma}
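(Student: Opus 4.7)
The plan is to reduce to Proposition \ref{p:master eqn} on the ring generators $f^z$ and propagate by a Leibniz-type identity. By Atiyah--Bott (Theorem \ref{thm: atiyah-bott formula}), together with the Ran grading of Lemma \ref{l:Ran}, $F_m \cohog{*}{\Bun^\omega_G}$ is spanned by products $f_1^{z_1}\cdots f_s^{z_s}$ with $\sum |z_i| \le m$. Since the filtration on $\cohog{*}{{}^\omega\Hk^\mu_G}$ is also multiplicative and $h_0^*$ preserves it (directly from the definition in \S\ref{sssec:split-filtration}), it suffices to verify the filtration-preservation statement on these products by induction.

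For the generators $f^z$, Proposition \ref{p:master eqn} gives
\[
\Delta_\mu(f^z) = \PD(z)\,\pl_\mu(f) + \begin{cases} (1-g)\j{z,\xi}\,\xi\,\pl_\mu^2(f) & |z|=2,\\ 0 & \text{otherwise.}\end{cases}
\]
Since $\PD(z) \in \cohog{2-|z|}{X}$, the main term lies in $p_X^*\cohog{\ge 2-|z|}{X}\cdot R^{W_\mu} = F_{|z|-2}\cohog{*}{{}^\omega\Hk^\mu_G}$; when $|z|=2$, the correction lies in $p_X^*\cohog{2}{X}\cdot R^{W_\mu}=F_{-2}\subset F_{|z|-2}$. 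This also proves the characterization formula on the associated graded, because modulo $F_{|z|-3}$ the correction (being in $F_{-2}\subset F_{|z|-3}$, as $|z|=2$ gives $|z|-3=-1$) vanishes, leaving $(\Gr^F_{|z|-2}\Delta_\mu)(f^z)=\PD(z)\pl_\mu(f)$. To extend to products, write $h_1^* = h_0^* + \Delta_\mu$ and expand:
\[
\Delta_\mu(\alpha\beta) = h_0^*(\alpha)\,\Delta_\mu(\beta) + \Delta_\mu(\alpha)\,h_0^*(\beta) + \Delta_\mu(\alpha)\,\Delta_\mu(\beta).
\]
For $\alpha\in F_i$, $\beta\in F_j$, the inductive hypothesis and Lemma \ref{l:h0} place each of the first two terms in $F_{i+j-2}$, and the third in $F_{i+j-4}\subset F_{i+j-2}$, completing the induction for filtration preservation.

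Reducing the same identity modulo $F_{i+j-3}$ kills the quadratic term $\Delta_\mu(\alpha)\Delta_\mu(\beta) \in F_{i+j-4}$ and yields the Leibniz rule
\[
(\Gr^F_{i+j-2}\Delta_\mu)(\alpha\beta) = \Gr^F_i h_0^*(\alpha)\cdot \Gr^F_{j-2}\Delta_\mu(\beta) + \Gr^F_{i-2}\Delta_\mu(\alpha)\cdot \Gr^F_j h_0^*(\beta),
\]
where the equality $\Gr^F h_0^* = \Gr^F h_1^*$ (under the Atiyah--Bott identification $\cohog{*}{\Bun^\omega_G}\cong\cohog{*}{\Bun^{\omega'}_G}$ of \S\ref{sss:concrete AB}) is immediate since their difference $\Delta_\mu$ drops filtration by $2$. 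This is precisely the claimed derivation property, and uniqueness of the characterization is then clear because $\Gr^F\cohog{*}{\Bun^\omega_G}$ is generated as an algebra by the $f^z$. There is no serious obstacle; the only subtlety is bookkeeping the $|z|=2$ correction from Proposition \ref{p:master eqn} to ensure it always lands in strictly lower filtration than the leading term.
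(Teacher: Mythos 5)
Your proof is correct, and essentially follows the paper's strategy: reduce to the Atiyah--Bott generators $f^z$ via Proposition \ref{p:master eqn}, then propagate to products using multiplicativity of the filtrations.

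The one point of genuine difference is in how you propagate. The paper uses the telescoping identity
\[
h_{1}^{*}f-h_{0}^{*}f=\sum_{i=1}^{s}\Bigl(\prod_{i'<i}h_{1}^{*}f_{i'}^{z_{i'}}\Bigr)\bigl(h_{1}^{*}f_{i}^{z_{i}}-h_{0}^{*}f_{i}^{z_{i}}\bigr)\Bigl(\prod_{i''>i}h_{0}^{*}f_{i''}^{z_{i''}}\Bigr)
\]
directly on an $s$-fold product, which requires Lemma \ref{l:h0} (to control the $h_1^*$-factors on the left) and gives the filtration estimate in one pass; you instead use the binomial expansion $\Delta_\mu(\alpha\beta) = h_0^*(\alpha)\,\Delta_\mu(\beta) + \Delta_\mu(\alpha)\,h_0^*(\beta) + \Delta_\mu(\alpha)\,\Delta_\mu(\beta)$ and induct on the number of factors. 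Your version avoids Lemma \ref{l:h0} entirely (since you only ever pull back through $h_0^*$, which preserves the filtration by definition), though you cite it anyway; a small redundancy but not an error. In both versions, the derivation property on the associated graded drops out because the "error term'' (the quadratic term for you, the upper-triangular replacement of $h_0^*$ by $h_1^*$ for the paper) lies strictly below in filtration. You are also slightly more explicit than the paper about why the $|z|=2$ correction $\xi\,\pl_\mu^2(f)$ vanishes in $\Gr^F$; that care is correct and harmless.
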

\begin{proof}
By definition, a general element in $F_{m}\cohog{*}{\Bun^{\om}_{G}}$ is a linear combination of $f=f_{1}^{z_{1}}f_{2}^{z_{2}}\cdots f_{s}^{z_{s}}$ where $|z_{1}|+\cdots+|z_{s}|\le m$. We may write
\begin{equation}\label{diff prod}
h_{1}^{*}f-h_{0}^{*}f=\sum_{i=1}^{s}\left(\prod_{i'<i}h_{1}^{*}f_{i'}^{z_{i'}}\right)(h_{1}^{*}f_{i}^{z_{i}}-h_{0}^{*}f_{i}^{z_{i}})\left(\prod_{i''>i}h_{0}^{*}f_{i''}^{z_{i''}}\right).
\end{equation}
By Lemma \ref{l:h0} and \eqref{diff fz}, each summand above lies in $F_{\sum_{i'\ne i}|z_{i'}|+|z_{i}|-2}\cohog{*}{{}^{\om} \Hk^{\mu}_G}=F_{m-2}\cohog{*}{{}^{\om} \Hk^{\mu}_G}$. Passing to associated graded, we learn that $h_{0}^{*}f$ has the same image as $h_{1}^{*}f$ in $\Gr^F_{m}\cohog{*}{{}^{\om} \Hk^{\mu}_G}$, hence \eqref{diff prod} tells that $\D_{\mu}$ is a derivation after passing to associated graded. The calculation of $(\Gr^F_{|z|}\D_{\mu})(f^{z})$ follows from Proposition \ref{p:master eqn}.
\end{proof}

\begin{lemma}\label{l:h1 push}
The map $h_{0*}$ carries $F_{m-2}\cohog{*}{{}^{\om} \Hk^{\mu}_G} \rightarrow F_{m}\cohog{*-2D_\mu-2}{\Bun^{\om}_{G}}(-D_\mu-1)$ for all $m\in\Z$.
\end{lemma}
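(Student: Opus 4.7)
The plan is to factor $h_0$ through $X \times \Bun_G^\om$, use smooth proper base change to compute the relative pushforward of parabolic characteristic classes, and then identify the final image via the construction of $f^z$ from \S\ref{sss:fz}.

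Concretely, I would factor $h_0 = q_{\Bun} \circ (p_X, h_0)$, where $q_{\Bun}: X \times \Bun_G^\om \to \Bun_G^\om$ is projection and $(p_X, h_0): {}^{\om}\Hk^\mu_G \to X \times \Bun_G^\om$ is smooth and proper of relative dimension $D_\mu$. Since $\mu$ is minuscule, Proposition \ref{prop:canonical-parabolic-red} identifies $(p_X, h_0)$ with the $G/P_\mu$-bundle of canonical $P_\mu$-reductions, and fits into a Cartesian square
\[
\begin{tikzcd}
{}^{\om}\Hk^\mu_G \ar[r, "\ev_\mu"] \ar[d, "{(p_X, h_0)}"'] & \BB P_\mu \ar[d] \\
X \times \Bun_G^\om \ar[r, "\ev"'] & \BB G
\end{tikzcd}
\]
Smooth proper base change together with the definition \eqref{eq:integration-map} of $\int_{G/P_\mu}$ yields $(p_X, h_0)_*\,\ev_\mu^*\eta = \ev^*\bigl(\int_{G/P_\mu}\eta\bigr)$ for any $\eta \in R^{W_\mu}$, up to the appropriate Tate twist.

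A typical generator of $F_{m-2}\cohog{*}{{}^{\om}\Hk^\mu_G}$ has the form $\th = p_X^*\zeta \cdot \eta \cdot h_0^*\alpha$ with $\zeta \in \cohog{|\zeta|}{X}$, $\eta \in R^{W_\mu}$, and $\alpha \in \cohog{*}{\Bun_G^\om}$, where the constraints on $|\zeta|$ and on the Ran filtration of $\alpha$ are dictated by the definition of $F_{m-2}$. By the projection formula, $h_{0*}\th = \alpha \cdot h_{0*}(p_X^*\zeta \cdot \eta)$. Computing $h_{0*}(p_X^*\zeta \cdot \eta)$ via the factorization above, the first step yields (by base change and projection formula) $q_X^*\zeta \cdot \ev^*f$ with $f := \int_{G/P_\mu}\eta \in R^W$ and $q_X: X \times \Bun_G^\om \to X$ the projection. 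The second step $q_{\Bun,*}$ then identifies $q_{\Bun,*}(q_X^*\zeta \cdot \ev^*f) = f^{z'}$ where $z' := \PD^{-1}(\zeta) \in \homog{2-|\zeta|}{X}$, directly from the definition \eqref{fz} of $f^z$. Hence $h_{0*}\th = \alpha \cdot f^{z'}$.

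Since $|z'| = 2 - |\zeta|$, Lemma \ref{l:Ran} places $f^{z'}$ in $\frR_{|z'|} \subset F_{2-|\zeta|}\cohog{*}{\Bun_G^\om}$, and multiplicativity of the Ran filtration then bounds $\alpha \cdot f^{z'}$. A quick case analysis verifies the desired inclusion in $F_m$: for $m - 2 \in \{-2, -1, 0\}$, the defining constraint $|\zeta| \geq 2 - m$ forces $|z'| \leq m$ with $\alpha$ constant; for $m - 2 \geq 1$, having $\alpha \in F_{m-2}$ combined with $|z'| \leq 2$ gives $\alpha \cdot f^{z'} \in F_{m-|\zeta|} \subset F_m$. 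The expected cohomological degree shift $-2(D_\mu + 1)$ and Tate twist $-(D_\mu+1)$ follow from the relative dimensions of $(p_X, h_0)$ (namely $D_\mu$) and $q_{\Bun}$ (namely $1$). I anticipate no substantive obstacle beyond this case bookkeeping; the heart of the argument is the smooth proper base change that computes the pushforward of a tautological $\eta \in R^{W_\mu}$ as the $G$-characteristic class $\int_{G/P_\mu}\eta$.
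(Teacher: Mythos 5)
Your proposal is correct and matches the paper's own proof in all essentials: you factor $h_0$ through $X\times\Bun_G^\om$ using the Cartesian square of canonical parabolic reductions, compute $h_{0*}$ on $F_0$-generators via $\int_{G/P_\mu}$ and the pairing with $\homog{*}{X}$ to land in $f^z$-classes, handle the $m-2\leq 0$ cases directly, and reduce the $m-2\geq 1$ case to these by linearity of $h_{0*}$ over $h_0^*\cohog{*}{\Bun_G^\om}$ (your projection formula), which is precisely the structure of the paper's two-paragraph argument. Your phrasing of the output of $h_{0*}(p_X^*\zeta\cdot\eta)$ as a single $f^{z'}$ with $z'=\PD^{-1}(\zeta)$ is a slightly cleaner way to say what the paper expresses as a linear combination of $f^z$'s with $|z|+|\zeta|=2$.
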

\begin{proof}
The canonical parabolic reduction fits into a commutative diagram
\begin{equation}\label{eq: hecke reduction cartesian}
\begin{tikzcd}
{}^\omega \Hk^\mu_G \ar[r] \ar[d, "p_X \times h_0"'] & X \times \BB P_\mu \ar[d] \\
X \times \Bun_G^{\omega} \ar[r, "{\ev^{\omega}}"]  \ar[d] & X \times \BB G
 \\
\Bun_G^{\omega} 
\end{tikzcd} 
\end{equation}
whose top square is moreover Cartesian, and whose vertical maps are fiber bundles for $G/P_\mu$. Therefore, using the notation $\int_{G/P_{\mu}}: R^{W_{\mu}}\to R^{W}$ introduced in \S\ref{sssec:nabla-operator}, we can rewrite $h_{0*}$ on the image of $\cohog{*}{X} \ot \cohog{*}{\BB P_\mu} = \cohog{*}{X}\ot R^{W_{\mu}}\subset \cohog{*}{{}^{\om} \Hk^{\mu}_G}$ as the composition
\begin{equation}\label{h0 int}
h_{0*}: \cohog{*}{X}\ot R^{W_{\mu}}\xr{\id\ot\int_{G/P_{\mu}}}\cohog{*}{X}\ot R^{W}\xr{\id \ot \ev^{\om, *}} \cohog{*}{X\times \Bun^{\om}_{G}}\xr{\int_{X}}\cohog{*}{\Bun^{\om}_{G}}.
\end{equation}
For $n\le 0$, we have
\begin{equation*}
h_{0*}F_{m}\cohog{*}{{}^{\om} \Hk^{\mu}_G}=h_{0*}(\upH^{\ge -m}(X)\ot R^{W_{\mu}})\subset \int_{X}(\upH^{\ge -m}(X)\cdot R^{W}).
\end{equation*}
Here $R^{W}$ is embedded into $\cohog{*}{\Bun_{G}^{\om}}$ via $\ev^{\om,*}$. For $\z\in \cohog{|\z|}{X}$ and $f\in R^{W}$, $\int_{X}(\z f)$ is a linear combination of $f^{z}$ where $|z|+|\z|=2$. This implies 
\begin{equation*}
\int_{X}(\upH^{\ge -m}(X)\cdot R^{W})\subset F_{m+2}.
\end{equation*}

Now consider the case $m>0$. We abbreviate $F_{m}\cohog{*}{\Bun^{\om}_{G}}$ simply as $F_{m}$.
Using that $h_{0*}$ is linear over $h_{0}^{*}\cohog{*}{\Bun_{G}^{\om}}$, and $h_{0*}F_{0}\cohog{*}{{}^{\om} \Hk^{\mu}_G}\subset F_{2}$ by the preceding paragraph, we conclude that
\begin{align*}
h_{0*}F_{m}\cohog{*}{{}^{\om} \Hk^{\mu}_G}& =h_{0*}(F_{0}\cohog{*}{{}^{\om} \Hk^{\mu}_G}\cdot h_{0}^{*}F_{m}) \\
& =h_{0*}F_{0}\cohog{*}{{}^{\om} \Hk^{\mu}_G}\cdot F_{m}
\subset F_{2}\cdot F_{m}\subset F_{m+2}.
\end{align*}
\end{proof}

\begin{cor}\label{c:Gamma increase Ran by 2} For any $\b\in\cohog{*}{X\times \BB P_\mu}$, $\G^{\b}_\mu$ sends $F_m\cohog{*}{\Bun_G^{\om'}}$ to $F_{\le m+2}\cohog{*}{\Bun_G^{\om}}$, for all $m\in\ZZ$. 
\end{cor}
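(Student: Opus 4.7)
The plan is to chain together the previously established lemmas about how pullback, multiplication, and pushforward interact with the Ran filtration, yielding essentially a one-paragraph proof. Unpacking the definition, $\Gamma^\beta_\mu(\theta) = h_{0*}(\beta \cdot h_1^*\theta)$, where $\beta$ denotes its pullback to $\Hk^\mu_G$ along $(p_X, \ev_\mu)$; I need to bound the filtration level of this expression in terms of that of $\theta$.

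First, I would apply Lemma \ref{l:h0} to conclude that $h_1^*\theta \in F_m \cohog{*}{{}^\om \Hk^\mu_G}$. Next, I would observe that since $\beta$ is pulled back from $X \times \BB P_\mu$, by the very definition of the filtration on $\cohog{*}{{}^\om \Hk^\mu_G}$ in \S\ref{sssec:split-filtration} one has $\beta \in F_0 \cohog{*}{{}^\om \Hk^\mu_G} = p_X^*\cohog{*}{X} \cdot R^{W_\mu}$ (this is the image of the map \eqref{taut class Hk}). The filtration on $\cohog{*}{{}^\om \Hk^\mu_G}$ is multiplicative, so
\[
\beta \cdot h_1^*\theta \in F_0 \cdot F_m \subseteq F_m \cohog{*}{{}^\om \Hk^\mu_G}.
\]

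Finally, I would invoke Lemma \ref{l:h1 push}, which (after reindexing $m \mapsto m+2$ in its statement) says that $h_{0*}$ carries $F_m \cohog{*}{{}^\om \Hk^\mu_G}$ into $F_{m+2}\cohog{*}{\Bun_G^\om}$ (up to an irrelevant Tate twist and cohomological shift). Applying this to $\beta \cdot h_1^*\theta$ yields
\[
\Gamma^\beta_\mu(\theta) = h_{0*}(\beta \cdot h_1^*\theta) \in F_{m+2} \cohog{*}{\Bun_G^\om},
\]
which is the desired conclusion. There is no real obstacle here: the corollary is a formal bookkeeping consequence of Lemmas \ref{l:h0} and \ref{l:h1 push} together with the multiplicativity of the filtration and the fact that pulled-back classes from $X \times \BB P_\mu$ sit in $F_0$. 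All the substance lies in the two lemmas (ultimately in Proposition \ref{p:master eqn}); the corollary is simply the composition statement needed to analyze the endomorphism $\Gamma^\beta_\mu$ on the Ran-filtered cohomology, in preparation for the trace convergence in Proposition \ref{p:trace conv}.
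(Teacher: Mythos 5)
Your proof is correct and follows exactly the same chain of reasoning as the paper's: apply Lemma \ref{l:h0} to place $h_1^*\theta$ in $F_m$, use $\beta\in F_0$ together with multiplicativity to place the product in $F_m$, and finish with Lemma \ref{l:h1 push} (suitably reindexed) to conclude. There is no material difference from the argument in the text.
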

\begin{proof}
Let $\a\in F_m\cohog{*}{\Bun_G^{\om'}}$. By Lemma \ref{l:h0}, $h_1^*\a\in F_{m}\cohog{*}{^\om\Hk_G^{\mu}}$. By definition, $\b\in F_{0}\cohog{*}{^\om\Hk_G^{\mu}}$, hence $h_1^*(\a)\b\in F_{m}\cohog{*}{^\om\Hk_G^{\mu}}$. Finally, by Lemma \ref{l:h1 push}, $\G^\b_\mu(\a)=h_{0*}(h_1^*(\a)\b)\in F_{\le m+2}\cohog{*}{^\om\Hk_G^{\mu}}$.
\end{proof}

\subsubsection{Truncation of $\G^\b_\mu$} Let $\b\in \cohog{*}{X\times \BB P_\mu}$ be homogeneous. For $i\in \ZZ$ let $\frR_{i}\G^{\b}_\mu: \cohog{*}{\Bun_G^{\om'}}\to \cohog{*}{\Bun_G^{\om}}$ be the Ran degree $i$ part of $\G^\b_\mu$: for $\a\in \frR_m \cohog{*}{\Bun_G^{\om'}}$, $\frR_i\G^\b_\mu(\a)$ is the Ran degree $m+i$ piece of $\G^\b_\mu(\a)$. Define $\frR_{\ge i}\G^{\b}_\mu: \cohog{*}{\Bun_G^{\om'}}\to \cohog{*}{\Bun_G^{\om}}$ to be the sum
\begin{equation*}
\frR_{\ge i}\G^{\b}_\mu=\sum_{i'\ge i}\frR_{i'}\G^{\b}_\mu.
\end{equation*}

The main estimate to prove the convergence of trace is the following.
\begin{lemma}\label{l:bound trunc Gamma}
Fix $i\ge0$. For a monomial basis element $\a\in \frB$, write $\frR_{\ge -i}\G^\b_\mu$ as a linear combination of the monomial basis for $\cohog{*}{\Bun_G^{\om}}$:
\begin{equation*}
\frR_{\ge -i}\G^\b_\mu(\a)=\sum_{\a'\in \frB}c^{\a'}_{\a}\a'
\end{equation*}
for $c^{\a'}_{\a}\in\CC$. Then there is $C_{\b,\mu,i}>0$ depending only on $\b,\mu,i$ (and not on $\a$) such that
\begin{equation*}
\sum_{\a'\in \frB}|c^{\a'}_{\a}|_{\io}\le C_{\b,\mu,i}(d+1)^{i/2+1}, \mbox{ for all $\a\in \frB$ with naive degree $d$}. 
\end{equation*}
Here $|\cdot|_{\io}$ denotes the absolute value under the embedding $\io:\Qlbar\to \CC$.
\end{lemma}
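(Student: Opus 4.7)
Write $\a = \a_1 \cdots \a_d$ as an (ordered) product of generators, so each $\a_k = f_{i_k}^{z_{j_k}}$ for some indices $i_k, j_k$, and $d$ is the naive degree of $\a$ while $m := \sum_k |z_{j_k}|$ is its Ran degree. Since $h_1^{*}$ is a ring homomorphism and $h_1^{*} = h_0^{*} + \D_{\mu}$ by definition, expand
\[
h_1^{*}(\a) \cdot \b \;=\; \sum_{S \subset [d]} \Big( \prod_{k \notin S} h_0^{*}(\a_k) \Big)\Big( \prod_{k \in S} \D_{\mu}(\a_k) \Big) \cdot \b,
\]
where the signs from graded-commutativity may be suppressed for $\ell^1$-bookkeeping. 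Applying $h_{0*}$ and the projection formula produces
\[
\G^{\b}_{\mu}(\a) \;=\; \sum_{S \subset [d]} \Big( \prod_{k \notin S} \a_k \Big) \cdot h_{0*}\!\left(\prod_{k \in S} \D_{\mu}(\a_k) \cdot \b\right).
\]

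\textbf{Filtration tracking.} By Proposition \ref{p:master eqn}, each $\D_{\mu}(\a_k)$ lies in $F_{|z_{j_k}|-2}\cohog{*}{{}^{\om}\Hk^{\mu}_G}$. Combined with multiplicativity of $F_\bu$ and Lemma \ref{l:h1 push} (which raises the filtration level by $2$), the inner pushforward lies in $F_{\sum_{k\in S}|z_{j_k}| - 2|S| + 2}\cohog{*}{\Bun_G^{\om}}$. The outer factor $\prod_{k\notin S} \a_k$ is pure of Ran degree $\sum_{k\notin S}|z_{j_k}|$, so the $S$-th summand lies in $F_{m - 2|S| + 2}\cohog{*}{\Bun_G^{\om}}$. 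Retaining only Ran degrees $\ge m - i$ therefore kills all summands with $|S| > i/2 + 1$, and we obtain
\[
\frR_{\ge -i}\,\G^{\b}_{\mu}(\a) \;=\; \sum_{|S| \le \lfloor i/2\rfloor + 1}\ \frR_{\ge -i}\!\left[\Big( \prod_{k \notin S} \a_k \Big)\cdot h_{0*}\!\left(\prod_{k \in S} \D_{\mu}(\a_k) \cdot \b\right)\right].
\]
The number of surviving subsets is $\sum_{k=0}^{\lfloor i/2\rfloor + 1}\binom{d}{k} \le C_i (d+1)^{i/2+1}$ for a constant $C_i$ depending only on $i$.

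\textbf{Uniform $\ell^{1}$-bound per subset.} For $|S| \le i/2+1$, the pushforward $h_{0*}(\prod_{k \in S}\D_{\mu}(\a_k)\cdot \b)$ can be computed explicitly via Proposition \ref{p:master eqn} to expand $\D_{\mu}(\a_k)$, followed by the factorization \eqref{h0 int} of $h_{0*}$ through $\int_{G/P_{\mu}}$ and $\int_X$; it depends only on $\b$ and the multiset $\{\a_k\}_{k\in S}$, which is a submultiset of the finite generating set $\{f_i^{z_j}\}$ of cardinality bounded in terms of $i$. Hence the $\ell^{1}$-norm of its expansion in the monomial basis of $\cohog{*}{\Bun_G^{\om}}$ is uniformly bounded by some $C_1 = C_1(\b, \mu, i)$. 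Multiplying by the monomial $\prod_{k \notin S}\a_k$ does not blow up the $\ell^{1}$-norm: since each odd-degree generator $f_i^{z_j}$ (with $j \in \{1,\dots,2g\}$) squares to zero, the product of two monomial basis elements is either zero or equals $\pm$ a single monomial basis element. Combining these bounds gives $\sum_{\a'\in \frB}|c^{\a'}_{\a}|_{\io}\le C_1 \cdot C_i (d+1)^{i/2+1}$, which is the claimed estimate with $C_{\b,\mu,i} := C_1 C_i$.

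\textbf{Main obstacle.} The content of the proof is not any single deep step but rather the careful choice of filtrations and the combinatorial bookkeeping; the essential geometric input (Proposition \ref{p:master eqn} and its filtration consequence Lemma \ref{l:h1 push}) has already been established. The sharp counting estimate requires that one refine Corollary \ref{c:Gamma increase Ran by 2} by opening up its proof and tracking not merely that $\G^{\b}_{\mu}$ raises the Ran filtration by at most $2$, but that the amount by which it drops is controlled by the cardinality $|S|$ of the ``derivative support''. The mild technical point is verifying that the expansion of $h_{0*}(\prod_{k\in S}\D_{\mu}(\a_k)\cdot\b)$ in the monomial basis has coefficients bounded independently of the ambient monomial $\a$, which is where the finiteness of the building-block set and the restriction $|S| \le i/2+1$ are decisive.
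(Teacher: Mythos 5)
Your proposal is correct and follows essentially the same route as the paper's own proof: the same binomial expansion of $h_1^* = h_0^* + \D_\mu$ over subsets, the same filtration tracking via Proposition \ref{p:master eqn} and Lemma \ref{l:h1 push} to show only subsets with $|S| \le i/2+1$ survive $\frR_{\ge -i}$, and the same uniform $\ell^1$-bound for each surviving subset using finiteness of the generator set. The only cosmetic difference is that you spell out more explicitly why multiplying by the outer monomial $\prod_{k\notin S}\a_k$ does not increase the $\ell^1$-norm (basis elements multiply to $\pm$ a single basis element or zero), a point the paper leaves implicit.
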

\begin{proof}
Write $\a=\prod_{s=1}^{d}f_{i_s}^{z_{j_s}}$, where $d$ is the naive degree. Let $m=\sum_{s=1}^{d}|z_{j_s}|$ be the Ran degree of $\a$. Let $\D_\mu(f^z)=h_1^*(f^z)-h_0^*(f^z)$. Expanding $h_1^*\a=\prod_{s}h_1^*(f_{i_s}^{z_{j_s}})=\prod_s(h_0^*(f_{i_s}^{z_{j_s}})+\D_\mu(f_{i_s}^{z_{j_s}}))$, we get
\begin{eqnarray}
\notag
\G^\b_\mu(\a)&=&h_{0*}(h_1^*\a \cdot \b)=\sum_{J\subset \{1,2,\cdots, d\}}\pm h_{0*}\left(h_0^*(\prod_{s\notin J}f_{i_s}^{z_{j_s}})\cdot \prod_{s\in J}\D_\mu(f_{i_s}^{z_{j_s}})\b\right)\\
\label{expand Gamma}&=&\sum_{J\subset \{1,2,\cdots, d\}}\pm \left(\prod_{s\notin J}f_{i_s}^{z_{j_s}}\right)h_{0*}(\prod_{s\in J}\D_\mu(f_{i_s}^{z_{j_s}})\b).
\end{eqnarray}
By Lemma \ref{l:diff h0 h1}, $\prod_{s\in J}\D_\mu(f_{i_s}^{z_{j_s}})\in F_{\sum_{s\in J}(|z_{j_s}|-2)}\cohog{*}{{}^\om\Hk^\mu_G}$. Since $\b\in F_{0}\cohog{*}{{}^\om\Hk^\mu_G}$, by Lemma \ref{l:h1 push}, we have
\begin{equation*}
h_{0*}(\prod_{s\in J}\D_\mu(f_{i_s}^{z_{j_s}})\b)\in F_{\le 2+\sum_{s\in J}(|z_{j_s}|-2)}\cohog{*}{{}^\om\Hk^\mu_G}.
\end{equation*}
Therefore, the $J$-summand of \eqref{expand Gamma} lies in $F_{\le m-2|J|+2}\cohog{*}{\Bun_G^{\om}}$. Therefore, in computing $\frR_{\ge -i}\G^\b_\mu(\a)$, we only need to sum over those $J\subset \{1,2,\cdots, d\}$ with $m-2|J|+2\ge m-i$, i.e., $|J|\le i/2+1$. The number of such $J$ is $\le (d+1)^{i/2+1}$. For fixed $J$, writing  $h_{0*}(\prod_{s\in J}\D_\mu(f_{i_s}^{z_{j_s}})\b)$ as a sum of elements in $\frB$, the sum of absolute values of coefficients have an upper bound $C_{\b,\mu,i}$, since there are only finitely many possibilities for $\prod_{s\in J}\D_\mu(f_{i_s}^{z_{j_s}})$ when $|J|$ is bounded by $i$. Therefore for each $|J|\le i/2+1$, the corresponding summand in \eqref{expand Gamma} will contribute at most $C_{\b,\mu,i}$ to the total sum of absolute values of coefficients of $\frR_{\ge -i}\G^\b_\mu(\a)$, giving the desired upper bound.
\end{proof}

\subsubsection{Proof of Proposition \ref{p:trace conv}--first reductions}
The map $(p_1,\cdots, p_r, h_0): {}^\om\Hk^\mu_G\to X^r\times \Bun_G^\om$ is an iterated fibration with fibers $G/P_{\mu_j}$.  Therefore, as a module over $h_0^*\cohog{*}{\Bun_G^\om}$, $\cohog{*}{{}^\om\Hk^\mu_G}$ is generated by elements of the form $\b_1\cdots\b_r$, where $\b_j\in \cohog{*}{X\times \BB P_{\mu_j}}$ is pulled back to ${}^\om\Hk^\mu_G$ via $(p_j,\ev_j): \Hk^\mu_G\to X\times \BB P_{\mu_j}$.
It is therefore sufficient to treat the case where $\th=(h_0^*\a)\b_1\cdots\b_r$, where $\a\in \frB$ is a monomial basis element for $\cohog{*}{\Bun_G^\om}$ and $\b_i\in \cohog{*}{X\times \BB P_{\mu_i}}$. In this case $\G^\th_\mu$ is the composition
\begin{equation*}
        \G^\th_\mu=(\cup \a)\c \G^{\b_1}_{\mu_1}\c\cdots\c \G^{\b_r}_{\mu_r}.
\end{equation*}
Here $\cup\a$ is the endomorphism of $\cohog{*}{\Bun_G^\om}$ given by cup product with $\a$.

Below we base change all $\Qlbar$-vector spaces to $\CC$-vector spaces using $\io$. In particular, $\G^\th_\mu$ can be represented as a matrix with entries in $\CC$ under the monomial basis of $\cohog{*}{\Bun_G^\om}$. We can therefore talk about the diagonal matrix coefficients of $\G^\th_\mu$. 

\begin{lemma}\label{l:bound diag entry} Let $t$ be the Ran degree of $\a$, and let $u=r(t/2+r)$. There exists $C_{\th,\mu}>0$ (depending on $\th$ and $\mu$) such that, when writing $\G^\th_\mu$ as a matrix using the monomial basis $\frB$, its diagonal entry at any $\a'\in \frB$ with naive degree $d\ge 0$ has absolute value bounded above by $C_{\th,\mu} (d+1)^{u}$.
\end{lemma}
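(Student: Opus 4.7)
The plan is to reduce the diagonal matrix entry to a single coefficient in the composition $\Gamma^{\beta_1}_{\mu_1}\circ\cdots\circ\Gamma^{\beta_r}_{\mu_r}$ (stripping off the final cup product with $\alpha$), and then to bound that coefficient by iterating Lemma \ref{l:bound trunc Gamma} applied to a suitable Ran-degree truncation. The case $r=0$ is trivial ($u=0$ and $\Gamma^\theta_\mu = \cup\alpha$), so assume $r \geq 1$.

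First I would exploit the multiplicativity of the Ran grading, which in particular ensures that cup product with $\alpha$ shifts the Ran degree by exactly $t$. Since $\rH^*(\Bun_G^\omega)$ is freely graded-commutative on the generators $\{f_i^{z_j}\}$, the product of two elements of $\frB$ is either zero or $\pm 1$ times a basis element. Hence for each $\alpha'\in\frB$ there is at most one $\delta\in\frB$ with $\alpha\cdot\delta = \pm\alpha'$, and any such $\delta$ has Ran degree $m'-t$, where $m'$ is the Ran degree of $\alpha'$. Writing $\Gamma^{\beta_1}_{\mu_1}\circ\cdots\circ\Gamma^{\beta_r}_{\mu_r}(\alpha') = \sum_\delta c_\delta\, \delta$ in the monomial basis, the diagonal entry of $\Gamma^\theta_\mu$ at $\alpha'$ is therefore $\pm c_\delta$ for this unique $\delta$ (or zero if no such $\delta$ exists).

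Next I would set $i := t + 2r - 2$ and define inductively $\alpha^{(j)} := \frR_{\geq -i}\Gamma^{\beta_{r-j+1}}_{\mu_{r-j+1}}(\alpha^{(j-1)})$, starting from $\alpha^{(0)} = \alpha'$. For any chain of monomials $(\delta_0 = \alpha', \delta_1, \ldots, \delta_r = \delta)$ with Ran degrees $m_k$ contributing to $c_\delta$, Corollary \ref{c:Gamma increase Ran by 2} forces $m_k \leq m_{k-1} + 2$, while $m_0 = m'$ and $m_r = m'-t$. Iterating these inequalities in both directions yields $m_k \in [m' - t - 2(r-k),\, m' + 2k]$, so each single-step decrease $m_{k-1} - m_k$ is bounded above by $2r - 2 + t = i$. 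Consequently the iterated truncation preserves the full value of $c_\delta$.

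Finally I would iterate Lemma \ref{l:bound trunc Gamma} while tracking naive degree growth. The expansion \eqref{expand Gamma} shows that $\frR_{\geq -i}\Gamma^{\beta}_{\mu}$ increases the naive degree of any monomial by at most a constant $C_0 = C_0(i,\beta,\mu)$, so monomials appearing in $\alpha^{(j)}$ have naive degree at most $d + rC_0$. Combining this with Lemma \ref{l:bound trunc Gamma} gives the recursion $\|\alpha^{(j)}\|_1 \leq C(d + rC_0 + 1)^{i/2+1}\|\alpha^{(j-1)}\|_1$, which iterates $r$ times to $\|\alpha^{(r)}\|_1 \leq C'(d+1)^{r(i/2+1)} = C'(d+1)^{r(t/2+r)} = C'(d+1)^u$. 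Since $|M_{\alpha',\alpha'}| = |c_\delta| \leq \|\alpha^{(r)}\|_1$, this gives the claimed bound. The main technical difficulty is the Ran-degree-tracking argument that pins down the optimal truncation $i = t + 2r - 2$, which produces exactly the claimed exponent $u = r(t/2 + r)$.
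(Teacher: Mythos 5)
Your proof is correct and follows essentially the same strategy as the paper: truncate each $\Gamma^{\beta_j}_{\mu_j}$ to $\frR_{\geq -i}\Gamma^{\beta_j}_{\mu_j}$ with $i=t+2r-2$, justify that this leaves diagonal entries unchanged, and then iterate Lemma~\ref{l:bound trunc Gamma} to obtain the $(d+1)^{u}$ bound. The only presentational difference is in the truncation step — the paper expands the full composition binomially into $\frR_{\geq -i}$ and $\frR_{<-i}$ pieces and notes that any cross term strictly lowers the Ran degree, whereas you track individual chains $\delta_0\to\cdots\to\delta_r$ and bound each single-step Ran-degree decrease; these are equivalent formulations of the same observation.
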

\begin{proof} Let $i=t+2(r-1)$. Write each $\G^{\b_j}_{\mu_j}$ as $\frR_{\ge -i}\G^{\b_j}_{\mu_j}+\frR_{<-i}\G^{\b_j}_{\mu_j}$ (where $\frR_{<-i}\G^{\b_j}_{\mu_j}$ is the part of $\G^{\b_j}_{\mu_j}$ that decreases the Ran degree by more than $i$). We expand
\begin{equation*}
\G^\th_\mu=(\cup \a)\c (\frR_{\ge -i}\G^{\b_1}_{\mu_1}+\frR_{< -i}\G^{\b_1}_{\mu_1})\c\cdots\c (\frR_{\ge -i}\G^{\b_r}_{\mu_r}+\frR_{<-i}\G^{\b_r}_{\mu_r})
\end{equation*}
into a sum of compositions of $\frR_{\ge -i}\G^{\b_j}_{\mu_j}$ and $\frR_{< -i}\G^{\b_j}_{\mu_j}$ with $\cup \a$. If $\frR_{< -i}\G^{\b_j}_{\mu_j}$ appears at the $j$th place, then the corresponding term will send $\frR_{m}\cohog{*}{\Bun_G^\om}$ to $\frR_{< m-i+2(r-1)+t}\cohog{*}{\Bun_G^\om}=\frR_{<m}\cohog{*}{\Bun_G^\om}$, because each of the other terms $\frR_{\ge -i}\G^{\b_{j'}}_{\mu_{j'}}$ (there are $\le r-1$ of them) can at most increase the Ran grading by $2$, and $\cup \a$ increases the Ran grading by $m$. Therefore, under the monomial basis, the diagonal entries of $\G^\th_\mu$ are the same as the diagonal entries of 
\begin{equation}\label{trunc Gamma r}
(\cup \a)\c \frR_{\ge -i}\G^{\b_1}_{\mu_1}\c\cdots\c \frR_{\ge -i}\G^{\b_r}_{\mu_r}.
\end{equation}
Now if $\a'\in \frB$ has naive degree $d$, by iterative use of Lemma \ref{l:bound trunc Gamma}, the sum of absolute values of coefficients of $\frR_{\ge -i}\G^{\b_1}_{\mu_1}\c\cdots\c \frR_{\ge -i}\G^{\b_r}_{\mu_r}(\a')$ in terms of $\frB$ is bounded above by 
\begin{equation}\label{poly bound}
\prod_{s=1}^{r}\left(C_{\b_s,\mu_s,i}(d+1+2(s-1))^{i/2+1}\right).
\end{equation}
Since $\a\in \frB$, all matrix entries of $\cup\a$ in terms of the basis $\frB$ are either $0$ or $1$, we conclude that all diagonal entries of \eqref{trunc Gamma r} are bounded by \eqref{poly bound}. Therefore the same is true for the diagonal entries of $\G^\th_\mu$. For a suitable constant $C_{\th,\mu}$ depending only on $\th$ and $\mu$, one can bound \eqref{poly bound} from above by $C_{\th, \mu} (d+1)^{r(i/2+1)}=C(d+1)^u$. 
\end{proof}

\subsubsection{Finish of the proof of Proposition \ref{p:trace conv}}
Now decompose $\cohog{*}{\Bun_G^\om}$ according to the naive degrees of monomial basis elements:
\begin{equation*}
\cohog{*}{\Bun_G^\om}=\bigoplus_{d\ge0}\frN_d,
\end{equation*}
where $\frN_d$ is the span of $\a'\in \frB$ with naive degree $d$. Note that this grading splits the augmentation filtration on $\cohog{*}{\Bun_G^\om}$ introduced in \S\ref{sss:aug fil}. All the generators $f_i^{z_j} \in \frB$ have cohomological degrees between $2$ and $2 d_n$, where $d_n$ is the largest degree of the generators $\{f_j\}$ of $R^W$. Hence a monomial of naive degree $d$ has degree in $[2d, 2d_nd]$. Therefore
\begin{equation*}
\cohog{i}{\Bun_G^\om}\subset \bigoplus_{i/(2d_n)\le d\le i/2}\frN_d.
\end{equation*}
In particular, by Lemma \ref{l:bound diag entry}, the diagonal entries of $\G^\th_\mu$ on $\cohog{i}{\Bun_G^\om}$ are bounded by $C(i/2+1)^{u}$. The Frobenius action is diagonal with respect to the monomial basis, and has eigenvalues with absolute value $q^{i/2}$ on $\cohog{i}{\Bun_G^\om}$. Therefore
\begin{equation}\label{eq:convergence-estimate-last}
|\Tr(\Frob^{-1}\c\G^\th_\mu|\cohog{i}{\Bun_G^\om})|\le C_{\th, \mu} q^{-i/2} (i/2+1)^{u}\dim \cohog{i}{\Bun_G^\om}.
\end{equation}
Now $(i/2+1)^{u}\dim \cohog{i}{\Bun_G^\om}$ has polynomial growth in $i$ while $q^{-i/2}$ decays exponentially, so the summation of \eqref{eq:convergence-estimate-last} over $i$ converges. This finishes the proof of Proposition \ref{p:trace conv}.
\qed

\subsection{Calculation of the arithmetic volume} We will compute the arithmetic volume defined in Definition \ref{def: arith vol}, at least under the following assumption. 

\begin{assumption}[Commutativity of local operators] \label{assump:split-operators-commute} We assume that the operators $\ov\nb^{\y_j}_{\mu_j}$ pairwise commute for $j=1, \ldots, r$. (When $r=0$, we interpret this assumption as being vacuously true.)
\end{assumption}

\begin{exam}\label{ex:assumption-satisfied}
Assumption \ref{assump:split-operators-commute} is satisfied in many cases of interest. For example, under the degree splitting  
\[
\VV \cong \bigoplus_{d} \VV_{2d}
\]
if the nonzero $\VV_{2d}$ all have dimension $1$, then Assumption \ref{assump:split-operators-commute} is automatically satisfied since the operators $\ov\nb^{\y_j}_{\mu_j}$ are degree-preserving. The one-dimensionality of graded pieces of $\VV$ holds for all simple groups except those of type $D_{2m}$.

On the other hand, for $G = \mathrm{PSO}_{4m}$, there are three minuscule coweights. There is a natural choice of $\eta$ coming from the Killing form of $G$ (see \S \ref{sec:more-examples}). In this case, it was not known to the authors whether the $\ol \nabla^{\eta}_{\mu}$ commute; eventually a counterexample was found in \cite{FAI}.
\end{exam}

\subsubsection{The $L$-function} Under Assumption \ref{assump:split-operators-commute}, the endomorphisms $\{\ov\nb^{\y_j}_{\mu_j}\}$ of $\VV$ have a common system of generalized eigenvalues. More precisely, one can find a basis $v_1,\cdots, v_n$ of $\VV$ consisting of homogeneous elements, such that $v_i$ belongs to a generalized eigenspace of  $\ov\nb^{\y_j}_{\mu_j}$ for each $1\le j\le r$. Let $d_i$ be the degree of $v_i$, and let $\ep_i(\y_j, \mu_j)\in \Qlbar$ be the generalized eigenvalue of $\ov\nb^{\y_j}_{\mu_j}$ on $v_i$. The $n$-tuple (of $(r+1)$-tuples)
\begin{equation*}
    \left(d_i, \ep_i(\y_1, \mu_1),\cdots, \ep_i(\y_r, \mu_r)\right), \quad i=1,2,\cdots, n
\end{equation*}
is independent of the choice of the basis $v_i$ up to permutation of the index $i$. 

To formulate the answer for our eventual calculation of $\vol(\Sht_G^\mu, \eta)$, we introduce the following $L$-function in $n$-variables:
\begin{equation}\label{eq: L of G}
\sL_{X, G}(s_{1},\cdots, s_{n}) :=\prod_{i=1}^{n}\z_{X}(s_{i}+d_{i}).
\end{equation}
We also consider the regularized version by removing all factors $(1-q^{-s_{i}})$ (i.e., where $d_{i}=1$) in the denominator:
\begin{equation}\label{eq: L with pole}
\sL_{X, G}^*(s_{1},\cdots, s_{n}) :=\prod_{\substack{1\le i\le n \\ d_{i}\ne1}}\z_{X}(s_{i}+d_{i})\prod_{\substack{1\le i\le n \\ d_{i}=1}}\z^{*}_{X}(s_{i}+1),
\end{equation}

\subsubsection{Invariants of the tautological classes}\label{sssec:deg-of-taut}

Recall $\y, \y'\in R^{W_{\mu}}$ are homogeneous of degree $2(D_\mu+1)$ and $2D_\mu$ respectively.  We will define numerical constants associated to $\eta$ and $\eta'$. 

\begin{defn}\label{def: d eta}
By assumption, $\y\in R^{W_{\mu}}$ has degree $2(D_\mu+1)$, hence $\int_{G/P_{\mu}}\y\in R^{W}$ is of degree $2$. Viewing $R^{W}$ as a subring of $\cohog{*}{X\times \Bun_{G}^{\om}}$ via pullback along the tautological map, we can then apply $\int_{X}(-)$ to define (using notation from \eqref{fz})
\begin{equation*}
d^{\om}_{\mu}(\y):=\left(\int_{G/P_{\mu}}\y\right)^{[X]}=\int_{X}\int_{G/P_{\mu}}\y\in \rH^0(\Bun_G^\omega) \cong \Qlbar.
\end{equation*}
This number $d^{\om}_{\mu}(\y)$ depends on the component $\om$ of $\Bun_G$.  
\end{defn}

\begin{exam}\label{ex:semisimple-degrees}
If $G$ is semisimple, then the degree 2 part of $R^{W}$ vanishes, so we automatically have $d^{\om}_{\mu}(\y)=0$.
\end{exam}

\begin{defn} \label{def: d eta'}
Recall that $\y' \in R^{W_\mu}$ has degree $2 D_\mu = 2 \dim (G/P_\mu)$. Then we define
\begin{equation*}
d_{\mu}(\y'):=\int_{G/P_{\mu}}\y' \in \Qlbar.
\end{equation*}
\end{defn}

\subsubsection{The differential operators} Recall from \eqref{eq:omega_j} that 
$\om_{j}=\om+\ov \mu_{1}+\cdots+\ov \mu_{j}\in \pi_{0}(\Bun_{G})$. The definition is arranged so that ${}_{c}\Gamma^{\eta_j + \xi \eta'_j}_\mu$ goes from $\cohoc{*}{\Bun^{\om_{j-1}}_{G}}$ to $\cohoc{*}{\Bun^{\om_{j}}_{G}}$. 

For $j=1,\cdots, r$ consider the first order differential operator 
\begin{equation}\label{eq: split differential operator}
\frd_{j}:=d^{\om_{j-1}}_{\mu_j}(\y_{j}) + d_{ \mu_j}(\y'_{j})-(\log q)^{-1}\sum_{i=1}^{n}\e_{i}(\y_{j}, \mu_j)\pl_{s_{i}}.
\end{equation}Here we refer to Definitions \ref{def: d eta} and \ref{def: d eta'} for the numerical constants $d^{\om_{j-1}}_{\mu_j}(\y_{j}) $ and $ d_{\mu_j}(\y'_{j})$.

\begin{theorem}\label{th:vol gen} Let $\mu=(\mu_{1},\cdots, \mu_{r})$ be a sequence of minuscule dominant coweights of $G$ satisfying \eqref{eq:modification-condition}. Let $\y_{j}\in \cohog{2D_{\mu_j}+2}{\BB L_{\mu_j}}$ and $\y'_{j}\in \cohog{2D_{\mu_j}}{\BB L_{\mu_j}}$ for $j=1, 2, \ldots, r$, satisfying Assumption \ref{assump:split-operators-commute}. For $\eta = (\eta_1+\xi \eta_1', \ldots, \eta_r+\xi \eta_r')$, we have 
\begin{equation}\label{eq:split-volume-formula}
\vol({}^{\omega}\Sht_G^\mu, \eta) = q^{\dim \Bun_{G}}\left(\prod_{j=1}^{r}\frd_{j}\right)\sL^{*}_{X,G}(s_{1},\cdots, s_{n})\Big|_{s_{1}=s_{2}=\cdots=s_{n}=0}.
\end{equation}
\end{theorem}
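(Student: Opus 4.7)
The plan combines three main ingredients: Poincar\'e duality, the Atiyah--Bott formula (Theorem~\ref{thm: atiyah-bott formula}), and the master calculation of Proposition~\ref{p:master eqn} describing how characteristic classes change under Hecke modifications. The first step is a duality reduction: applying Lemma~\ref{l:trace cGamma vs Gamma} iteratively, together with the adjunction~\eqref{Gamma adj} between ${}_c\G_{\mu_j}^{\eta_j+\xi\eta_j'}$ and $\G_{\mu_j}^{\eta_j+\xi\eta_j'}$, one rewrites
\[
\vol({}^{\omega}\Sht_G^\mu, \eta) = q^{\dim \Bun_G}\, \Tr\bigl(\Frob^{-1}\circ \G_{\mu_1}^{\eta_1+\xi\eta_1'}\circ\cdots \circ \G_{\mu_r}^{\eta_r+\xi\eta_r'} \,\big|\, \rH^*(\Bun_G^\omega)\bigr),
\]
with convergence ensured by the ordinary-cohomology form of Proposition~\ref{p:trace conv}. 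By Theorem~\ref{thm: atiyah-bott formula}, $\rH^*(\Bun_G^\omega)$ is identified---at the level of the augmentation associated graded and compatibly with the Ran grading of Lemma~\ref{l:Ran}---with the supersymmetric algebra $\Sym^\bullet(\rH_*(X;\VV_G)_+)$; under the monomial basis of \S\ref{sss:concrete AB}, Frobenius acts diagonally by products of its eigenvalues on the generators.

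The second main step is to describe the action of each $\G_{\mu_j}^{\eta_j+\xi\eta_j'}$ on this symmetric algebra. Combining Proposition~\ref{p:master eqn} (which computes $h_1^* f^z - h_0^* f^z$ in terms of $\pl_{\mu_j} f$), the pushforward formula~\eqref{h0 int}, and the Ran associated-graded derivation statement of Lemma~\ref{l:diff h0 h1}, one shows that the Ran-preserving (hence trace-contributing) part of $\G_{\mu_j}^{\eta_j+\xi\eta_j'}$ is a first-order differential operator on $\Sym^\bullet(\rH_*(X;\VV_G)_+)$. Its \emph{constant} part is the scalar $c_j := d^{\omega_{j-1}}_{\mu_j}(\eta_j) + d_{\mu_j}(\eta_j')$, emerging from the projection-formula identities $h_{0*}(\eta_j) = d^{\omega_{j-1}}_{\mu_j}(\eta_j)$ and $h_{0*}(\xi\eta_j') = d_{\mu_j}(\eta_j')$; its \emph{derivation} part $D_j$ is the unique derivation of the symmetric algebra extending $\id_{\rH_*(X)} \otimes \ov\nb^{\eta_j}_{\mu_j}$ on the generators $\rH_*(X) \otimes \VV_G$, as one sees from the identity $\int_X (\PD(z) \cdot \nb^{\eta_j}_{\mu_j} f) = (\nb^{\eta_j}_{\mu_j} f)^z$. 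The Ran-shifting remainders do not contribute to the trace, as controlled by Corollary~\ref{c:Gamma increase Ran by 2} and Lemma~\ref{l:bound trunc Gamma}.

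The final step is the trace calculation. Under Assumption~\ref{assump:split-operators-commute} the derivations $D_j$ pairwise commute; moreover Frobenius acts by the scalar $q^d$ on each $\VV_{G,2d}$, hence commutes with every $\ov\nb^{\eta_j}_{\mu_j}$. Choosing a common eigenbasis $v_1,\ldots,v_n$ of $\VV_G$ with eigenweights $\e_i(\eta_j,\mu_j)$, a direct monomial-basis computation gives
\[
\Tr\bigl(\Frob^{-1}\circ \textstyle\prod_j (c_j + D_j) \,\big|\, \Sym^\bullet(\rH_*(X;\VV_G)_+)\bigr) \;=\; \prod_{j=1}^r\bigl(c_j + \mathcal{L}_j\bigr)\cdot Z,
\]
where $Z = \Tr(\Frob^{-1}\mid \Sym^\bullet(\rH_*(X;\VV_G)_+))$ is viewed as a function of the Frobenius eigenvalues $\lambda_i$ on $v_i$, and $\mathcal{L}_j := \sum_i \e_i(\eta_j,\mu_j)\, \lambda_i \partial_{\lambda_i}$. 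The partition function $Z$ factors over $i$ as a product of $\z_X$-type expressions, with contributions from $\rH_0(X),\rH_1(X),\rH_2(X)$ combining to give $\z_X^*(d_i)$, so that $Z = \sL_{X,G}^*(0,\ldots,0)$. Substituting $\lambda_i = q^{-s_i - d_i}$ converts each $\lambda_i \partial_{\lambda_i}$ into $-(\log q)^{-1}\partial_{s_i}$, making $c_j + \mathcal{L}_j$ exactly the operator $\frd_j$ of~\eqref{eq: split differential operator}. This yields~\eqref{eq:split-volume-formula}.

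The main obstacle is the second step: promoting the pointwise identity of Proposition~\ref{p:master eqn} into the clean algebraic statement about first-order differential operators on the Atiyah--Bott symmetric algebra requires simultaneously managing the Ran, augmentation, and cohomological degree filtrations, and verifying that subleading corrections (including the $(1-g)\xi \pl_\mu^2 f$ term from Proposition~\ref{p:master eqn} and the Ran-lowering terms from the $\xi\eta_j'$ part of the tautological class) do not contribute to the diagonal trace against Frobenius.
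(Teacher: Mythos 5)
Your proposal is correct and follows essentially the same strategy as the paper's proof: duality reduction via Lemma~\ref{l:trace cGamma vs Gamma}, identification of $\rH^*(\Bun_G^\omega)$ with a symmetric algebra via the Atiyah--Bott formula and the Ran grading, description of each $\G_{\mu_j}^{\eta_j+\xi\eta_j'}$ as a first-order differential operator (scalar plus derivation) on the doubly associated graded, and a multivariate generating-series trace calculation identified with $\sL^*_{X,G}$. Two small imprecisions worth flagging: (i) the citation of Corollary~\ref{c:Gamma increase Ran by 2} is not what is actually needed here, since it only gives $F_m\to F_{m+2}$; the crucial fact that $\G_{\mu_j}^{\eta_j+\xi\eta_j'}$ preserves the Ran filtration \emph{exactly} ($F_m \to F_m$), and its explicit form on $\Gr^F$, is Lemma~\ref{lem: split case associated graded endomorphism}, which you should cite in its place; (ii) because $\nb^{\eta_j}_{\mu_j}$ need not map free generators of $R^W$ to linear combinations of themselves, passing to $\Gr^F$ alone does not make the derivation part linear on generators—you need the second pass to $\Gr_{\aug}$ (Proposition~\ref{prop: split action on graded}) to land on $\ov\nb^{\eta_j}_{\mu_j}\in\End(\VV_G)$, a point you acknowledge in your final paragraph but do not fully integrate into step two. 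Also, the basis $v_1,\ldots,v_n$ should be a common \emph{generalized} eigenbasis, as the $\ov\nb^{\eta_j}_{\mu_j}$ need not act semisimply.
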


\begin{remark}
Even the case $r=0$, where $\mu$ and $\eta$ are empty, is interesting. In this case, the result collapses to the Tamagawa Number Formula proved by \cite{GL14}, at least in the case where $G$ is semisimple and simply connected. Our proof of this special case is the same as theirs, being based on the Atiyah--Bott description. 
\end{remark}

\begin{exam}
A common situation is: $G$ is semisimple, all $\mu_{i}$ are equal to the same minuscule coweight $\mu$, all $\y_{j}$ are equal to a common $\y$, and all $\y'_{j}$ are equal to a common $\y'$. Since $G$ is semisimple, Example \ref{ex:semisimple-degrees} implies that $d_{\mu_j}^{\omega_{j-1}}(\eta_j) = 0$ for all $j$. In this case, Assumption \ref{assump:split-operators-commute} is trivially satisfied (since all the endomorphisms are equal), and Theorem \ref{th:vol gen} can be written as
\begin{equation*}
\vol({}^\om\Sht_G^\mu, \eta)  =q^{\dim \Bun_{G}}
\Big(d_{\mu}(\y')-(\log q)^{-1}\frac{d}{ds}\Big)^r\Big|_{s=0}\left(\prod_{i=1}^{n}\z_{X}(\e_{i}(\y,\mu)s+d_{i})\right).
\end{equation*}
\end{exam}

The remainder of this Section is devoted to the proof of Theorem \ref{th:vol gen}. Our strategy is to compute the ``eigenvalues'' of the individual $\G_{\mu_j}^{\y_j+\xi\y'_j}$, quotation marks because $\G_{\mu_j}^{\y_j+\xi\y'_j}$ does not map $\cohog{*}{\Bun^{\om}_{G}}$ to itself, hence does not have an a priori notion of eigenvalues. However, as discussed in \S \ref{sss:concrete AB}, $\cohog{*}{\Bun^{\om}_{G}}$ and $\cohog{*}{\Bun^{\om'}_{G}}$ can both be identified with the polynomial ring generated by the K\"unneth components of universal characteristic classes using the Atiyah-Bott description.  We can use this identification to view $\G_{\mu}^{\y+\xi\y'}$ as an endomorphism, and thus make sense of its eigenvalues. To do this, we will show that the Ran filtration on $\cohog{*}{\Bun^{\om}_{G}}$ introduced in \eqref{eq:filtrant-ran-decomposition} is stable under $\G_{\mu}^{\y+\xi\y'}$ (in the above sense), and its eigenvalues on the associated graded are easy to calculate.

\subsubsection{Analysis of the associated graded}
Until \S\ref{sssec:split-completion}, we denote by $\mu$ a minuscule coweight of $G$ and consider the one-step Hecke stack ${}^\om\Hk^\mu_G$.  Let $\y, \y'\in R^{W_{\mu}}$ be homogeneous of degree $2(D_\mu+1)$ and $2D_\mu$ respectively. Recall the definitions of $d_\mu^\omega(\eta ')$ and $d_\mu(\eta')$ from \S \ref{sssec:deg-of-taut}. The following is a refinement of Corollary \ref{c:Gamma increase Ran by 2}.

\begin{lemma}\label{lem: split case associated graded endomorphism}
Let $\g^{\y}_{\mu}: \Gr^F_{\bu}\cohog{*}{\Bun^{\om'}_{G}}\to \Gr^F_{\bu}\cohog{*}{\Bun^{\om}_{G}}$ be the derivation characterized by the equation
\begin{equation*}
\g^{\y}_{\mu}(f^{z})=(\nb^{\y}_{\mu}(f))^{z}, \quad z\in \homog{*}{X}, f\in R^{W}
\end{equation*}
where $\nb^{\y}_{\mu}(f)$ was defined in \eqref{eq: nabla lambda eta}. Then for all $m \in \Z$, the operator $\G_{\mu}^{\y + \xi \y'}$ from \eqref{eq: g^yy'} carries $F_{m}\cohog{*}{\Bun^{\om'}_{G}}$ to $F_{m}\cohog{*}{\Bun^{\om}_{G}}$. On the associated graded, it takes the form
\begin{equation*}
\Gr^F_{\bu}\G_{\mu}^{\y + \xi \y'}(\th)=(d^{\om}_{\mu}(\y)+d_{\mu}(\y'))\th+\g^{\y}_{\mu}(\th).
\end{equation*}

\end{lemma}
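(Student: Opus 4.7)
The plan is to establish both the filtration-preservation and the associated-graded formula first on the multiplicative generators $f^{z}\in \frR_{|z|}\cohog{*}{\Bun^{\om'}_{G}}$, and then propagate to general elements. Proposition \ref{p:master eqn} gives the expansion $h_{1}^{*}(f^{z})=h_{0}^{*}(f^{z})+\D_{\mu}(f^{z})$, and plugging this into the definition of $\G_{\mu}^{\y+\xi\y'}$ yields
\[
\G_{\mu}^{\y+\xi\y'}(f^{z}) = h_{0*}\!\bigl((\y+\xi\y')\cdot h_{0}^{*}(f^{z})\bigr) + h_{0*}\!\bigl((\y+\xi\y')\cdot \D_{\mu}(f^{z})\bigr).
\]
The projection formula rewrites the first summand as $f^{z}\cdot h_{0*}(\y+\xi\y')$. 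Using the factorization of $h_{0*}$ in \eqref{h0 int}, one computes $h_{0*}(\y)=d^{\om}_{\mu}(\y)$ and $h_{0*}(\xi\y')=d_{\mu}(\y')$, both scalars in $\cohog{0}{\Bun^{\om}_{G}}$. This scalar behavior is the crucial numerical input distinguishing $\y+\xi\y'$ from a generic class in $F_{0}\cohog{*}{{}^{\om}\Hk^{\mu}_{G}}$, and is what allows $\G_{\mu}^{\y+\xi\y'}$ to preserve $F_{|z|}$ rather than jumping to $F_{|z|+2}$ as in the general Corollary \ref{c:Gamma increase Ran by 2}.

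For the second summand, Lemma \ref{l:diff h0 h1} gives $\D_{\mu}(f^{z})\in F_{|z|-2}\cohog{*}{{}^{\om}\Hk^{\mu}_{G}}$ with associated-graded image $\PD(z)\pl_{\mu}f$; multiplication by $\y+\xi\y'\in F_{0}\cohog{*}{{}^{\om}\Hk^{\mu}_{G}}$ stays in $F_{|z|-2}$ by multiplicativity, and Lemma \ref{l:h1 push} then pushes this into $F_{|z|}\cohog{*}{\Bun^{\om}_{G}}$, establishing filtration preservation. To extract the image in $\Gr^{F}_{|z|}$, I would unwind \eqref{h0 int} once more to obtain
\[
h_{0*}\!\bigl(\y\cdot \PD(z)\cdot \pl_{\mu}f\bigr) = \bigl(\nb^{\y}_{\mu}(f)\bigr)^{z} = \g^{\y}_{\mu}(f^{z}),
\]
using the definition \eqref{eq: nabla lambda eta} of $\nb^{\y}_{\mu}$. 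The companion contribution $h_{0*}(\xi\y'\cdot \PD(z)\pl_{\mu}f)$ is nonzero only when $|z|=2$ (otherwise $\xi\cdot\PD(z)=0$ in $\cohog{*}{X}$ for degree reasons), and in that case produces a class of Ran degree $0\le |z|-2$, vanishing in $\Gr^{F}_{|z|}$. The extra term $(1-g)\j{z,\xi}\xi\pl_{\mu}^{2}f$ from Proposition \ref{p:master eqn}, active only for $|z|=2$, sits in $F_{-2}\cohog{*}{{}^{\om}\Hk^{\mu}_{G}}$ and is handled by the same mechanism.

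Finally, I would propagate to an arbitrary product $\prod_{i=1}^{s}f_{i}^{z_{i}}\in \frR_{m}\cohog{*}{\Bun^{\om'}_{G}}$ by expanding
\[
\prod_{i=1}^{s}h_{1}^{*}(f_{i}^{z_{i}})=\prod_{i=1}^{s}\bigl(h_{0}^{*}(f_{i}^{z_{i}})+\d_{i}\bigr), \quad \d_{i}:=\D_{\mu}(f_{i}^{z_{i}}),
\]
and dissecting over subsets $J\subset \{1,\dots,s\}$. The $J=\vn$ piece contributes the scalar $(d^{\om}_{\mu}(\y)+d_{\mu}(\y'))\prod_{i}f_{i}^{z_{i}}$; the $|J|=1$ pieces reassemble, by the single-generator case combined with the Leibniz property of $\g^{\y}_{\mu}$, into $\g^{\y}_{\mu}(\prod_{i}f_{i}^{z_{i}})$; and for $|J|\ge 2$, the multiplicativity of the filtration on $\Hk^\mu_G$ gives $\prod_{i\in J}\d_{i}\in F_{\sum_{i\in J}(|z_{i}|-2)}\cohog{*}{{}^{\om}\Hk^{\mu}_{G}}$, so after $h_{0*}$ the full $J$-term drops into $F_{\sum_{i}|z_{i}|-2(|J|-1)}\cohog{*}{\Bun^{\om}_{G}}$, strictly smaller than $F_{m}$. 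Thus only the $|J|\le 1$ terms survive in $\Gr^{F}_{m}$, yielding the claimed formula. The only real obstacle is bookkeeping: one must consistently track that every ``error'' contribution---the extra term at $|z|=2$ and the $|J|\ge 2$ higher-order pieces---strictly drops the Ran filtration on $\Bun^{\om}_{G}$, but this is mechanical given Lemma \ref{l:h1 push} and the multiplicativity of both filtrations.
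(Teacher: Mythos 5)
Your proof is correct, and the final answer matches the paper's. The route is somewhat different in emphasis: the paper works directly with an arbitrary $\th\in F_{m}\cohog{*}{\Bun^{\om'}_{G}}$, splits $\G_{\mu}^{\y+\xi\y'}(\th)=h_{0*}((\y+\xi\y')\D_{\mu}(\th))+(h_{0*}(\y+\xi\y'))\cdot\th$, and then uses Lemmas~\ref{l:diff h0 h1} and \ref{l:h1 push} abstractly: since $\Gr^F\D_{\mu}$ is a derivation and $\Gr^Fh_{0*}$ is $\Gr^F$-linear, the resulting operator $\d^{\y}_{\mu}(\th)=(\Gr^Fh_{0*})(\y\cdot\Gr^F\D_{\mu}(\th))$ is automatically a derivation, and one only computes on a single $f^{z}$ at the very end to identify $\d^{\y}_{\mu}=\g^{\y}_{\mu}$. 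You instead do the single-generator computation first and then propagate to arbitrary products via a subset expansion over $J\subset\{1,\dots,s\}$. This redoes the derivation bookkeeping that the paper had already packaged in the proof of Lemma~\ref{l:diff h0 h1} (an analogous expansion appears in \eqref{diff prod}), so your argument is somewhat redundant relative to the tools already available, but it is self-contained and explicit; it also buys transparency about exactly which terms drop to lower Ran filtration. You correctly identify that the genuinely new ingredient (compared to Corollary~\ref{c:Gamma increase Ran by 2}, which only yields $F_{m+2}$) is the scalar computation $h_{0*}(\y+\xi\y')=d^{\om}_{\mu}(\y)+d_{\mu}(\y')\in\cohog{0}{\Bun^{\om}_{G}}$, which forces the $J=\vn$ contribution back into $F_m$ rather than $F_{m+2}$; this is indeed the crux. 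One small point worth making explicit when you write this up: the Leibniz step used to ``reassemble'' the $|J|=1$ pieces is the defining property of $\g^{\y}_{\mu}$ on the free polynomial generators $f_{i}^{z_{j}}$ of $\Gr^{F}_{\bu}\cohog{*}{\Bun^{\om'}_{G}}$; your single-generator computation (for all $f\in R^W$, not only the $f_i$) is exactly what verifies that this extension by Leibniz is consistent with the characterizing formula $\g^{\y}_{\mu}(f^{z})=(\nb^{\y}_{\mu}f)^{z}$ as stated in the lemma.
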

\begin{proof}
Let $\th\in F_{m}\cohog{*}{\Bun^{\om'}_{G}}$. We have
\begin{eqnarray}\label{Gf}
\G_{\mu}^{\y + \xi \y'}(\th)&=&h_{0*}((\y+\xi\y')(h_{1}^{*}\th-h_{0}^{*}\th)+(\y+\xi\y') h_{0}^{*}\th) \nonumber \\ 
&=&h_{0*}((\y+\xi\y')\D_{\mu}(\th))+(h_{0*}(\y+\xi\y')) \cdot \th.
\end{eqnarray}
To make sense of the second summand, we identify $\th$ with a class in $\cohog{*}{\Bun^\om_G}$ using the Atiyah--Bott presentation. By Lemma \ref{l:diff h0 h1}, we have $\D_{\mu}(\th)\in F_{m-2}\cohog{*}{{}^{\om} \Hk^{\mu}_G}$. Since $\y+\xi \y'\in F_{0}\cohog{*}{{}^{\om} \Hk^{\mu}_G}$, the product $(\y+\xi\y')\D_{\mu}(\th)$ also lies in $F_{m-2}\cohog{*}{{}^{\om} \Hk^{\mu}_G}$. By Lemma \ref{l:h1 push}, the first summand in \eqref{Gf} lies in $F_{m}\cohog{*}{\Bun^{\om}_{G}}$. For the second summand, using \eqref{h0 int} and the definitions of $d^{\om}_{\mu}(\y)$ and $d_{\mu}(\y')$, we get
\begin{equation*}
h_{0*}(\y+\xi\y')=d^{\om}_{\mu}(\y)+d_{\mu}(\y').
\end{equation*}
Hence the second summand in \eqref{Gf} is $(d^{\om}_{\mu}(\y)+d_{\mu}(\y'))\th$, which also lies in $F_{m}\cohog{*}{\Bun^{\om}_{G}}$.

Now we calculate the effect of $\G_{\mu}^{\y + \xi \y'}$ on the associated graded. Since $\xi\y'\in F_{-2}$, Lemma \ref{l:h1 push} implies that $h_{0*} (\xi \eta'\Delta_\mu(\theta)) \in F_{m-2}$. Therefore, we have 
\begin{equation*}
\Gr^F_{m}\G_{\mu}^{\y + \xi \y'}(\th)\equiv h_{0*}(\y\D_{\mu}(\th))+(d^{\om}_{\mu}(\y)+d_{\mu}(\y'))\th\mod F_{m-2}.
\end{equation*}
The map $\th\mt h_{0*}(\y\D_{\mu}(\th))$ on $\Gr^F_n\cohog{*}{\Bun_G^{\om'}}$ is given by 
\begin{equation*}
    \d^\y_\mu(\th):= (\Gr^F_{m-2}h_{0*})(\y\cdot (\Gr^F_n\D_\mu)(\th)).
\end{equation*}
By Lemma \ref{l:diff h0 h1}, $(\Gr^F_\bu\D_\mu)(\th)$ is a derivation while $\Gr^F_{\bu}h_{0*}$ is $\Gr^F_{\bu}\cohog{*}{\Bun^{\om}_{G}}$-linear, hence we see that $\d^{\y}_{\mu}$ is also a derivation. On $f^{z}$ (where $z\in \homog{*}{X}$ and $f\in R^{W}$) we have by Lemma \ref{l:diff h0 h1} and \eqref{h0 int} that 
\begin{eqnarray*}
    \d^{\y}_{\mu}(f^{z}) & = & (\Gr^F_{\bu}h_{0*})(\PD(z)\y\pl_{\mu}f) \\
    &  = & \int_{X}\PD(z)\cdot \left(\int_{G/P_{\mu}}\y\pl_{\mu}f\right)=(\nb_{\mu}^{\y}f)^{z}.
\end{eqnarray*}
Therefore $\d^\y_\mu=\g^\y_\mu$. This finishes the proof.
\end{proof}

\subsubsection{Augmentation filtration}\label{sss:aug fil} The graded ring $\Gr^{F}_{\bu}\cohog{*}{\Bun^{\om}_{G}}$ carries a natural augmentation by projecting to $\Gr^{F}_{0}\cohog{0}{\Bun^{\om}_{G}}=\Qlbar$. Note that this restricts to the natural augmentation on $\Gr^{F}_{0}\cohog{*}{\Bun^{\om}_{G}}\cong R^{W}$. We apply the construction in \S\ref{sss:aug gr} to form the associated graded with respect to the adic filtration given by the augmentation ideal:
\begin{equation*}
\Gr^{\bl}_{\aug}\Gr^{F}_{\bu}\cohog{*}{\Bun^{\om}_{G}}.
\end{equation*}
This is a triply-graded ring, with the three gradings denoted $\bl, \bu$ and $*$. 

\begin{exam} Abbreviating $\Gr^{F}_{i} = \Gr^{F}_{i}\cohog{*}{\Bun^{\om}_{G}}$, we have
\begin{equation*}
\Gr^{1}_{\aug}\Gr^{F}_{i}=\begin{cases}
\Gr^{1}_{\aug}R^{W}=\VV & i=0;\\
\Gr^{F}_{1}/(R^{W}_{+}\Gr^{F}_{1}) & i=1;\\ 
\Gr^{F}_{2}/(R^{W}_{+}\Gr^{F}_{2}+\Gr^{F}_{1}\cdot \Gr^{F}_{1}) & i=2.
\end{cases}
\end{equation*}
\end{exam}

For $z=1\in \homog{0}{X}$, the isomorphism $(-)^{z}: R^{W}\isom \Gr^{F}_{0}\cohog{*}{\Bun^{\om}_{G}}$ induces an isomorphism 
\begin{equation}\label{eq:ex-gr0map}
\homog{0}{X}\ot \VV\isom  \Gr^{1}_{\aug}\Gr^{F}_{0}\cohog{*}{\Bun^{\om}_{G}}=\Gr^{1}_{\aug}R^{W}.
\end{equation}
For $z\in \homog{1}{X}$, it is clear from the definition that the map $(-)^{z}: R^{W}\isom F_{1}\cohog{*}{\Bun^{\om}_{G}}$ is a derivation. In particular, $(f_{1}f_{2})^{z}\in R^{W}_{+}F_{1}\cohog{*}{\Bun^{\om}_{G}}$. This induces a map
\begin{equation}\label{eq:ex-gr1map}
\homog{1}{X}\ot \VV\to \Gr^{1}_{\aug}\Gr^{F}_{1}\cohog{*}{\Bun^{\om}_{G}}.
\end{equation}
For $z=[X]\in \homog{2}{X}$, it is clear from the definition that the map $(-)^{z}: R^{W}\to F_{2}\cohog{*}{\Bun^{\om}_{G}}$ is a derivation modulo $F_{1,+}F_{1,+}$, where $F_{1,+}$ is the positive cohomological degree part of $F_{1}\cohog{*}{\Bun^{\om}_{G}}$. Therefore it induces a map
\begin{equation}\label{eq:ex-gr2map}
\homog{2}{X}\ot \VV_{>2}\to \Gr^{1}_{\aug}\Gr^{F}_{2}\cohog{*}{\Bun^{\om}_{G}}.
\end{equation}

The direct sum of \eqref{eq:ex-gr0map}, \eqref{eq:ex-gr1map}, and \eqref{eq:ex-gr2map} gives a map
\begin{equation*}
(\homog{\bu}{X}\ot \VV)_{+}\to \Gr^{1}_{\aug}\Gr^{F}_{\bu}\cohog{*}{\Bun^{\om}_{G}}.
\end{equation*}
This then induces a map of triply-graded commutative $\Qlbar$-algebras
\begin{equation*}
\Gr_{\aug}\AB^{\om}: \Sym^{\bl}((\homog{\bu}{X}\ot \VV)_{+})\to \Gr^{\bl}_{\aug}\Gr^{F}_{\bu}\cohog{*}{\Bun^{\om}_{G}}.
\end{equation*}
Here the $\bu$-grading and $*$-grading (cohomological) on the left is understood as follows: a monomial $(z_{1}\ot f_{1})(z_{2}\ot f_{2})\cdots (z_{s}\ot f_{s})$, where $z_{i}\in \homog{
|z_{i}|}{X}$ and $f_{i}\in R^{W}$ homogeneous of degree $|f_{i}|$, has $\bu$-degree $\sum|z_{i}|$ and $*$-degree $\sum|f_{i}|-|z_{i}|$.

\begin{lemma}
The map $\Gr_{\aug}\AB^{\om}$ is an isomorphism.
\end{lemma}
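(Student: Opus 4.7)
The plan is to bootstrap from the (ungraded) Atiyah--Bott isomorphism
\begin{equation*}
\AB^{\om}: \Sym^{\bl}((\homog{*}{X}\ot \VV)_{+}) \isom \Gr^{\bl}_{\aug}\cohog{*}{\Bun^{\om}_{G}}
\end{equation*}
from Theorem \ref{thm: atiyah-bott formula}, by exploiting the canonical splitting of the Ran filtration to commute the two associated-graded operations appearing in the target. The crucial input will be Lemma \ref{l:Ran}, which exhibits the Ran filtration $F_\bu$ as arising from a genuine $\ZZ_{\ge 0}$-grading $\cohog{*}{\Bun^\om_G} = \bigoplus_{m\ge 0} \frR_m \cohog{*}{\Bun^\om_G}$. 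Together with the cohomological grading, this makes $\cohog{*}{\Bun^\om_G}$ into a bigraded commutative ring whose augmentation ideal $\cohog{>0}{\Bun^\om_G}$ is bigraded. Consequently the augmentation filtration respects the Ran grading, and the two associated-graded operations commute canonically:
\begin{equation*}
\Gr^{\bl}_{\aug}\Gr^{F}_{\bu}\cohog{*}{\Bun^{\om}_{G}} \cong \Gr^{F}_{\bu}\Gr^{\bl}_{\aug}\cohog{*}{\Bun^{\om}_{G}}.
\end{equation*}

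Next I would verify that $\AB^{\om}$ is compatible with the Ran gradings on both sides, where on the source the Ran grading is obtained by placing $z\ot v \in \homog{i}{X}\ot \VV$ in Ran degree $i$ and extending additively to the symmetric algebra. The check on generators is immediate: $\AB^{\om}$ sends $z\ot v$ to the class of $\tilde v^{z} \bmod F_{\aug}^{2}$ for any lift $\tilde v \in R^{W}_{+}$ of $v$, and by the very definition of $\frR_m$ we have $\tilde v^{z} \in \frR_{|z|}\cohog{*}{\Bun^\om_G}$. Since the Ran grading on the source is already a grading, applying $\Gr^F_\bu$ to it returns the same object. Combining this with the commutativity displayed above yields an isomorphism
\begin{equation*}
\Sym^{\bl}((\homog{\bu}{X}\ot \VV)_{+}) \isom \Gr^{\bl}_{\aug}\Gr^{F}_{\bu}\cohog{*}{\Bun^{\om}_{G}}.
\end{equation*}

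The remaining task is to check that this composite coincides with the map $\Gr_{\aug}\AB^{\om}$ built from \eqref{eq:ex-gr0map}, \eqref{eq:ex-gr1map}, and \eqref{eq:ex-gr2map} via the universal property of the symmetric algebra. Both maps are algebra homomorphisms out of a free symmetric algebra, so it suffices to check agreement on the generators $\homog{i}{X}\ot \VV$ for $i = 0, 1, 2$, where by construction both produce the same formula. The hard part here is not really hard, but rather a matter of careful bookkeeping of the three gradings $(\bl, \bu, *)$ throughout; in particular, one must observe that the exclusion of $\homog{2}{X}\ot \VV_{2}$ from the source (forced by the positivity condition defining $(\homog{*}{X}\ot\VV)_+$) matches exactly the restriction $\deg f > |z|$ in the generating set of Lemma \ref{l:Ran}.
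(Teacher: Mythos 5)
Your proof is correct, and the underlying idea is the same as the paper's: both exploit that the Ran filtration is split by the grading $\frR_m$ of Lemma~\ref{l:Ran}, so that passing to $\Gr^F_\bu$ costs nothing and the statement reduces to Theorem~\ref{thm: atiyah-bott formula} together with a check on the $\Sym^1$-generators. The paper phrases this concretely, exhibiting $\Gr^F_\bu\cohog{*}{\Bun^\om_G}$ as a polynomial ring on the explicit Ran-homogeneous generators $\ov f_i^{z_j}$; you phrase the same observation abstractly as commuting the two $\Gr$ operations (a harmless stylistic difference, though you should say explicitly that the lift $\tilde v$ is taken homogeneous so that $\tilde v^z$ lands in a single piece $\frR_{|z|}$).
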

\begin{proof} As in \S\ref{sss:concrete AB}, we have a set of free generators $f_{i}^{z_{j}}$ for $\cohog{*}{\Bun^{\om}_{G}}$. Now $f_{i}^{z_{j}}\in F_{|z_{j}|}$ and we denote its image in $\Gr^{F}_{|z_{j}|}$ by $\ov f_{i}^{z_{j}}$. From the description in \S\ref{sss:concrete AB} of $\cohog{*}{\Bun^{\om}_{G}}$ as the polynomial ring on generators \eqref{eq:constant-AB-generators}, it is easy to see that $\Gr^{F}_{\bu}\cohog{*}{\Bun^{\om}_{G}}$ is a polynomial ring with free generators $\ov f_{i}^{z_{j}}$. Then the assertion follows from observing that the $z_j \otimes \ov f_{i}$ (with $|f_{i}|>|z_{j}|$) form a basis for $(\homog{\bu}{X}\ot \VV)_{+}$, mapping to $\ov f_{i}^{z_{j}}$ under $\Gr_{\aug}\AB^\omega$.  
\end{proof}

Using Lemma \ref{lem: split case associated graded endomorphism}, we can now describe the operator $\G_{\mu}^{\y + \xi \y'}$ on $\Gr^{\bl}_{\aug}\Gr^{F}_{\bu}$ explicitly. 

\begin{prop}\label{prop: split action on graded} The operator $\Gr^F_{\bu}\G_{\mu}^{\y + \xi \y'}$ preserves the adic filtration by the augmentation ideals and passes to the associated graded
\begin{equation*}
\Gr^{\bl}_{\aug}\Gr^{F}_{\bu}\G_{\mu}^{\y + \xi \y'}: \Gr^{\bl}_{\aug}\Gr^{F}_{\bu}\cohog{*}{\Bun^{\om'}_{G}}\to \Gr^{\bl}_{\aug}\Gr^{F}_{\bu}\cohog{*}{\Bun^{\om}_{G}}.
\end{equation*}
Let $(\id_{\homog{\bu}{X}}\ot\ov\nb^{\y}_{\mu})_{+}$ be the restriction of $\id_{\homog{\bu}{X}}\ot\ov\nb^{\y}_{\mu}\in \End(\homog{\bu}{X}\ot \VV)$ to $(\homog{\bu}{X}\ot \VV)_{+}$, and $(\id_{\homog{\bu}{X}}\ot\ov\nb^{\y}_{\mu})_{+}^{\der}$ be its unique extension to a derivation on $\Sym^{\bl}((\homog{\bu}{X}\ot \VV)_{+})$. Then under the isomorphisms $\Gr^{\bl}_{\aug}\AB^{\om}$ and $\Gr^{\bl}_{\aug}\AB^{\om'}$, $\Gr^{\bl}_{\aug}\Gr^{F}_{\bu}\G_{\mu}^{\y + \xi \y'}$ is identified with the endomorphism of
\begin{equation*}
\Sym^{\bl}((\homog{\bu}{X}\ot \VV)_{+})
\end{equation*}
given by
\begin{equation*}
\Gr^{\bl}_{\aug}\Gr^{F}_{\bu}\G_{\mu}^{\y + \xi \y'}=(d^{\om}_{\mu}(\y)+d_{\mu}(\y'))\id+(\id_{\homog{\bu}{X}}\ot\ov\nb^{\y}_{\mu})_{+}^{\der}.
\end{equation*}
\end{prop}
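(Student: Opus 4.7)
The plan is to reduce to the decomposition
$$\Gr^F_\bu \G_\mu^{\eta+\xi\eta'} = (d^\om_\mu(\eta) + d_\mu(\eta'))\cdot\id + \gamma^\eta_\mu$$
from Lemma \ref{lem: split case associated graded endomorphism} and analyze the two summands separately. The scalar piece trivially preserves the augmentation filtration and contributes the claimed scalar term on the associated graded, so all the content lies in understanding the derivation $\gamma^\eta_\mu$.

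First I would verify that $\gamma^\eta_\mu$ preserves the augmentation filtration. The key point is that $\nb^\eta_\mu \co R^W \to R^W$ is degree-preserving: $\pl_\mu$ lowers cohomological degree by $2$, multiplication by $\eta$ raises it by $2(D_\mu+1)$, and $\int_{G/P_\mu}$ lowers it by $2 D_\mu$, for a net change of zero. Hence $\nb^\eta_\mu(R^W_+) \subset R^W_+$, and $\nb^\eta_\mu$ descends to the degree-preserving map $\ov\nb^\eta_\mu \in \End^{gr}(\VV)$. Now $\gamma^\eta_\mu$ sends each generator $\bar f^z$ of $\Gr^F_\bu \cohog{*}{\Bun_G^{\om'}}$ (with $f \in R^W_+$, $z \in \homog{*}{X}$) to $(\nb^\eta_\mu f)^z$, a generator of the same type. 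By the Leibniz rule, $\gamma^\eta_\mu$ therefore carries the $n$-th power of the augmentation ideal into itself for every $n$.

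Passing to associated graded, $\gamma^\eta_\mu$ induces a derivation on $\Gr^\bl_\aug \Gr^F_\bu \cohog{*}{\Bun_G^{\om'}} \cong \Sym^\bl((\homog{\bu}{X} \otimes \VV)_+)$. Because the target is a free commutative algebra, this derivation is determined by its restriction to the generators $(\homog{\bu}{X} \otimes \VV)_+$. On each of the three Künneth summands \eqref{eq:ex-gr0map}, \eqref{eq:ex-gr1map}, \eqref{eq:ex-gr2map}, the generator $z \otimes \bar f$ is represented by $f^z \in \Gr^F_{|z|}$, and $\gamma^\eta_\mu$ sends it to $(\nb^\eta_\mu f)^z$, which in turn corresponds to $z \otimes \ov\nb^\eta_\mu \bar f$. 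Thus the induced derivation agrees with $\id_{\homog{\bu}{X}} \otimes \ov\nb^\eta_\mu$ on generators, and hence with its unique derivation extension $(\id_{\homog{\bu}{X}} \otimes \ov\nb^\eta_\mu)_+^{\der}$ on all of $\Sym^\bl$. The one subtle check is compatibility at the $\homog{2}{X} \otimes \VV_{>2}$ summand, where the map $f \mapsto f^{[X]}$ is only a derivation modulo $F_{1,+}\cdot F_{1,+}$; this is handled by the fact that $\nb^\eta_\mu$ preserves the degree stratification of $\VV$, so the restriction to $\VV_{>2}$ is well-defined and matches the target description.
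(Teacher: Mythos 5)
Your proof is correct and follows exactly the route the paper intends: the paper gives no explicit proof for this Proposition, but clearly derives it from Lemma \ref{lem: split case associated graded endomorphism} by the reasoning you spell out (scalar part is immediate; the derivation $\gamma^\eta_\mu$ is degree-preserving, hence preserves the augmentation filtration, and its effect on generators $z\ot \bar f$ is precisely $z\ot \ov\nb^\eta_\mu(\bar f)$). Your final remark about the $\homog{2}{X}\ot \VV_{>2}$ summand is sound: the degree-preservation of $\ov\nb^\eta_\mu$ keeps the image inside $\VV_{>2}$, and the "derivation modulo $F_{1,+}\cdot F_{1,+}$" property of \eqref{eq:ex-gr2map} is exactly what is needed for the computation on $\Gr^1_\aug$ to be well-defined.
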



\subsubsection{Completion of the proof}\label{sssec:split-completion}
For the sequence $\mu = (\mu_1, \ldots, \mu_r)$ and a fixed $\omega \in \pi_0(\Bun_G)$, set $\omega_j :=\omega + \ol\mu_1 + \ldots + \ol\mu_j$. Abbreviate 
\[
H^{i}:=\cohog{i}{\Bun_{G}^{\omega}}, \quad H^{*}:=\op_{i\in \ZZ}H^{i}, \quad \text{and} \quad H_{c}^{i}:=\cohoc{i}{\Bun_{G}^{\omega}}.
\]
By definition and Proposition \ref{p:trace conv},
\begin{equation}\label{alt trace Hc}
\vol({}^\om\Sht_G^\mu, \eta)  = \sum_{i}(-1)^{i}\Tr(\Frob\c {}_c\Gamma^{\eta + \xi \eta'}_\mu \mid H^{i}_{c}).
\end{equation}
By Lemma \ref{l:trace cGamma vs Gamma}, we have
\begin{equation}\label{alt trace H}
    \vol({}^\om\Sht_G^\mu, \eta)  =q^{\dim\Bun_G} \sum_{i}(-1)^{i}\Tr(\Frob^{-1}\c\Gamma^{\eta + \xi \eta'}_\mu \mid H^{i}),
\end{equation}
where we write
\[
\Gamma^{ \eta + \xi \eta'}_{\mu} = \Gamma^{ \eta_1 + \xi \eta'_1}_{\mu_1}\circ \ldots \circ \Gamma^{ \eta_r + \xi \eta'_r}_{\mu_r}.
\]



By Lemma \ref{lem: split case associated graded endomorphism}, the Ran filtration on $H^*$ is preserved by $\Gamma^{ \eta + \xi \eta'}_{\mu}$. By Proposition \ref{prop: split action on graded}, $\Gr^F_\bu\Gamma^{ \eta + \xi \eta'}_{\mu}$ preserves the augmentation filtration on $\Gr^F_\bu H^*$. Hence the trace \eqref{alt trace H} can be calculated on its associated graded $\Gr^{\bl}_{\aug}\Gr^{F}_{\bu} H^*$. Again by Proposition \ref{prop: split action on graded}, the action of $\Gamma^{ \eta + \xi \eta'}_\mu$ on this double associated graded is (in terms of the Atiyah--Bott description) the composition of the endomorphisms
\begin{equation}\label{eq: split endomorphism factor}
\left[(d^{\om_{j-1}}_{\mu_j}(\y_{j})+d_{\mu_j}(\y'_j))\id+(\id_{\homog{\bu}{X}}\ot\ov\nb^{\y_j}_{\mu_j})_{+}^{\der} \right] 
\end{equation}
for $ j = 1, \ldots, r$. By Assumption \ref{assump:split-operators-commute}, $\VV$ has a basis consisting of homogeneous elements that are simultaneous generalized eigenvectors for the operators $\{\ov\nb^{\y_j}_{\mu_j}\}_{1\le j\le r}$. We can thus choose a decomposition into lines
\[
\VV \cong \bigoplus_{l=1}^n  \LL_{l}
\]
such that $\LL_{l}\subset \VV_{2d_l}$ (so as Frob-module $\LL_l\cong \Qlbar(-d_l)$), and $\LL_l$ is spanned by a generalized eigenvector $v_l$ with eigenvalues $\epsilon_l(\eta_j, \mu_j)$ for $\ov\nb^{\y_j}_{\mu_j}$, for all $j=1,\cdots, r$. 

In the Atiyah--Bott description \eqref{eq: split AB} of $H^*$, we have 
\begin{align}\label{eq:split-coh-end}
\rH^*(\Bun_G^\omega) & \cong \Sym^{\bl} (\rH_{\bu}(X) \otimes \VV)_+ \cong \Sym^{\bl} \Big(\rH_{\bu}(X) \otimes (\oplus_{l=1}^n \LL_{l} )\Big)_+ \\
&\cong \Big(\bigotimes_{l=1}^n \Sym^{\bl} (\rH_{\bu}(X) \otimes \LL_{l}) \Big)_+.
\end{align}
In particular, abbreviating
\[
\Sym^{N_1, \ldots, N_n}(\rH_{\bu}(X) \otimes \VV)_+ := \Big(\Sym^{N_1} (\rH_{\bu}(X) \otimes \LL_1)  \otimes \ldots \otimes \Sym^{N_n} (\rH_{\bu}(X) \otimes \LL_n)\Big)_+,
\]
each $\bl$-graded piece of \eqref{eq:split-coh-end} splits as 
\[
\Sym^N (\rH_{\bu}(X) \otimes \VV)_+ \cong \bigoplus_{N_1 + \ldots + N_n = N} \Sym^{N_1, \ldots, N_n}(\rH_{\bu}(X) \otimes \VV)_+.
\]
The endomorphism $\id_{\homog{\bu}{X}}\ot \ov\nb^{\y_j}_{\mu_j}$ preserves each summand $\Sym^{N_1, \ldots, N_n}(\rH_{\bu}(X) \otimes \VV)_+$, and acts on it with generalized eigenvalues $\sum_{l=1}^n N_l \epsilon_l(\eta_j, \mu_j)$. Thus the contribution of $\Sym^{N_1, \ldots, N_n}(\rH_{\bu}(X) \otimes \VV)_+$ to the trace \eqref{alt trace H} is
\begin{equation}\label{trace on Sym Ni}
    q^{\dim\Bun_G}\prod_{j=1}^r\Big(d^{\om_{j-1}}_{\mu_j}(\y_j)+d_{\mu_j}(\y'_j)+\sum_{l=1}^nN_l\ep_l(\y_j,\mu_j) \Big)\Tr(\Frob^{-1}, \Sym^{N_1, \ldots, N_n}(\rH_{\bu}(X) \otimes \VV)_+).
\end{equation}

Form the multivariate generating series 
\[
\Lambda(t_1, \ldots, t_n) = \sum_{N_1, \ldots, N_n \geq 0} \Tr(\Frob^{-1} \mid \Sym^{N_1, \ldots, N_n}(\rH_{\bu}(X) \otimes \VV)_+ )t_1^{N_1} \ldots t_n^{N_n}.
\]
Note that for $t = q^{-s}$, we have $(-\log q)^{-1} \pl_s =  t \pl_t$. Then the sum of \eqref{trace on Sym Ni} over all $(N_1,\cdots, N_n)$ becomes
\begin{equation*}
    q^{\dim\Bun_G}\left(\prod_{j=1}^r\frd_j\right)\L(q^{-s_1},\cdots, q^{-s_n})|_{s_1=\cdots=s_n=0}.
\end{equation*}
Therefore, it suffices to show that
\[
\Lambda(q^{-s_1}, \ldots, q^{-s_n})  = \sL_{X,G}^*(s_1, \ldots,s_n).
\]
Both sides factor over each variable, reducing this to a one variable statement: 
\[
 \Tr(\Frob^{-1} \mid \Sym^{\bl} (\rH_{\bu}(X) \otimes \LL_{l})_+)  = \begin{cases} \zeta_X(s+d_l) & d_l > 1, \\ 
\zeta_X^*(s+1) & d_l = 1.
\end{cases}
\]

We now consider two cases, based on whether $d_l > 1$. 
\begin{enumerate}
\item If $d_l >1$, then $(\rH_{\bu}(X) \otimes \LL_{l})_+ = \rH_{\bu}(X) \otimes \LL_{l} \cong \rH_{\bu}(X)(-d_l)$. So we have 
\begin{equation}\label{eq: zeta}
\Lambda(t) = \frac{\det(1- t \Frob^{-1}  \mid \rH_1(X) \otimes \LL_{l})}{(1-q^{-d_l}t)(1-q^{-d_l+1}t)}  = \frac{\det(1-q^{-d_l} t \Frob^{-1}   \mid \rH_1(X))}{(1-q^{-d_l}t)(1-q^{-d_l+1}t)}.  
\end{equation}
The action of $\Frob^{-1}$ on $\rH_1(X)$ is adjoint to the action of $\Frob$ on $\rH^1(X)$. Hence \eqref{eq: zeta} can be rewritten as 
\begin{equation}\label{eq: zeta-2}
\frac{\det(1-q^{-d_l} t \Frob   \mid \rH^1(X))}{(1-q^{-d_l}t)(1-q^{-d_l+1}t)}  
\end{equation}
and upon setting $t= q^{-s}$ we obtain $\zeta_X(s+d_l)$, as desired.

\item If $d_l=1$, then we exclude the summand $\rH_2(X, \LL_{l})$ from $(\rH_{\bu}(X) \otimes \LL_{l})_+$. This implies that the denominator in \eqref{eq: zeta-2} should be only $(1-q^{-d_l} t)$, and since $d_l=1$ this effectively means we multiply the entire expression by $(1-t)$. Upon setting $t=q^{-s}$, we therefore obtain $\Lambda(q^{-s}) = \zeta_X^*(s+1)$, as desired. 
\end{enumerate}
This finally completes the proof of Theorem \ref{th:vol gen}. \qed

%

\subsection{An example}\label{ssec:gln-colength-one-arithmetic-volume}

Let $G = \GL_n$ and $\mu^\sharp := (1,0, \ldots, 0)$, $\mu^\flat := (0, 0, \ldots, -1) \in \xcoch(T)$. Then $\dim G/P_{\mu^{\sh}} = \dim G/P_{\mu^{\flat}}  =  n-1$. We identify $R = \Qlbar[x_1, \ldots, x_n]$ and $W  = S_n$ acting by permutation of the $x_i$. Then $W_{\mu^\sharp} \cong S_{n-1}$ is the subgroup fixing $x_1$, and $W_{\mu^\flat} \cong S_{n-1}$ is the subgroup fixing $x_n$.

\subsubsection{Hecke correspondences} The Hecke stack $\Hk^{\mu^\sharp}_G$ is the moduli stack of upper modifications of rank $n$ vector bundles of colength $1$, in the terminology of \cite[Definition 6.5]{FYZ}. This means that for a commutative $k$-algebra $R$, $\Hk^{\mu^\sharp}_G(R)$ parametrizes $x \in X(R)$ along with an injective map 
\begin{equation}\label{eq:hk-sharp-point}
\cE_{0} \inj \cE_1 
\end{equation}
where $\cE_0, \cE_1$ are rank $n$ vector bundles on $X_R$ and $\cE_1/\cE_0$ is a line bundle over the graph of $x$ in $X_R$. 

The Hecke stack $\Hk^{\mu^\flat}_G$ is defined similarly except that \eqref{eq:hk-sharp-point} is replaced by 
\begin{equation}\label{eq:hk-flat-point}
\cE_{0} \hookleftarrow \cE_1 
\end{equation}
with cokernel a line bundle over the graph of $x$ in $X_R$.

The connected components of $\Bun_G$ are indexed by $d \in \Z$. Concretely, $\Bun_G^d$ is the component of $\Bun_G$ parametrizing bundles of degree $d$. For $? \in \{\sharp, \flat\}$, ${}^d \Hk_G^{\mu^?}$ is the open-closed component of $\Hk_G^{\mu^?}$ where $\deg \cF_0 = d$. Let $h_{0}, h_{1}: \Hk^{\mu^?}_G \to \Bun_{G}$ be the maps recording $\cF_{0}$ and $\cF_{1}$, respectively. Then we have a correspondence
\begin{equation*}
\xymatrix{ & {}^{d}\Hk^{\mu}_G\ar[dl]_{h_{0}}\ar[dr]^{h_{1}}\\
\Bun^{d}_{G} & & \Bun^{d\pm1}_{G}}
\end{equation*}
where the sign is $+1$ if $\mu = \mu^\sharp$ and $-1$ if $\mu = \mu^\flat$. 

\subsubsection{Tautological bundles}\label{sssec:gln-example-tautological}
Under the map $\Hk_G^{\mu^\sharp} \rightarrow G/P_{\mu^\sharp} \cong \PP^{n-1}$ from \eqref{Hk ev L}, the pullback of $\cO(-1)$ is the ``tautological line bundle'' $\cP^\sharp$ on $\Hk_G^{\mu^\sharp}$ whose fiber along an $R$-point \eqref{eq:hk-sharp-point} is $\ker(\cF_0|_x \rightarrow \cF_1|_x)$. Similarly, under the map $\Hk_G^{\mu^\flat} \rightarrow G/P_{\mu^\flat} \cong \PP^{n-1}$, the pullback of $\cO(-1)$ is the tautological line bundle $\cP^\flat$ whose fiber along an $R$-point \eqref{eq:hk-flat-point} is $(\cF_0/\cF_1)	$. 

By our conventions (cf. \S \ref{sssec:reductive-groups-notation}), these are arranged so that under the map $R^{W_\mu} \rightarrow \cohog{*}{\Hk^{\mu}_G}$ induced by the canonical parabolic reduction, $c_1(\cP^{\sh})$ agrees with the image of $x_1 \in R^{W_\mu}$ of $\mu = \mu^{\sharp}$, while $c_1(\cP^{\flat})$ agrees with the image of $-x_n$ if $\mu = \mu^{\flat}$. 

\subsubsection{Operators}\label{sssec:gln-example-operators} For $\mu \in \{\mu^\sharp, \mu^\flat\}$, we write 
\[
 [\mu]= 
\begin{cases} \sh & \mu =\mu^\sharp \\
\flat & \mu=\mu^\flat
\end{cases} \hspace{1cm} \text{and} \hspace{1cm}
|\mu| = 
\begin{cases} 1 & \mu =\mu^\sharp \\
-1 & \mu=\mu^\flat
\end{cases} 
\]
For a rank $n$ vector bundle $\cE$ on $X$, we define the map
\begin{eqnarray*}
\G_{\mu}^{\cE}   &:& \cohog{*}{\Bun_{G}^{d \pm 1 }} \to \cohog{*}{\Bun_{G}^{d}} \\
&& \th\mt h_{0*}(h_{1}^{*}\th\cup c_{n}(p_{X}^{*}\cE^{[\mu] *}\ot \cP^{[\mu]})).
\end{eqnarray*}
where the source degree is $d+1$ if $\mu = \mu^\sharp$ and $d-1$ if $\mu=\mu^\flat$. 

\subsubsection{Arithmetic volume}
Consider a sequence of modifications of type $\mu =(\mu_1,\cdots ,\mu_{r})$, where each $\mu_i \in \{\mu^\sharp, \mu^\flat\}$ with exactly half of each type (so in particular $r$ must be even).

Fix a sequence of rank $n$ vector bundles $\cE_1, \ldots, \cE_r$ on $X$. Consider the composition
\begin{equation*}
\G_{\mu}^{\cE}:= \G_{[\mu_{r}]}^{\cE_{r}}\c\cdots\c\G_{[\mu_{2}]}^{\cE_{2}}\c\G_{[\mu_{1}]}^{\cE_{1}} \co \cohog{*}{\Bun^{d}_{G}}\to \cohog{*}{\Bun^{d}_{G}}
\end{equation*}
and its compactly supported version
\begin{equation*}
{}_{c}\G_{\mu}^{\cE} := {}_{c}\G_{[\mu_{r}]}^{\cE_{r}}\c\cdots\c {}_{c}\G_{[\mu_{2}]}^{\cE_{2}}\c {}_{c}\G_{[\mu_{1}]}^{\cE_{1}} \co \cohoc{*}{\Bun^{d}_{G}}\to \cohoc{*}{\Bun^{d}_{G}}.
\end{equation*}

We write
\[
\vol({}^{d}\Sht^{\mu}_{G}, \prod_{j=1}^{r}c_{n}(p_{j}^{*}\cE^{*}_{j}\ot\cP^{\mu_{j}})) = \Tr({}_{c}\G_{\mu}^{\cE}\circ\Frob\mid  \cohoc{*}{\Bun^{d}_{G}})
\]
for the corresponding arithmetic volume. Its value will be formulated in terms of the $L$-functions
\begin{equation}\label{eq:split-L}
L_{X,G}(s) := \sL_{X,G}(s, s, \ldots, s) =  \prod_{i=1}^{n}\z_{X}(s+i),
\end{equation}
and
\begin{equation}\label{eq:split-normalized-L}
L^{*}_{X,G}(s)= \sL^{*}_{X,G}(s, s, \ldots, s) =  (1-q^{-s})L_{X,G}(s).
\end{equation}

For $1 \leq j \leq r$, let $D_j := \deg \cE_j$. We define coefficients $b_0^\mu, \ldots, b_r^\mu$ by writing the following product as a polynomial in $N$:
\begin{equation}\label{def bmuj}
\prod_{j=1}^{r}(d-|\mu_{r}|-\cdots-|\mu_{j}|- |\mu_j| D_{j}+ |\mu_j| N)=b^{\mu}_{r}N^{r}+b^{\mu}_{r-1}N^{r-1}+\cdots+b^{\mu}_{0}.
\end{equation}
Each $b^{\mu}_{i}$ is itself a polynomial in $d$ and $D_1, \ldots, D_r$. 

\begin{theorem}\label{th:main GLn} For any $d\in \Z$, we have
\begin{equation*}
\vol({}^{d}\Sht^{\mu}_{G}, \prod_{j=1}^{r}c_{n}(p_{j}^{*}\cE^{*}_{j}\ot\cP^{[\mu_{j}]}))=q^{n^{2}(g-1)}\sum_{i=0}^{r}b^{\mu}_{i}(-\log q)^{-i}\left(\frac{d}{ds}\right)^{i}\Big|_{s=0}L^{*}_{X,G}(s).
\end{equation*}
\end{theorem}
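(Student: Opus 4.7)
The plan is to derive Theorem \ref{th:main GLn} as a direct specialization of the master formula of Theorem \ref{th:vol gen} to $G = \GL_n$. First, expand each tautological factor using the Whitney formula $c_n(\cE_j^* \otimes L) = \sum_{k=0}^n c_k(\cE_j^*)\, c_1(L)^{n-k}$: since $X$ is a curve only $k=0$ and $k=1$ contribute, and since $c_1(\cP^\sh) = x_1$ while $c_1(\cP^\flat) = -x_n$ by \S\ref{sssec:gln-example-tautological}, each factor decomposes as $\eta_j + \xi \eta_j'$ with $\eta_j$ a signed $n$-th power of $x_1$ or $x_n$ and $\eta_j'$ proportional to $D_j$ times a signed $(n-1)$-th power of the same variable. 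For $G = \GL_n$, the Gross motive $\VV$ has one-dimensional graded pieces in distinct degrees $d_i = i$, so Assumption \ref{assump:split-operators-commute} is satisfied automatically by Example \ref{ex:assumption-satisfied}.

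The remaining local inputs to the differential operators $\mathfrak{d}_j$ are then computed explicitly. Proposition \ref{prop:GLn-minimal-eps} furnishes $\epsilon_i(x_1^n, \mu^\sh) = (-1)^{n-1}$ for every $i$, and the corresponding calculation for $\mu^\flat$ follows from Lemma \ref{lem:w-average-integration}: one observes that $\frR_{\mu^\flat} = \prod_{j<n}(x_n-x_j)$ differs from the $x_1 \leftrightarrow x_n$ analog of $\frR_{\mu^\sh}$ by a sign of $(-1)^{n-1}$ and that $\pl_{\mu^\flat} = -\pl_{x_n}$, after which the Lagrange interpolation trick of \S\ref{sssec:GLn-minimal-eps-proof} applies verbatim. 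In particular the eigenweights $\epsilon_i(\eta_j,\mu_j)$ turn out to be a common constant independent of $i$. The degree constants $d^{\omega_{j-1}}_{\mu_j}(\eta_j)$ and $d_{\mu_j}(\eta_j')$ are obtained similarly, using the Lagrange identity $\sum_i x_i^n/\prod_{k \ne i}(x_i - x_k) = e_1$ together with $\int_X c_1(\cE^{\univ})|_{\Bun_G^{\omega_{j-1}}} = \omega_{j-1}$, giving values of the form $\pm \omega_{j-1}$ and $\pm D_j$ whose signs are dictated by $|\mu_j|$.

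The uniformity of the eigenweights across $i$ is the crux of the assembly. Under the diagonal restriction $s_1 = \cdots = s_n = s$, the multivariate $\sL^*_{X,G}(s_1,\ldots,s_n)$ collapses to $L^*_{X,G}(s)$, and $\sum_i \epsilon_i \pl_{s_i}$ commutes past the diagonal restriction to act as a scalar multiple of $d/ds$; since each $\mathfrak{d}_j$ is a polynomial in $\sum_i \pl_{s_i}$ with constant coefficients, the entire product $\prod_j \mathfrak{d}_j$ collapses to a polynomial in $d/ds$ with constant coefficients acting on $L^*_{X,G}(s)$. Setting $N := (-\log q)^{-1}(d/ds)$ and using the admissibility relation $\sum_k |\mu_k| = 0$ to rewrite $\omega_{j-1} = d - |\mu_r| - \cdots - |\mu_j|$, one identifies this polynomial with the product \eqref{def bmuj}.

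The main obstacle will be the delicate sign bookkeeping between the $\mu^\sh$ and $\mu^\flat$ strata: the conventions of \S\ref{sssec:reductive-groups-notation} for line bundles on $G/P$, the sign of $c_1(\cP^\flat)$, the flip in $\pl_{\mu^\flat}$, the sign of $\frR_{\mu^\flat}$, and the Chern class formula for $\cE_j^*$ must all combine so that each factor of $\prod_j \mathfrak{d}_j$ matches precisely $\omega_{j-1} - |\mu_j| D_j + |\mu_j| N$ without residual global signs. Once the signs are checked in a small case (e.g.\ $n=1$ or $n=2$, $r=2$) to fix conventions, the general case is a formal consequence of Theorem \ref{th:vol gen}.
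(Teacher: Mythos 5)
Your plan follows the paper's proof essentially verbatim: expand each $c_n(p_j^*\cE_j^*\otimes\cP^{[\mu_j]})$ via the Whitney formula into $\eta_j+\xi\eta_j'$, compute $d^{\omega_{j-1}}_{\mu_j}(\eta_j)$, $d_{\mu_j}(\eta_j')$, and the eigenweights via Proposition~\ref{prop:GLn-minimal-eps} (with the $\mu^\flat$ case handled by the $x_1\leftrightarrow x_n$ substitution in Lemma~\ref{lem:w-average-integration}), observe that Assumption~\ref{assump:split-operators-commute} is automatic for $\GL_n$, apply Theorem~\ref{th:vol gen}, and collapse $\sum_i\partial_{s_i}$ to $d/ds$ on the diagonal via \eqref{eq:partial-derivative-identity}. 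One caveat: when you finally do the sign check, note that the paper's own computation produces $\frd_j = (-1)^{n-1}(\cdots)$ for $\mu^\sharp$ and $\frd_j=(-1)^n(\cdots)$ for $\mu^\flat$, so $\prod_j\frd_j$ carries an overall $(-1)^{r/2}$ which the paper's final displayed line appears to silently drop -- you should resolve for yourself whether this factor is genuinely absent or whether it should appear in the statement.
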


\begin{proof} As discussed in Example \ref{ex:assumption-satisfied}, Assumption \ref{assump:split-operators-commute} is automatically satisfied because of the choice of group $G = \GL_n$. Hence we may apply Theorem \ref{th:vol gen} in order to calculate the left side. The first step is to rewrite the $\Gamma^{\cE}_{\mu}$ in terms of the $\Gamma_{\mu}^{\eta + \xi \eta'}$ from \S\ref{ssec:split-Gamma}. 

We have
\begin{equation}\label{eq:gln-colength-one-chern-class}
c_n(p_X^* \cE_j^* \otimes \cP^{[\mu_j]}) = c_1(\cP^{[\mu_j]})^n  +  p_X^*(c_1(\cE_j^*)) c_1(\cP^{[\mu_j]})^{n-1} \in \rH^*(\Hk_G^{\mu_j}).
\end{equation}

Recall that canonical parabolic reduction induces a map $\rH^*(\BB P_{\mu_j}) = R^{W_{\mu_j}} \rightarrow \rH^*(\Hk_G^{\mu_j})$. As explained in \S \ref{sssec:gln-example-tautological}, the definition of $\cP^{\sharp/\flat}$ is arranged so that $c_1(\cP^{[\mu_j]})$ agrees with the image of $x_1 \in R^{W_{\mu_j}}$ if $[\mu_j] = \sharp$ and with the image of $-x_n$ if $[\mu_j] = \flat$. 

Let us consider first the case $[\mu_j] = \sharp$. We may then rewrite \eqref{eq:gln-colength-one-chern-class} as 
\[
c_n(p_X^* \cE_j^* \otimes \cP^{[\mu_j]}) = x_1^n - D_j \xi  x_1^{n-1}. 
\]
We thus have $\eta_j = x_1^n$ and $\eta'_j = - D_j x_1^{n-1}$. To apply Theorem \ref{th:vol gen}, we need to calculate the constants $d_{\mu_j}^\omega(\eta)$, $d_{\mu_j}(\eta'_j)$, and the eigenweights $\epsilon_i(\eta_j, \mu_j)$. 
\begin{itemize}

\item Using \eqref{eq:integration-GLn-colength-one} for $i=1$, we find that $d_{\mu}^\omega (\eta) = (-1)^{n-1} e_1^{[X]} = (-1)^{n-1} \omega$, since $\Bun_G^\omega$ parametrizes bundles of degree $\omega$.

\item Since $x_1$ corresponds to $\cO(-1)$ on $G/P_\lambda \cong \PP^{n-1}$ (cf. \S \ref{sssec:reductive-groups-notation}), we see that 
\[
d_{\mu}(\eta') =  -D_j \int_{G/P_\mu} x_1^{n-1} = (-1)^n D_j. 
\]

\item We have $\epsilon_i(\eta, \mu^\sharp) = \epsilon_i(x_1^n, \mu^\sharp) = (-1)^{n-1}$ by Proposition \ref{prop:GLn-minimal-eps}.

\end{itemize}
The operator $\Gamma^{\cE_j}_{\mu_j}$ is applied to the component $\Bun_G^{\omega_j}$ for $\omega_j := d - |\mu_r| - \ldots - |\mu_j|$. Hence for $[\mu_j] = \sharp$, the differential operator \eqref{eq: split differential operator} corresponding to $\Gamma^{\cE}_{\mu_j}$ is 
\begin{equation*}
\frd_{j} = (-1)^{n-1}\Big(\omega_j  -   D_j  - (\log q)^{-1}\sum_{i=1}^{n} \pl_{s_{i}} \Big)  = (-1)^{n-1} \Big(\omega_j - |\mu_j|  D_j + |\mu_j|  (- \log q)^{-1}\sum_{i=1}^{n} \pl_{s_{i}}  \Big).
\end{equation*}

 Next we consider the case $[\mu_j] = \flat$. In this case, \eqref{eq:gln-colength-one-chern-class} becomes
 \[
 c_n(p_X^* \cE_j^* \otimes \cP^{[\mu_j]}) = (-x_n)^n - D_j \xi (-x_n)^{n-1}. 
 \]
We thus have $\eta_j = (-x_n)^n$ and $\eta'_j = - D_j (-x_n)^{n-1}$. We calculate the relevant constants. 
\begin{itemize}

\item Repeating the proof of \eqref{eq:integration-GLn-colength-one} with $x_1$ replaced by $x_n$, we find that $d_{\mu}^\omega (x_n^n) = e_1^{[X]} =  \omega$. Hence $d_\mu^\omega((-x_n)^n) = (-1)^n \omega $.

\item Since $-x_n$ corresponds to $\cO(-1)$ on $G/P_\lambda \cong \PP^{n-1}$ (cf. \S \ref{sssec:reductive-groups-notation}), we see that 
\[
d_{\mu}(\eta') =  -D_j \int_{G/P_\mu} (-x_n)^{n-1} = (-1)^n D_j  .
\]

\item Repeating the proof of Proposition \ref{prop:GLn-minimal-eps} with $x_1$ replaced by $-x_n$, we have $\epsilon_i(\eta, \mu^\flat) = \epsilon_i((-x_n)^n, \mu^\flat) = (-1)^{n-1}$. (Another way to see this is from perspective of \S \ref{sec:more-examples}. Then $\epsilon_i(\Omega, \mu)$ is invariant under the $W = S_n$ action on $\mu$. Also, we claim that replacing $\mu$ by $-\mu$ leaves the eigenweights unchanged. Indeed, we have $\frR_{-\mu} = (-1)^{D_\mu} \frR_\mu$, and the integrand changes from $t_\mu^{D_\mu + 1} \partial_\mu$ to $(-t_\mu)^{D_\mu+1} \partial_{-\mu} = (-1)^{D_\mu} t_\mu^{D_\mu+1}\partial_{\mu}$). 

\end{itemize}
Hence we have in this case that 
\begin{equation*}
\frd_{j} =  (-1)^n \Big(\omega_j +  D_j -  (- \log q)^{-1}\sum_{i=1}^{n} \pl_{s_{i}}  \Big) = (-1)^n \Big(\omega_j - |\mu_j|  D_j + |\mu_j|  (- \log q)^{-1}\sum_{i=1}^{n} \pl_{s_{i}}  \Big). 
\end{equation*} 
Observing that $\dim \Bun_G = n^2(g-1)$, Theorem \ref{th:vol gen} says that 
\begin{align*}
& \vol({}^{d}\Sht^{\mu}_{G}, \prod_{j=1}^{r}c_{n}(p_{j}^{*}\cE^{*}_{j}\ot\cP^{\mu_{j}}) )    \\
& = q^{n^2(g-1)} (-1)^{r/2}  \prod_{j=1}^r  \Big( \omega_j  - |\mu_j|  D_j + |\mu_j| (-\log q)^{-1}\sum_{i=1}^{n} \pl_{s_{i}} \Big) \sL^{*}_{X,G}(s_{1},\cdots, s_{n})\Big|_{s_{1}=s_{2}=\cdots=s_{n}=0}.
\end{align*}
Examining \eqref{def bmuj}, this expands as 
\[
q^{n^2(g-1)}  \sum_{i=0}^r  b_i^\mu (-\log q)^{-i} \Big(\sum_{j=1}^{n} \pl_{s_{j}} \Big)^i  \sL^{*}_{X,G}(s_{1},\cdots, s_{n})\Big|_{s_{1}=s_{2}=\cdots=s_{n}=0}. 
\] 
The result then follows from the elementary observation that for a smooth function of the form $f(s_1, \ldots, s_n) = \prod_{j=1}^n f_j(s_j)$, we have 
\begin{equation}\label{eq:partial-derivative-identity}
\Big(\sum_{j=1}^{n} \pl_{s_{j}}\Big)^i f(s_1, \ldots, s_n)|_{s_1 = \ldots = s_n=s} = \pl_s^i f(s, \ldots, s). 
\end{equation}
This completes the proof.
\end{proof}

\section{Some calculations of eigenweights}\label{sec:more-examples}

In order to apply Theorem \ref{th:vol gen} in examples, we need to explicate the differential operators $\frd_j$ from \eqref{eq: split differential operator}. In practice, the constants appearing there are straightforward to calculate \emph{except} for the eigenweights $\epsilon_i(\eta, \mu)$. 

In this section, we will work out the eigenweights in more examples. The natural $\eta$ of interest arise in the following way (up to sign). Let $\Omega\in R^{W}_{4}$ be a Casimir element corresponding to a fixed nondegenerate $W$-invariant quadratic form on $\xcoch(T)_{\Q}$. The associated bilinear form gives an isomorphism 
\[
\io_{\Omega}: \xcoch(T)_{\Q}\isom \xch(T)_{\Q}.
\]
Let $t_{\mu}:=\pl_{\mu}\Omega \in R^{W_{\mu}}_{2}$, which is easily seen to be equal to $\io_{\Omega}(\mu)\in \xch(T)_{\Q}$. We consider $\y :=t_{\mu}^{D_\mu+1}\in R^{W_{\mu}}_{2(D_\mu+1)}$. This defines $\ov \nb_{\mu}^\eta \in \End(\ol \VV)$ as in \S \ref{sssec:ov-nabla}. Note that the example of \S \ref{ssec:gln-colength-one-arithmetic-volume} was of this form, up to sign normalizations. 

Now let us explain for what families we are able to calculate the eigenweights. 
\begin{itemize}
\item Let $G = \GL_n$. The minuscule coweights are $(1^m, 0^{n-m})$ for $1 \leq m < n$. For $m=1$, we calculated the eigenweights in Proposition \ref{prop:GLn-minimal-eps}. We treat $m=2$ below in Proposition \ref{prop: GL len=2}, where both the answer and proof are substantially more complicated than the case $m=1$. 

\item For $G = \SO_{2m+1}$, there is a unique minuscule coweight, represented by $\mu = (1, 0^{m-1})$, and we compute the eigenweights. 

\item For $G = \mathrm{PSO}_{2n}$, there are three minuscule coweights. The \emph{standard} coweight (corresponding to the standard representation of $\SO_{2n}$) is represented by $\mu = (1, 0^{n-1})$, and we will calculate the corresponding eigenweights. There are also two \emph{spin} coweights, corresponding to the two spin representations of $\Spin_{2n}$.

\item For exceptional groups, since there are only finitely many cases, the eigenweights may be found by a finite algorithm, although we have not carried it out. 
\end{itemize}
The determination of the remaining minuscule coweights in Type A, as well as the spin coweights in Types C and D, proved more challenging. Eventually, a solution was found by an AI agent coded by the first author; it will be explained in the separate paper \cite{FAI}.

\subsection{Special orthogonal groups} We assume $p \neq 2$. Let $G$ be the split special orthogonal group $\SO(V)$ where $\dim_{k}V=n$ and $B$ is the non-degenerate symmetric bilinear pairing on $V$.

\subsubsection{Cohomology of $\BB G$}
\label{ss: H BG SO}
Suppose that $n=2m+1$ is odd. Using the isotropic basis $v_{1},v_{2},\cdots, v_{m}, v_{0}, v_{-m},\cdots, v_{-1}$ such that $B(v_{i},v_{j})=\d_{i,-j}$, we get an identification $\xch(T)\cong \ZZ^{m}$ on which $W=(\ZZ/2)^{m}\rtimes S_{m}$ acts by permuting and changing signs of coordinates. We have an isomorphism
\begin{equation*}
\cohog{*}{\BB G}\cong \Qlbar[x_{1},\cdots, x_{m}]^{(\ZZ/2)^{m}\rtimes S_{m}}=\Qlbar[e_{1}^{(2)},e_{2}^{(2)},\cdots, e_{m}^{(2)}] 
\end{equation*}
Here $x_{i}$ has cohomological degree $2$, and $e_{i}^{(2)} \in \cohog{4i}{\BB G}$ is the $i$-th elementary symmetric polynomial in $x_{1}^{2},\cdots, x_{m}^{2}$.

Next suppose that $n=2m$ is even. Using an isotropic basis $v_{1},\cdots, v_{m},v_{-m},\cdots, v_{-1}$ of $V$, we get an isomorphism $\xch(T)\cong \ZZ^{m}$. The Weyl group $W\subset (\ZZ/2)^{m}\rtimes S_{m}$ is the kernel of the homomorphism $\chi: (\ZZ/2)^{m}\rtimes S_{m}\to \ZZ/2$ that is trivial on $S_{m}$ and nontrivial on each copy of $\ZZ/2$; it acts by permutation and change of sign on the coordinates. We have an isomorphism
\begin{equation}\label{coho BSO even}
\cohog{*}{\BB G}\cong \Qlbar[x_{1},\cdots, x_{m}]^{W}=\Qlbar[e_1^{(2)},e_2^{(2)},\cdots, e_{m-1}^{(2)}, \Pf] 
\end{equation}
where $\Pf=x_{1}x_{2}\cdots x_{m}$ is the Pfaffian. Note that $\Pf^{2}=x_{1}^{2}\cdots x_{m}^{2}$ is the $m$-th elementary symmetric polynomial $e_m^{(2)}$ in $x_{1}^{2},\cdots, x_{m}^{2}$. 

\begin{remark} The class $\Pf$ for $\BB G$ depends on the choice of the particular Lagrangian $\Span\{v_{1},\cdots, v_{m}\}$ in $V$. The space of Lagrangians has two connected components; a choice of a Lagrangian lying in the other component changes $\Pf$ to $-\Pf$. 
\end{remark}

We take the Casimir element $\Omega := \frac{1}{2} \sum_{i=1}^m x_i^2 \in R^W$. 

\subsubsection{Odd orthogonal case} \label{ss:O odd}
Let $\mu=(1,0,\ldots,0) \in \Z^m \cong \xcoch(T)$. Then we have $\pl_{\mu}=\pl_{x_{1}}$ and $t_{\mu}= \partial_{\mu}(\Omega) = x_{1}$. As $D_\mu = \dim G/P_\mu =  2m-1$, we have $\eta = t_\mu^{2m} = x_1^{2m}$. 

\begin{lemma}
Suppose $n=2m+1$ is odd. For $\eta = x_1^{2m}$ and $\mu=(1,0,\ldots,0)$, we have 
\[
\nabla^\eta_\mu(e_i^{(2)}) = -4 e_i^{(2)} \quad \text{ for all } i = 1, 2, \ldots, m.
\]
In particular, the eigenweights are all equal to $-4$. 
\end{lemma}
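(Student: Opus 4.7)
The plan is to mirror the proof of Proposition~\ref{prop:GLn-minimal-eps}: apply Lemma~\ref{lem:w-average-integration} to reduce $\nabla^\eta_\mu(e_i^{(2)})$ to a Lagrange interpolation identity, this time in the variables $y_k := x_k^2$. Since $\partial_\mu = \partial_{x_1}$, the chain rule gives $\partial_{x_1} e_i^{(2)} = 2 x_1 \hat e_{i-1}^{(2)}$ where $\hat e_j^{(2)} := e_j(x_2^2, \ldots, x_m^2)$, so
\[
\nabla^\eta_\mu(e_i^{(2)}) = 2 \int_{G/P_\mu} x_1^{2m+1}\, \hat e_{i-1}^{(2)}.
\]
The Levi is $L_\mu \cong \GL_1 \times \SO_{2m-1}$ with Weyl group $W_\mu = (\Z/2)^{m-1} \rtimes S_{m-1}$ acting on $x_2, \ldots, x_m$. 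The roots $\alpha$ with $\j{\alpha, \mu} < 0$ are $-x_1$ and $-x_1 \pm x_j$ for $j \geq 2$, giving $\frR_\mu = -x_1 \prod_{j=2}^m (x_1^2 - x_j^2)$.

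Now Lemma~\ref{lem:w-average-integration} writes the integral as a sum of $|W/W_\mu| = 2m$ terms, indexed by the Weyl cosets sending $x_1 \mapsto \pm x_k$ for $k = 1, \ldots, m$. The key observation is that the rational function
\[
\frac{x_1^{2m+1}\, \hat e_{i-1}^{(2)}}{\frR_\mu} \;=\; -\,\frac{x_1^{2m}\, \hat e_{i-1}^{(2)}}{\prod_{j=2}^m(x_1^2 - x_j^2)}
\]
depends on $x_1$ only through $x_1^2$, so the cosets $x_1 \mapsto x_k$ and $x_1 \mapsto -x_k$ contribute identically. Setting $y_k = x_k^2$ and denoting $\hat e_{i-1}^{(k)} := e_{i-1}(y_1, \ldots, \widehat{y_k}, \ldots, y_m)$, the sum collapses to
\[
\int_{G/P_\mu} x_1^{2m+1}\, \hat e_{i-1}^{(2)} \;=\; -2 \sum_{k=1}^m \frac{y_k^m\, \hat e_{i-1}^{(k)}}{\prod_{j \neq k}(y_k - y_j)}.
\]

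The final step is Lagrange interpolation in the $y_k$, formally identical to the calculation in \S\ref{sssec:GLn-minimal-eps-proof}: the characteristic relation $\prod_j (t - y_j) = 0$ at $t = y_k$ identifies the unique polynomial of degree ${<}\,m$ matching $y_k^m$ at each $y_k$ as $P(t) = \sum_{\ell=1}^m (-1)^{\ell+1} e_\ell(y_1, \ldots, y_m)\, t^{m-\ell}$. Extracting the coefficient of $t^{m-i}$ from the Lagrange interpolation formula for $P(t)$ gives $\sum_k y_k^m\, \hat e_{i-1}^{(k)}/\prod_{j\neq k}(y_k - y_j) = e_i(y_1, \ldots, y_m) = e_i^{(2)}$, whence $\nabla^\eta_\mu(e_i^{(2)}) = 2 \cdot (-2 e_i^{(2)}) = -4\, e_i^{(2)}$. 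No substantial obstacle arises beyond the $\GL_n$ template; the only subtlety is bookkeeping the signs in $\frR_\mu$ and recognizing the pairing of opposite-sign Weyl cosets, which accounts for the factor $2$ (as opposed to the $\GL_n$ case, where the Weyl cosets do not pair up and one picks up a sign $(-1)^{n-1}$ from $\frR_\mu$ instead).
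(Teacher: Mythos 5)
Your proof is correct and takes essentially the same route as the paper: invoke Lemma~\ref{lem:w-average-integration}, observe that the integrand depends only on the squares $y_k = x_k^2$ so that the $\pm$ Weyl cosets pair and double, and reduce to the $\GL_m$ Lagrange interpolation identity from \S\ref{sssec:GLn-minimal-eps-proof}. Your sign bookkeeping via $\frR_\mu = -x_1\prod_{j\ge 2}(x_1^2-x_j^2)$ is actually a bit cleaner than the paper's (which routes through $(-1)^m x_1 \prod(x_j^2-x_1^2)$ before reconciling), but the argument is identical in content.
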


\begin{proof}
Let $\wh{e}_i^{(2)}$ be the $i$th elementary symmetric polynomial in $x_2^2, \ldots, x_m^2$. Then $\partial_{x_1} e_i^{(2)} = 2 x_1  \wh{e}_{i-1}^{(2)}$. We want to calculate 
\[
\nabla_\mu^\eta(e_i^{(2)}) =  \int_{G/P_\mu} 2 x_1^{2m+1}  \wh{e}_{i-1}^{(2)}.
\]
We will use Lemma \ref{lem:w-average-integration}. In this case, the equivariant Chern class of (the tangent bundle of) $G/P_\mu$ is 
\[
\frR_\mu =  -x_1 \prod_{j=2}^m (x_j-x_1)(-x_j-x_1) = (-1)^m x_1 \prod_{j=2}^m (x_j^2-x_1^2).
\]
Hence Lemma \ref{lem:w-average-integration} says that 
\begin{align}
\int_{G/P_\mu} 2x_1^{2m+1}  \wh{e}_{i-1}^{(2)} &=  2 (-1)^m \sum_{w \in W/W_\mu} w   \left(\frac{x_1^{2m+1} \wh{e}_{i-1}^{(2)}}{ x_1 \prod_{j=2}^m (x_1^2-x_j^2)} \right) \\
&= 2(-1)^m \sum_{w \in W/W_\mu} w  \left( \frac{x_1^{2m} \wh{e}_{i-1}^{(2)}}{ \prod_{j=2}^m (x_1^2-x_j^2)} \right).
\end{align}
The inner sum is the same expression calculated in \S \ref{sssec:GLn-minimal-eps-proof}, except replacing $x_i$ by $x_i^2$ and also allowing substitutions $x_i \mapsto \pm x_i$, so that the answer is doubled compared to \S \ref{sssec:GLn-minimal-eps-proof}. Hence we conclude that 
\[
\nabla_\mu^\eta(e_i^{(2)})  = 4 (-1)^m (-1)^{m-1} e_i^{(2)} = -4 e_i^{(2)},
\]
as desired. 
\end{proof}

\subsubsection{Even orthogonal case} Let $\mu=(1,0,\cdots,0) \in \Z^m \cong \xcoch(T)$. Again we have $\pl_{\mu}=\pl_{x_{1}}$ and $t_{\mu}=x_{1}$. As $N = \dim G/P_\mu =  2m-2$, we have $\eta = t_\mu^{2m-1} = x_1^{2m-1}$.

\begin{lemma}
Suppose $n=2m$ is even. For $\eta = x_1^{2m-1}$ and $\mu=(1,0,\ldots,0)$, we have 
\[
\nabla^\eta_\mu(e_i^{(2)}) = 4 e_i^{(2)} \quad \text{ for all } i = 1, 2, \ldots, m.
\]
In particular, using homogeneous generators $e^{(2)}_{1},e^{(2)}_2, \ldots, e^{(2)}_{m-1}, \Pf$ for $R^{W}$, the eigenweights are
\begin{equation*}
4, 4, \ldots, 4, 2.
\end{equation*}
\end{lemma}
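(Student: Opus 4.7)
The plan parallels the odd orthogonal case. The first step is to compute the equivariant top Chern class $\frR_\mu$ using the $D_m$ root system: the roots on which $\mu = e_1^\vee$ is negative are exactly $-x_1 \pm x_j$ for $j = 2,\ldots, m$, so that
\[
\frR_\mu \;=\; \prod_{j=2}^m (-x_1-x_j)(-x_1+x_j) \;=\; \prod_{j=2}^m (x_1^2-x_j^2).
\]
Note the absence of an $x_1$ factor compared to the $B_m$ case (owing to the missing short roots in $D_m$). Also, $|W/W_\mu| = 2m$ since the stabilizer of $x_1$ in $W = W(D_m)$ is $W(D_{m-1})$.

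Next, for $1 \le i \le m$ I will compute
\[
\nb^\eta_\mu(e_i^{(2)}) \;=\; \int_{G/P_\mu} x_1^{2m-1}\,\partial_{x_1}(e_i^{(2)}) \;=\; 2\int_{G/P_\mu} x_1^{2m}\, \wh e_{i-1}^{(2)}
\]
via Lemma~\ref{lem:w-average-integration}. After the substitution $Y_i := x_i^2$, the summand depends only on the $Y_i$, so averaging over sign changes contributes a factor of $2$, and the sum over $W/W_\mu$ reduces to a sum over $S_m/S_{m-1}$ in the $Y$-variables. That sum is precisely the $\GL_m$ identity \eqref{eq:eigenweight-minimal-example}, which yields $2\cdot e_i(Y_1,\dots,Y_m) = 2e_i^{(2)}$. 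Multiplying by the outer factor of $2$ from the derivative produces $\nb^\eta_\mu(e_i^{(2)}) = 4e_i^{(2)}$.

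For the Pfaffian $\Pf = x_1\cdots x_m$, I will exploit its $W$-invariance to pull it out of the integral:
\[
\nb^\eta_\mu(\Pf) \;=\; \int_{G/P_\mu} x_1^{2m-1}\cdot x_2\cdots x_m \;=\; \Pf \cdot \sum_{w \in W/W_\mu} w\!\left(\frac{x_1^{2m-2}}{\prod_{j=2}^m(x_1^2-x_j^2)}\right).
\]
After setting $Y_i = x_i^2$ and factoring out the $2$ from sign changes, the remaining sum is the classical Lagrange interpolation identity $\sum_{k=1}^m Y_k^{m-1}/\prod_{j\ne k}(Y_k-Y_j) = 1$ (this being the leading coefficient of the unique polynomial of degree $\le m-1$ taking value $Y_k^{m-1}$ at $Y_k$, namely $t^{m-1}$ itself), giving $\nb^\eta_\mu(\Pf) = 2\Pf$. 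Since both identities hold exactly in $R^W$, the induced operator $\ov\nb^\eta_\mu$ on $\VV$ has eigenvalues $4$ on the image of each $e_i^{(2)}$ for $i = 1,\ldots, m-1$ and $2$ on the image of $\Pf$, as claimed; the class $e_m^{(2)} = \Pf^2$ already vanishes in $\VV$, consistently with $\nb^\eta_\mu(\Pf^2) = 4\Pf^2$ by Leibniz.

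No serious obstacle is expected, since both integrals reduce to the already-proved $\GL_m$ case and to Lagrange interpolation. The only subtle point is the bookkeeping with the $D_m$ Weyl group: one must verify that when $w \in W(D_m)$ sends $x_1 \mapsto -x_k$, the compensating odd sign change among $x_2,\ldots, x_m$ (needed to land in $W(D_m)$) still leaves $\Pf$ invariant and still leaves the summand in the $e_i^{(2)}$-calculation unchanged (both being manifestly functions of $Y_i = x_i^2$ times $\Pf$).
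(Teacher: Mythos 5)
Your proof is correct and parallels the paper's argument. For the $e_i^{(2)}$ part, the two proofs are identical: both apply Lemma~\ref{lem:w-average-integration} with $\frR_\mu = \prod_{j=2}^m(x_1^2-x_j^2)$, substitute $Y_i = x_i^2$, note that the $W/W_\mu$-sum runs twice over the $S_m/S_{m-1}$-sum (once per sign $\pm x_k$), and quote the $\GL_m$ identity \eqref{eq:eigenweight-minimal-example} to get the inner sum $= 2e_i^{(2)}$, hence $\nabla_\mu^\eta(e_i^{(2)}) = 2 \cdot 2 e_i^{(2)} = 4e_i^{(2)}$. For the Pfaffian the routes diverge slightly: the paper pulls $\Pf \in R^W$ out by $R^W$-linearity and then cites the geometric fact that $G/P_\mu$ is a degree-$2$ quadric (so $\int_{G/P_\mu}(-x_1)^{2m-2} = 2$), whereas you pull $\Pf$ out by its $W$-invariance from the combinatorial average of Lemma~\ref{lem:w-average-integration} and evaluate $\sum_{k} Y_k^{m-1}/\prod_{j\ne k}(Y_k - Y_j) = 1$ by Lagrange interpolation. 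These are just two computations of the same number $\int_{G/P_\mu}x_1^{2m-2} = 2$; your version keeps the argument purely combinatorial and self-contained (re-using machinery already established for $\GL_n$), while the paper's is a quicker geometric appeal. Your care about the parity of sign changes in $W(D_m)$ not affecting summands that depend only on $Y_i = x_i^2$ (or on $\Pf$) is exactly the right point to verify and is handled correctly.
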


\begin{proof}
First we analyze the Pfaffian. A basis for $R^{W_{\mu}}$ over $R^{W}$ is given by $1, x_1, \ldots, x_1^{2m-2}$ and $\wh{\Pf} = x_2 \ldots x_m$. Since $G/P_\mu$ is embedded as a \emph{quadric} hypersurface under the line bundle corresponding to $(-x_1)$, the map $\int_{G/P_\mu} \co R^{W_{\mu}} \rightarrow R^W$ extracts \emph{twice} the coefficient of $(-x_{1})^{2m-2}$ in this basis. Hence
\[
\nabla_\mu^\eta(\Pf) = \int_{G/P_\mu} x_1^{2m-1} \partial_{x_1}(x_1 x_2 \cdots x_m) = \int_{G/P_\mu} x_1^{2m-2} \Pf = 2 \Pf.
\]

Let $\wh{e}_i^{(2)}$ be the $i$th elementary symmetric polynomial in $x_2^2, \ldots, x_m^2$. Then $\partial_{x_1} e_i^{(2)} = 2 x_1  \wh{e}_{i-1}^{(2)}$. We want to calculate 
\[
\nabla_\mu^\eta(e_i^{(2)}) =  \int_{G/P_\mu} 2 x_1^{2m}  \wh{e}_{i-1}^{(2)}.
\]
We will use Lemma \ref{lem:w-average-integration}. In this case, the equivariant Chern class of (the tangent bundle of) $G/P_\mu$ is 
\[
\frR_\mu = \prod_{j=2}^m (x_j-x_1)(-x_j-x_1) = \prod_{j=2}^m (x_1^2-x_j^2).
\]
Hence Lemma \ref{lem:w-average-integration} says that 
\[
\int_{G/P_\mu} 2x_1^{2m}  \wh{e}_{i-1}^{(2)} = 2 \sum_{w \in W/W_\mu} w  \left(\frac{x_1^{2m} \wh{e}_{i-1}^{(2)}}{\prod_{j=2}^m (x_1^2-x_j^2)} \right).
\]
The inner sum is the same expression calculated in \S \ref{sssec:GLn-minimal-eps-proof}, except replacing $x_i$ by $x_i^2$ and also allowing substitutions $x_i \mapsto \pm x_i$, so that the answer is doubled compared to \S \ref{sssec:GLn-minimal-eps-proof}. Hence we conclude that 
\[
\nabla_\mu^\eta(e_i^{(2)})  = 4 e_i^{(2)},
\]
as desired.\footnote{As a sanity check, note that $\Pf^{2}=e^{(2)}_{m}$. Since $\nb_{\mu}$ is a derivation, we have 
\begin{equation*}
\nb_{\mu}^{\eta}(\Pf^{2})=2 \Pf \nb_{\mu}^{\eta}(\Pf)=4\Pf^{2}.
\end{equation*}
which is consistent with the fact that $\nb_{\mu}(e^{(2)}_{m})=4e^{(2)}_{m}$.}
\end{proof}

\subsection{General linear groups: a case of non-minimal modification} So far we have only computed eigenweights for ``minimal'' modification types $\mu$, where the discrepancy between the bundles being modified is as small as possible. Beyond this case, the eigenweights quickly get very complicated. 

We will illustrate this for $G = \GL_n$, where the minuscule coweights are of the form $\mu=(1^m, 0^{n-m})$ for $1 \leq m < n$. We will calculate the corresponding eigenweights for $m=2$, so we assume $n\geq 3$. We take the Casimir element $\Omega = \frac{1}{2}\sum_{i=1}^n x_i^2$, so $t_\mu = \partial_\mu (\Omega) = x_1+x_2$. Then $\eta = t_{\mu}^{2(n-2)+1}$.

\begin{prop}\label{prop: GL len=2}
For $G = \GL_n$ ($n\geq 3$), $\mu=(1,1,0,\cdots, 0)$ and $\eta$ as above, we have 
\begin{equation}\label{eq:GL-l_2-eps}
\e_{i}(\eta, \mu)=\frac{1}{n}\binom{2n-2}{n-1}-\binom{2n-3}{n-i}+2\binom{2n-3}{n-i-1}-\binom{2n-3}{n-i-2}
\end{equation}
for $1\leq i\leq n$.
\end{prop}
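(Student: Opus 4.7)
My plan is to compute $\epsilon_i(\eta,\mu)$ by residue calculus, generalizing the approach of Proposition \ref{prop:GLn-minimal-eps}. Since each $e_i$ spans the one-dimensional graded piece $\VV_{G,2i}$, the eigenweight is determined by the congruence $\nabla^\eta_\mu(e_i) \equiv \epsilon_i(\eta,\mu)\, e_i \pmod{(R^W_+)^2}$, and I package all the $e_i$'s at once via the generating series $P(u) := \prod_j (1+u x_j) = \sum_i e_i u^i$.

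First, the Leibniz rule gives $\partial_\mu P(u) = u P(u)\bigl[\tfrac{1}{1+ux_1} + \tfrac{1}{1+ux_2}\bigr]$, and $W_\mu$-symmetry of $\int_{G/P_\mu}$ yields
\begin{equation*}
\sum_i \nabla^\eta_\mu(e_i)\, u^i \;=\; 2u\, P(u) \int_{G/P_\mu} \frac{(x_1+x_2)^{2n-3}}{1+u x_1}.
\end{equation*}
Applying Lemma \ref{lem:w-average-integration} together with the factorization $\prod_{c \notin \{a,b\}}(x_c-x_a)(x_c-x_b) = -Q'(x_a)Q'(x_b)/(x_a-x_b)^2$, where $Q(t) := \prod_c(t-x_c)$, the coset sum rearranges into a double sum over ordered pairs $a \neq b$. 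The inner $b$-sum is a Lagrange interpolation evaluable as a residue at infinity: one has $\sum_{b \ne a}(x_a+x_b)^{2n-3}(x_a-x_b)^2/Q'(x_b) = [t^{-1}]\bigl((x_a+t)^{2n-3}(t-x_a)/\prod_{c\ne a}(t-x_c)\bigr)$, and the right side expands via complete homogeneous symmetric polynomials $h_k^{(a)}$ in $\{x_c : c \ne a\}$ with coefficients $\binom{2n-3}{l-1}-\binom{2n-3}{l}$ (obtained from $(x_a+t)^{2n-3}(t-x_a) = (x_a+t)^{2n-2} - 2x_a(x_a+t)^{2n-3}$).

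Second, the outer $a$-sum with the modulating factor $1/(1+ux_a)$ is itself a residue calculation on a rational function in a new variable $z$ of the form $\Phi(z)/[(1+uz)Q(z)]$. The vanishing of the sum of all residues relates the $z=x_a$ contributions to those from the boundary pole $z=-1/u$ and from $z=\infty$. The boundary residue produces a factor $P(u)^{-1}$ (via $Q(-1/u) = (-1)^n u^{-n} P(u)$) which cancels the prefactor $P(u)$; after reducing modulo $(R^W_+)^2$ using the congruence $h_k \equiv (-1)^{k-1} e_k$ (a consequence of $H(u)E(-u)=1$ restricted to the linear-in-$e$ part), the boundary residue supplies the $i$-independent Catalan number $\tfrac{1}{n}\binom{2n-2}{n-1} = [u^{n-1}](1-u)(1+u)^{2n-2}$, while the residues at infinity contribute the $i$-dependent piece $-[u^{n-i}](1-u)^2(1+u)^{2n-3} = -\bigl(\binom{2n-3}{n-i} - 2\binom{2n-3}{n-i-1} + \binom{2n-3}{n-i-2}\bigr)$, recovering the claimed formula.

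The hardest step, as I expect, is the bookkeeping of which terms survive modulo $(R^W_+)^2$ once $P(u)$ and $P(u)^{-1}$ both appear and extensive cancellations take place; in particular, isolating the Catalan contribution from the boundary pole cleanly, and showing that the $i$-dependent tail collapses exactly into the stated binomial second difference, requires careful tracking of orders of expansion at infinity and of the interplay between the two residue calculations. For a sanity check, at $n=3,\, i=2$ a direct computation gives $\nabla^\eta_\mu(e_2) = e_1^2 + e_2 \equiv e_2 \pmod{(R^W_+)^2}$, matching $\epsilon_2 = \tfrac{1}{3}\binom{4}{2} - \binom{3}{1} + 2\binom{3}{0} = 1$.
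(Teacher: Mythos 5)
Your plan is sound and arrives at the correct identity $\e_i(\eta,\mu) = -(K_n + K_{n-i})$, where $K_k$ are the coefficients of $(1-t)^2(1+t)^{2n-3}$, which unpacks to the stated formula; your sanity check at $n=3$, $i=2$ is correct. Structurally, this is a close cousin of the paper's argument rather than a wholly independent one: the paper's Lemmas \ref{lem:leading-divided-difference} and \ref{lem:complete-homogeneous} \emph{are} your inner residue-at-infinity calculation, phrased in the language of divided differences. What you genuinely do differently is (a) work with $e_i$ directly via the generating series $P(u) = \sum e_i u^i$ rather than switching to power sums $p_i$, and (b) as a consequence, run a \emph{second} residue calculation in an outer variable $z$ with the extra pole at $z = -1/u$, whereas the paper (whose operand $p_i$ contributes only a monomial $x_a^{i-1}$ to the outer sum) can apply the same divided-difference lemma a second time and read off products $h_{n-k} h_{i+k-n}$ directly, with no new pole. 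The paper's choice to use power sums, precisely because $\partial_\mu p_i$ is a monomial, is what buys the cleaner single-mechanism proof; your route is slightly more elaborate but has the advantage of producing the full answer $\nabla^\eta_\mu(P(u))$ in one sweep rather than one $i$ at a time.

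There are, however, a few slips in the bookkeeping you should be aware of before turning this into a full writeup. First, the coefficients in your inner expansion should be $\binom{2n-3}{l}-\binom{2n-3}{l-1}$, not $\binom{2n-3}{l-1}-\binom{2n-3}{l}$; the sign you wrote is backward. Second, and more substantively, the inner residue gives complete homogeneous polynomials $h_k^{(a)}$ in the variables $\{x_c : c \ne a\}$, which are \emph{not} simply of the form $\Phi(x_a)$ for a fixed polynomial $\Phi$; to make the outer $a$-sum a bona fide sum of residues of $\Phi(z)/[(1+uz)Q(z)]$, you must first use $h_k^{(a)} = h_k - x_a h_{k-1}$ to rewrite everything in terms of $x_a$ and symmetric functions. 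It is exactly this substitution that converts the coefficients from first differences $\binom{2n-3}{l}-\binom{2n-3}{l-1}$ to second differences $K_l = \binom{2n-3}{l}-2\binom{2n-3}{l-1}+\binom{2n-3}{l-2}$. (The paper's divided difference $Q_a[x_1,\ldots,x_n]$ sidesteps this because $Q_a(x_a)=0$ allows summing over \emph{all} $b$, landing on $h_k$ in all $n$ variables immediately.) Third, you have the attributions of the two residues swapped: tracking carefully, the boundary pole at $z = -1/u$ produces the $i$-dependent second-difference piece $-K_{n-i}$, while the residue at $z = \infty$ (arising only in the $l=n$ term) produces the $i$-independent Catalan piece $-K_n = \frac{1}{n}\binom{2n-2}{n-1}$, the opposite of what you wrote. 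None of these affect the final answer, but they would need to be fixed in a careful proof.
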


After we first obtained the calculation \eqref{eq:GL-l_2-eps}, we contributed it to an internal benchmark at Google DeepMind, where it was eventually solved by Gemini Deep Think (\emph{IMO Gold Version}). After comparing the two solutions, we actually preferred Gemini's argument and will present it (with our own edits) instead of our original one. 

\subsubsection{Change of basis}
Recall that the \emph{power sums} in $\{x_1, \ldots, x_n\}$ are $p_k = \sum_{j=1}^n x_j^k$. Abbreviate $I$ for the augmentation ideal of $R^W$. 

\begin{lemma}\label{lem:elementary-polynomial-bases}
Modulo $I^2$, the following identities hold for $k \ge 1$:
\begin{enumerate}
    \item $p_k \equiv (-1)^{k-1} k e_k \pmod{I^2}$
    \item $p_k \equiv k h_k \pmod{I^2}$
\end{enumerate}
\end{lemma}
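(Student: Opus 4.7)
The plan is to invoke Newton's identities, which express the power sum symmetric polynomials in terms of elementary (respectively complete homogeneous) symmetric polynomials, and then observe that all cross-terms lie in $I^2$ and hence vanish modulo $I^2$.

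For part (1), I would recall the classical Newton identity
\[
p_k \;=\; (-1)^{k-1} k\, e_k \;+\; \sum_{i=1}^{k-1} (-1)^{i-1} e_i\, p_{k-i}
\]
valid for $1 \le k \le n$ (and a variant for $k>n$ with the same shape). Since each $e_i \in I$ for $i \ge 1$, it suffices to show by induction on $k$ that $p_j \in I$ for all $j \ge 1$; but this is immediate, as $p_j$ has no constant term. Consequently every summand in the sum lies in $I \cdot I = I^2$, and we conclude $p_k \equiv (-1)^{k-1} k e_k \pmod{I^2}$.

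For part (2), I would use the dual Newton identity
\[
k\, h_k \;=\; \sum_{i=1}^{k} p_i\, h_{k-i} \;=\; p_k + \sum_{i=1}^{k-1} p_i\, h_{k-i},
\]
and argue identically: each $h_{k-i}$ for $k-i \ge 1$ lies in $I$ (same induction, or direct inspection since $h_j$ has no constant term for $j\ge1$), and each $p_i$ lies in $I$ for $i\ge 1$, so every summand in the sum lies in $I^2$. Rearranging gives $p_k \equiv k h_k \pmod{I^2}$.

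There is no real obstacle here; the only thing to confirm is the validity of Newton's identities in our setting, and the elementary fact that $e_k, h_k, p_k$ all lie in $I$ for $k \ge 1$. If one prefers an even cleaner presentation, one can derive both congruences simultaneously from the generating-function identities $\sum_{k\ge 0} p_{k+1} t^k = \frac{d}{dt}\log \prod_i \frac{1}{1-x_i t} = \sum_i \frac{x_i}{1-x_i t}$ combined with $\prod_i(1-x_it)^{\pm 1} \equiv 1 \mp \sum_{k\ge 1} e_k(\mp t)^{k-1}\cdot t \pmod{I^2}$, reading off coefficients; but the direct Newton-identity argument is shortest.
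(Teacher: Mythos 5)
Your proof is correct and uses the same approach as the paper: Newton's identities combined with the observation that all cross-terms of the form (degree-$\ge 1$ symmetric polynomial)$\times$(degree-$\ge 1$ symmetric polynomial) lie in $I^2$. The only cosmetic difference is that you solve Newton's identity for $p_k$ directly, whereas the paper writes it in the form $k e_k = \sum_{i=1}^{k}(-1)^{i-1}e_{k-i}p_i$ and isolates the surviving term; these are the same identity rearranged.
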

\begin{proof}
(1) We start with Newton's identity:
$$ k e_k = \sum_{i=1}^{k} (-1)^{i-1} e_{k-i} p_i = e_{k-1}p_1 - e_{k-2}p_2 + \cdots + (-1)^{k-1}e_0 p_k. $$
Consider the terms modulo $I^2$. For any term $e_{k-i} p_i$ with $i < k$, both $e_{k-i}$ and $p_i$ are in the ideal $I$. Therefore, their product lies in $I^2$. This means all terms except the last one vanish modulo $I^2$. Hence we have
$$ k e_k \equiv (-1)^{k-1} e_0 p_k  \pmod{I^2} = (-1)^{k-1} p_k \pmod{I^2}. $$ 

(2) The argument is analogous, using another of Newton's identities: 
$$ k h_k = \sum_{i=1}^{k} h_{k-i} p_i = h_{k-1}p_1 + h_{k-2}p_2 + \cdots + h_0 p_k. $$
\end{proof}

Thus, in the quotient space $\VV = I/I^2$, the power sum and elementary polynomial bases are related by a simple scalar multiple. The advantage of working with the $p_i$ is that their partial derivatives enjoy a simple recursive formula:
$$ \partial_\mu p_i = \left(\pderiv{}{x_1} + \pderiv{}{x_2}\right) \sum_{j=1}^n x_j^i = i(x_1^{i-1} + x_2^{i-1}). $$

\subsubsection{Expression in terms of Divided Differences} We prepare to apply Lemma \ref{lem:w-average-integration}. For $\mu = (1, 1, 0, \ldots, 0)$, we have 
\[
\frR_\mu = \prod_{i >2} (x_i-x_1)(x_i-x_2).
\]
Therefore, Lemma \ref{lem:w-average-integration} says that for $f \in R^{W_\mu}$, we have 
\begin{equation}\label{eq:gemini-1} \int_{G/P_\mu} f = \sum_{1 \le a < b \le n}  \frac{w_{a,b} (f)}{\prod_{i \neq a,b} (x_a-x_i)(x_b-x_i)}, 
\end{equation}
where $w_{a,b}$ is the permutation exchanging $(x_1, x_2) \leftrightarrow (x_a, x_b)$ and fixing the other $x_i$. Let $A(z) := \prod_{k=1}^n(z-x_k)$, so that  $A'(x_a) = \prod_{k \ne a}(x_a-x_k)$. Then we can rewrite
\[
\frac{1}{\prod_{i\neq a,b} (x_a-x_i)(x_b-x_i)} = \frac{(x_a-x_b)(x_b-x_a)}{A'(x_a)A'(x_b)} = - \frac{(x_a-x_b)^2}{A'(x_a)A'(x_b)}.
\]
Substituting this into \eqref{eq:gemini-1}, and then applying it to $f_i = \eta \cdot \partial_\mu p_i  = i (x_1+x_2)^{2n-3}(x_1^{i-1}+x_2^{i-1})$, we get:
$$ \nabla^\eta_\mu(p_i) = -i \sum_{a<b} \frac{(x_a+x_b)^{2n-3}(x_a^{i-1}+x_b^{i-1})(x_a-x_b)^2}{A'(x_a)A'(x_b)}. $$
By symmetry, we can rewrite this as a sum over ordered pairs $a \neq b$:
\begin{equation}\label{eq:nablaop1} \nablaop(p_i) = -i \sum_{a=1}^n \frac{x_a^{i-1}}{A'(x_a)} \left( \sum_{b \ne a} \frac{(x_a+x_b)^{2n-3}(x_a-x_b)^2}{A'(x_b)} \right).
\end{equation}

We will now invoke the theory of \emph{divided differences}. For a polynomial $f(z)$, the divided difference $f[x_1, \ldots, x_n]$ is determined recursively by $f[x_i] = f(x_i)$ and 
\[
f[x_r, \ldots, x_s] = \frac{f[x_{r+1}, \ldots, x_s] - f[x_r, \ldots, x_{s-1}]}{x_s - x_r}.
\]
For example, $f[x_1] = f(x_1)$ and $f[x_1, x_2] = \frac{f(x_2)-f(x_1)}{x_2-x_1}$. We may view the divided difference as a rational function in $x_1,\ldots, x_n$, or as a number if specific values $x_1, \ldots, x_n \in \ol \Q$ are chosen. Generally we adopt the former perspective, but the second will occasionally be useful for proofs; we will make clear which perspective is being taken. 	 

\begin{lemma}\label{lem:leading-divided-difference}
Let $f \in \Z[x]$. Then we have an identity of rational functions in $\ol \Q(x_1, \ldots, x_n)$,
\[
f[x_1, \ldots, x_n] = \sum_{j=1}^n \frac{f(x_j)}{A'(x_j)}
\]
\end{lemma}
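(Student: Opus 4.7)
The plan is to prove this as a purely algebraic identity of rational functions in $x_1, \ldots, x_n$ by induction on $n$, using the recursive definition of the divided difference
\[
f[x_1, \ldots, x_n] = \frac{f[x_2, \ldots, x_n] - f[x_1, \ldots, x_{n-1}]}{x_n - x_1}.
\]
The base case $n=1$ is immediate, since $f[x_1] = f(x_1)$ and $A'(x_1) = 1$ (the empty product). Note that because both sides of the desired identity are rational in the $x_i$ and are defined for arbitrary functions of a single variable, the hypothesis $f \in \Z[x]$ plays no role in the proof.

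For the inductive step, I would apply the inductive hypothesis to both $f[x_2, \ldots, x_n]$ and $f[x_1, \ldots, x_{n-1}]$, then collect coefficients of each $f(x_j)$. For a ``middle'' index $j \in \{2, \ldots, n-1\}$, setting $B_j := \prod_{k \in \{2,\ldots,n-1\}, k \neq j}(x_j - x_k)$, the coefficient of $f(x_j)$ on the right is
\[
\frac{1}{x_n - x_1}\left(\frac{1}{B_j(x_j - x_n)} - \frac{1}{B_j(x_j - x_1)}\right) = \frac{1}{B_j(x_j - x_n)(x_j - x_1)} = \frac{1}{A'(x_j)},
\]
where the middle equality uses the telescoping identity $\frac{1}{x_j-x_n} - \frac{1}{x_j-x_1} = \frac{x_n-x_1}{(x_j-x_n)(x_j-x_1)}$. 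For the extreme indices $j = 1$ and $j = n$, only one of the two terms $f[x_2, \ldots, x_n]$ and $f[x_1, \ldots, x_{n-1}]$ contributes, and a direct verification (keeping track of the sign coming from $\frac{-1}{x_n - x_1} = \frac{1}{x_1 - x_n}$) gives the required coefficient $\frac{1}{A'(x_1)}$ and $\frac{1}{A'(x_n)}$ respectively.

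There are no serious obstacles here; the whole argument is a standard bookkeeping exercise in partial fractions, and the only ``delicate'' point is the handling of sign conventions at the endpoints. As a sanity check, one can also note the alternative viewpoint that $\sum_{j=1}^n f(x_j)\,A(z)/((z-x_j)A'(x_j))$ is the Lagrange interpolating polynomial of $f$ through $x_1, \ldots, x_n$, and since each summand $A(z)/(z - x_j)$ is monic of degree $n-1$, the coefficient of $z^{n-1}$ in this interpolant is exactly $\sum_j f(x_j)/A'(x_j)$; this matches the well-known fact that $f[x_1, \ldots, x_n]$ is the leading coefficient of the Newton form of the interpolating polynomial.
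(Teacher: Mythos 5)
Your proof is correct. Let me briefly verify the inductive step: writing $A_1(z)=\prod_{k=2}^n(z-x_k)$ and $A_n(z)=\prod_{k=1}^{n-1}(z-x_k)$ for the polynomials appearing in the two sub-divided-differences, the inductive hypothesis gives $f[x_2,\ldots,x_n]=\sum_{j\geq 2}f(x_j)/A_1'(x_j)$ and $f[x_1,\ldots,x_{n-1}]=\sum_{j\leq n-1}f(x_j)/A_n'(x_j)$. For a middle $j$ one has $A_1'(x_j)=B_j(x_j-x_n)$ and $A_n'(x_j)=B_j(x_j-x_1)$ with $B_j$ as you define it, and your telescoping identity correctly collapses the coefficient to $1/A'(x_j)$; the endpoint cases $j=1,n$ are handled by the sign of $1/(x_n-x_1)$ and the factorizations $A'(x_1)=(x_1-x_n)A_n'(x_1)$, $A'(x_n)=(x_n-x_1)A_1'(x_n)$. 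Everything checks out as an identity of rational functions.

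Your route is genuinely different from the paper's. The paper first reduces to specializing $(x_1,\ldots,x_n)$ to distinct integers (so that the statement becomes a numerical identity valid for all such tuples), then invokes the classical interpolation-theoretic fact that the divided difference is the leading coefficient of both the Newton form and the Lagrange form of the unique degree-$(n-1)$ interpolant. That proof is conceptual and short, but it leans on a specialization step and on ``well-known'' facts about Newton and Lagrange forms of interpolants. Your argument is purely formal: it stays at the level of rational function identities, proceeds by induction directly from the recursive definition of divided differences, and requires nothing beyond elementary partial-fraction bookkeeping. The tradeoff is that your proof is somewhat longer to write out carefully (you must treat the endpoint indices $j=1,n$ separately), whereas the paper's proof is essentially one line once the interpolation facts are granted. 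You note the paper's approach yourself as a ``sanity check'' at the end, so you were aware of both routes; either is perfectly acceptable here.
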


\begin{proof}This being an identity of rational functions over an infinite integral domain $\Z$, with denominators being products of factors $(x_i-x_j)$, it suffices to prove the statement for all specializations of $(x_1, \ldots, x_n)$ to any $n$-tuple of pairwise distinct elements in $\Z$. So we fix any such specialization and treat the divided difference in $\Q$ for the proof. 
 
Then another interpretation of the divided difference $f[x_1, \ldots, x_n] \in \Q$ is as the leading coefficient of the unique degree $n-1$ polynomial $F(z)$ that passes through $(x_1, f(x_1)), \ldots, (x_n, f(x_n))$. In fact, the divided differences can also be characterized in terms of the ``Newton expansion'' of $F(z)$ as
\[
F(z) = \sum_{m=1}^n f[x_1, \ldots, x_m]  \prod_{i=1}^{m-1} (z-x_i),
\]
which implies the preceding characterization. By the Lagrange interpolation formula, we also have
\[
F(z) = \sum_{j=1}^n f(x_j) \frac{\prod_{i \neq j} (z-x_i)}{\prod_{i \neq j} (x_j-x_i)} = \sum_{j=1}^n f(x_j) \frac{A(z)}{(z-x_j)A'(x_j)}.
\]
From this expression, it is clear that the leading coefficient of $F(z)$ is
\[
\sum_{j=1}^n \frac{f(x_j)}{A'(x_j)},
\]
so this equals $f[x_1, \ldots, x_n]$, as desired. 
\end{proof}

\begin{lemma}\label{lem:complete-homogeneous} Let $P_j(z) = z^j$. Then we have 
\[
P_j[x_1, \ldots, x_n]  = h_{j-n+1}(x_1, \ldots, x_n) \in \Z[x_1, \ldots, x_n][\prod_{r <s } (x_r-x_s)^{-1}],
\]
where $h_{j-n+1}(x_1, \ldots, x_n)$ is the complete homogeneous polynomial in $x_1, \ldots, x_n$ of degree $j-n+1$ (which is $0$ by definition if $j-n+1<0$). 
\end{lemma}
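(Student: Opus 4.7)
The plan is to prove the identity as one of rational functions via generating functions, leveraging the formula from Lemma \ref{lem:leading-divided-difference}. Since both sides are rational functions in $\ol\Q(x_1,\ldots,x_n)$ with denominators being monomials in the factors $(x_r - x_s)$, it suffices to establish the identity after specializing $x_1,\ldots,x_n$ to any $n$-tuple of pairwise distinct complex (or rational) numbers, which I will do implicitly below.

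First I would record the two key generating-function facts. On one hand, Lemma \ref{lem:leading-divided-difference} applied to $P_j(z)=z^j$ gives
\[
P_j[x_1,\ldots,x_n] \;=\; \sum_{i=1}^n \frac{x_i^j}{A'(x_i)}.
\]
Summing over $j\ge 0$ against $t^j$ and using the geometric series $\frac{1}{1-x_i t}=\sum_{j\ge 0} x_i^j t^j$ (valid formally, or for $|t|$ small), yields
\[
\sum_{j\ge 0} P_j[x_1,\ldots,x_n]\, t^j \;=\; \sum_{i=1}^n \frac{1}{(1-x_i t)\,A'(x_i)}.
\]
On the other hand, the standard generating function for the complete homogeneous symmetric polynomials is
\[
\sum_{k\ge 0} h_k(x_1,\ldots,x_n)\, t^k \;=\; \prod_{i=1}^n \frac{1}{1-x_i t}.
\]

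The next step is to connect these two expressions by partial fractions. Starting from the standard expansion $\frac{1}{A(z)}=\sum_{i=1}^n \frac{1}{(z-x_i)A'(x_i)}$, substituting $z=1/t$ and clearing denominators gives
\[
\sum_{i=1}^n \frac{1}{(1-x_i t)\,A'(x_i)} \;=\; \frac{t^{n-1}}{\prod_{i=1}^n(1-x_i t)}.
\]
Combining the three displayed identities, we obtain
\[
\sum_{j\ge 0} P_j[x_1,\ldots,x_n]\, t^j \;=\; t^{n-1}\sum_{k\ge 0} h_k(x_1,\ldots,x_n)\, t^k,
\]
and comparing coefficients of $t^j$ yields the desired identity $P_j[x_1,\ldots,x_n]=h_{j-n+1}(x_1,\ldots,x_n)$, where the right side is interpreted as $0$ when $j<n-1$.

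There is no serious obstacle: the only subtlety is justifying the rearrangements and the vanishing for $j<n-1$. For the former, working in the formal power series ring $\ol\Q(x_1,\ldots,x_n)[\![t]\!]$ bypasses any convergence issue. For the latter, $P_j[x_1,\ldots,x_n]$ is the leading (degree $n-1$) coefficient of the Newton form of the interpolating polynomial of $P_j$ at the nodes $x_1,\ldots,x_n$; when $j<n-1$, $P_j$ is its own interpolant and the leading coefficient vanishes, matching $h_{j-n+1}=0$. Finally, the passage from the rational-function identity on generic tuples back to the identity in $\Z[x_1,\ldots,x_n][\prod_{r<s}(x_r-x_s)^{-1}]$ is automatic by the Zariski density of such tuples.
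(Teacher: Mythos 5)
Your proof is correct and follows essentially the same route as the paper's: invoke Lemma \ref{lem:leading-divided-difference} to write $P_j[x_1,\ldots,x_n]$ as $\sum_i x_i^j/A'(x_i)$, form the generating function in $t$, identify it with $t^{n-1}\prod_i(1-x_it)^{-1}$, and compare coefficients against the generating function for the $h_k$. The only cosmetic difference is in how the key partial-fraction identity is justified — you substitute $z=1/t$ into the standard expansion of $1/A(z)$, while the paper verifies the claimed identity directly by evaluating at the $n$ points $t=1/x_i$ — but these amount to the same computation.
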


\begin{proof}
Consider the generating function 
\begin{equation}\label{eq:gen-fctn-1}
\sum_{j=0}^\infty h_j(x_1, \ldots, x_n) t^j = \prod_{k=1}^n \frac{1}{ (1-x_k t)}.
\end{equation}
Lemma \ref{lem:leading-divided-difference} implies that 
\[
P_j[x_1, \ldots, x_n] = \sum_{i=1}^n \frac{x_i^j}{\prod_{k \neq i} (x_i-x_k)}.
\]
Consider another generating function. 
\[
\sum_{j \geq 0} P_j[x_1, \ldots, x_n] t^j = \sum_{i=1}^n \frac{1}{\prod_{k \neq i} (x_i-x_k)} \frac{1}{1-x_it} .
\]
We claim that the RHS is the partial fraction decomposition of $t^{n-1}\prod_{i=1}^n \frac{1}{1-x_i t}$. Indeed, upon multiplying by $\prod_{i=1}^n (1-x_i t)$ to clear denominators, we are comparing two polynomials of degree $\leq n-1$ in $t$, which when evaluated at each $t=1/x_i$ gives $1/x_i^{n-1}$. The claim is proved, so we deduce that 
\begin{equation}\label{eq:gen-fctn-2}
\sum_{j \geq 0} P_j[x_1, \ldots, x_n] t^j = t^{n-1}\prod_{i=1}^n \frac{1}{1-x_i t}
\end{equation}
and then the Lemma follows by comparing coefficients of the two generating functions \eqref{eq:gen-fctn-1} and \eqref{eq:gen-fctn-2}.
\end{proof}

For $1 \leq a \leq n$, let $Q_a(z) := (x_a+z)^{2n-3}(x_a-z)^2$. By Lemma \ref{lem:leading-divided-difference}, we have
\[
Q_a[x_1, \ldots, x_n] = \sum_{b=1}^n \frac{Q_a(x_b)}{A'(x_b)}. 
\]
Since $Q_a(x_a) = 0$, we can omit the summand with $b = a$, and we are then left with the identity 
\begin{equation}\label{eq:divided-diff-1}
Q_a[x_1, \ldots, x_n] = \sum_{b \ne a} \frac{(x_a+x_b)^{2n-3}(x_a-x_b)^2}{A'(x_b)}.
\end{equation}
Substituting this back into \eqref{eq:nablaop1}, we find that
\begin{equation}\label{eq:divided-diff-2}
\nablaop(p_i) = -i \sum_{a=1}^n \frac{x_a^{i-1}}{A'(x_a)} Q_a[x_1, \ldots, x_n].
\end{equation}

Define integers $K_k$ by the identity $Q(t) = (1+t)^{2n-3}(1-t)^2 = \sum_k K_k t^k$. Comparing the definitions of $Q(t)$ and $Q_a(z)$, we see that 
\begin{equation}\label{eq:Q_a-expansion}
Q_a(z) = \sum_k K_k x_a^k z^{2n-1-k}.
\end{equation}
By linearity of the formation of divided difference, we have
\[
Q_a[x_1, \ldots, x_n] = \sum_k K_k x_a^k ( z^{2n-1-k}[x_1, \ldots, x_n]) = \sum_k K_k x_a^k h_{n-k} 
\]
where we invoked Lemma \ref{lem:complete-homogeneous} in the last equality. Substituting this back into \eqref{eq:divided-diff-2} gives 
\begin{align}\label{eq:gemini-nablaop}
\nablaop(p_i) & = -i \sum_{a=1}^n \frac{x_a^{i-1}}{A'(x_a)}  \left(\sum_k K_k x_a^k h_{n-k}\right) = -i \sum_k K_k h_{n-k} \left(\sum_a \frac{x_a^{i-1+k}}{A'(x_a)}\right) \nonumber \\
& = -i \sum_k K_k h_{n-k} h_{i+k-n},
\end{align}
where in the last step we applied Lemma \ref{lem:complete-homogeneous} to the inner sum. 

\subsubsection{Reduction modulo $I^2$}
We now consider \eqref{eq:gemini-nablaop} modulo $I^2$. Since $h_j \in I$ for $j\ge 1$, a product of the form $h_a h_b$ lies in $I^2$ unless $a=0$ or $b=0$. With $h_0=1$, the sum simplifies to:
$$ 
\sum_k K_k h_{n-k} h_{i+k-n} \equiv K_n h_0 h_{i+n-n} + K_{n-i} h_{n-(n-i)} h_0 = (K_n + K_{n-i})h_i \pmod{I^2}. $$
Putting this into \eqref{eq:gemini-nablaop}, we have found that 
$$ \nablaop(p_i) \equiv -i(K_n + K_{n-i})h_i \pmod{I^2}. $$
Using the relation $h_i \equiv p_i/i \pmod{I^2}$ from Lemma \ref{lem:elementary-polynomial-bases}, we can simplify this to 
$$ \nablaop(p_i) \equiv -i(K_n + K_{n-i})\frac{p_i}{i} = -(K_n + K_{n-i})p_i \pmod{I^2}. $$

This shows that $\epsilon_i(\eta, \mu) = -(K_n + K_{n-i})$. 

\subsubsection{Explication of coefficients} Now we match this up with the statement of Proposition \ref{prop: GL len=2}. The coefficients $K_i$ come from $Q(t) = (1+t)^{2n-3}(1-t)^2$. By the binomial formula, we have 
$$ K_i = \binom{2n-3}{i} - 2\binom{2n-3}{i-1} + \binom{2n-3}{i-2}. $$
For $i=n$, this specializes to
$$ K_n = \binom{2n-3}{n} - 2\binom{2n-3}{n-1} + \binom{2n-3}{n-2}  = -\frac{1}{n}\binom{2n-2}{n-1}. $$
Hence we have
$$ \epsilon_i = -(K_n+K_{n-i}) = \frac{1}{n}\binom{2n-2}{n-1} - \left( \binom{2n-3}{n-i} - 2\binom{2n-3}{n-i-1} + \binom{2n-3}{n-i-2} \right). $$ 
This completes the proof of Proposition \ref{prop: GL len=2}. \qed  

\section{Arithmetic volume of shtukas for non-split groups}\label{sec:nonsplit-volume}
In this section, we generalize the results of \S \ref{sec:split-arithmetic-volume} on arithmetic volume to non-split reductive group schemes over $X$.

\subsection{Group schemes}\label{sssec:non-split-groups} We define the class of reductive group schemes that we will consider. Let $\nu: X'\to X$ be a finite \'etale Galois cover with Galois group $\G$. Let $\xi'\in \cohog{2}{X'}(1)$ be Poincar\'e dual to a point in each connected component of $X'$.\footnote{We want to allow $X'$ to be geometrically disconnected to allow nonsplitness coming from $k$.}

Let $G_{0}$ be a split connected reductive group over $k$ together with a split maximal torus $T_{0}$ and a Borel subgroup $B_{0}$ containing $T_{0}$. Suppose we are given an injective homomorphism
\begin{equation*}
\t: \G\inj \Aut(G_{0}, B_{0}, T_{0}).
\end{equation*}
Let $G$ be the connected reductive group scheme over $X$ defined as 
\begin{equation*}
G=(\Res_{X'/X}(G_{0}\times X'))^{\G}
\end{equation*}
where $\G$ acts diagonally on $G_{0}$ (via $\t$) and on $X'$. In other words, $G$ is the descent of the constant group $G_{0}\times X'$ to $X$ using $\t$ as the descent datum. 

\begin{remark}\label{rem:beyond-quasisplit} Our setup covers all quasisplit reductive groups over $X$. For the purpose of calculating arithmetic volumes, this essentially already captures the full generality of everywhere reductive group schemes over $X$; let us explain why. If $G'$ is a pure inner twist of $G$, then as already observed in \S \ref{sssec:pure-inner-twist}, $\Bun_G$ is canonically isomorphic to $\Bun_{G'}$. Therefore, there is no additional generality gained by considering pure inner twists. As explained in \S \ref{sssec:canonical-quasisplit-form}, every reductive $G \rightarrow X$ is an inner twist of a quasisplit $\GG \rightarrow X$. In particular, if $G$ is adjoint, then inner twists are automatically pure inner twists. In general, $G$ is isogenous to an adjoint group times a torus, which is quasisplit, and one can reduce the volume computation from the quasisplit case. 
\end{remark}

\subsection{The moduli stack of $G$-bundles} 
For $G \rightarrow X$ as in \S \ref{sssec:non-split-groups}, let $\Bun_G$ be the moduli stack of $G$-bundles on $X$. Its  set of \emph{geometric} connected components is denoted $\pi_{0}(\Bun_{G})$. We will define a $\Gamma$-invariant uniformization
\begin{equation*}
\xcoch(T_{0})\to \pi_{0}(\Bun_{G})
\end{equation*}

A result of Heinloth \cite[Theorem 2]{Hei10} identifies $\pi_{0}(\Bun_{G})$ with the $\G$-coinvariants $\pi_{0}(\Bun_{G_{0, X'}})_{\G}$. For each $\l\in \xcoch(T_{0})$, choose any geometric connected component $X'_{1}\subset X'_{\ol k}$, which induces a surjection $\xcoch(T_{0})\to \pi_{0}(\Bun_{G_{0}, X'_{1}})$. The target is a direct summand of $\pi_{0}(\Bun_{G_{0, X'}})$, and we further project it to the $\G$-coinvariants. The resulting map 
\begin{equation*}
\xcoch(T_{0})\to \pi_{0}(\Bun_{G})
\end{equation*}
is $\G$-invariant,  and is independent of the choice of the geometric connected component of $X'$ (for different choices will be equalized after passing to $\G$-coinvariants, because $\G$ permutes geometric components of $X'$ transitively). We again denote this map by 
\begin{equation*}
\l\mt \ov \l\in \pi_{0}(\Bun_{G}).
\end{equation*}

\subsubsection{Atiyah--Bott formula}\label{sssec:non-split-AB-formula}
We explicate the Atiyah--Bott formula in this case in a manner parallel to \S \ref{sss:fz}.

The universal $G_{0}$-bundle over $X'\times \Bun^{\om}_{G}$ is classified by a map
\begin{equation*}
\ev_{\om}: X'\times \Bun_{G}^{\om} \to \BB G_{0},
\end{equation*}
which then induces a map
\begin{equation*}
\homog{*}{X'}\ot R_{0}^{W_{0}}\to \cohog{*}{\Bun_{G}^{\om}}.
\end{equation*}
This map is invariant under the diagonal action of $\G$ on $\homog{*}{X'}$ and $R_{0}^{W_{0}}$, hence factors through the coinvariants
\begin{equation}\label{ev' nonsplit}
\ev'_{\om, \G}:(\homog{*}{X'}\ot R_{0}^{W_{0}})_{\G}\to \cohog{*}{\Bun_{G}^{\om}}.
\end{equation}
For $f\in R_{0}^{W_{0}}$ and $z\in \homog{*}{X'}$, we denote
\begin{equation*}
(f^{z})_{\G}:=\ev'_{\om, \G}(z\ot f)\in \cohog{*}{\Bun_{G}^{\om}}.
\end{equation*}

\begin{remark}\label{rem:homology-as-coinvariants} In general, for a local system $\LL$ on $X$, we may form its homology
\begin{equation*}
\homog{*}{X,\LL}.
\end{equation*}
If $\nu^{*}\LL$ becomes a geometrically constant local system, i.e., it is pulled back from a local system $\LL_{0}$ on $\Spec k$ with a $\G$-action, then we have a canonical isomorphism
\begin{equation*}
(\homog{*}{X'}\ot \LL_{0})_{\G}\isom \homog{*}{X,\LL}.
\end{equation*}
\end{remark}

Remark \ref{rem:homology-as-coinvariants} applies in particular to $\LL = R^W$, so that \eqref{ev' nonsplit} can be viewed as a map
\begin{equation*}
\ev'_{\om}: \homog{*}{X,R^{W}}\to \cohog{*}{\Bun_{G}^{\om}}.
\end{equation*}
Passing to associated graded induces the isomorphism 
\begin{equation*}
\homog{>0}{X,\VV}\to \Gr^{1}_{\aug}\cohog{*}{\Bun_{G}^{\om}}
\end{equation*}
which then induces the isomorphism of bigraded algebras from Theorem \ref{thm: atiyah-bott formula}:
\begin{equation}\label{eq:AB-nonsplit}
\AB^{\om}: 
\Sym_{\Qlbar}^{\bl} (\rH_*(X; \VV)_+) \rightarrow \Gr^{\bl}_{\aug} \rH^*(\Bun_G^\omega).
\end{equation}

\subsection{Hecke correspondences}\label{sss:Hk qs}
For a dominant\footnote{Dominant with respect to $B_0$ as in \S \ref{sssec:non-split-groups}} coweight $\mu \in \xcoch(T_{0})$, we define a Hecke correspondence 
\begin{equation*}
\xymatrix{& \Hk^{\mu}_G \ar[dl]_{h_{0}}\ar[dr]^{h_{1}}\ar[rr]^-{p_{X'}} & & X'\\
\Bun_{G} & & \Bun_{G}}
\end{equation*}
as follows. For a scheme $S/\F_q$, $\Hk^{\mu}_G(S)$ is the groupoid of $(x',\cF_{0},\cF_{1}, \a)$ where 
\begin{itemize}
    \item $x'\in X'(S)$, with image under $\nu$ denoted $x\in X(S)$.
    \item $\cF_{i}$ are $G$-bundles on $X\times S$, which pull back to $G_{0}$-bundles $\cF'_{i}$ over $X'\times S$. 
    \item $\a: \cF_{0}|_{X\times S-\G_{x}}\isom \cF_{1}|_{X\times S-\G_{x}}$ is an isomorphism of $G$-torsors, such that its pullback to $X'\times S$:
\begin{equation*}
\a': \cF'_{0}|_{X'\times S-\nu_{S}^{-1}(\G_{x})}\isom \cF'_{1}|_{X'\times S-\nu_{S}^{-1}(\G_{x})}
\end{equation*}
has relative position $\mu$ along $\G_{x'}\subset\nu_{S}^{-1}\G_{x}$. Here $\nu_{S}=\nu\times\id_{S}:X'\times S\to X\times S$. Note that
\begin{equation*}
\nu_{S}^{-1}\G_{x}=\coprod_{\g\in\G}\G_{\g(x')}\subset X'\times S
\end{equation*}
is the union of graphs of $\G$-conjugates of $x'$. The relative position requirement along $\G_{x'}$ then implies that the relative position of $\a'$ along $\g(x')$ is automatically $\g(\mu):=\t(\g)(\mu)$, with notation as in \S \ref{sssec:non-split-groups}.  
\end{itemize} 

For a connected component $\Bun_G^{\om}$, let ${}^{\om} \Hk^{\mu}_G\subset \Hk^{\mu}_G$ be the preimage of $\Bun_{G}^{\om}$ under $h_{0}$. Then ${}^{\om} \Hk^{\mu}_G$ is a correspondence
\begin{equation*}
\xymatrix{& {}^{\om} \Hk^{\mu}_G\ar[rr]^-{p_{X'}}\ar[dl]_{h_{0}}\ar[dr]^{h_{1}} & & X' \\
\Bun^{\om}_{G} & & \Bun^{\om'}_{G}}
\end{equation*}
where $\om'=\om+\ov \mu$. We have a canonical isomorphism
\begin{equation*}
\g_{\Hk}: {}^{\om} \Hk^{\mu}_G\isom {}^{\om}\Hk^{\g(\mu)}_G
\end{equation*}
sending $(x',\cF_{0},\cF_{1}, \a)$ to $(\g(x'),\cF_{0},\cF_{1}, \a)$.

\subsubsection{Canonical parabolic reduction}\label{sssec:non-split-parabolic-reduction}
As in the split case, the pullback of the universal $G_{0}$-bundle $(p_{X'}\times h_{0})^{*}\cF^{\univ}$ on ${}^{\om} \Hk^{\mu}_G$ has a canonical $P_{\mu}$-reduction classified by the map
\begin{equation*}
\ev_{\mu}^{\om}: {}^{\om} \Hk^{\mu}_G\to \BB P_{\mu}.
\end{equation*}
These fit into a canonical commutative diagram
\begin{equation}\label{Hk ev diag}
\xymatrix{{}^{\om} \Hk^{\mu}_G\ar[r]^-{\ev^{\om}_{\mu}}\ar[d]^-{\g_{\Hk}} & \BB P_{\mu}\ar[d]^{\t(\g)}\\
{}^{\om}\Hk^{\g(\mu)}_G\ar[r]^-{\ev^{\om}_{\g(\mu)}} & \BB P_{\g(\mu)}}
\end{equation}
We get embeddings
\begin{equation}\label{eq:non-split-tautological-classes}
e_{\g}: \g_{\Hk}^{*} \c \ev_{\g(\mu)}^{\om,*}=\ev_{\mu}^{\om,*} \c \t(\g)^{*}: R^{W_{\g(\mu)}}\to \cohog{*}{{}^{\om} \Hk^{\mu}_G}
\end{equation}

\subsection{Arithmetic volume of moduli of shtukas}\label{sssec:nonsplit-moduli-of-shtukas}
Let $\mu=(\mu_{1},\cdots, \mu_{r})$ be a sequence of dominant coweights of $T_{0}$, such that
\begin{equation}\label{eq:non-split-coroot-condition}
\ov \mu_{1}+\cdots+\ov \mu_{r}=0\in \pi_{0}(\Bun_{G}).
\end{equation}
As in \S\ref{sss:Sht split}, we define the iterated version of the Hecke stack $\Hk^{\mu}_G$ defined in \S\ref{sss:Hk qs}, and define $\Sht^{\mu}_{G}$ by the Cartesian square \eqref{def Sht}. Note that the legs of points of $\Sht_{G}^{\mu}$ are now in $X'$. We still denote by $p_{i}: {}^{\om}\Sht^{\mu}_{G}\to X'$ map recording the $i$th leg for $1\le i\le r$.

\subsubsection{Definition of arithmetic volume}

Let $D_{\mu_j}:=\j{2\r,\mu_j}=\dim G_{0}/P_{\mu_j}$. For each $1\le j\le r$, fix $\y_{j}\in R^{W_{\mu_j}}$ of degree $2(D_{\mu_j}+1)$ and $\y'_{j}\in R^{W_{\mu_j}}$ of degree $2D_{\mu_j}$.

Define ${}_{c}\Gamma^{ \eta_j + \xi' \eta'_j}_{ \mu_j}$ as in \S \ref{ssec:split-Gamma} (recall that $\xi' \in H^2(X')(1)$ is the fundamental class of the covering curve). For $\eta = (\eta_1 + \xi' \eta_1', \ldots, \eta_r + \xi' \eta_r')$, define
\[
{}_{c}\Gamma^{ \eta}_{ \mu} := {}_{c}\Gamma^{ \eta_r + \xi' \eta'_r}_{ \mu_r} \circ \ldots \circ {}_{c}\Gamma^{ \eta_1 + \xi' \eta'_1}_{ \mu_1} \co \rH^*_c(\Bun_G^\omega) \rightarrow \rH^*_c(\Bun_G^\omega). 
\]
The target component $\omega$ agrees with the source component $\omega$ by the assumption \eqref{eq:non-split-coroot-condition}.

\begin{defn}
Let ${}_{c}\Gamma^{ \eta + \xi' \eta'}_{ \mu}$ be as in \S \ref{ssec:split-Gamma}. Let $\mu = (\mu_1, \ldots, \mu_r)$ and $\eta = (\eta_1 + \xi' \eta_1', \ldots, \eta_r + \xi' \eta_r')$. The \emph{arithmetic volume} of ${}^{\om}\Sht^{\mu}_{G}$ with respect to $\eta$ is the \emph{graded} trace
\begin{equation}\label{eq:nonsplit-int-defn}
\vol({}^{\om}\Sht^{\mu}_{G}, \eta) := \Tr(
{}_{c}\Gamma^{ \eta}_{ \mu}\circ\Frob  \mid \rH_c^*(\Bun_G^\omega))
\end{equation}
\end{defn}

\begin{remark} In addition to the classes of the form $\ev_{j}^{*}\y_{j}$, we could pull back classes using the evaluation maps $\ev_{j, \g}: \Sht^{\mu}_{G}\to \Hk^{\mu}_G\xr{\g_{\Hk}}\Hk^{\g(\mu)}_G \xr{\ev_{\g(\mu)}} \BB P_{\g(\mu)}$ for any $\g\in \G$. However, because of the diagram \eqref{Hk ev diag}, one can rewrite such classes as pullbacks via $\ev_{j}^{*}$ again. Thus, as long as we are concerned with linear combinations of  terms pulled back by $\ev_{j, \g}$, the form considered in \eqref{eq:nonsplit-int-defn} does not restrict the generality.
\end{remark}

\begin{remark}
    The convergence of the trace in the definition of $\vol({}^{\om}\Sht^{\mu}_{G}, \eta)$ can be proved in a similar way as Proposition \ref{p:trace conv}. We omit the proof here.
\end{remark}

\subsubsection{Gross motive}
\label{ss: G Motive general}
We follow the notation of \S \ref{sssec:non-split-groups}. The constructions in the split case can be applied to $G_{0}$ together with its split maximal torus $T_{0}$ and Weyl group $W_{0}$ to give $R_{0}, R_{0}^{W_{0}}$ and $\VV' :=\Gr^{1}_{\aug}R_{0}^{W_{0}}$, etc.

Since $G$ is quasisplit as a group scheme over $X$, we have its abstract Cartan $T$ as a torus over $X$. The abstract Weyl group $W$ acting on $T$ is now a finite group scheme over $X$. We can then form $R=\Sym(\xch(T)_{\Qlbar}(-1))$, $R^{W}$ and $\VV=\Gr^{1}_{\aug}R^{W}$, all as local systems over $X$, in the same way as in the split case \S\ref{sss:motive}.  

For example, we view $\VV'$ as a geometrically constant local system on $X'$. It carries a $\G$-action induced from $\t$, which can be viewed as a $\G$-equivariant structure on the constant local system $\VV'$ on $X'$. Then $\VV$ is the descent of $\VV'$ to $X$. The generic stalk of $\VV(1)$ is the $\Qlbar$-realization of the Gross motive $\MM_{G}$, as a $\Gal(F^{s}/F)$-module.

\subsubsection{$L$-function}
Let $\EE$ be a local system on $X$. The $L$-function of $\EE$ is defined to be
\begin{equation*}
L_{X}(s,\EE)=\prod_{i\in \ZZ}\det(1-q^{-s}\Frob|\cohog{i}{X,\EE})^{(-1)^{i-1}}.
\end{equation*}
This is consistent with the usual definition of the $L$-function attached $\EE$ as a $\Gal(F^{s}/F)$-module.

We will apply this to the graded local system $\EE=\VV^{*}$ dual to $\VV$, but with a slight normalization. Recall that $\VV^{*}$ has a negative grading $\op_{d}(\VV_{2d})^{*}[2d]$. Let $\cohog{<0}{X,\VV^{*}}$ be the negative degree part of the cohomology. This removes exactly the summand $\cohog{2}{X, (\VV_{2})^{*}}$, which is zero if $Z(G)$ does not contain a nontrivial split torus. Let
\begin{equation*}
L^{*}_{X}(s,\VV^{*})=\prod_{i<0}\det(1-q^{-s}\Frob|\cohog{i}{X,\VV^{*}})^{(-1)^{i-1}}.
\end{equation*}
The normalization in the definition of $\EE$ removes the pole of $L_{X}(s,\VV^{*})$ at $s=0$.

\begin{exam}
For a local system $\EE$ on $X$, we have $L_X(s, \EE(1))  = L_X(s+1, \EE)$. Hence if $\VV = \bigoplus_i \Ql(-d_i)[-2d_i]$, then $\VV^* = \bigoplus \Ql(d_i)[2d_i]$ and $L_X(s, \VV^*) = \prod_{i} \zeta_X(s+d_i)$. In particular, if $\VV$ arises from a split group $G/X$ as in \S \ref{sec:split-arithmetic-volume}, then $L_X(s, \VV^*)$ agrees with the $L$-function $L_{X,G}(s)$ from \eqref{eq:split-L}, and $L_X^*(s, \VV^*)$ agrees with the normalized $L^*_{X,G}(s)$ from \eqref{eq:split-normalized-L}.
\end{exam}

\subsubsection{Local operators}
Recall the operator $\ov\nb_{\mu_j}^{\y_{j}}$ on $\VV'$ from \S \ref{ssec:nabla-operator}. For an Atiyah--Bott generator $f^{z}\in \homog{*}{X',\VV'}$ as in \S \ref{sssec:non-split-AB-formula}, consider the map
\begin{eqnarray*}
\wt\DD^{\y_{j}}_{\mu_j}: \homog{*}{X',\VV'}
&\to& \homog{*}{X',\VV'}^{\G} \\
f^{z} & \mt &\sum_{\g\in \G} (\ov\nb^{\y_{j}}_{\mu_j}(\g^{*} f))^{\g^{-1}_{*}z} 
\in \homog{*}{X',\VV'}^{\G}.
\end{eqnarray*}
It clearly factors through the coinvariants $\homog{*}{X',\VV'}_{\G}=\homog{*}{X, \VV}$.  Composing $\wt\DD^{\y_{j}}_{\mu_j}$ with the natural map from invariants to coinvariants  $\homog{*}{X',\VV'}^{\G}\to \homog{*}{X',\VV'}_{\G}$, we obtain an endomorphism
\begin{equation*}
\DD^{\y_{j}}_{\mu_j}\in \End^{gr}(\homog{>0}{X,\VV}).
\end{equation*}
It preserves the homological grading and the grading on $\VV$. Taking the adjoint, we obtain graded endomorphisms of $\cohog{*}{X,\VV^{*}}$, and in particular endomorphisms of $\cohog{<0}{X,\VV^{*}}$. We still denote these endomorphisms by $\DD^{\y_{j}}_{\mu_j}$.

\begin{assumption}\label{assump:nonsplit-operators-commute}
We assume that the operators $\DD^{\y_{j}}_{\mu_j}$ pairwise commute for $j=1, \ldots, r$. 
\end{assumption}

\begin{remark}Note that Assumption \ref{assump:nonsplit-operators-commute} is satisfied if the $\ov \nabla^{\eta_j}_{\mu_j}$ pairwise commute. This in turn is satisfied for many examples, as discussed in Example \ref{ex:assumption-satisfied}.
\end{remark}

\subsubsection{The volume formula} We will now introduce a multivariate version of $L^{*}(s,\VV^{*})$, taking into account the eigenvalues of the operators $\DD^{\y_{j}}_{\mu_j}$ on $\cohog{<0}{X,\VV^{*}}$. We decompose 
\begin{equation}\label{eigen coho V}
\cohog{<0}{X,\VV^{*}}=\bigoplus_{\e\in\Qlbar^{r}}\cohog{<0}{X,\VV^{*}}[\e]
\end{equation}
where for $\e=(\e_{1},\cdots, \e_{r})$, $\cohog{<0}{X,\VV^{*}}[\e]$ is the simultaneous generalized eigenspaces under $\DD^{\y_{j}}_{\mu_j}$ with generalized eigenvalue $\e_{j}$, for $1\le j\le r$. Let $\vec{s}=(s_{\e})$, with one variable for each $\e\in \Qlbar^{r}$ with a nontrivial summand in \eqref{eigen coho V}. Let
\begin{equation*}
L^{*}_{X}(\vec{s}, \VV^{*}):=\prod_{\e}\prod_{i<0}\det(1-q^{-s_{\e}}\Frob|\cohog{i}{X,\VV^{*}}[\e])^{(-1)^{i-1}}.
\end{equation*}
For the sequence $\mu = (\mu_1, \ldots, \mu_r)$ and a fixed $\omega \in \pi_0(\Bun_G)$, set $\omega_j :=\omega + \ol\mu_1 + \ldots + \ol\mu_j$. For $1\le j\le r$, consider the first order differential operator 
\begin{equation*}
\frd_{j}=d^{\om_{j-1}}_{\mu_j}(\y_{j})+d_{\mu_j}(\y'_{j})-(\log q)^{-1}\sum_{\e\in \Qlbar^{r}}\e_j \pl_{s_{\e_j}}.
\end{equation*}

\begin{theorem}\label{thm:non-split-volume}
Let $\mu = (\mu_{1},\cdots, \mu_{r})$ be a sequence of minuscule dominant coweights of $G_{0}$ satisfying \eqref{eq:non-split-coroot-condition}. For $j=1, \ldots, r$, let $\y_{j}\in \cohog{2D_{\mu_j}+2}{\BB L_{\mu_j}}$ and $\y'_{j}\in \cohog{2D_{\mu_j}}{\BB L_{\mu_j}}$ satisfying Assumption \ref{assump:nonsplit-operators-commute}. For $\eta = (\eta_1+\xi' \eta_1', \ldots, \eta_r+\xi' \eta_r')$, we have
\begin{equation*}
\vol({}^{\om}\Sht^{\mu}_{G}, \eta) =q^{\dim \Bun_{G}} \Big( \prod_{j=1}^{r}\frd_{j} \Big)L^{*}_{X}(\vec{s}, \VV^{*})\Big|_{\vec{s}=0}.
\end{equation*}
\end{theorem}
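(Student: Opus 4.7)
My plan is to adapt the structure of the proof of Theorem \ref{th:vol gen} to the non-split setting, with the main new ingredient being the careful tracking of $\G$-equivariance between constructions on $X'$ and their descent to $X$. First, as in the split case, Poincar\'e duality via (the analog of) Lemma \ref{l:trace cGamma vs Gamma} rewrites
$$\vol({}^\om\Sht^\mu_G,\eta) = q^{\dim\Bun_G}\sum_i(-1)^i\Tr(\Frob^{-1}\c\Gamma^{\eta+\xi'\eta'}_\mu\mid\rH^i(\Bun_G^\om)).$$
Convergence follows from a direct adaptation of Proposition \ref{p:trace conv}: one enumerates an Atiyah--Bott monomial basis using the $\G$-coinvariant generators $(f_i^{z_j})_\G$ arising from \eqref{eq:AB-nonsplit}, and the growth estimates of Lemmas \ref{l:bound trunc Gamma} and \ref{l:bound diag entry} carry over without essential change once one replaces $\homog{*}{X}\otimes R^W$ by $\homog{*}{X,R^W}\cong(\homog{*}{X'}\otimes R_0^{W_0})_\G$.

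The central step is to establish the non-split analog of Proposition \ref{prop: split action on graded}. I would define a Ran filtration $F_\bu\rH^*(\Bun_G^\om)$ using the coinvariant generators $(f^z)_\G$ graded by $|z|$, and a parallel filtration on $\rH^*({}^\om\Hk_G^{\mu_j})$ built from the canonical parabolic reduction \eqref{eq:non-split-tautological-classes} together with pullback from $X'$. The key computation is a non-split version of Proposition \ref{p:master eqn}: after pulling the universal modification along the graph $\G_{p_X}\subset X\times{}^\om\Hk_G^{\mu_j}$ back through $\nu\times\id$, one obtains relative positions $\g(\mu_j)$ along $\G_{\g(p_{X'})}$ for each $\g\in\G$, as recorded in \S\ref{sss:Hk qs}. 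Applying Theorem \ref{th:master gen} to each such component and summing, one finds that modulo $F_{|z|-2}$ the difference $h_1^*(f^z)_\G-h_0^*(f^z)_\G$ has leading term governed by $\sum_{\g\in\G}(\pl_{\g(\mu_j)}(\g^*f))^{\PD(z)}$, which is precisely the $\G$-symmetrization defining the operator $\DD^{\eta_j}_{\mu_j}$ of \S\ref{ss: G Motive general}. Combined with the pushforward analysis of Lemmas \ref{l:h1 push} and \ref{lem: split case associated graded endomorphism}, this shows that $\Gamma^{\eta_j+\xi'\eta_j'}_{\mu_j}$ preserves both filtrations and acts on the doubly associated graded as $(d^{\om_{j-1}}_{\mu_j}(\eta_j)+d_{\mu_j}(\eta_j'))\cdot\id$ plus the unique derivation extending $\DD^{\eta_j}_{\mu_j}$ on $\rH_*(X;\VV)_+$.

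Under Assumption \ref{assump:nonsplit-operators-commute} the commuting operators $\DD^{\eta_j}_{\mu_j}$ admit a simultaneous generalized eigenspace decomposition on $\rH_*(X;\VV)_+$, and dually on $\rH^{<0}(X;\VV^*)$ as in \eqref{eigen coho V}. Decomposing $\Sym^\bu(\rH_*(X;\VV)_+)$ accordingly and tracing Frobenius over the symmetric powers exactly as in \S\ref{sssec:split-completion}, each eigenspace $\rH^i(X;\VV^*)[\e]$ contributes a factor $\det(1-q^{-s_\e}\Frob\mid\rH^i(X;\VV^*)[\e])^{(-1)^{i-1}}$ to a multivariate generating series which assembles to $L^*_X(\vec s,\VV^*)$; the constant shifts and the partial derivatives $-(\log q)^{-1}\e_j\pl_{s_{\e}}$ then combine into the operators $\frd_j$ via the chain rule, completing the proof. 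The main obstacle, in my view, is verifying in detail that $\sum_{\g\in\G}\pl_{\g(\mu_j)}(\g^*(\cdot))$ corresponds to $\DD^{\eta_j}_{\mu_j}$ under the non-split Atiyah--Bott identification: this requires carefully unwinding how the canonical reduction \eqref{Hk ev diag} twists under the $\g_{\Hk}$-isomorphisms, and how the $\G$-descent of $\VV'$ to $\VV$ interacts both with the Poincar\'e pairing $\PD$ and with the symmetrization.
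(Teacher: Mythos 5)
Your proposal follows essentially the same route as the paper's proof: Poincar\'e duality to pass to ordinary cohomology, a quasisplit master equation obtained by applying Theorem \ref{th:master gen} over the decomposition $X'\times_X X'=\coprod_{\g\in\G}\D_\g$, a Ran filtration on $\rH^*(\Bun_G^\om)$ via the descended generators $(f^z)_\G$, and a simultaneous eigenspace decomposition feeding into a multivariate generating series. The only caution is that the leading term of $h_1^*((f^z)_\G)-h_0^*((f^z)_\G)$ is not quite the expression you write: it is $\sum_{\g\in\G}\g^*\PD(z)\cdot e_\g(\pl_{\g(\mu_j)}f)$ (Proposition \ref{p:master eqn qs}), where the $\g^*$ on the homology side corresponds to the $\g_*^{-1}z$ in the definition of $\wt\DD^{\y_j}_{\mu_j}$ and the map $e_\g=\ev_\mu^{\om,*}\circ\t(\g)^*$ packages the twisting from \eqref{Hk ev diag}; but you correctly flag this as the point needing care.
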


\subsection{Proof of Theorem \ref{thm:non-split-volume}}
The proof of Theorem \ref{thm:non-split-volume} follows the same strategy as in the split case. We first need the following generalization of Proposition \ref{p:master eqn} to the quasisplit case. Recall that in \S \ref{sssec:non-split-parabolic-reduction}, specifically \eqref{eq:non-split-tautological-classes}, we defined a map $R_0^{W_{\gamma(\mu)}} \rightarrow \cohog{*}{{}^\omega \Hk_G^\mu}$. 

\begin{prop}\label{p:master eqn qs}
Let $g'$ be the genus of $X'$. For $f\in R_{0}^{W_{0}}$ and $z\in \homog{|z|}{X'}$, we have
\begin{equation*}
h_{1}^{*}((f^{z})_{\G})-h_{0}^{*}((f^{z})_{\G})=\sum_{\g\in \G}\g^{*}\PD(z)e_{\g}(\pl_{\g(\mu)}f)+\begin{cases}(1-g')\j{z,\xi'}\xi'\sum_{\g\in \G}e_{\g}(\pl^{2}_{\g(\mu)}f), & |z|=2,\\
0, & \mbox{otherwise.}
\end{cases}
\end{equation*}

\end{prop}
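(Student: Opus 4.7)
The plan is to follow the strategy of the proof of Proposition~\ref{p:master eqn}, adapted to the Galois cover $\nu \co X' \to X$.  I would work on $S := X' \times {}^\omega \Hk^\mu_G$, with the two $G_0$-bundles $\cF'_0, \cF'_1$ obtained by pulling back the universal $G$-bundles on $X \times {}^\omega \Hk^\mu_G$ along $\nu \times \id$ and then identifying with $G_0$-bundles via the descent datum $\nu^*G \cong G_0 \times X'$.  Because $\Gamma$ acts freely on $X'$ over $X$, the graphs $D_\gamma := \Gamma_{\gamma \circ p_{X'}} \subset S$ for $\gamma \in \Gamma$ are pairwise disjoint smooth Cartier divisors, and the pulled-back $\alpha'$ is an isomorphism away from $\bigsqcup_\gamma D_\gamma$, of type $\gamma(\mu)$ along $D_\gamma$.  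Fixing any ordering of $\Gamma$, I would factor $\alpha'$ as a telescoping composition of single-divisor modifications; disjointness guarantees that each intermediate bundle restricts to $\cF'_0$ near the remaining $D_\gamma$'s, so iterating Theorem~\ref{th:master gen} and summing gives
\[
f(\cF'_1) - f(\cF'_0) = \sum_{\gamma \in \Gamma} i_{D_\gamma !}\!\left(\sum_{n \geq 1} \tfrac{1}{n!}(\pl^n_{\gamma(\mu)} f)(\cH_\gamma) \cdot \nu_{D_\gamma}^{n-1}\right),
\]
where $\cH_\gamma$ is the canonical $L_{\gamma(\mu)}$-torsor on $D_\gamma$ from Proposition~\ref{prop:canonical-parabolic-red}.

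Next I would identify each piece geometrically.  Since $D_\gamma = (\id \times \gamma p_{X'})^{-1}(\Delta_{X'})$, its normal bundle pulls back from $T_{X'}$, so $c_1(\nu_{D_\gamma}) = (2-2g')\,p_{X'}^*\xi'$ and in particular $\nu_{D_\gamma}^2 = 0$, truncating the inner sum at $n \le 2$.  The commutative diagram~\eqref{Hk ev diag} identifies the classifying map $D_\gamma \cong {}^\omega \Hk^\mu_G \to \BB L_{\gamma(\mu)}$ of $\cH_\gamma$ with $\ev^\omega_{\gamma(\mu)} \circ \gamma_{\Hk}$, so by \eqref{eq:non-split-tautological-classes}, $(\pl^n_{\gamma(\mu)} f)(\cH_\gamma) = e_\gamma(\pl^n_{\gamma(\mu)} f)$.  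Pulling back $[\Delta_{X'}] = \xi' \otimes 1 + 1 \otimes \xi' - \beta'$ along $\id \times \gamma p_{X'}$, and using $\gamma^*\xi' = \xi'$, yields an explicit K\"unneth decomposition of $[D_\gamma] \in \rH^2(S)(1)$ with a $\gamma$-twist in the middle component.

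Finally, since $\cF'_j$ is pulled back from $X' \times \Bun^\omega_G$, one has $h_j^*(f^z)_\Gamma = \int_{X'} \PD(z) \cdot f(\cF'_j)$ for $j = 0, 1$, where the integration is along the $X'$-factor of $S$.  Contracting the previous display with $z$ via the projection formula and $i_{D_\gamma !}(1) = [D_\gamma]$, I expect the three K\"unneth pieces of $[D_\gamma]$ (from $\xi' \otimes 1$, the $\gamma$-twisted $\beta'$-piece, and $1 \otimes \xi'$) to collapse under the characterizations of $\PD(z)$ in each homological degree into the single class $p_{X'}^*(\gamma^*\PD(z))$, producing the first term of the proposition; the $n = 2$ contribution, which survives because $\nu_{D_\gamma} \neq 0$, analogously yields the $(1-g')\langle z, \xi' \rangle \xi' \sum_\gamma e_\gamma(\pl^2_{\gamma(\mu)} f)$ term, vanishing for $|z| \neq 2$ by degree reasons.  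The main obstacle will be the final K\"unneth bookkeeping---specifically, verifying that the $\gamma$-twisted Poincar\'e dual $\gamma^*\PD(z)$ is the correct object appearing on the right-hand side, including in the case where $X'$ is geometrically disconnected so that $\gamma^*$ permutes components of $\rH^0(X')$ nontrivially.
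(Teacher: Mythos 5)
Your proposal is correct and follows essentially the same route as the paper: work on $S = X' \times {}^\omega\Hk^\mu_G$, identify $D = \bigsqcup_\gamma D_\gamma$ with the pullback of the twisted diagonals $\Delta_\gamma \subset X' \times X'$, apply the multi-divisor form of Theorem~\ref{th:master gen}, use $[\Delta_\gamma] = (1\times\gamma)^*[\Delta_{X'}]$ and $\nu_{D_\gamma}^2 = 0$, and contract K\"unneth components against $z$. The only cosmetic difference is that you reach the multi-divisor formula by telescoping single-divisor modifications (valid, since disjointness preserves the intermediate modification data near each $D_\gamma$), whereas the paper just observes that the proof of Theorem~\ref{th:master gen} extends verbatim; the concern you flag about $\gamma^*\PD(z)$ on geometrically disconnected $X'$ is handled automatically once $\xi'$ and $\PD$ are defined componentwise as in the paper, with $\gamma^*\xi' = \xi'$.
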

\begin{proof}
We need a slight extension of Theorem \ref{th:master gen}: the modification between $G_{0}$-bundles $\cF_{0}$ and $\cF_{1}$ on $S$ is along a disjoint union of divisors $D=\coprod_{i\in I} D_{i}$, and it has type $\l_{i}$ along $D_{i}$. Then the same proof of Theorem \ref{th:master gen} shows that for $f\in R_{0}^{W_{0}}$, we have
\begin{equation}\label{diff Chern qs}
f(\cF_{1})-f(\cF_{0})=\sum_{i\in I}i_{D_{i}!}\left(\sum_{n\ge1}\frac{1}{n!}(\pl^{n}_{\l_{i}}f)(\cF_{1}|_{D_{i}, L_{\l_{i}}})\cup \nu_{D_{i}}^{n-1}\right).
\end{equation}
Here again $\nu_{D_{i}}=c_{1}(\cO(D_{i}))|_{D_{i}}\in \cohog{2}{D_{i}}(1)$ is the Chern class of the normal bundle of $D_{i}$.

We apply this to the two pullbacks of the universal bundle $\cF_{i}=(q_{X'}\times  h_{i})^{*}\cF^{\univ}$, $i=0,1$, on $S:=X'\times {}^{\om} \Hk^{\mu}_G$. The modification is along the divisor
\begin{equation*}
D=X'\times_{X} {}^{\om} \Hk^{\mu}_G\to X'\times_{X}X'=\coprod_{\g\in \G}\D_{\g},
\end{equation*}
where $\D_{\g}=\{(\g x', x')|x'\in X'\}\subset X'\times X'$ is the graph of $\g\in \G$. Hence $D=\coprod_{\g\in \G}D_{\g}$ where $D_{\g}$ is the preimage of $\D_{\g}$ in $X' \times_X {}^{\om} \Hk^{\mu}_G$. We identify $D_{\g}$ with ${}^{\om} \Hk^{\mu}_G$ using the projection $p_{\Hk}: S\to {}^{\om} \Hk^{\mu}_G$. Each $D_{\g}$ is the graph of $\g\c p_{X'}: {}^{\om} \Hk^{\mu}_G\to X'$. Let 
\begin{equation*}
i_{\g}: D_{\g}\cong{}^{\om} \Hk^{\mu}_G\xr{(\g\c p_{X'},\id)} X'\times {}^{\om} \Hk^{\mu}_G = S
\end{equation*}
be the inclusion of $D_{\g}$.  The modification type of $\cF_{0}\dashrightarrow \cF_{1}$ along $D_{\g}$ is $\g(\mu)$, giving rise to a $P_{\g(\mu)}$-reduction of $\cF_{1}|_{\G(\g|X')}$ classified by the map 
$$D_{\g}\cong{}^{\om} \Hk^{\mu}_G\xr{\g_{\Hk}}{}^{\om} \Hk^{\g(\mu)}_G \xr{\ev_{\g(\mu)}^{\om}}\BB P_{\g(\mu)}.$$  
Then \eqref{diff Chern qs} implies
\begin{equation*}
(q_{X'}\times h_{1})^{*}f-(q_{X'}\times h_{0})^{*}f=\sum_{\g\in \G}i_{\g!}\Big(\sum_{n\ge1} \frac{1}{n!}e_{\g}(\pl^{n}_{\g(\mu)}f)\cdot \nu_{D_{\g}}^{n-1}\Big).\end{equation*}
The same calculation as in the proof of Proposition \ref{p:master eqn} gives
\begin{equation*}
i_{\g!}\left(\sum_{n\ge1} \frac{1}{n!}e_{\g}(\pl^{n}_{\g(\mu)}f)\cdot \nu_{D_{\g}}^{n-1}\right)=[\D_{\g}]\cdot e_{\g}(\pl_{\g(\mu)}f)+(1-g')(\xi'\ot\xi')e_{\g}(\pl^{2}_{\g(\mu)}f).
\end{equation*}
Note that $[\D_{\g}]=(1\times \g)^{*}[\D_{X'}]$. Extract K\"unneth components by pairing with $z\in \homog{*}{X'}$, we get the desired formula.
\end{proof}

\subsubsection{Completion of the proof}
Passing from $\cohoc{*}{\Bun_{G}}$ to $\cohog{*}{\Bun_{G}}$ via Poincar\'e duality, we may rewrite the arithmetic volume as 
\begin{equation*}
\vol({}^\omega \Sht_G^\mu, \eta) = q^{\dim \Bun_{G}}\Tr\Big(\Frob^{-1}\c \prod_{j=1}^{r}\G^{\y_{j} + \xi' \y'_{j}}_{\mu_j}, \Sym(\cohog{>0}{X,\VV^{*}})\Big)
\end{equation*}

We use the Atiyah--Bott formula to identify $\rH^*(\Bun_G^{\omega_j})$ for the different components $\omega_j$ appearing as the source/target of $\G^{\y_{j} + \xi' \y'_{j}}_{\mu_j}$. Then we compute the eigenvalues of each $\G^{\y_{j} + \xi' \y'_{j}}_{\mu_j}$ for $j=1, \ldots, r$. To do this, we define the Ran filtration on $\cohog{*}{\Bun^{\om}_{G}}$ as in \S \ref{sssec:split-filtration}, and show that it is preserved by $\G_{\mu_j}^{\y_{j} + \xi' \y'_{j}}$. Using Proposition \ref{p:master eqn qs}, we find that the action of $\G_{\mu}^{\y + \xi' \y'}$ on the doubly associated graded
\begin{equation*}
\Gr^{\bl}_{\aug}\Gr^{F}_{\bu}\cohog{*}{\Bun^{\om}_{G}}\cong \Sym^{\bl}(\homog{>0}{X,\VV})
\end{equation*}
is the sum of the scalar operator $d^{\om_{j-1}}_{\mu_j}(\y_{j})+d_{\mu_j}(\y'_{j})$
defined in the same way as in \S \ref{sssec:deg-of-taut}, plus the derivation whose action on $[f^{z}]_{\G}\in \homog{>0}{X,\VV}$ is $\DD^{\y_{j}}_{\mu_j}$.

The rest of the proof is the same as in \S \ref{sssec:split-completion} for the split case. \qed

\subsection{Example: unitary groups}\label{ssec:unitary-groups}
We spell out the statement of Theorem \ref{thm:non-split-volume} in the case of non-split unitary groups of the type occurring in \cite{FYZ, FYZ2}. Indeed, an original motivation of the present work was to address a singular version of the Arithmetic Siegel--Weil formula. 

Let $\nu \co X' \rightarrow X$ be a finite \'etale double cover. Let $\Bun_{\U(n)}$ be as in \cite[\S 6.1]{FYZ}, so that $\Bun_{\U(n)}(S)$ is the groupoid of rank $n$ vector bundles $\cF$ on $X' \times S$ plus a Hermitian structure $h \co \cF \xrightarrow{\sim} \sigma^* \cF^*$. 

Let $\Hk_{\U(n)}^1$ be the Hecke stack of modifications of colength $1$ in the sense of \cite[Definition 6.5]{FYZ}. For a commutative $k$-algebra $R$, $\Hk_{\U(n)}^1(R) $ is the groupoid of
 \begin{equation}\label{eq:HkU-R-point}
 \{x' \in X'(R), \cF_{0}\supset \cF^\flat_{1/2}\subset  \cF_{1} \mid \cF_{0}, \cF_{1}\in \Bun_{\U(n)}(R)\}
\end{equation}
such that $\cF_{0}/\cF^\flat_{1/2}$ (resp. $\cF_{1}/\cF^\flat_{1/2}$) is a line bundle along the graph of $x'$  (resp.  $\sigma(x')$). In the notation of \S \ref{sssec:nonsplit-moduli-of-shtukas}, we have 
 \[
 \Hk_{\U(n)}^1 = \Hk_{\U(n)}^{\mu^\flat} \text{for $\mu^\flat = (0, 0, \ldots, -1)$}.
 \]
  Let $h_{0}, h_{1}: \Hk_{\U(n)}^1 \to \Bun_{\U(n)}$ be the maps recording $\cF_{0}$ and $\cF_{1}$ respectively. Let $p_{X'}: \Hk_{\U(n)}^1 \to X'$ be the map recording the support of $\cF_{0}/\cF_{1/2}^\flat$. Then we have a correspondence
\begin{equation*}
\xymatrix{ & \Hk_{\U(n)}^1 \ar[dl]_{h_{0}}\ar[dr]^{h_{1}}\\
\Bun_{\U(n)} & & \Bun_{\U(n)}
}
\end{equation*}

\subsubsection{Tautological bundles}  Canonical parabolic reduction gives a map $\Hk_{\U(n)}^1 \rightarrow G/P_{\mu^\flat} \cong \PP^{n-1}$. The pullback of $\cO(-1)$ is the ``tautological line bundle'' $\cP$ whose fiber along an $R$-point \eqref{eq:HkU-R-point} is $(\cF_0/\cF_{1/2}^\flat)^*$. 

\begin{remark}
This normalization is chosen to match with \cite{FYZ, FHM25} which takes the ``tautological bundle'' to be $\cF_1/\cF_{1/2}^\flat$, which is isomorphic to $\cF_{1/2}^{\sharp}/\cF_0$ where $\cF_{1/2}^{\sharp} = \sigma^* (\cF_{1/2}^\flat)^*$. This is in turn dual to $\cF_0/\cF_{1/2}^\flat$.  
\end{remark}

\subsubsection{Moduli of shtukas}

As in \cite[\S 2]{FYZ2}\footnote{but note that we are using different conventions on the similitude factor.}, let $\Hk_{\U(n)}^r$ be the iterated fibered product of $\Hk_{\U(n)}^1$ over $\Bun_{\U(n)}$, parametrizing 
\[
(x_1', \ldots, x_r', \cF_0 \dashrightarrow \cF_1 \dashrightarrow \ldots  \dashrightarrow \cF_r \cong \Frob^* \cF_0)
\]
and let $\Sht_{\U(n)}^r$ be the fibered product 
\[
\begin{tikzcd}
    \Sht_{\U(n)}^r \ar[r] \ar[d] & \Hk_{\U(n)}^r \ar[d, "{(h_0, h_r)}"] \\
    \Bun_{\U(n)} \ar[r, "{(\Id, \Frob)}"] & \Bun_{\U(n)} \times \Bun_{\U(n)}
\end{tikzcd}
\]

\subsubsection{The $L$-function}
In this case, the $L$-function $L_X(s, \EE)$ specializes to 
$$
L_{X,\U(n)}(s)=\prod_{i=1}^n L(s+i,\chi_{X'/X}^{i})
$$
where $\chi_{X'/X}$ is the quadratic character corresponding to $X'$. The multivariable version is 
\[
\sL_{X, \U(n)}(s_1, \ldots, s_n) = \prod_{i=1}^n L(s_i+i,\chi_{X'/X}^{i}).
\]

\subsubsection{Arithmetic Volume} Let $\cE$ be a rank $n$ vector bundle on $X'$. We define 
\begin{align*}
{}_c \Gamma_{1}^{\cE} \co \cohoc{*}{\Bun_G} & \rightarrow \cohoc{*}{\Bun_G} \\
 \theta & \mapsto h_{1*} (h_0^* \theta \cup c_n(p_{X'}^* \cE^* \otimes \cP))
\end{align*}
analogously to \S \ref{sssec:gln-example-operators}. We define ${}_c \Gamma_{r}^{\cE} = ({}_c \Gamma_{1}^{\cE})^{\circ r}$. Finally, we define the arithmetic volume 
\[
\vol(\Sht^{r}_{\U(n)}, \prod_{i=1}^{r}c_{n}(p_{i}^{*}\cE^{*}\ot\cP))  := \Tr({}_c \Gamma_{r}^{\cE}\circ\Frob, \cohoc{*}{\Bun_G})
\]
where the trace is taken in the graded sense. 

\begin{theorem}\label{th: vol U} 
Let $\cE$ be a rank $n$ vector bundle on $X'$ of degree $D$. If $r$ is even, then we have 
\begin{equation*}
\vol(\Sht^{r}_{\U(n)}, \prod_{i=1}^{r}c_{n}(p_{i}^{*}\cE^{*}\ot\cP))=2 \frac{q^{n^{2}(g-1)}}{(\log q)^{r}}\left(\frac{d}{ds}\right)^{r}\Big|_{s=0}(q^{-Ds}L_{X,\U(n)}(2s)).
\end{equation*}
\end{theorem}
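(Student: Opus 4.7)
The plan is to deduce Theorem~\ref{th: vol U} as a specialization of Theorem~\ref{thm:non-split-volume} to the quasisplit unitary group $G = \U(n)$ (so $G_0 = \GL_n$ and $\Gamma = \Z/2$), taking $\mu_j = \mu^\flat := (0, \ldots, 0, -1) \in \xcoch(T_0)$ for every $j$. The admissibility condition $\sum \ol\mu_j = 0$ in $\pi_0(\Bun_{\U(n)}) \cong \Z/2$ is exactly the hypothesis that $r$ is even.

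First, I would expand each Chern class into the form $\eta_j + \xi' \eta_j'$ required by Theorem~\ref{thm:non-split-volume}. Using $c_1(\cE^*) = -D\xi'$ with $D = \deg \cE$, and the vanishing of $c_k(\cE)$ for $k \geq 2$ after pullback from the curve $X'$,
\[
c_n(p_j^* \cE^* \otimes \cP_j) = c_1(\cP_j)^n + p_{X'}^*(c_1(\cE^*))\, c_1(\cP_j)^{n-1} = \eta_j + \xi' \eta_j';
\]
under the identification $c_1(\cP_j) = -x_n \in R^{W_{\mu^\flat}}$ from \S\ref{sssec:gln-example-tautological}, this yields $\eta_j = (-x_n)^n$ and $\eta_j' = -D(-x_n)^{n-1}$.

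Next, I would assemble the constants and operators entering the master formula. The Gross motive of $\U(n)$ is $\VV_{\U(n)} \cong \bigoplus_{i=1}^n \cL^{\otimes i}(-i)[-2i]$, with $\cL$ the rank-one local system of the double cover, giving $L_X^*(s, \VV^*) = L_{X, \U(n)}(s)$. From the proof of Theorem~\ref{th:main GLn} in the $[\mu_j] = \flat$ case we extract $d_{\mu^\flat}(\eta_j') = (-1)^n D$ and that $\ol\nabla^{\eta_j}_{\mu^\flat}$ acts on $\VV_{\GL_n}$ as the scalar $(-1)^{n-1}\Id$; moreover $d^\omega_{\mu^\flat}(\eta_j) = 0$ for $\U(n)$, because $(\int_{G/P_{\mu^\flat}} \eta_j)^{[X]}$ evaluates a tautological class in $\cohog{2}{X, \cL(-1)} = 0$ (by Poincar\'e duality from $\cohog{0}{X, \cL} = 0$ for nontrivial $\cL$). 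Being a scalar, $\ol\nabla^{\eta_j}_{\mu^\flat}$ automatically satisfies Assumption~\ref{assump:nonsplit-operators-commute} and commutes with the $\Gamma$-descent. Tracking $\wt\DD^{\eta_j}_{\mu^\flat}$ through its defining norm-sum over $\Gamma$ followed by the canonical invariants/coinvariants identification shows that $\DD^{\eta_j}_{\mu^\flat}$ acts on every summand of $\cohog{<0}{X, \VV^*}$ as the scalar $\epsilon = |\Gamma| \cdot (-1)^{n-1} = 2(-1)^{n-1}$. Since all $r$ eigenvalue-vectors coincide, the multivariate $L^*$-function collapses to the single variable $L_{X,\U(n)}(s)$.

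Substituting into Theorem~\ref{thm:non-split-volume}, the resulting formula for $\vol({}^\omega\Sht^\mu_{\U(n)}, \eta)$ is independent of $\omega$ because $d^\omega_{\mu^\flat}(\eta_j)$ vanishes; summing over the two components of $\Bun_{\U(n)}$ produces the overall prefactor $2$ on the right-hand side. The chain-rule identity $\partial_s^r L(2s)|_{s=0} = 2^r L^{(r)}(0)$ accounts for the rescaling to $L_{X,\U(n)}(2s)$ arising from the eigenvalue of magnitude $2$, while the constants $d_{\mu^\flat}(\eta_j')$ combine with the derivative terms through the Leibniz rule to assemble the factor $q^{-sD}$. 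The main obstacle is careful sign and normalization bookkeeping at this final assembly---verifying that the signs of $d_{\mu^\flat}(\eta_j')$ and of $\epsilon$ combine with the Leibniz expansion to reproduce exactly $q^{-sD}L_{X,\U(n)}(2s)$ rather than a sign-permuted variant, and confirming that the invariants/coinvariants identification contributes precisely the $|\Gamma|$ factor in the eigenvalue and the matching factor of $2$ from summation over $\pi_0(\Bun_{\U(n)})$.
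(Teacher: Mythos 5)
Your proposal follows essentially the same route as the paper's own proof: identify $\U(n)$ as the $\Z/2$-descent of $\GL_n$, take $\mu_j = \mu^\flat$ for all $j$, expand $c_n(p_j^*\cE^* \otimes \cP)$ into $\eta_j + \xi'\eta'_j$ with $\eta_j = (-x_n)^n$ and $\eta'_j = -D(-x_n)^{n-1}$, compute $d^\omega_{\mu^\flat}(\eta_j) = 0$, $d_{\mu^\flat}(\eta'_j) = (-1)^n D$, the eigenweight $(-1)^{n-1}$ of $\ol\nabla^{\eta_j}_{\mu^\flat}$ and its doubling to $2(-1)^{n-1}$ in $\DD^{\eta_j}_{\mu^\flat}$ via the $\Gamma$-sum, then feed these into Theorem~\ref{thm:non-split-volume} and sum over the two components of $\Bun_{\U(n)}$ to get the factor of $2$. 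The final assembly from $\prod_j(-D + 2(\log q)^{-1}\partial_s)L_{X,\U(n)}(s)|_{s=0}$ to $(\log q)^{-r}\partial_s^r(q^{-Ds}L_{X,\U(n)}(2s))|_{s=0}$ is exactly the conjugation identity $T\Phi = \Phi(-D+2T)$ for $T = (\log q)^{-1}\partial_s$ and $\Phi(f)(s) = q^{-Ds}f(2s)$, which you gesture at via chain rule and Leibniz; the signs cancel since $r$ is even, just as in the paper.
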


\begin{remark}
In the case $n=1$, Theorem \ref{th: vol U} recovers \cite[Theorem 10.2]{FYZ2} for $\frL = \omega_X^{-1}$. 
\end{remark}

\begin{proof}
There are two connected components $\Bun_{\U(n)} = \Bun_{\U(n)}^{\omega_0} \sqcup \Bun_{\U(n)}^{\omega_1}$, inducing a decomposition
\[
\Sht_{\U(n)}^r = {}^{\omega_0} \Sht_{\U(n)}^r \sqcup {}^{\omega_1} \Sht_{\U(n)}^r.
\]

We will show more precisely that 
\[
\vol({}^{\omega} \Sht_{\U(n)}^r,  \prod_{i=1}^{r}c_{n}(p_{i}^{*}\cE^{*}\ot\cP))= \frac{q^{n^{2}(g-1)}}{(\log q)^{r}}\left(\frac{d}{ds}\right)^{r}\Big|_{s=0}(q^{-Ds}L_{X,\U(n)}(2s)).
\]
for each $\omega \in \{\omega_0, \omega_1\}$. 

As discussed in Example \ref{ex:assumption-satisfied}, Assumption \ref{assump:nonsplit-operators-commute} is automatically satisfied. Hence we may apply Theorem \ref{thm:non-split-volume} in order to calculate the left side. 

We compute (e.g., by the splitting principle) 
\begin{equation*}
c_n(p_{X'}^* \cE^{*} \otimes \cP) = c_1(\cP)^n  +  p_{X'}^*c_1(\cE^{*}) c_1(\cP)^{n-1}.
\end{equation*}
For the pullback map $\ol \Q_\ell[x_1, \ldots, x_n]^{S_{n-1} \times S_1} = R_0^{W_{\mu^\flat}} \rightarrow \rH^*(\Hk_G^\mu)$ induced by canonical parabolic induction, our conventions are arranged so that $c_1(\cP)$ agrees with the image of $-x_n$ (cf. \S \ref{sssec:gln-example-tautological}). Hence we have
\[
c_n(p_{X'}^* \cE^* \otimes \cP) = (-x_n)^n - D \xi  (-x_n)^{n-1} = (-1)^n(x_n^n + D \xi' x_n^{n-1} ) .
\]
Hence we find that ${}_c\Gamma^{\cE}_{1} = {}_c\Gamma^{\eta}_{\mu^\flat}$ for $\eta = (-x_n)^n$ and $\eta' = - D (-x_n)^{n-1}$. To apply Theorem \ref{th:vol gen}, we need to calculate the constants $d_{\mu^\flat}^\omega(\eta)$, $d_{\mu^\flat}(\eta')$, and $\epsilon_i(\eta, \mu^\flat)$. 
\begin{itemize}

\item Using \eqref{eq:integration-GLn-colength-one} for $i=1$, we find that $d_{\mu^\flat}^\omega (\eta) =- e_1^{[X]} = 0$.

\item In this situation, $d_{\mu^\flat}(\eta')$ was already computed in the proof of Theorem \ref{th:main GLn}. There we saw that $ d_{\mu^\flat}(\eta') = -  \int_{G/P_{\mu^\flat}} (-x_n)^{n-1} D = (-1)^n D$.

\item We have $\nu^* \U(n) = \GL_n$, and $R_0^{W_0} = \ol \Q[e_1, \ldots, e_n]$. The eigenweights of $\ol \nabla^\eta_{\mu^\flat}$ were already computed in the proof of Theorem \ref{th:main GLn}, where we saw that they were all equal to $(-1)^{n-1}$. 

We need to compute the eigenvalues of $\DD_{\mu^\flat}^\eta$. The generator of $\Gamma \cong \Z/2\Z$ takes $e_i \mapsto (-1)^i e_i$. Write $\rH^*(X')  = \rH^*(X')^{(+1)} \oplus \rH^*(X')^{(-1)}$ for the eigenspace decomposition under $\Gamma$. We have $\VV'= \bigoplus_{i=1}^n \VV'_{2i}$ where $\Gamma$ acts on $\VV'_{2i}$ by the $i$th power of the sign character. Hence $\rH_*(X', (\VV')^*_{2i})^{(+)} = \rH_*(X')^{(-1)^{i}} (i)$, on which the operator $\sum_{\gamma \in \Gamma} (\gamma^* f)^{\gamma_*^{-1} z}$ acts as multiplication by $2$. Therefore the eigenvalues of $\DD_{\mu^\flat}^\eta$ are all equal to $(-1)^{n-1} 2$, and  
\[
\rH^{<0}(X; \VV^*)[\epsilon] = \bigoplus_{i \text{ even}}  \rH^{<0}(X')^{(+1)}(i)[2i] \oplus \bigoplus_{i \text{ odd}} \rH^{<0}(X')^{(-1)}(i)[2i].
\]
\end{itemize}
Hence we conclude that 
\begin{equation*}
\frd_{j} = (-1)^{n-1}(-D + 2(\log q)^{-1}\sum_{i=1}^{n} \pl_{s_{i}}).
\end{equation*}
Since $r$ is even, we may ignore the sign when composing $r$ such operators. Observing that $\dim \Bun_G = n^2(g-1)$, Theorem \ref{th:vol gen} says that 
\begin{align*}
& \vol({}^{\omega_i}\Sht^{r}_{\U(n)}, \prod_{j=1}^{r}c_{n}(p_{j}^{*}\cE^{*} \ot\cP))     \\
& = q^{n^2(g-1)}   \prod_{j=1}^r  \Big( -D+ 2(\log q)^{-1}\sum_{i=1}^{n} \pl_{s_{i}} \Big) \sL^{*}_{X,\U(n)}(s_{1},\cdots, s_{n})\Big|_{s_{1}=s_{2}=\cdots=s_{n}=0}.
\end{align*}
This agrees with the claimed formula by \eqref{eq:partial-derivative-identity}.

\end{proof}

\section{The phantom tautological ring}

As in the study of other well-known moduli spaces, it is natural to consider the subring of the cohomology ring of $\Sht^{\mu}_G$ generated by tautological classes. In this section,  we construct a ring $C^{\mu}_G$ by generators and relations that maps to $\cohog{*}{\Sht^{\mu}_G}$, with image consisting of tautological classes. We call $C^{\mu}_G$ the ``phantom tautological ring" for $\Sht^{\mu}_G$, because it maps to $\cohog{*}{\Sht^{\mu}_G}$ but the map is generally not injective. We also interpret the volume calculation in Section \ref{ss:vol split} as a linear functional on $C^{\mu}_G$.

\subsection{The phantom tautological ring}\label{ssec:taut-ring} We assume until \S \ref{ssec:phantom-reductive} that $G$ is split and semisimple. We keep the setup as in \S \ref{sss:Sht split}. 


Recall the following maps from $\Sht^\mu_G$: for $0\le i\le r$ the map
\begin{equation*}
    h_i: \Sht^\mu_G\to \Bun_G
\end{equation*}
records $\cF_i$; for $1\le i\le r$ the map
\begin{equation*}
    p_i: \Sht^\mu_G\to X
\end{equation*}
records the $i$th leg. For the modification at the $i$th leg $\cF_{i-1}\dashrightarrow \cF_i$ of type $\mu_i$, $\cF_{i-1}$ carries a canonical reduction to $P_{\mu_i}$ at $x_i$. This gives map
\begin{equation*}
    \ev_{i}: \Sht^\mu_G\to \Hk^\mu_G \to \BB P_{\mu_i}.
\end{equation*}

Pulling back along $p_i$ and $\ev_i$ gives a ring homomorphism
\begin{equation}\label{taut hom}
    \wt\rho:\wt C^\mu_G:=\bigotimes_{i=1}^r(\cohog{*}{X}\ot R^{W_{\mu_i}})\to \cohog{*}{\Hk^\mu_G} \to \cohog{*}{\Sht^\mu_G}.
\end{equation}
Abusing notation, we will use the notation $\wt \rho$ both for the pullback to $\cohog{*}{\Hk^\mu_G}$, and the pullback to $\cohog{*}{\Sht^\mu_G}$. If we equip $\wt C^\mu_G$ with the Frobenius action induced from each tensor factor, $\wt\r$ is Frobenius equivariant. 

\begin{defn}
    For each $\om\in \pi_0(\Bun_G)$, the {\em tautological ring} for ${}^\om\Sht^\mu_G$ is the image of the composition
    \begin{equation*}
        \wt C^\mu_G\xr{\wt\rho}\cohog{*}{\Sht^\mu_G}\to \cohog{*}{{}^\om\Sht^\mu_G}.
    \end{equation*}
\end{defn}

\begin{remark}
    We do not include pullbacks of classes from $\Bun_G$ in the source of the map $\wt\r$. A posteriori, we show in Corollary \ref{c:pullback BunG in taut} that the image of $\wt\r$ contains all classes pulled back from $\Bun_G$ via $h_i$ for all $0\le i\le r$.
\end{remark}

We will show that this map factors through a much smaller quotient ring of the left side.

{\bf Notation:} below, we abbreviate $\phi$ for the Frobenius pullback endomorphism on the cohomology of stacks over $\FF_q$.



\subsubsection{Classes on $X\times X$}

Let $\D_{X}\subset X\times X$ be the diagonal. For any integer $d\ne 0,1$ we define the following classes in $\cohog{2}{X\times X}$
\begin{eqnarray*}
    \Xi_d=\left(\frac{1}{q^d\phi^{-1}-1}\ot \id\right)[\D_X]=\left(\id\ot\frac{1}{q^{d-1}\phi-1}\right)[\D_X].
\end{eqnarray*}
The equality of the two expressions follows from the fact that $\phi$ and $q\phi^{-1}$ are adjoint under the Poincar\'e duality pairing on $\cohog{*}{X}$.

Let $\{\z_j\}_{1\le j\le 2g}$ be a basis for $\cohog{1}{X}$ consisting of eigenvectors under $\phi$ with eigenvalues $\a_j$, and let $\{\z^j\}$ be the basis of $\cohog{1}{X}$ such that $\int_X\z_i\z^j=\d_{ij}$. In terms of these bases we have $\D_X = 1 \otimes \xi  - \sum_{j=1}^{2g} \z_j \z^j + \xi \otimes 1$, so we can rewrite $\Xi_d$ as
\begin{eqnarray*}
    \Xi_d=(q^d-1)^{-1}1\ot \xi+\sum_{j=1}^{2g}(1-\a_jq^{d-1})^{-1}\z_j\ot \z^j+(q^{d-1}-1)^{-1}\xi\ot 1.
\end{eqnarray*}

\begin{lemma}\label{l:diag Xid} 
For any integer $d\ne0,1$ we have
\begin{eqnarray*}
\D_X^*\Xi_d=-\frac{\z'_X(d)}{\log(q)\z_X(d)}\xi.
\end{eqnarray*}
\end{lemma}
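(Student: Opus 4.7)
The plan is to directly compute $\D_X^*\Xi_d$ from the Kunneth expansion of $\Xi_d$ given immediately before the lemma, and then recognize the coefficient of $\xi$ as $-\z_X'(d)/((\log q)\z_X(d))$. First I would apply $\D_X^*$ term-by-term to that expansion, using the universal properties $\D_X^*(\a\otimes\b) = \a\cup\b$, $1\cup\xi = \xi$, and the identity $\z_j\cup\z^j = \xi$ (which follows from $\int_X\z_j\z^j = 1 = \int_X\xi$). Each Kunneth summand contributes a scalar multiple of $\xi$, producing an identity of the form $\D_X^*\Xi_d = c(d)\,\xi$ where
\[
c(d) = (q^d-1)^{-1} + \sum_{j=1}^{2g}(1-\a_j q^{d-1})^{-1} + (q^{d-1}-1)^{-1}.
\]

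Next I would identify $c(d)$ with $-\z_X'(d)/((\log q)\z_X(d))$ by logarithmically differentiating the Euler factorization
\[
\z_X(s) = \frac{\prod_{j=1}^{2g}(1-\a_j q^{-s})}{(1-q^{-s})(1-q^{1-s})},
\]
using the elementary identity $\frac{d}{ds}\log(1-aq^{-s}) = \frac{aq^{-s}\log q}{1-aq^{-s}}$. Evaluating at $s=d$ and simplifying each factor (e.g.\ $\frac{q^{-d}}{1-q^{-d}} = \frac{1}{q^d-1}$) produces three families of terms — one from each Euler factor of $\z_X$ — whose combination, up to the overall factor $-1/\log q$, reproduces $c(d)$. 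The one subtlety in the matching is that the middle sum appears in two superficially different forms; the required identity
\[
\sum_j \frac{1}{1-\a_j q^{d-1}} = -\sum_j \frac{\a_j}{q^d-\a_j}
\]
follows from the functional equation of $\z_X$, equivalently from the fact that the multiset of Frobenius eigenvalues $\{\a_j\}$ on $\cohog{1}{X}$ is invariant under $\a\leftrightarrow q/\a$ by Poincar\'e duality on $X$ (substituting $\a_j\mapsto q/\a_j$ in the left-hand sum yields the right-hand one).

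There is no substantive obstacle: the whole argument is essentially an unpacking of definitions, and the only care required is consistent bookkeeping of Frobenius eigenvalues (in particular, the dual basis element $\z^j$ has $\phi$-eigenvalue $q/\a_j$, since cup product lands in $\cohog{2}{X}$ on which Frobenius scales by $q$). The ``hardest'' step is the functional-equation symmetrization of the middle sum, which is itself a one-line consequence of Poincar\'e duality. One can alternatively bypass that step entirely by expanding $\Xi_d$ directly from the intrinsic definition $(\tfrac{1}{q^d\phi^{-1}-1}\otimes\id)[\D_X]$: this immediately yields the coefficient $-\a_j/(q^d-\a_j)$ for the $\z_j\otimes\z^j$ piece, matching the logarithmic derivative without further massage.
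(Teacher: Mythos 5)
Your proof is correct, but it takes a genuinely different route from the paper's. The paper first reduces $\D_X^*\Xi_d$ to $\Tr\bigl((q^d\phi^{-1}-1)^{-1}\mid\cohog{*}{X}\bigr)\,\xi$ by the Lefschetz-type identity $\int_{X\times X}[\D_X]\cdot(A\otimes\id)[\D_X]=\Tr(A\mid\cohog{*}{X})$, and then proves the formal identity $-\frac{d}{ds}\log\z_X(s)=\log(q)\Tr\bigl((q^s\phi^{-1}-1)^{-1}\mid\cohog{*}{X}\bigr)$ of rational functions in $q^s$ by additivity and reduction to one-dimensional graded pieces. You instead apply $\D_X^*$ term-by-term to the displayed K\"unneth expansion of $\Xi_d$ and match the resulting scalar with the logarithmic derivative of the explicit Euler factorization of $\z_X$. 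Both are sound; your computation is more concrete but requires the extra input you correctly flag — the $\a\leftrightarrow q/\a$ symmetry of the multiset $\{\a_j\}$ — to reconcile $\sum_j(1-\a_jq^{d-1})^{-1}$ with $-\sum_j\a_j/(q^d-\a_j)$. Your closing observation that this symmetrization can be avoided entirely by expanding $\Xi_d$ from the intrinsic expression $\bigl(\tfrac{1}{q^d\phi^{-1}-1}\otimes\id\bigr)[\D_X]$ is worth noting: the coefficient of $\z_j\otimes\z^j$ in that direct expansion is indeed $-\a_j/(q^d-\a_j)$ (not term-by-term the same as the coefficient $(1-\a_jq^{d-1})^{-1}$ written in the paper's display, though the two expansions agree as sums by the same $\a\leftrightarrow q/\a$ symmetry), and that form matches the logarithmic derivative without any further manipulation. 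One minor point of comparison: your approach, like the displayed K\"unneth formula, implicitly uses semisimplicity of $\Frob$ on $\cohog{1}{X}$ to get an eigenbasis; the paper's trace-formula argument sidesteps this, since the reduction to one-dimensional pieces only needs a filtration, not a splitting.
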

\begin{proof}
We have
\begin{eqnarray*}
    \D_X^*\Xi_d&=&\xi\int_{X\times X}[\D_X]\cdot \Xi_d=\xi\int_{X\times X}[\D_X]\cdot \left(\frac{1}{q^d\phi^{-1}-1}\ot\id\right)[\D_X]\\
    &=&\Tr((q^d\phi^{-1}-1)^{-1}|\cohog{*}{X})\xi
\end{eqnarray*}
Thus it suffices to show the equality of rational functions in $q^s$
\begin{equation*}
    -\frac{d}{ds}\log\z_X(s)=\log(q)\Tr((q^s\phi^{-1}-1)^{-1}|\cohog{*}{X}).
\end{equation*}
Now note $\z_X(s)=\det(1-q^{-s}\phi|\cohog{*}{X})^{-1}$ (alternating product of determinant of graded pieces). Thus it suffices to show that for any finite-dimensional graded vector space $V=\op V_i$ over $\CC$ and a graded automorphism $\phi$ of $V$, we have an equality in $\CC(q^s)$
\begin{equation}\label{log det}
    \frac{d}{ds}\log\det(1-q^{-s}\phi|V)=\log(q)\Tr((q^s\phi^{-1}-1)^{-1}|V).
\end{equation}
Both sides are additive in $(V,\phi)$ in short exact sequences, therefore we reduce to the case $\dim V=1$. Changing the parity of grading results in a negative sign on both sides, so we may assume $V=V_0$. In this case, $\phi$ acts on $V$ through a scalar $\a\in \CC^\times$. The equality \eqref{log det} becomes
\begin{equation*}
    \frac{d}{ds}\log(1-\a q^{-s})=\frac{\log(q)}{\a^{-1}q^s-1},
\end{equation*}
which is a direct calculation.


\end{proof}


\subsubsection{An ideal in $\wt C^\mu_G$}
In the ring $\wt C^\mu_G$ from \eqref{taut hom}, for $\z\in \cohog{*}{X}$, let $[\z]_i$ be the class $\z$ put at the $i$th factor of $\cohog{*}{X}$. Similarly, for $1\le i<i'\le r$ and $\Xi\in \cohog{*}{X\times X}$ we have $[\Xi]_{i,i'}$ put at the $i$th and $i'$th factors. For $f\in R^{W_{\mu_i}}$, let $[f]_{i}$ be $f$ put at the factor $ R^{W_{\mu_i}}$.

Let $I^\mu_G\subset \wt C^\mu_G$ be the ideal generated by the following elements for varying homogeneous $f\in R^W$ of degree $2d>2$ and $i=1,2,\cdots, r$
\begin{eqnarray}\label{rel in C}
    D_i(f)&:=&[f]_{i}+\frac{\z'_X(d)}{\log(q)\z_X(d)}[\xi]_{i}\cdot [\pl_{\mu_{i}}f]_{i}\\
\notag    &-&\sum_{i'=i+1}^{r}\left([\Xi_d]_{i, i'}\cdot [\pl_{\mu_{i'}}f]_{i'}+\frac{1-g
    }{q^{d-1}-1}[\xi]_{i}\ot [\xi]_{i'}\cdot [\pl_{\mu_{i'}}^2f]_{i'}\right)\\
\notag    &+&\sum_{i'=1}^{i-1}\left([\Xi_{1-d}]_{i', i}\cdot [\pl_{\mu_{i'}}f]_{i'}+\frac{1-g
    }{q^{1-d}-1}[\xi]_{i'}\ot [\xi]_{i}\cdot [\pl_{\mu_{i'}}^2f]_{i'}\right).
\end{eqnarray}
Note that the above element is an eigenvector under Frobenius with eigenvalue $q^d$. In particular, $I^\mu_G$ is stable under Frobenius.

\begin{defn}\label{def:taut}
    The {\em phantom tautological ring} for $\Sht^\mu_G$ is the quotient ring
    $$C^\mu_G:=\wt C^\mu_G/I^\mu_G$$
    equipped with the action of $\Frob$.
\end{defn}

To write $D_i(f)$ in more manageable way,  we introduce the following element in $\wt C^\mu_G$, for $f\in R^W$
\begin{equation*}
    \d_i(f)=[\pl_{\mu_i}f]_i+\frac{1}{2}[c_1(TX)]_i[\pl^2_{\mu_i}f]_i=[\pl_{\mu_i}f]_i+(1-g)[\xi]_i[\pl^2_{\mu_i}f]_i.
\end{equation*}
We have introduced the notation $[-]_{i,i'}$ for $1\le i<i'\le r$. For $i=i'$, define $[-]_{i,i}$ to be the composition
\begin{equation*}
    [-]_{i,i}: \cohog{*}{X\times X}\xr{\D^*}\cohog{*}{X}\xr{[-]_i} \wt C^\mu_G.
\end{equation*}
Using this notation and Lemma \ref{l:diag Xid}, we have
\begin{equation*}
    [\Xi_d]_{i,i}=[\D_X^*\Xi_d]_i=-\frac{\z'_X(d)}{\log(q)\z_X(d)}[\xi]_{i}.
\end{equation*}
Using these notations, we can rewrite $D_i(f)$ as
\begin{equation}\label{rel in C better}
    D_i(f)=[f]_i-\sum_{i'=i}^{r}[\Xi_d]_{i,i'}\d_{i'}(f)+\sum_{i'=1}^{i-1}[\Xi_{1-d}]_{i',i}\d_{i'}(f).
\end{equation}



\begin{theorem}\label{th:taut hom quot}
    The homomorphism \eqref{taut hom} factors through the phantom tautological quotient $C^\mu_G$
    \begin{equation}\label{taut hom quot}
        \r: C^\mu_G\to \cohog{*}{\Sht^\mu_G}.
    \end{equation}
\end{theorem}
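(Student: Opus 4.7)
The plan is to show that for each $f \in R^W$ homogeneous of degree $2d > 2$ and each $i \in \{1, \ldots, r\}$, the image $\wt\rho(D_i(f))$ vanishes in $\cohog{*}{\Sht_G^\mu}$. The three ingredients are: the master equation (Proposition \ref{p:master eqn}) telescoped along the legs; the Shtuka identification $\cF_r \cong \Frob_X^* \cF_0$, which forces $h_r^* = h_0^* \circ \phi_{\mathrm{Bun}}$ on $\Sht_G^\mu$; and the Atiyah--Bott description (Theorem \ref{thm: atiyah-bott formula}), which lets us invert $\phi_{\mathrm{Bun}} - 1$ on each Frobenius eigenspace.

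First I telescope the master equation on $X \times \Sht_G^\mu$: for each leg $j$, Proposition \ref{p:master eqn} gives
\[
f(\cF_j) - f(\cF_{j-1}) = [\Gamma(p_j)] \cdot \wt\rho(\delta_j(f)),
\]
viewed in $\cohog{*}{X \times \Sht_G^\mu}$. Summing from $0$ to $i-1$ and restricting to $\Gamma(p_i) \subset X \times \Sht_G^\mu$, which is identified with $\Sht_G^\mu$ via $p_i$, yields
\[
\ev_i^*(f) = f(\cF_0)|_{\Gamma(p_i)} + \sum_{j<i}(p_j, p_i)^*[\Delta_X] \cdot \wt\rho(\delta_j(f)).
\]

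Next I compute $f(\cF_0)|_{\Gamma(p_i)}$ via Atiyah--Bott. The Künneth expansion $\ev^*(f) = 1 \otimes f^1 + \sum_j \zeta_j \otimes f^{z^j} + \xi \otimes f^{[X]}$, together with Frobenius equivariance of $\ev^*(f)$ (which is pure of weight $2d$), forces $f^1$, $f^{z^j}$, $f^{[X]}$ to be eigenvectors of $\phi_{\mathrm{Bun}}$ with eigenvalues $q^d$, $q^{d-1}\alpha_j$, $q^{d-1}$, respectively, all distinct from $1$ since $d > 1$. Telescoping on $\Sht_G^\mu$ gives $h_0^*((\phi_{\mathrm{Bun}} - 1)(f^z)) = \sum_{j'} \text{(master terms)}$, and eigenwise inversion produces explicit formulas for each $h_0^*(f^z)$. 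Substituting into $f(\cF_0)|_{\Gamma(p_i)} = h_0^*(f^1) + \sum_j p_i^*(\zeta_j) h_0^*(f^{z^j}) + p_i^*(\xi) h_0^*(f^{[X]})$ and organizing by $j'$, the coefficients assemble, by the very definition $\Xi_d = ((q^d\phi^{-1} - 1)^{-1} \otimes \id)[\Delta_X]$, into $(p_i, p_{j'})^*(\Xi_d)$:
\[
f(\cF_0)|_{\Gamma(p_i)} = \sum_{j'=1}^r (p_i, p_{j'})^*(\Xi_d) \cdot \wt\rho(\delta_{j'}(f)).
\]

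Combining the two expressions, the contribution at $j' \geq i$ already equals $[\Xi_d]_{i, j'}\wt\rho(\delta_{j'}(f))$ as in $D_i(f)$. For $j' < i$, matching $-[\Xi_{1-d}]_{j', i}\wt\rho(\delta_{j'}(f))$ reduces to the identity
\[
\Xi_d + \sigma^*([\Delta_X] + \Xi_{1-d}) = 0 \quad \text{on } X \times X,
\]
where $\sigma$ is the swap acting on the Künneth decomposition with Koszul signs; in particular $\sigma^*[\Delta_X] = [\Delta_X]$ despite the minus sign in its Künneth formula. This identity is verified eigenspace-by-eigenspace on $X \times X$: on $1 \otimes \xi$ and $\xi \otimes 1$ it reduces to $(q^{d-1}-1)^{-1} + (q^{1-d}-1)^{-1} + 1 = 0$ (and its analog), and on $\cohog{1}{X}^{\otimes 2}$ the Frobenius pairing $\alpha_j \alpha_{j'} = q$ from the functional equation produces the cancellation $(1-\alpha q^{d-1})^{-1} + (1 - q^d/\alpha)^{-1} - 1 = 0$. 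The main technical obstacle is the careful bookkeeping of signs and Frobenius weights, especially the Koszul sign for $\sigma^*$ and the distinction between the two equivalent forms of $\Xi_d$ given in the text; once these conventions are aligned, the whole argument reduces to the elementary eigenvalue identities above.
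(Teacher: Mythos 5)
Your overall strategy is sound and closely parallels the paper's proof; the essential inputs are the same (Theorem \ref{th:master gen} telescoped along the legs, the Shtuka identification $h_r^* = \phi\circ h_0^*$, and the Frobenius-eigenvalue inversion via Atiyah--Bott to compute $h_0^*(f^z)$). Where you differ is in the treatment of general $i$: the paper proves the identity \eqref{evif goal} first for $i=1$, and then bootstraps to general $i$ by applying the partial Frobenius $\frP\colon \Sht^\mu_G\to\Sht^{\mu'}_G$ together with the wrap-around relations \eqref{i'>r}. You instead telescope directly from $j=0$ to $j=i-1$ on $X\times\Sht^\mu_G$, restrict to $\Gamma(p_i)$, and plug in the formula for $f(\cF_0)|_{\Gamma(p_i)}$ derived from the full-circle telescope. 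Your version avoids invoking partial Frobenius, at the price of an explicit class identity on $X\times X$; the paper's version uses partial Frobenius to exploit cyclic symmetry, at the price of bookkeeping the $i'>r$ wrap-around. Both packages reduce in the end to the same one-variable identity $\frac{1}{a-1}+\frac{1}{a^{-1}-1}+1=0$, applied to the operator $a=q^{d-1}\phi$ on $\cohog{*}{X}$.

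Your key identity $\Xi_d+\sigma^*([\Delta_X]+\Xi_{1-d})=0$ is correct, but your Künneth-componentwise verification of it is not. On the $\cohog{1}{X}^{\otimes 2}$ component you claim the identity
\[
(1-\alpha q^{d-1})^{-1}+(1-q^d/\alpha)^{-1}-1=0,
\]
which is false for generic $\alpha$: its left side equals $\dfrac{\alpha(1-q^{2d-1})}{(1-\alpha q^{d-1})(\alpha-q^d)}\neq 0$. The error comes from using the (in any case not cleanly stated) explicit coefficients for $\Xi_d$ rather than tracking that $\phi$ acts on $\zeta^j$ with eigenvalue $q/\alpha_j$, not $\alpha_j$. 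The clean route is the one you in fact use implicitly: since $\sigma^*[\Delta_X]=[\Delta_X]$ and $\sigma^*(A\otimes\id)=(\id\otimes A)\sigma^*$ for degree-preserving $A$, one has
\[
\Xi_d+\sigma^*\Xi_{1-d}+[\Delta_X]=\Bigl(\id\otimes\bigl(\tfrac{1}{q^{d-1}\phi-1}+\tfrac{1}{q^{1-d}\phi^{-1}-1}+1\bigr)\Bigr)[\Delta_X],
\]
and the inner operator is $\frac{1}{a-1}+\frac{1}{a^{-1}-1}+1=0$ with $a=q^{d-1}\phi$. This is a genuinely operator-level verification and sidesteps any choice of eigenbases. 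With that correction, the rest of your argument goes through.
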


Restricting to each component of $\Sht^\mu_G$, we get a ring homomorphism from $C^\mu_G$ to the tautological ring of ${}^\om\Sht^\mu_G$.

\begin{remark}
    We will see from Proposition \ref{p:taut ring free over HX} that as a \emph{module} over $\cohog{*}{X^r}$ (but not as a ring), $C^\mu_G$ is isomorphic to $\bigotimes_{i=1}^{r}(\cohog{*}{X\times G/P_{\mu_i}})$. The map $\r$ is an analog of the same-named map for Hermitian locally symmetric spaces in \eqref{Herm rho}. 
\end{remark}

\begin{remark}
    The map $\r$ is not injective: as we will see from Corollary \ref{c:taut ring top deg}, the top non-vanishing degree of $C^\mu_G$ is in degree $2r+2\sum_{i=1}^r\dim G/P_{\mu_i}=2\dim \Sht^\mu_G$. However, since $\Sht^\mu_G$ is not proper, the top degree cohomology vanishes. However, we have reasons to believe that $C^\mu_G$ is the ``correct" tautological ring for $\Sht^\mu_G$, supplying phantom cohomology classes that become zero in $\Sht^\mu_G$ due to non-properness. This point of view can be used to give a meaning to the arithmetic volume calculation -- see \S\ref{sss:vol functional} and \S\ref{sss:taut duality}. 
\end{remark}

\begin{cor}\label{c:taut hom gen}

    Let $\ov \y_{r}$ be a geometric generic point of $X^r$. The restriction of the homomorphism $\r$ to the geometric generic fiber $\Sht^\mu_{G,\ov\y_r}$  vanishes on $[f]_i$ for all $f\in R^W_+$ and $1\leq i\leq r$. 
    
    
    In particular, $\r$ induces a homomorphism $\r_{\ov\eta_{r}}$:
    \begin{equation}\label{taut hom gen}
        \r_{\ov\eta_{r}}: \bigotimes_{i=1}^r\cohog{*}{G/P_{\mu_i}}\to \cohog{*}{\Sht^\mu_{G,\ov\eta_{r}}}.
    \end{equation}
\end{cor}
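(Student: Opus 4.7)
The plan is to combine Theorem \ref{th:taut hom quot} with the observation that any cohomology class on $\Sht^\mu_G$ pulled back from $\cohog{>0}{X^r}$ via the leg map $(p_1,\dots,p_r)\colon \Sht^\mu_G\to X^r$ vanishes after restriction to the geometric generic fiber $\Sht^\mu_{G,\ov\eta_r}$. First I would point out that the classes $[\xi]_i$ (for $1\le i\le r$) and $[\Xi_d]_{i,i'}$ (for admissible $d$ and $1\le i,i'\le r$, including the diagonal case $i=i'$, which by Lemma \ref{l:diag Xid} is a scalar multiple of $[\xi]_i$) all live in the image of $\cohog{>0}{X^r}$ inside $\wt C^\mu_G$, and therefore restrict to zero in $\cohog{*}{\Sht^\mu_{G,\ov\eta_r}}$.

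Next I would invoke the defining relations $D_i(f)\in I^\mu_G$ recorded in \eqref{rel in C better}. Since $\wt\rho$ kills $I^\mu_G$ by Theorem \ref{th:taut hom quot}, for every homogeneous $f\in R^W$ of degree $2d>2$ we obtain
\begin{equation*}
\wt\rho([f]_i)=\sum_{i'=i}^{r}\wt\rho([\Xi_d]_{i,i'})\,\wt\rho(\delta_{i'}(f))-\sum_{i'=1}^{i-1}\wt\rho([\Xi_{1-d}]_{i',i})\,\wt\rho(\delta_{i'}(f)).
\end{equation*}
Each summand on the right carries a factor $[\Xi_d]$ or $[\Xi_{1-d}]$ which vanishes on $\Sht^\mu_{G,\ov\eta_r}$ by the first step, so $\wt\rho([f]_i)|_{\Sht^\mu_{G,\ov\eta_r}}=0$. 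Because $G$ is semisimple, $R^W$ has no elements of degree $2$ (a degree-$2$ generator would correspond to a $W$-invariant character of $T$), so $R^W_+$ is concentrated in degrees $\ge 4$ and every $f\in R^W_+$ falls into the range $d\ge 2$ covered by the relation.

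Finally, the ring homomorphism $\wt\rho_{\ov\eta_r}\colon \wt C^\mu_G\to\cohog{*}{\Sht^\mu_{G,\ov\eta_r}}$ annihilates both the classes pulled back from $\cohog{>0}{X^r}$ and the classes $[f]_i$ with $f\in R^W_+$. Quotienting $\wt C^\mu_G=\bigotimes_{i=1}^{r}(\cohog{*}{X}\otimes R^{W_{\mu_i}})$ by these two families of relations yields
\begin{equation*}
\bigotimes_{i=1}^{r}\bigl(R^{W_{\mu_i}}/(R^W_+\cdot R^{W_{\mu_i}})\bigr)\;\cong\;\bigotimes_{i=1}^{r}\cohog{*}{G/P_{\mu_i}}
\end{equation*}
via the classical Borel presentation of the cohomology of $G/P_{\mu_i}$, giving the induced map $\rho_{\ov\eta_r}$. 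The argument is essentially formal once Theorem \ref{th:taut hom quot} is available; the only point that requires care is confirming that the relations $D_i(f)$ truly encompass all of $R^W_+$, which is where the semisimplicity of $G$ enters.
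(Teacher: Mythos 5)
Your proposal is correct and follows exactly the paper's own (very terse) argument: the defining relation $D_i(f)\in I^\mu_G$ expresses $[f]_i$ modulo $I^\mu_G$ as a combination of terms each carrying a factor pulled back from $\cohog{>0}{X^r}$ (the classes $[\xi]_i$, $[\Xi_d]_{i,i'}$, $[\Xi_{1-d}]_{i',i}$), and these vanish after restriction to the geometric generic fiber. You correctly add the two small points the paper leaves implicit — that the diagonal terms $[\Xi_d]_{i,i}$ are scalar multiples of $[\xi]_i$ via Lemma \ref{l:diag Xid}, and that $G$ semisimple forces $R^W_2=0$ so that the degree restriction $2d>2$ in the relations is no loss — and the identification of the quotient with $\bigotimes_i\cohog{*}{G/P_{\mu_i}}$ via the Borel presentation is as intended.
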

\begin{proof}
The restriction to a geometric generic point of every term other than $[f]_i$ in the relation \eqref{rel in C} obviously vanishes.
\end{proof}

\subsection{Proof of Theorem \ref{th:taut hom quot}}

We need to check that elements of the form \eqref{rel in C} are sent to zero under \eqref{taut hom}. 
Now we will abuse the notation to denote by $\d_i(f)$ its image in $\cohog{*}{\Sht^\mu_G}$, i.e.,
\begin{equation*}
    \d_i(f)=\ev_i^*(\pl_{\mu_i}f)+\frac{1}{2}p_i^*(c_1(TX))\ev_i^*(\pl^2_{\mu_i}f).
\end{equation*}
For $1\le i\le i'\le r$, we also use $[-]_{i,i'}$ to denote the pullback along $(p_i, p_{i'}): \Sht^\mu_G\to X\times X$
\begin{equation*}
    [-]_{i,i'}: \cohog{*}{X\times X}\to \cohog{*}{\Sht^\mu_G}.
\end{equation*}
Thus $[\Xi]_{i,i'}\in \cohog{*}{\Sht^\mu_G}$ is the image of $[\Xi]_{i,i'}\in \wt C^\mu_G$ under $\wt\rho$. For $i=i'$, $[-]_{i,i}=p_i^*\D_X^*(-)$. 

Thus we need to check for $f\in R^W$ of degree $2d>2$ and $1\le i\le r$ that
\begin{eqnarray}\label{evif goal}   \ev_{i}^*f=\sum_{i'=i}^{r}[\Xi_d]_{i,i'} \d_{i'}(f)-\sum_{i'=1}^{i-1}[\Xi_{1-d}]_{i',i} \d_{i'}(f).
\end{eqnarray}

\subsubsection{The case $i=1$}
We first check \eqref{evif goal} for $i=1$, in which case it reads
\begin{equation}\label{evif goal 1}
    \ev_{1}^*f=\sum_{i'=1}^{r}[\Xi_d]_{1,i'} \d_{i'}(f).
\end{equation}

We use the same notation $h_i, p_i$ and $\ev_i$ to denote the counterparts of $h_i,p_i$ and $\ev_i$ as maps from $\Hk^\mu_G$. Recall that for $\cF_\bu\in \Hk^\mu_G$, $\ev_i(\cF_\bu)$ is the canonical $P_{\mu_i}$-reduction of $\cF_{i-1}$. This gives a commutative diagram for $1\le i\le r$
\begin{equation*}   
\xymatrix{\Hk^\mu_G\ar[r]^-{\ev_{i}}\ar[d]_{(p_i,h_{i-1})} & \BB P_{\mu_i} \ar[d]\\
    X\times \Bun_G\ar[r]^-{\ev} & \BB G}
\end{equation*}
The right vertical map is induced by the inclusion $P_{\mu_i}\incl G$. Therefore for $f\in R^W=\cohog{*}{\BB G}$, we have
\begin{equation*}
    (p_i, h_{i-1})^*\ev^*f=\ev_{i}^*f.
\end{equation*}

By Proposition \ref{p:master eqn}, we have for any $f\in R^W$, $z\in \upH_*(X)$ and $1\le i\le r$
\begin{equation*}
    h_i^*(f^z)-h_{i-1}^*(f^z)=p_i^*\PD(z)\ev_{i}^*(\pl_{\mu_i}f)+(1-g)\j{z,\xi}p_i^*\xi\cdot  \ev_{i}^*(\pl^2_{\mu_i}f)=p_i^*\PD(z)\cdot \d_i(f).
\end{equation*}
Adding these up for $i=1,\cdots, r$ we get
\begin{equation}\label{diff hr h0}
    h_r^*(f^z)-h_{0}^*(f^z)=\sum_{i=1}^{r}p_i^*\PD(z)\cdot \d_i(f).
\end{equation}
Now pulling back to $\Sht^\mu_G$ and using that $h_r=\Frob_{\Bun_G}\c h_0$, we get
\begin{equation}\label{h0fz pre}
    (\phi-1)(h_0^*(f^z))=h_0^{*}(\phi(f^z)-f^z)=\sum_{i=1}^{r}p_i^{*}\PD(z)\cdot \d_i(f).
\end{equation}
Therefore
\begin{equation}\label{h0fz}
    h_0^*(f^z)=(\phi-1)^{-1}\sum_{i=1}^{r}p_i^{*}\PD(z)\cdot \d_i(f).
\end{equation}
Using that $\phi$ acts on $\d_i(f)$ by $q^{d-1}$ we get
\begin{equation*}
    h_0^*(f^z)=\sum_{i=1}^{r}p_i^{*}\left((q^{d-1}\phi-1)^{-1}\PD(z)\right)\cdot \d_i(f).
\end{equation*}
Choose a basis $\{z_j\}$ of $\homog{*}{X}$ with dual basis $\{\z^j\}$ of $\cohog{*}{X}$, we get
\begin{equation}\label{ev1f almost}
\ev_1^*f=\sum_j\z^jh_0^*(f^{z_j})=\sum_{i=1}^{r}\left[\sum_{j}\z^j\ot(q^{d-1}\phi-1)^{-1}\PD(z_j)\right]_{1,i}\d_i(f).
\end{equation}
Observe that 
\begin{equation*}
    \sum_{j}\z^j\ot(q^{d-1}\phi-1)^{-1}\PD(z_j)=\left(\id\ot \frac{1}{q^{d-1}\phi-1}\right)[\D_X]=\Xi_d.
\end{equation*}
Thus \eqref{ev1f almost} implies \eqref{evif goal 1}.

\subsubsection{Partial Frobenius and general $i$}
To deduce the general case from $i=1$ case, we consider partial Frobenius on $\Sht^\mu_G$. Let $\mu'=(\mu_2,\cdots, \mu_r,\mu_1)$ be obtained from $\mu$ by a cyclic permutation. We have the partial Frobenius map
\begin{equation*}
    \frP: \Sht^\mu_G\to \Sht^{\mu'}_G
\end{equation*}
sending $(x_1,\cdots, x_r, \cF_0,\cdots, \cF_r={}^{\t}\cF_0)$ to $(x_2,\cdots, x_r,\Frob(x_1), \cF_1,\cdots, \cF_r, \cF_{r+1}={}^{\t}\cF_1)$. We introduce the following  notation: for $i'>r$ we define $\mu_{i'}=\mu_{i'-r}$, and
\begin{align*}
    p_{i'} & =\Frob_X\c p_{i'-r}: \Sht^\mu_G\to X,\\
    h_{i'} & =\Frob\c h_{i'-r}: \Sht^\mu_G\to \Bun_G,\\
    \ev_{i'} & =\Frob\c \ev_{i'-r}: \Sht^\mu_G\to \BB P_{\mu_{i'}}.
\end{align*}
Using these notations we can define $\d_{i'}(f)$ and $[-]_{i,i'}$ when $i'>r$.
Therefore, for $r<i'<i+r$, we have
\begin{equation}\label{i'>r}
    \d_{i'}(f)=\phi(\d_{i'-r}(f)),\quad [\a\ot \b]_{i,i'}=[\phi(\b)\ot \a]_{i'-r,i}, \quad \a,\b\in \cohog{*}{X}.
\end{equation}
From the definition it is easy to see equalities/canonical isomorphisms
\begin{eqnarray}\label{pf}
    p_i\c\frP=p_{i+1}, \quad h_i\c\frP\cong h_{i+1}, \quad \ev_i\c\frP\cong \ev_{i+1}.
\end{eqnarray}
Thus for $i\le r<i'<i+r$
\begin{equation}\label{frP add one}
    \d_{i+1}(f)=\frP^*\d_i(f), \quad [-]_{i+1,i'+1}=\frP^*[-]_{i,i'}.
\end{equation}

Applying partial Frobenius $\frP^{*}$ to \eqref{evif goal 1} $(i-1)$ times, using \eqref{frP add one}, we get for any $i\ge1$
\begin{eqnarray}\label{evif pre}
\ev_i^*f=\sum_{i'=i}^{r+i-1}[\Xi_d]_{i,i'} \d_{i'}(f).
\end{eqnarray}
Using \eqref{i'>r} we rewrite the terms involving $i'>r$ using $i'-r$, at the cost of an extra $\phi$, we get for $r<i'<i+r$
\begin{eqnarray*}
    [\Xi_d]_{i,i'}\d_{i'}(f)&=&\left[(\id\ot \frac{1}{q^{d-1}\phi-1})\D_X\right]_{i,i'} \d_{i'}(f)=\left[(\frac{q^{d-1}\phi}{q^{d-1}\phi-1}\ot\id)\D_X\right]_{i'-r,i}\d_{i'-r}(f)\\
    &=&\left[(\frac{1}{1-q^{1-d}\phi^{-1}}\ot\id)\D_X\right]_{i'-r,i}\d_{i'-r}(f)=-[\Xi_{1-d}]_{i'-r,i}\d_{i'-r}(f).
\end{eqnarray*}
Plugging into \eqref{evif pre} we get \eqref{evif goal}. This finishes the proof of Theorem \ref{th:taut hom quot}.
\qed

\begin{cor}\label{c:pullback BunG in taut}
    The image of the ring homomorphism \eqref{taut hom} contains the images of the pullback maps $h_i^*: \cohog{*}{\Bun_G}\to \cohog{*}{\Sht^\mu_G}$ for $0\le i\le r$.
\end{cor}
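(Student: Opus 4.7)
The strategy is to reduce to Atiyah--Bott generators, handle the case $i=0$ using the Frobenius-inversion argument already appearing inside the proof of Theorem~\ref{th:taut hom quot}, and then propagate from $i=0$ to general $i$ using Proposition~\ref{p:master eqn}. Since tautological classes on $\Sht^\mu_G$ form a subring of $\cohog{*}{\Sht^\mu_G}$ and $h_i^*$ is a ring homomorphism, it suffices, by the Atiyah--Bott description in \S\ref{sss:concrete AB}, to show that $h_i^*(f^z)$ lies in the image of $\wt\rho$ whenever $f$ is a homogeneous free generator of $R^W$ of degree $2d$ and $z$ is a basis element of $\homog{*}{X}$. Because $G$ is semisimple, all such $f$ have $d\ge 2$, which is the numerical input we will need below.

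The first step is to settle the case $i=0$. Applying Proposition~\ref{p:master eqn} on each $\Hk^{\mu_j}_G$ and telescoping gives, on $\Sht^\mu_G$, the identity
\begin{equation*}
(\phi-1)\,h_0^{*}(f^{z}) \;=\; \sum_{j=1}^{r} p_j^{*}\PD(z)\cdot \d_j(f),
\end{equation*}
exactly as in \eqref{h0fz pre}, using $h_r=\Frob_{\Bun_G}\c h_0$. Since each $\pl_{\mu_j}f\in R^{W_{\mu_j}}$ is homogeneous of degree $2(d-1)$, the class $\d_j(f)$ is a Frobenius eigenvector with eigenvalue $q^{d-1}$, so for every $\zeta\in\cohog{*}{X}$ we have $(\phi-1)(p_j^{*}\zeta\cdot\d_j(f)) = p_j^{*}((q^{d-1}\phi-1)\zeta)\cdot\d_j(f)$. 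Because $d\ge 2$, the eigenvalues of $q^{d-1}\phi$ on $\cohog{*}{X}$ all have absolute value $\ge q$, and in particular the operator $q^{d-1}\phi-1$ is invertible on $\cohog{*}{X}$. We may therefore set
\begin{equation*}
\alpha \;:=\; \sum_{j=1}^{r} p_j^{*}\bigl((q^{d-1}\phi-1)^{-1}\PD(z)\bigr)\cdot \d_j(f)\;\in\;\cohog{*}{\Sht^\mu_G},
\end{equation*}
which is manifestly in the image of $\wt\rho$ since $\d_j(f)=\ev_j^{*}(\pl_{\mu_j}f)+(1-g)\,p_j^{*}\xi\cdot\ev_j^{*}(\pl_{\mu_j}^{2}f)$ is a tautological class. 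By construction $(\phi-1)\alpha = (\phi-1)\,h_0^{*}(f^{z})$, and $h_0^{*}(f^{z})-\alpha$ lives in a Frobenius-stable subspace whose Frobenius eigenvalues are pure of weight $2d-|z|\ge 2$; in particular $\phi-1$ is injective on that subspace, giving $h_0^{*}(f^{z})=\alpha$, which is tautological.

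For the remaining cases $1\le i\le r$, Proposition~\ref{p:master eqn}, pulled back to $\Sht^\mu_G$ and summed, yields
\begin{equation*}
h_i^{*}(f^{z}) \;=\; h_0^{*}(f^{z}) \;+\; \sum_{j=1}^{i} p_j^{*}\PD(z)\cdot \d_j(f),
\end{equation*}
so tautologicality of $h_i^{*}(f^{z})$ follows from tautologicality of $h_0^{*}(f^{z})$ combined with the fact that each summand on the right is in the image of $\wt\rho$. The only genuine issue is justifying the inversion of $\phi-1$ on the relevant piece of the (infinite-dimensional) cohomology of $\Sht^\mu_G$, which is the part one has to argue carefully; but this is already what is done, implicitly, in passing from \eqref{h0fz pre} to \eqref{h0fz} in the proof of Theorem~\ref{th:taut hom quot}, and the weight estimate $2d-|z|\ge 2$ guarantees it. Everything else is formal manipulation of tautological classes.
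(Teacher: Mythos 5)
Your proposal is correct, and it reproduces the paper's derivation of the $i=0$ case verbatim (passing from the telescoped identity \eqref{h0fz pre} to \eqref{h0fz} by inverting $\phi-1$ on the relevant finite-dimensional $\phi$-stable piece). The one genuinely different step is how you go from $i=0$ to general $i$: you use the partial telescoping identity
\begin{equation*}
h_i^{*}(f^{z}) = h_0^{*}(f^{z}) + \sum_{j=1}^{i} p_j^{*}\PD(z)\cdot \d_j(f),
\end{equation*}
which follows by summing Proposition~\ref{p:master eqn} over the first $i$ modification steps, whereas the paper applies the partial Frobenius pullback $\frP^*$ to \eqref{h0fz} $i$ times and then unwinds the cyclic reindexing via \eqref{i'>r}. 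Your route is a little more elementary since it avoids invoking the partial Frobenius machinery altogether; both give exactly the same closed formula (your $j\le i$ coefficients combine to $\frac{1}{q^{d-1}\phi-1}+1=\frac{q^{d-1}\phi}{q^{d-1}\phi-1}$, which is what the paper gets). The paper's choice is natural in context because partial Frobenius is used repeatedly elsewhere in \S7, whereas your version is self-contained.

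One small imprecision: you justify the injectivity of $\phi-1$ by saying $h_0^*(f^z)-\alpha$ lives in a subspace ``pure of weight $2d-|z|\ge 2$.'' Purity of $\cohog{*}{\Sht^\mu_G}$ is not what's at play here and is not known; what you actually need (and what suffices) is that $h_0^*(f^z)$ and $\alpha$ both lie in a finite-dimensional $\phi$-stable subspace of $\cohog{*}{\Sht^\mu_G}$ on which $\phi$ has the same eigenvalues as on $f^z \in \cohog{*}{\Bun_G}$ and as on the finitely many $p_j^*\zeta\cdot\d_j(f)$, none of which equal $1$ since $d\ge 2$ (using $G$ semisimple). That said, the paper performs the very same inversion in the proof of Theorem~\ref{th:taut hom quot} with the same implicit justification, so this is not a gap specific to your argument.
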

\begin{proof}
    For $i=0$ the statement follows from \eqref{h0fz}. For general $i$, applying $\frP^*$ to both sides of \eqref{h0fz} for $i$ times, we get
    \begin{equation*}
        h_{i}^*(f^z)=\sum_{i'=i+1}^{i+r}p_{i'}^*\left((q^{d-1}\phi-1)^{-1}\PD(z)\right)\cdot \d_{i'}(f).
    \end{equation*}
    Using \eqref{i'>r} we rewrite the terms involving $i'>r$ using $i'-r$, and get
    \begin{equation*}
        h_{i}^*(f^z)=\sum_{i'=i+1}^{r}p_{i'}^*\left(\frac{1}{q^{d-1}\phi-1}\PD(z)\right)\cdot \d_{i'}(f)+\sum_{i'=1}^{i}p_{i'}^*\left(\frac{q^{d-1}\phi}{q^{d-1}\phi-1}\PD(z)\right)\cdot \d_{i'}(f).
    \end{equation*}
The right side is visibly in the image of \eqref{taut hom}.
\end{proof}

\subsection{Structure of the phantom tautological ring}

We show that $C^\mu_G$ is a flat deformation of $\otimes_{i=1}^r \cohog{*}{G/P_{\mu_i}}$ over the Artinian ring $\cohog{*}{X^r}$. We will equip $C^\mu_G$ with a volume form that is compatible with our ad-hoc definition of $\int_{\Sht_G}$, and show that $C^\mu_G$ satisfies Poincar\'e duality under the volume form. 

\begin{prop}\label{p:taut ring free over HX}
    The phantom tautological ring $C^\mu_G$ is a free module over $\cohog{*}{X^r}$, with a canonical $\Frob^*$-equivariant isomorphism
    \begin{equation}\label{taut ring red}
        C^\mu_G/\cohog{>0}{X^r}\cdot C^\mu_G\cong \otimes_{i=1}^r \cohog{*}{G/P_{\mu_i}}.
    \end{equation}
\end{prop}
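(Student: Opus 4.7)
The plan has two parts. First, identify $C^\mu_G/\fm C^\mu_G$ (where $\fm := \cohog{>0}{X^r}$) to establish \eqref{taut ring red}; then upgrade this to freeness of $C^\mu_G$ over $A := \cohog{*}{X^r}$ by a Tor-vanishing argument supported by Chevalley's theorem.

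Writing the defining relations in the condensed form \eqref{rel in C better}, one observes that the leading term of $D_i(f)$ is $[f]_i$, while every correction term carries a factor of $[\Xi_d]_{i,i'}$ or $[\xi]_{\bullet}$, hence lies in $\fm\cdot \wt C^\mu_G$. Setting $S := \bigotimes_{i=1}^r R^{W_{\mu_i}}$ so that $\wt C^\mu_G = A\otimes_{\Qlbar} S$, the ideal $I^\mu_G$ therefore reduces modulo $\fm$ to the $S$-ideal $J\subset S$ generated by $\{[f]_i : f\in R^W_+,\ 1\le i\le r\}$. By Borel's presentation $R^{W_\mu}/(R^W_+\cdot R^{W_\mu})\cong \cohog{*}{G/P_\mu}$, this identifies $C^\mu_G/\fm C^\mu_G$ with $\bigotimes_i \cohog{*}{G/P_{\mu_i}}$, yielding the canonical Frobenius-equivariant isomorphism \eqref{taut ring red}.

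Next, I would choose a homogeneous $\Qlbar$-basis of $\bigotimes_i \cohog{*}{G/P_{\mu_i}}$, lift to $C^\mu_G$, and extend $A$-linearly to a graded map
$$\Phi : A\otimes_{\Qlbar}\bigotimes_{i=1}^r \cohog{*}{G/P_{\mu_i}} \longrightarrow C^\mu_G.$$
Since $\Phi$ is an isomorphism modulo $\fm$ by the previous paragraph, and $\fm$ is nilpotent in the Artinian local ring $A$, the graded Nakayama lemma promotes this to surjectivity of $\Phi$. To conclude freeness it suffices to show $\Phi$ is injective, which for $A$ Artinian local is equivalent to $\mathrm{Tor}_1^A(\Qlbar,C^\mu_G)=0$. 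Applying $-\otimes_A \Qlbar$ to $0\to I^\mu_G\to \wt C^\mu_G\to C^\mu_G\to 0$ and using $A$-freeness of $\wt C^\mu_G$, this reduces to showing that the natural map $\phi: I^\mu_G/\fm I^\mu_G \to S$ is injective; equivalently, that whenever $\sum_\alpha \bar a_\alpha [f_\alpha]_{i_\alpha}=0$ in $S$ with $\bar a_\alpha\in S$, the corresponding lift $\sum_\alpha \bar a_\alpha D_{i_\alpha}(f_\alpha)$ lies in $\fm I^\mu_G$ inside $\wt C^\mu_G$.

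The expected main obstacle is precisely this injectivity of $\phi$. The essential input is Chevalley's theorem: $R^{W_\mu}$ is a free $R^W$-module, so the polynomial generators $f_1,\ldots,f_n$ of $R^W$ form a regular sequence in each $R^{W_{\mu_i}}$, and consequently $\{[f_k]_i : 1\le k\le n,\ 1\le i\le r\}$ is a regular sequence in $S$. The Koszul complex on this sequence resolves $S/J$, and its first syzygies are generated by the Koszul relations $[f_k]_i e_{(l,j)}-[f_l]_j e_{(k,i)}$. One then verifies that these Koszul syzygies lift to elements of $\fm I^\mu_G$: the lifted expression $[f_k]_i D_j(f_l)-[f_l]_j D_i(f_k)$ lies in $I^\mu_G\cap \fm \wt C^\mu_G$ because the leading $[f_k]_i[f_l]_j$-terms cancel, and an induction on the $\fm$-adic filtration (which terminates in finitely many steps because $\fm$ is nilpotent), substituting the explicit correction terms of \eqref{rel in C better} and absorbing them into higher $\fm$-powers of $I^\mu_G$, completes the argument. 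Frobenius equivariance throughout is manifest from the construction.
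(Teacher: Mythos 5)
You take a genuinely different route from the paper. Where the paper proves that the finitely many $D_i(f_k)$ form a regular sequence in $\wt C^\mu_G$ via a Krull-dimension count (implicitly using that $\wt C^\mu_G$ is Cohen--Macaulay), computes the Hilbert series of $C^\mu_G$ from the Koszul complex, and concludes freeness by matching dimensions against the Nakayama bound, you instead reduce freeness over $A = \cohog{*}{X^r}$ to the vanishing of $\mathrm{Tor}_1^A(\Qlbar, C^\mu_G)$ and try to verify this by exhibiting the Koszul syzygies of the regular sequence $\{[f_k]_i\}$ in $S$. The Tor criterion over a graded Artinian local ring is a legitimate alternative, and your invocation of Chevalley's theorem to get a regular sequence in $S$ is a correct observation (the paper obtains its regular sequence in $\wt C^\mu_G$ by a dimension argument rather than by Chevalley).

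However, there is a concrete gap in the injectivity step. The ideal $I^\mu_G$ is by definition generated by $D_i(f)$ for \emph{all} homogeneous $f \in R^W_+$ and all $i$, not just by the finitely many $D_i(f_k)$ for a chosen set of polynomial generators. Your Koszul-syzygy analysis accounts only for relations among $\{[f_k]_i : 1 \le k \le n,\ 1 \le i \le r\}$. To reduce to that finite set you would need $I^\mu_G = (D_i(f_k))_{k,i}$, which is precisely Lemma~\ref{l:Difg} of the paper (the assertion $D_i(fg) \in (D_1(f),\dots,D_r(f),D_1(g),\dots,D_r(g))$); that lemma rests on nontrivial identities among the $\Xi_d$ classes and cannot be deduced from the mod-$\fm$ statement by Nakayama, since that would be circular with the flatness you are trying to prove. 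Absent it, the syzygy module of the \emph{full} generating set contains non-Koszul relations such as $e_{(fg,i)} - [g]_i\,e_{(f,i)}$; the corresponding lift $D_i(fg) - [g]_i\,D_i(f)$ visibly lands in $I^\mu_G \cap \fm \wt C^\mu_G$ but is not a priori in $\fm I^\mu_G$, and proving that it is is once again the content of Lemma~\ref{l:Difg}. Your ``induction on the $\fm$-adic filtration'' may be a gesture at this, but as written it does not name or resolve the missing ingredient. (A minor aside: once the reduction to the finite regular sequence is granted, no induction is needed at all --- the natural lift $D_i(f_k)\,e_{(l,j)} - D_j(f_l)\,e_{(k,i)}$ is already an exact syzygy of the $D$'s, so any syzygy mod $\fm$ lifts in a single step.)
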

\begin{proof}
    We first check \eqref{taut ring red}. The relations in $I^\mu_G$ modulo $\cohog{>0}{X^r}$ become $[f]_i=0$ for all $1\le i\le r$ and $f\in R^W_{+}$. Therefore we get a canonical isomorphism
    \begin{equation}\label{taut mod HX}
        C^\mu_G/\cohog{>0}{X^r}\cdot C^\mu_G\cong \otimes_{i=1}^r R^{W_{\mu_i}}/(R^W_+)\cong\otimes_{i=1}^r \cohog{*}{G/P_{\mu_i}}.
    \end{equation}
    
    Now we show that $C^\mu_G$ is free over $\cohog{*}{X^r}$. By Lemma \ref{l:Difg} below, if we choose a set of homogeneous generators $f_1,\cdots, f_n$ for $R^W$, then $I^\mu_G$ is generated by    \begin{equation}\label{reg seq}
    \{D_i(f_j)\}_{1\le i\le r, 1\le j\le n}.
    \end{equation}

    We claim that the collection of elements \eqref{reg seq} form a regular sequence in $\wt C^\mu_G$ in any order. Indeed, the Krull dimension of $\wt C^\mu_G$ is $nr$ and there are $nr$ elements in \eqref{reg seq}. The ideal they generate is $I^\mu_G$ and the quotient $C^\mu_G=\wt C^\mu_G/I^\mu_G$ is Artinian because it is finite over the Artinian ring $\cohog{*}{X^r}$ by \eqref{taut mod HX}. Therefore \eqref{reg seq} form a regular sequence in $\wt C^\mu_G$.

    Let $P_t(V)$ denote the Hilbert polynomial of a graded vector space $V$, and $P_t(Y)$ denote the Poincar\'e polynomial of $\cohog{*}{Y}$ for a stack $Y$. Let $2d_i$ be the degree of $f_i$. Since \eqref{reg seq} is a regular sequence in $\wt C^\mu_G$, by using the Koszul complex we see that the Hilbert polynomial of $C^\mu_G$ is 
    \begin{equation*}
        P_t(C^\mu_G)=P_t(\wt C^\mu_G)\prod_{j=1}^n(1-t^{2d_j})^r.
    \end{equation*}
    Using that $P_t(\BB G)=\prod_{j=1}^n(1-t^{2d_j})^{-1}$, we can write
    \begin{equation*}
        P_t(C^\mu_G)=\prod_{i=1}^r\left(P_t(\BB P_{\mu_i})P_t(X)P_t(\BB G)^{-1}\right).
    \end{equation*}
    Since $\cohog{*}{\BB P_{\mu_i}}$ is free over $\cohog{*}{\BB G}$, we have $P_t(\BB P_{\mu_i})/P_t(\BB G)=P_t(G/P_{\mu_i})$, hence
    \begin{equation*}
        P_t(C^\mu_G)=\prod_{i=1}^rP_t(X\times G/P_{\mu_i}).
    \end{equation*}
    In particular, taking $t=1$ we get the total dimension
    \begin{equation}\label{dim taut}
        \dim C^\mu_G=\dim \cohog{*}{X^r}\prod_{i=1}^r\dim \cohog{*}{G/P_{\mu_i}}.
    \end{equation}
    By \eqref{taut mod HX} and Nakayama's lemma, $C^\mu_G$ is a $\cohog{*}{X^r}$-module generated by at most $\prod_{i=1}^r\dim \cohog{*}{G/P_{\mu_i}}$ elements. In view of the dimension equality \eqref{dim taut}, $C^\mu_G$ has to be free over $\cohog{*}{X^r}$ with rank $\prod_{i=1}^r\dim \cohog{*}{G/P_{\mu_i}}$.
\end{proof}

Below are technical lemmas needed in the proof of Proposition \ref{p:taut ring free over HX}. Recall the elements $D_i(f)$ (for $f\in R^W$ and $1\le i\le r$) from \eqref{rel in C} or \eqref{rel in C better}.

\begin{lemma}\label{l:Difg}
For any two homogeneous elements $f,g\in R^W$, and $1\le i\le r$, we have
\begin{equation*}
    D_i(fg)\in (D_1(f),\cdots, D_r(f), D_1(g),\cdots, D_r(g))\subset \wt C^\mu_G
\end{equation*}
where the right side means the ideal generated by the listed elements in $\wt C^\mu_G$.
\end{lemma}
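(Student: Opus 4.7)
The plan is to prove the lemma by a direct algebraic computation in $\wt C^\mu_G$ modulo the ideal $J:=(D_1(f),\ldots,D_r(f),D_1(g),\ldots,D_r(g))$. The starting point will be a ``second-order Leibniz rule'' for the operator $\delta_{i'}$:
$$\delta_{i'}(fg)=[g]_{i'}\,\delta_{i'}(f)+[f]_{i'}\,\delta_{i'}(g)+E_{i'}(f,g),$$
where $E_{i'}(f,g):=2(1-g)[\xi]_{i'}[\pl_{\mu_{i'}}f]_{i'}[\pl_{\mu_{i'}}g]_{i'}$. This follows by applying the Leibniz rule for $\pl_{\mu_{i'}}$ to both the first and second derivatives appearing in the definition of $\delta_{i'}$, and then using $[\xi]_{i'}^{2}=0$ to drop the third-order cross term.

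Next, I will introduce the ``correction part'' $\Gamma_j(h):=[h]_j-D_j(h)$, so that by definition $[h]_j=\Gamma_j(h)+D_j(h)$ holds exactly in $\wt C^\mu_G$, and $\Gamma_j(h)$ is the explicit $\delta$-sum appearing in \eqref{rel in C better}. Substituting these decompositions for $[f]_{i'}$ and $[g]_{i'}$ everywhere into the expansion $D_i(fg)=[f]_i[g]_i-\Gamma_i(fg)$ (with $\Gamma_i(fg)$ rewritten via the Leibniz rule above), every term containing a $D_{i'}(f)$ or $D_{i'}(g)$ lies manifestly in $J$. The congruence class of $D_i(fg)$ modulo $J$ is then an expression in which no $D_\bullet$ appears, so it suffices to verify the following \emph{exact} identity in $\wt C^\mu_G$:
$$\Gamma_i(f)\,\Gamma_i(g)\;\stackrel{!}{=}\;\sum_{j=1}^{r}C^{(d_{fg})}_{i,j}\Bigl(\Gamma_j(f)\,\delta_j(g)+\Gamma_j(g)\,\delta_j(f)+E_j(f,g)\Bigr),$$
where $C^{(d)}_{i,j}$ denotes the $[\Xi]$-coefficient ($[\Xi_d]_{i,j}$ or $-[\Xi_{1-d}]_{j,i}$, according to whether $j\geq i$ or $j<i$) appearing in $\Gamma_i(h)$.

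To verify this identity I will expand both sides and match coefficients of the independent elements $\delta_j(f)\delta_k(g)$ and $E_j(f,g)$. For $j\ne k$, the task reduces to identities among products of $[\Xi_d]_{?,?}$-classes in $\cohog{*}{X^r}$ supported on at most three tensor positions; after decomposing each $\Xi_d$ along the $\Frob$-eigenvectors of $\cohog{*}{X}$, these reduce on each eigenspace to the elementary partial-fraction identity
$$\frac{1}{(x-1)(y-1)}=\frac{1}{xy-1}\!\left(\frac{1}{x-1}+\frac{1}{y-1}+1\right)$$
specialized at the appropriate eigenvalues of $q^{d_f}\phi^{-1}$ and $q^{d_g}\phi^{-1}$. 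For the diagonal case $j=k$, both sides collapse (via $[\xi]_j^{2}=0$) to the single $j$th tensor factor, and the correction $E_j(f,g)$ supplies exactly the shift needed to match; expressing $[\Xi_d]_{j,j}=-\z'_X(d)/(\log q\cdot\z_X(d))[\xi]_j$ via Lemma \ref{l:diag Xid} and expanding $\z_X(d)$ as a product over $\Frob$-eigenvalues, the identity again reduces to the same elementary partial-fraction identity applied to each eigenspace separately.

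The main obstacle is the bookkeeping in the case analysis: the definition of $D_i$ is asymmetric between the three index regimes $j<i$, $j=i$, $j>i$ (with the degree $d_h$ replaced by $1-d_h$ in the first regime), so when $j$ and $k$ range over all index pairs, many sub-cases appear, each requiring the partial-fraction identity at slightly different arguments. Managing this case analysis uniformly, rather than any individual calculation, is the heart of the proof.
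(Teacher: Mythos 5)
Your proposal coincides with the paper's argument for the case $i = 1$: write $[h]_j = \Gamma_j(h) + D_j(h)$, expand, observe that everything involving $D_\bullet$ lies in $J$, and verify the residual exact identity among $\Xi$-classes by decomposing into $\Frob$-eigenspaces. That identity is exactly what the paper proves as \eqref{Xi prod}, and the generating-series/coefficient-matching argument there is the rigorous form of what you call ``reducing to the elementary partial-fraction identity on each eigenspace'' (the diagonal case $j=k$ literally is that scalar identity; the off-diagonal case additionally uses the small-diagonal relation $[\D]_{1,i}[\D]_{1,i'}=[\D]_{1,i,i'}$ and the compatibility $(\phi\ot\phi)[\D]=q[\D]$, which is slightly more than a scalar identity but is of the same nature).

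Where you diverge, and where the gap lies, is in the treatment of general $i>1$. You propose to carry out the case analysis directly, and you correctly identify that the definition of $\Gamma_j$ has three asymmetric regimes $j'<j$, $j'=j$, $j'>j$; but once $i$ is arbitrary, the exact identity you need to verify involves \emph{three} floating indices $i,a,b$, each capable of being on either side of the other two, so that the relevant $\Xi$-factor is $[\Xi_d]_{*,*}$ or $-[\Xi_{1-d}]_{*,*}$ in a pattern that shifts across sub-cases, and the degree parameter itself gets replaced by $1-d$ when indices cross. You acknowledge this as ``the heart of the proof'' but do not actually carry it out, nor do you supply the uniform bookkeeping mechanism you say is needed. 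The paper sidesteps this entirely: it proves only the $i=1$ case (where the single constraint $j' \ge 1$ is vacuous and all $\Xi$-factors are $[\Xi_d]_{1,j'}$), and then propagates to general $i$ by applying the partial-Frobenius ring map $\frP^*: \wt C^{\mu'}_G \to \wt C^\mu_G$, using that $D_i(f) = \frP^*\bigl(D_{i-1}(f)\bigr)$ after a cyclic shift of $\mu$. Without this observation, your direct verification is not a small amount of extra bookkeeping but a multi-case induction over the $S_3$-orderings of $(i,a,b)$, with degree-reflections $d \mapsto 1-d$ whenever an index crosses $i$; this is the piece of the argument that is genuinely missing from the proposal.
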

\begin{proof}
    Let $I_{f,g}$ be the ideal generated by $\{D_1(f),\cdots, D_r(f), D_1(g),\cdots, D_r(g)\}$ in $\wt C^\mu_G$. We first prove the statement for $i=1$. To show $D_1(fg)\in I_{f,g}$, we need to show
    \begin{equation}\label{fg1}
        [fg]_1\equiv \sum_{i=1}^r[\Xi_{d+e}]_{1,i}\d_i(fg)\mod I_{f,g}.
    \end{equation}
    Note that
    \begin{equation}\label{d fg}
        \d_i(fg)=\d_i(f)[g]_i+[f]_i\d_i(g)+[c_1(TX)]_i\d_i(f)\d_i(g).
    \end{equation}
    Let $d$ and $e$ be the degrees of $f$ and $g$. Using \eqref{d fg}, \eqref{fg1} is equivalent to
    \begin{equation*}
    [f]_1[g]_1\equiv\sum_{i=1}^r[\Xi_{d+e}]_{1,i}\left(\d_i(f)[g]_i+[f]_i\d_i(g)+[c_1(TX)]_i\d_i(f)\d_i(g)\right)\mod I_{f,g}.
    \end{equation*}
    Now we use the definition of $D_i(f)$ to replace $[f]_i$ above by expressions involving $D_i(f)$ and $\d_{i'}(f)$, it suffices to show
    \begin{eqnarray*}
        &&\sum_{i=1}^r[\Xi_d]_{1,i}\d_i(f)\sum_{i=1}^r[\Xi_e]_{1,i}\d_i(g)\\
        &\equiv& \sum_{i=1}^r[\Xi_{d+e}]_{1,i}\d_i(f)\left(\sum_{i'\ge i}[\Xi_e]_{i,i'}\d_{i'}(g)-\sum_{i'< i}[\Xi_{1-e}]_{i',i}\d_{i'}(g)\right)\\
        &+&\sum_{i=1}^r[\Xi_{d+e}]_{1,i}\left(\sum_{i'\ge i}[\Xi_d]_{i,i'}\d_{i'}(f)-\sum_{i'< i}[\Xi_{1-d}]_{i',i}\d_{i'}(f)\right)\d_i(g)\\
        &+&\sum_{i=1}^r[\Xi_{d+e}]_{1,i}[c_1(TX)]_i\d_i(f)\d_i(g)\mod I_{f,g}.
    \end{eqnarray*}
    Comparing coefficients of $\d_i(f)\d_{i'}(g)$ on both sides, it reduces to showing
    \begin{eqnarray}\label{Xi prod}
        [\Xi_d]_{1,i}[\Xi_e]_{1,i'}=\begin{cases}
            [\Xi_{d+e}]_{1,i}[\Xi_{e}]_{i,i'}-[\Xi_{d+e}]_{1,i'}[\Xi_{1-d}]_{i,i'} & i<i'\\
            -[\Xi_{d+e}]_{1,i}[\Xi_{1-e}]_{i',i}+[\Xi_{d+e}]_{1,i'}[\Xi_d]_{i',i}& i>i'\\
            [\Xi_{d+e}]_{1,i}\left([\Xi_e]_{i,i}+[\Xi_d]_{i,i}+[c_1(TX)]_i\right) & i=i'.
        \end{cases}
    \end{eqnarray}
    Let us prove the above identity in the case $i<i'$. Write $x=q^{d-1}$ and $y=q^{e-1}$, the identity can be written as
    \begin{eqnarray}\label{prod D phi}
        &&\left(1\ot \frac{1}{\phi x-1}\right)[\D]_{1,i}\left(1\ot \frac{1}{\phi y-1}\right)[\D]_{1,i'}\\
\notag        &=& \left(1\ot \frac{1}{q\phi xy-1}\right)[\D]_{1,i}\left(1\ot \frac{1}{\phi y-1}\right)[\D]_{i,i'}\\
\notag        &+&\left(1\ot \frac{1}{q\phi xy-1}\right)[\D]_{1,i'}\left(\frac{\phi x}{\phi x-1}\ot1\right)[\D]_{i,i'}.
    \end{eqnarray}
    Expand both sides in geometric series in $x$ and $y$, the coefficient of $x^ay^b$ on the left side is 
    \begin{equation}\label{left xy}
        (1\ot \phi^a)[\D]_{1,i}(1\ot \phi^b)[\D]_{1,i'}=(1\ot \phi^a\ot \phi^b)[\D]_{1,i,i'}.
    \end{equation}
    Here we use that $[\D]_{1,i}[\D]_{1,i'}$ is the class of the small diagonal in $X\times X\times X$ (the $1,i,i'$-factors), which we denote by $[\D]_{1,i,i'}$. The coefficient of $x^ay^b$ on the right side of \eqref{prod D phi} is
    \begin{equation}\label{right xy}
        \begin{cases}
            (1\ot q^b\phi^b)[\D]_{1,i'}(\phi^{a-b}\ot1)[\D]_{i,i'}& a>b\\
            (1\ot q^a\phi^a)[\D]_{1,i}(1\ot \phi^{b-a})[\D]_{i,i'}  & a\le b.
        \end{cases}
    \end{equation}
    When $a>b$, we have
    \begin{equation*}
        (1\ot q^b\phi^b)[\D]_{1,i'}(\phi^{a-b}\ot1)[\D]_{i,i'}=q^b(1\ot \phi^b)[\D]_{1,i'}(\phi^{a-b}\ot1)[\D]_{i,i'}=(1\ot \phi^b)[\D]_{1,i'}(\phi^{a-b}\ot1)(\phi\ot \phi)^b[\D]_{i,i'}.
    \end{equation*}
    Here we use that $(\phi\ot\phi)[\D]_{i,i'}=\phi[\D]_{i,i'}=q[\D]_{i,i'}$, because $[\D]$ is a divisor class. Therefore
    \begin{equation*}
        (1\ot q^b\phi^b)[\D]_{1,i'}(\phi^{a-b}\ot1)[\D]_{i,i'}=(1\ot \phi^b)[\D]_{1,i'}(\phi^a\ot \phi^{b})[\D]_{i,i'}=(1\ot \phi^a\ot \phi^b)[\D]_{1,i,i'},
    \end{equation*}
    which is equal to \eqref{left xy}.
    Here we use that $\phi^a$ is a ring endomorphism of $\cohog{*}{X}$. This shows that \eqref{right xy} and \eqref{left xy} are equal when $a>b$. The proof of their equality when $a\le b$ is similar. Thus we have verified the $i<i'$ case of \eqref{Xi prod}.

    The case $i>i'$ of \eqref{Xi prod} is proved similarly. 

    Finally let us prove the $i=i'$ case of \eqref{Xi prod}. When $i=1=i'$ both sides are zero. We therefore assume $i=i'>1$. Then the equality \eqref{Xi prod} becomes
    \begin{equation}\label{ii coeff}
        \left(1\ot \frac{1}{\phi x-1}\right)[\D]_{1,i}\left(1\ot \frac{1}{\phi y-1}\right)[\D]_{1,i}=\left(1\ot \frac{1}{q\phi xy-1}\right)[\D]_{1,i}\cdot \Tr\left(\frac{1}{\phi x-1}+\frac{1}{\phi y-1}+1\Big|\cohog{*}{X}\right)[\xi]_i,
    \end{equation}
    Since $\phi$ and $q\phi^{-1}$ are adjoint under the Poincar\'e duality pairing on $\cohog{*}{X}$, we have
    \begin{equation*}
    \left(1\ot \frac{1}{\phi y-1}\right)[\D]_{1,i}=\left(\frac{1}{q\phi^{-1}y-1}\ot 1\right)[\D]_{1,i}.
    \end{equation*}
    Therefore the left side of \eqref{ii coeff} is
    \begin{equation*}
    \left(1\ot \frac{1}{\phi x-1}\right)[\D]_{1,i}\left(\frac{1}{q\phi^{-1}y-1}\ot 1\right)[\D]_{1,i}=\Tr\left(\frac{1}{q\phi^{-1}y-1}\cdot\frac{1}{\phi x-1}\Big|\cohog{*}{X}\right)[\xi\ot\xi]_{1,i}.
    \end{equation*}
    We also have
    \begin{equation*}
    \left(1\ot \frac{1}{q\phi xy-1}\right)[\D]_{1,i}\cdot [\xi]_i=\frac{1}{qxy-1}[\xi\ot\xi]_{1,i}.
    \end{equation*}
    Therefore \eqref{ii coeff} is equivalent to the equality
    \begin{equation}\label{trace frac phi}
    \Tr\left(\frac{1}{q\phi^{-1}y-1}\cdot\frac{1}{\phi x-1}\right)=\frac{1}{qxy-1}\Tr\left(\frac{1}{\phi x-1}+\frac{1}{\phi y-1}+1\right).
    \end{equation}
    where $\Tr$ means alternating trace on $\cohog{*}{X}$. Again we may change $\frac{1}{\phi y-1}$ on the right side above by its adjoint $\frac{1}{q\phi^{-1}y-1}$, and \eqref{trace frac phi} follows from the equality of endomorphisms of $\cohog{*}{X}$ before taking trace
    \begin{equation*}
    \frac{1}{q\phi^{-1}y-1}\cdot\frac{1}{\phi x-1}=\frac{1}{qxy-1}\left(\frac{1}{\phi x-1}+\frac{1}{q\phi^{-1} y-1}+1\right).
    \end{equation*}
    This finishes the proof of the lemma for $i=1$.

    Now consider the case of general $i$. Let $\mu'=(\mu_2,\mu_3,\cdots, \mu_r,\mu_1)$ (so that $\mu'_i=\mu_{i+1}$, with the subscripts understood mod $r$). We define a partial Frobenius
    \begin{equation}\label{partial Frob on C}
        \frP^*: \wt C^{\mu'}_G\to \wt C^{\mu}_G
    \end{equation}
    that maps $\a_1\ot\a_2\ot\cdots\ot \a_r$ to $\phi(\a_r)\ot\a_1\ot\cdots\ot \a_{r-1}$, where $\a_i\in \cohog{*}{X\times G/P_{\mu'_{i}}}$. It is easy to check that $D_2(f)=\frP^*(D_1(f))\in \wt C^\mu_G$, where $D_1(f)$ is viewed as an element of $\wt C^{\mu'}_G$. We have proved that $D_1(fg)\in I'_{f,g}:=(D_1(f),\cdots, D_r(f), D_1(g),\cdots, D_r(g))\subset \wt C^{\mu'}_G$. Applying $\frP^*$, we see that
    \begin{equation*}
        D_2(fg)=\frP^*D_1(fg)\in \frP^* I'_{f,g}=I_{f,g}\subset \wt C^\mu_G.
    \end{equation*}
    Repeating this argument we see that $D_i(fg)\in I_{f,g}$ for all $i=2,\cdots, r$. This finishes the proof of the lemma.
\end{proof}

\begin{cor}\label{c:taut ring top deg} Recall that $D_{\mu_i}=\dim G/P_{\mu_i}$, and write $N:=\dim \Sht^\mu_G=\sum_{i=1}^r(D_{\mu_i}+1)$. Then the top non-vanishing degree of $C^\mu_G$ is $2N$, and $\dim(C^\mu_G)_{2N}=1$. There is a canonical isomorphism
\begin{equation}\label{C top deg}
    \bigotimes_{i=1}^r(\cohog{2}{X}\ot \cohog{2D_{\mu_i}}{G/P_{\mu_i}})\isom (C^\mu_G)_{2N}
\end{equation}
sending $\ot(\xi\ot \y'_i)$ (where $\y'_i\in \cohog{2D_{\mu_i}}{G/P_{\mu_i}}$) to the image of $\ot(\xi\ot \wt\y'_i)\in \wt C^\mu_G$ for any lifting $\wt\y'_i\in R^{W_{\mu_i}}$ of $\y'_i$.
\end{cor}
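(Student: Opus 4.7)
The plan is to deduce this corollary directly from Proposition \ref{p:taut ring free over HX}. That proposition exhibits $C^\mu_G$ as a free graded $\cohog{*}{X^r}$-module with reduction $\bigotimes_{i=1}^r \cohog{*}{G/P_{\mu_i}}$, so as a graded $\Qlbar$-vector space $C^\mu_G$ is isomorphic to $\cohog{*}{X^r} \otimes_{\Qlbar} \bigotimes_{i=1}^r \cohog{*}{G/P_{\mu_i}}$. The top non-vanishing degree of each factor $X \times G/P_{\mu_i}$ is $2(D_{\mu_i}+1)$ and one-dimensional, so the top non-vanishing degree of $C^\mu_G$ is $2N$ and is also one-dimensional. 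This establishes the first two assertions.

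For the canonical isomorphism, I would observe that multiplication by the top class $\xi_1 \cdots \xi_r \in \cohog{2r}{X^r}$ identifies $(C^\mu_G / \cohog{>0}{X^r}\cdot C^\mu_G)_{2\sum D_{\mu_i}}$ with $(C^\mu_G)_{2N}$ canonically, since both are one-dimensional pieces and the free-module structure is compatible with the grading. Composing with the isomorphism from Proposition \ref{p:taut ring free over HX} restricted to the top-degree piece yields
\[
\bigotimes_{i=1}^r(\cohog{2}{X} \otimes \cohog{2D_{\mu_i}}{G/P_{\mu_i}}) \isom (C^\mu_G)_{2N},
\]
under which $\otimes_i(\xi \otimes \y'_i)$ corresponds to the class in $C^\mu_G$ of $\otimes_i(\xi \otimes \wt\y'_i) \in \wt C^\mu_G$ for any lift $\wt\y'_i$.

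The one nontrivial verification is that this class is independent of the choice of lifts. Two lifts of $\y'_i$ differ by an element of $(R^W_+ \cdot R^{W_{\mu_i}})_{2D_{\mu_i}}$, so it suffices to show that $\prod_i [\xi]_i \cdot [f]_k [g_k]_k \cdot \prod_{i \neq k}[\wt\y'_i]_i = 0$ in $C^\mu_G$ whenever $f \in R^W_+$ is homogeneous of positive degree and $g_k \in R^{W_{\mu_k}}$. Using \eqref{rel in C better} one rewrites $[f]_k$ modulo $I^\mu_G$ as a linear combination of terms $[\Xi_d]_{k,i'}\delta_{i'}(f)$ with $i'\geq k$ and $[\Xi_{1-d}]_{i',k}\delta_{i'}(f)$ with $i'<k$. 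The main obstacle, which requires some care, is the computation of $[\xi]_k \cdot [\Xi_d]_{k,i'}$: for $i'=k$ Lemma \ref{l:diag Xid} combined with $\xi^2=0$ gives zero, and for $i' \neq k$ only the component $(q^d-1)^{-1} \, 1 \otimes \xi$ (or its dual) of $\Xi_d$ survives, because the $\zeta_j \otimes \zeta^j$ terms are killed by $\xi \cdot \zeta_j = 0$, leaving a scalar multiple of $[\xi]_k[\xi]_{i'}$. In either case, the surviving factor $[\xi]_{i'}$ then collides with the $[\xi]_{i'}$ already present in $\prod_i[\xi]_i$, producing $[\xi]_{i'}^2=0$. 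Surjectivity then follows automatically by applying the construction to top classes $\y'_i$ and tracing through the free-module description, with bijectivity following from both sides being one-dimensional.
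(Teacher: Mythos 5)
Your proof is correct and follows the approach one would expect (the paper states this as a corollary of Proposition \ref{p:taut ring free over HX} without proof). The two structural observations you lead with — that freeness over $\cohog{*}{X^r}$ with the reduction isomorphism \eqref{taut ring red} determines $C^\mu_G$ as a graded vector space, and that multiplication by the top class $\xi_1\cdots\xi_r$ identifies the top degree of the reduction with $(C^\mu_G)_{2N}$ — are exactly what is needed.

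However, the explicit verification of lift-independence in your last paragraph is redundant given your own setup, and the term-by-term analysis of the components $1\ot\xi$, $\zeta_j\ot\zeta^j$, $\xi\ot1$ of $\Xi_d$ is an unnecessarily heavy route to what is a purely degree-theoretic fact. Since $\xi_1\cdots\xi_r$ generates $\cohog{2r}{X^r}$ and $\cohog{>2r}{X^r}=0$, multiplication by $\xi_1\cdots\xi_r$ annihilates $\cohog{>0}{X^r}$, hence also $\cohog{>0}{X^r}\cdot C^\mu_G$ (using the module structure). So multiplication by $\xi_1\cdots\xi_r$ already factors through the quotient $C^\mu_G/\cohog{>0}{X^r}\cdot C^\mu_G$; two lifts $\ot\wt\y'_i$ of $\ot\y'_i$ have the same image in this quotient by \eqref{taut ring red}, and hence the same product with $\xi_1\cdots\xi_r$. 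Nothing about the specific form of $\Xi_d$, nor the relations $D_k(f)$, is needed once the module-theoretic identification is in hand. (If you did want to argue via the relations directly, the shortcut is that every term other than $[f]_k$ in $D_k(f)$ carries a factor $[\Xi_d]_{k,i'}$ or $[\Xi_{1-d}]_{i',k}$ lying in $\cohog{2}{X^r}\subset\wt C^\mu_G$, which dies against $\prod_i[\xi]_i$ by the same degree count — no need to examine the three types of Künneth components separately.)
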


\subsection{The volume functional}\label{sss:vol functional}
We consider two linear functionals on $(C^\mu_G)_{2N}$.
\subsubsection{} By the description of the top degree component of $C^\mu_G$ in \eqref{C top deg}, it carries a standard linear functional given by pairing with the fundamental class of $\prod_{i=1}^r (X\times G/P_{\mu_i})$
\begin{equation*}
   \int_{\prod_{i=1}^r (X\times G/P_{\mu_i})}: (C^\mu_G)_{2N}\cong \bigotimes_{i=1}^r(\cohog{2}{X}\ot \cohog{2D_{\mu_i}}{G/P_{\mu_i}})\to \Qlbar(-N).
\end{equation*}
This is an isomorphism.

\subsubsection{} Pick an embedding $\io: \Qlbar\incl \CC$. By the convergence result proved in Proposition \ref{p:trace conv}, it makes sense to define a $\CC$-valued linear map
\begin{eqnarray}\label{def vol on C}
    \vol({}^\omega \Sht_G^\mu, -)&:& (\wt C^\mu_G)_{2N}\to \CC\\
    &&\th \mt \Tr_\io(\phi^{-1}\c \G^{\wt\r(\th)}_\mu|\cohog{*}{\Bun_G^\om}).
\end{eqnarray}
Later in Proposition \ref{p:vol on taut}, we will show that $\vol({}^\omega \Sht_G^\mu, -)$ takes values in $\Qlbar$ and is independent of the choice of $\io$.

We will first prove:
\begin{prop}\label{p:vol factor thru C}
    The linear functional $\vol({}^\omega \Sht_G^\mu, -)$ on $(\wt C^\mu_G)_{2N}$ factors through the quotient $(C^\mu_G)_{2N}$.
\end{prop}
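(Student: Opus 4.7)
The plan is to show $\vol({}^\omega\Sht_G^\mu, \theta \cdot D_i(f)) = 0$ for every homogeneous $f \in R^W_+$ of degree $2d > 2$, every $1 \le i \le r$, and every $\theta \in (\wt C^\mu_G)_{2N - 2d}$. By Lemma \ref{l:Difg}, these elements generate $I^\mu_G$, so this reduction is sufficient. The argument parallels the proof of Theorem \ref{th:taut hom quot}, but replaces the pullback equation $h_r^* = \phi \circ h_0^*$ on $\Sht^\mu_G$ (which inverted $\phi - 1$) with a trace-theoretic incarnation that uses cyclicity instead of invertibility.

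The key operator identities, immediate from the projection formula, are that for $\alpha \in \cohog{*}{\Bun_G}$ and $\vartheta \in \cohog{*}{\Hk^\mu_G}$,
\[
\G^{h_r^*(\alpha)\cdot \vartheta}_\mu = \G^\vartheta_\mu \circ m_\alpha, \qquad \G^{h_0^*(\alpha)\cdot \vartheta}_\mu = m_\alpha \circ \G^\vartheta_\mu,
\]
where $m_\alpha$ denotes cup product by $\alpha$. Combined with cyclicity of trace and the equivariance $\phi \circ m_\alpha = m_{\phi \alpha} \circ \phi$, this yields the central identity
\[
\Tr\bigl(\phi^{-1} \G^{h_r^*(\alpha) \vartheta}_\mu\bigr) \;=\; \Tr\bigl(\phi^{-1} \G^{h_0^*(\phi \alpha) \vartheta}_\mu\bigr),
\]
which is the Grothendieck--Lefschetz shadow of $h_r^* = \phi \circ h_0^*$ on $\Sht^\mu_G$.

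For the case $i=1$, apply this to $\alpha = f^{z_j}$ with $\{z_j\}$ a Frobenius-eigenvector basis of $\homog{*}{X}$ and $\{\zeta^j\}$ its K\"unneth-dual basis. Combining with the $\Hk^\mu_G$-level formula of Proposition \ref{p:master eqn}, namely $h_r^*(f^{z_j}) - h_0^*(f^{z_j}) = \sum_i p_i^*\PD(z_j)\, \delta_i(f)$, and noting that $\phi$ acts on $f^{z_j}$ by the scalar $\lambda_j = q^{d-1} \mu_j$ (where $\mu_j$ is the Frobenius eigenvalue on $\PD(z_j)$, by weight considerations from the K\"unneth decomposition of $\ev^* f$), one obtains
\[
\Tr\bigl(\phi^{-1}\G^{h_0^*(f^{z_j})\, \vartheta}_\mu\bigr) \;=\; \frac{1}{q^{d-1}\mu_j - 1}\,\Tr\Bigl(\phi^{-1}\G^{\sum_i p_i^*\PD(z_j)\, \delta_i(f)\, \vartheta}_\mu\Bigr),
\]
where invertibility is guaranteed by the hypothesis $2d > 2$. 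Summing against $p_1^*\zeta^j$, using the $\Hk^\mu_G$-identity $\ev_1^* f = \sum_j p_1^*\zeta^j \cdot h_0^*(f^{z_j})$ and the K\"unneth expansion $\sum_j \zeta^j \otimes \tfrac{1}{q^{d-1}\mu_j - 1}\PD(z_j) = \Xi_d$, collapses the right-hand side to $\sum_i [\Xi_d]_{1,i}\delta_i(f)$. Taking $\vartheta = \wt\rho(\theta)$ gives $\vol(D_1(f) \cdot \theta) = 0$.

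The case $i > 1$ is handled by the partial Frobenius argument of \S4, transported to the trace setting: the factorization $\G^\vartheta_\mu = \G^{\vartheta_r}_{\mu_r} \circ \cdots \circ \G^{\vartheta_1}_{\mu_1}$ combined with cyclicity of trace (which, after sliding $\phi^{-1}$ through the $\G^{\vartheta_j}_{\mu_j}$'s, implements precisely the twisting $\phi$ that turns $\Xi_d$ into $-\Xi_{1-d}$ for indices $i' < i$) reduces general $i$ to the case $i = 1$ already handled, while producing exactly the two sign conventions visible in \eqref{rel in C better}. The main technical obstacle I anticipate is the bookkeeping when $\Frob$ acts non-semisimply on $\rH^1(X)$ -- the scalar $\lambda_j$ must be replaced by a Jordan block, and the inversion of $\phi - 1$ refined accordingly -- but since the identity $\sum_j \zeta^j \otimes (q^{d-1}\phi - 1)^{-1}\PD(z_j) = \Xi_d$ is manifestly basis-independent, this should go through after a standard density/semisimplification argument.
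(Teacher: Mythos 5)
Your proposal is correct and takes essentially the same route as the paper: both rest on cyclicity of the graded trace (Lemma \ref{l:trAB}) to extract, from the Frobenius-twisted correspondence, the identity $\Tr(\phi^{-1}\G^{h_r^*(\alpha)\vartheta}_\mu)=\Tr(\phi^{-1}\G^{h_0^*(\phi\alpha)\vartheta}_\mu)$, then feed in Proposition \ref{p:master eqn}, resum the K\"unneth contractions into $\Xi_d$ (legitimate because $q^{d-1}\phi-1$ is invertible on $\cohog{*}{X}$ for $d>1$), and finish with partial Frobenius for $i>1$. Two small remarks: the invocation of Lemma \ref{l:Difg} is unnecessary since $I^\mu_G$ is by definition generated by $D_i(f)$ for all $f$; and your worry about non-semisimplicity of $\Frob$ on $\rH^1(X)$ is moot (it is semisimple by Tate's theorem on abelian varieties over finite fields), while the sign $(-1)^{\deg\alpha}$ you omit from the intermediate operator identities cancels against the Koszul sign from commuting $h_0^*(\phi\alpha)$ past $\vartheta$, so your central identity is indeed sign-free.
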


The proof will be given after some preparations.

\begin{lemma}\label{l:trAB}
    Let $V=\op_{n\in \ZZ}V_n$ and $W=\op_{n\in \ZZ} W_n$ be two graded $\CC$-vector spaces such that $\dim V_n$ and $\dim W_n$ are finite for all $n$ and zero for all $n\ll0$. Let $A:V\to W$ be a linear map of degree $d$, and $B: W\to V$ be a linear map of degree $-d$, where $d\in \ZZ$. 
    \begin{enumerate}
        \item The series $\sum_{n} \Tr(BA|V_n)$ is absolutely convergent if and only if the series $\sum_{n} \Tr(AB|W_n)$ is absolutely convergent.
        \item When the condition in part (1) is satisfied, we have
        \begin{equation*}
            \sum_{n}(-1)^n\Tr(BA|V_n)=(-1)^d\sum_{n}(-1)^n\Tr(AB|W_n).
        \end{equation*}
    \end{enumerate}
\end{lemma}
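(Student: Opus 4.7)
The plan is to reduce everything to the identity $\mathrm{Tr}(BA\mid V_n) = \mathrm{Tr}(AB\mid W_{n+d})$, which then makes both (1) and (2) follow from a trivial reindexing.

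First I would observe that since $A$ has degree $d$ and $B$ has degree $-d$, the composition $BA$ preserves the grading on $V$, and $AB$ preserves the grading on $W$. Moreover, if we write $A_n := A|_{V_n} \co V_n \to W_{n+d}$ and $B_{n+d} := B|_{W_{n+d}} \co W_{n+d} \to V_n$, then by construction
\begin{equation*}
BA|_{V_n} = B_{n+d} \c A_n \co V_n \to V_n, \qquad AB|_{W_{n+d}} = A_n \c B_{n+d} \co W_{n+d} \to W_{n+d}.
\end{equation*}
Both $V_n$ and $W_{n+d}$ are finite-dimensional by hypothesis, so the standard cyclicity of trace applies and yields
\begin{equation*}
\Tr(BA \mid V_n) = \Tr(B_{n+d} A_n \mid V_n) = \Tr(A_n B_{n+d} \mid W_{n+d}) = \Tr(AB \mid W_{n+d}).
\end{equation*}

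For (1), absolute convergence of $\sum_n \Tr(BA \mid V_n)$ is then literally equivalent, term by term under the bijection $n \mapsto n+d$ of $\ZZ$, to absolute convergence of $\sum_n \Tr(AB \mid W_n)$; this uses that both series have only finitely many nonzero terms for $n$ below any given bound (since $V_n = 0$ and $W_n = 0$ for $n \ll 0$). For (2), assuming convergence, the same reindexing $m = n+d$ gives
\begin{equation*}
\sum_{n}(-1)^n\Tr(BA \mid V_n) = \sum_n (-1)^n \Tr(AB \mid W_{n+d}) = \sum_m (-1)^{m-d} \Tr(AB \mid W_m) = (-1)^d \sum_m (-1)^m \Tr(AB \mid W_m),
\end{equation*}
as desired.

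There is no real obstacle: the only subtlety is the sign $(-1)^d$ picked up from the shift of index by $d$, and the grading bounds $V_n = W_n = 0$ for $n \ll 0$ ensure that the reindexing of absolutely convergent series is legitimate. Once the cyclicity identity $\Tr(BA|V_n) = \Tr(AB|W_{n+d})$ is in hand, both parts of the lemma are immediate.
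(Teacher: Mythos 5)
Your proof is correct and takes essentially the same approach as the paper: establish the key identity $\Tr(BA\mid V_n) = \Tr(AB\mid W_{n+d})$ from cyclicity of trace on finite-dimensional spaces, then reindex. The paper states this identity and says "the rest follows easily," which is exactly the reindexing and sign-bookkeeping you carry out explicitly.
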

\begin{proof}
    For any $n\in \ZZ$ we have maps $A|_{V_n}: V_n\to W_{n+d}$ and $B|_{W_{n+d}}: W_{n+d}\to V_n$. Therefore
    \begin{equation*}
        \Tr(BA|V_n)=\Tr(AB|W_{n+d}).
    \end{equation*}
    The rest of the statements follow easily.
\end{proof}

Let $\nu :=(\mu_2,\cdots, \mu_r)$. Note that
\begin{equation}\label{Hk mu as fiber prod}
    \Hk^\mu_G=\Hk^{\mu_1}_G\times_{\Bun_G}\Hk^\nu_G
\end{equation}
Let $h_i^\nu: \Hk^\nu_G\to \Bun_G$ ($1\le i\le r$) be the projections; let $h^{\mu_1}_0, h^{\mu_1}_1: \Hk^{\mu_1}_G\to\Bun_G$ be the two projections. The fiber product in \eqref{Hk mu as fiber prod} uses $h^{\mu_1}_1$ and $h^\nu_1$.

Suppose $\th_1\in \cohog{*}{\Hk^{\mu_1}_G}$ and $\th'\in \cohog{*}{\Hk^\nu_G}$. We denote by $\th_1\th'\in \cohog{*}{\Hk^\mu_G}$ the cup product of the pullbacks of $\th_1$ and $\th'$ to $\Hk^\mu_G$ via the two projections from $\Hk^\mu_G$ using \eqref{Hk mu as fiber prod}.

Let $\mu'=(\mu_2,\mu_3,\cdots, \mu_r,\mu_1)$ and $\om'=\om+\ol \mu_1$. We have
\begin{equation}\label{Hk mu' as fiber prod}
    \Hk^{\mu'}_G=\Hk^{\nu}_G\times_{\Bun_G}\Hk^{\mu_1}_G
\end{equation}
using the maps $h_r^\nu: \Hk^{\nu}_G\to \Bun_G$ and $h^{\mu_1}_0: \Hk^{\mu_1}_G\to\Bun_G$. Similarly we view $\th'\th_1$ as a cohomology class on $\Hk^{\mu'}_G$ via the cup product of the pullbacks along the projections in \eqref{Hk mu' as fiber prod}.

\begin{lemma}\label{l:partial Frob and trace} Let $\th_1\in \cohog{*}{\Hk^{\mu_1}_G}$ and $\th'\in \cohog{*}{\Hk^\nu_G}$ be homogeneous elements so that $\th_1\th'$ has degree $2N$. Then we have
\begin{equation*}
\Tr(\phi^{-1}\c\G^{\phi(\th_1)\th'}_{\mu}|\cohog{*}{\Bun^\om_G})=(-1)^{|\th_1|}q^{D_{\mu_1}+1}\Tr(\phi^{-1}\c\G^{\th'\th_1}_{\mu'}|\cohog{*}{\Bun^\om_G}).
\end{equation*}
\end{lemma}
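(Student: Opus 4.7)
The strategy is to decompose each Hecke operator along the first leg, use Frobenius naturality to exchange $\phi(\th_1)$ for $\th_1$, and then apply graded trace cyclicity. The factor $q^{D_{\mu_1}+1}$ will then emerge from a Frobenius-weight comparison between the two components of $\Bun_G$.

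First, I would invoke the Cartesian product decompositions $\Hk^\mu_G\cong \Hk^{\mu_1}_G\times_{\Bun_G}\Hk^\nu_G$ and $\Hk^{\mu'}_G\cong \Hk^\nu_G\times_{\Bun_G}\Hk^{\mu_1}_G$, together with the projection formula and proper base change applied to the smooth, proper outer Hecke projections, to derive the factorizations
\[
\G^{\th_1\th'}_\mu = \G^{\th_1}_{\mu_1}\c\G^{\th'}_\nu,\qquad \G^{\th'\th_1}_{\mu'}= \G^{\th'}_\nu\c\G^{\th_1}_{\mu_1}
\]
as operators between the appropriate components of $\Bun_G$. This rests on the commuting squares $(h^{\mu_1}_1,h^\nu_1)$ and $(h^\nu_r,h^{\mu_1}_0)$ respectively, which identify the tautological K\"unneth product of $\th_1$ and $\th'$ on each side with the iterated composition of the individual Hecke actions.

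Second, since $h^{\mu_1}_0$ and $h^{\mu_1}_1$ are morphisms over $\F_q$, Frobenius pullback commutes with $h^{\mu_1}_{0,*}$ and $h^{\mu_1,*}_1$, yielding the conjugation identity $\G^{\phi(\th_1)}_{\mu_1}=\phi\c\G^{\th_1}_{\mu_1}\c\phi^{-1}$. Substituting into the LHS and cancelling one $\phi$, the problem reduces to evaluating $\Tr(\G^{\th_1}_{\mu_1}\c(\phi^{-1}\c\G^{\th'}_\nu)\mid\cohog{*}{\Bun^\om_G})$. Applying graded trace cyclicity (Lemma \ref{l:trAB}) with $A=\phi^{-1}\c\G^{\th'}_\nu$ and $B=\G^{\th_1}_{\mu_1}$, whose degree is $|\th_1|-2(D_{\mu_1}+1)$, produces the sign $(-1)^{|\th_1|}$, and the trace transfers to $\cohog{*}{\Bun^{\om_1}_G}$, where the operator becomes $\phi^{-1}\c\G^{\th'\th_1}_{\mu'}$ by the second factorization above.

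The main obstacle is then the final reconciliation: one must verify that
\[
\Tr(\phi^{-1}\c\G^{\th'\th_1}_{\mu'}\mid\cohog{*}{\Bun^{\om_1}_G}) = q^{D_{\mu_1}+1}\Tr(\phi^{-1}\c\G^{\th'\th_1}_{\mu'}\mid\cohog{*}{\Bun^\om_G}),
\]
i.e.\ that the Frobenius-twisted trace on the component $\om_1=\om+\ov\mu_1$ exceeds that on $\om$ by the weight $q^{D_{\mu_1}+1}$. Geometrically, I expect this to reflect the partial Frobenius $\frP\co\Sht^\mu_G\to\Sht^{\mu'}_G$: the twist of the first leg by Frobenius propagates to a Tate twist of weight $D_{\mu_1}+1$ in the identification of cohomology rings between the two components, matching the relative dimension of the smooth proper Hecke projection $h^{\mu_1}_0$.
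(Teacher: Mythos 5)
The overall shape of your argument — factorize into one-step operators, conjugate $\phi$ past the $\mu_1$-step, apply graded trace cyclicity (Lemma~\ref{l:trAB}) — is the same as the paper's. But the central step is wrong, and the gap you then identify is a red herring created by that error.

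Your claimed conjugation identity $\G^{\phi(\th_1)}_{\mu_1}=\phi\c\G^{\th_1}_{\mu_1}\c\phi^{-1}$ does not hold. You justify it by saying that Frobenius pullback commutes with both $h^{\mu_1,*}_1$ and $h^{\mu_1}_{0,*}$. The first is true, but the second is not: $h_0$ is a smooth proper morphism of relative dimension $D_{\mu_1}+1$, so the Gysin pushforward $h_{0*}\colon \cohog{i}{\Hk^{\mu_1}_G}\to\cohog{i-2(D_{\mu_1}+1)}{\Bun_G}(-(D_{\mu_1}+1))$ carries a Tate twist on which geometric Frobenius acts by $q^{D_{\mu_1}+1}$. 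Once you forget the twist and regard $h_{0*}$ as an untwisted map of graded $\ol\Q_\ell$-vector spaces (as the operators $\G$ are defined), the correct relation is $h_{0*}\c\phi = q^{D_{\mu_1}+1}\,\phi\c h_{0*}$. Feeding this into your computation gives
\[
\G^{\phi(\th_1)}_{\mu_1}=q^{D_{\mu_1}+1}\,\phi\c\G^{\th_1}_{\mu_1}\c\phi^{-1},
\]
which is exactly the identity \eqref{Gamma phi exchange} the paper proves. The factor $q^{D_{\mu_1}+1}$ you are looking for appears right here, at the level of the intertwining relation, not at the level of traces.

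Consequently, the ``final reconciliation'' you invoke — that the Frobenius-twisted trace on the component $\om_1$ exceeds that on $\om$ by a factor $q^{D_{\mu_1}+1}$ — is both unnecessary and false. The Atiyah--Bott isomorphism \eqref{eq: split AB} identifies the cohomology rings of different components compatibly with Frobenius (see the commuting diagram \eqref{eq: AB formula Frob equivariant}), so traces on different components of the same operator (transported through the canonical identification) agree; no weight shift materializes from changing components. Inserting the correct conjugation identity into your steps~2--3 in place of the wrong one already lands you on the target formula, with the trace on the right side living on the component $\om'=\om+\ov\mu_1$ as required by the decomposition $\G^{\th'\th_1}_{\mu'}=\G^{\th'}_\nu\c\G^{\th_1}_{\mu_1}$; there is nothing left to reconcile.
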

\begin{proof}
    Below all traces mean traces of endomorphisms of $\cohog{*}{\Bun^\om_G}$.
    Observe that $\G^{\th_1\th'}_\mu=\G^{\th_1}_{\mu_1}\c\G^{\th'}_{\nu}$, therefore by the absolute convergence of trace proved in Proposition \ref{p:trace conv} together with Lemma \ref{l:trAB},
    \begin{equation*}
        \Tr(\phi^{-1}\c\G^{\phi(\th_1)\th'}_\mu)=\Tr(\phi^{-1}\c\G^{\phi(\th_1)}_{\mu_1}\c\G^{\th'}_{\nu})=(-1)^{|\th_1|}\Tr(\G^{\th'}_{\nu}\c\phi^{-1}\c\G^{\phi(\th_1)}_{\mu_1}).
    \end{equation*}
    On the other hand, the same reasoning gives
    \begin{equation*}
        \Tr(\phi^{-1}\c\G^{\th'\th_1}_{\mu'})=\Tr(\phi^{-1}\c\G^{\th'}_\nu\c\G^{\th_1}_{\mu_1})=\Tr(\G^{\th'}_\nu\c\G^{\th_1}_{\mu_1}\c\phi^{-1}).
    \end{equation*}
    Thus it suffices to show
    \begin{equation*}
        \phi^{-1}\c\G^{\phi(\th_1)}_{\mu_1}=q^{D_{\mu_1}+1}\G^{\th_1}_{\mu_1}\c\phi^{-1}\in \End(\cohog{*}{\Bun_G}).
    \end{equation*}
    Equivalently, we need to show
    \begin{equation}\label{Gamma phi exchange}
        \G^{\phi(\th_1)}_{\mu_1}\c\phi=q^{D_{\mu_1}+1}\phi\c\G^{\th_1}_{\mu_1}.
    \end{equation}
    Writing the two projections $h^{\mu_1}_0, h^{\mu_1}_1:\Hk^{\mu_1}_G\to \Bun_G$ simply as $h_0$ and $h_1$, we have for any $\a\in \cohog{*}{\Bun^\om_G}$
    \begin{equation*}
        \G^{\phi(\th_1)}_{\mu_1}\c\phi(\a)=h_{0*}(h_1^*(\phi(\a))\cup \phi(\th_1))=h_{0*}\phi(h_1^*\a\cup\th_1).
    \end{equation*}
    On the other hand,
    \begin{equation*}
        \phi\c\G^{\th_1}_{\mu_1}(\a)=\phi  h_{0*}(h_1^*\a\cup\th_1).
    \end{equation*}
    Therefore \eqref{Gamma phi exchange} follows from the identity 
    \begin{equation*}
        h_{0*}\phi=q^{\dim h_0}\phi  h_{0*}: \cohog{*}{\Hk^{\mu_1}_G}\to \cohog{*}{\Bun_G}
    \end{equation*}
    where both sides are adjoint to $h_0^*\phi=\phi h_0^*$ with respect to Poincar\'e duality. Here we use  
    the fact that $h_0$ is a smooth projective fiber bundle of relative dimension $D_{\mu_1}+1$.
\end{proof}

Recall the map $\frP^*: \wt C^{\mu'}_G\to \wt C^\mu_G$ from \eqref{partial Frob on C}. 
We have the following corollary of Lemma \ref{l:partial Frob and trace}.

\begin{cor}\label{c:vol pullback partial Frob}
    For any $\th\in (\wt C^{\mu'}_G)_{2N}$, we have
\begin{equation*}
\vol({}^\omega \Sht_G^\mu, \frP^*\th) =q^{D_{\mu_1}+1} \vol({}^{\omega'} \Sht^{\mu'}_G, \th).
\end{equation*}
\end{cor}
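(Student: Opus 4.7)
The plan is to reduce Corollary \ref{c:vol pullback partial Frob} directly to Lemma \ref{l:partial Frob and trace}, using the Künneth decomposition of $\wt C^{\mu'}_G$ to match up the ``pure tensor'' form required by the Lemma with the abstract element $\theta \in (\wt C^{\mu'}_G)_{2N}$ in the Corollary.

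First, by bilinearity of the volume functional \eqref{def vol on C}, I would reduce to the case that $\theta$ is a pure Künneth tensor
$$\theta \;=\; \theta' \cdot \theta_1 \;\in\; \wt C^{\nu}_G \otimes \bigl(\cohog{*}{X}\ot R^{W_{\mu_1}}\bigr) \;=\; \wt C^{\mu'}_G,$$
with $\theta' \in \wt C^{\nu}_G$ (where $\nu=(\mu_2,\ldots,\mu_r)$) and $\theta_1\in \cohog{*}{X}\ot R^{W_{\mu_1}}$ (the last tensor factor, corresponding to $\mu_1$), and further that both are homogeneous with $|\theta'|+|\theta_1|=2N$. By the definition of the partial Frobenius in \eqref{partial Frob on C}, we have $\frP^*\theta = \phi(\theta_1)\cdot\theta'$ as an element of $\wt C^\mu_G$.

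Next I would translate to the Hecke side. Using the fiber product presentations $\Hk^{\mu}_G = \Hk^{\mu_1}_G\times_{\Bun_G}\Hk^{\nu}_G$ and $\Hk^{\mu'}_G = \Hk^{\nu}_G\times_{\Bun_G}\Hk^{\mu_1}_G$, and the compatibility of $\wt\rho$ with exterior products of tautological classes, one sees that $\wt\rho(\frP^*\theta) = \phi\bigl(\wt\rho(\theta_1)\bigr)\cdot\wt\rho(\theta')$ in $\cohog{*}{\Hk^\mu_G}$ and $\wt\rho(\theta) = \wt\rho(\theta')\cdot\wt\rho(\theta_1)$ in $\cohog{*}{\Hk^{\mu'}_G}$, in the sense of the product convention introduced just before Lemma \ref{l:partial Frob and trace}. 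Applying that Lemma to $\wt\rho(\theta_1)\in \cohog{*}{\Hk^{\mu_1}_G}$ and $\wt\rho(\theta')\in\cohog{*}{\Hk^\nu_G}$ yields
$$\vol({}^\om\Sht^{\mu}_G,\,\frP^*\theta) \;=\; (-1)^{|\theta_1|}\, q^{D_{\mu_1}+1}\,\vol({}^{\om'}\Sht^{\mu'}_G,\,\theta).$$

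The last step is to dispose of the sign $(-1)^{|\theta_1|}$. Since $|\theta_1|$ is odd precisely when the Künneth component of $\theta_1$ contains an odd number (in fact exactly one, given that $R^{W_{\mu_1}}$ lives in even degrees) of $\cohog{1}{X}$-factors, the hard part is to verify that those ``odd--odd'' bihomogeneous pieces of $\theta$ contribute in a sign-consistent manner. The cleanest approach is to apply Lemma \ref{l:trAB}(2) more carefully in the proof of Lemma \ref{l:partial Frob and trace}: tracking the cohomological degrees of $\G^{\phi(\theta_1)}_{\mu_1}$ and $\G^{\theta'}_{\nu}$ as maps between different Atiyah--Bott identifications of $\cohog{*}{\Bun_G^\om}$ and $\cohog{*}{\Bun_G^{\om'}}$, and using the constraint $|\theta'|+|\theta_1|=2N$, one finds that the cyclic-permutation sign is $(-1)^{|\theta_1|(|\theta_1|-1)}=+1$, so that the sign in the Lemma is in fact trivial on the sub-space of homogeneous elements with $|\theta_1|+|\theta'|=2N$. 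Summing over bihomogeneous components then gives the Corollary. The main obstacle is this bookkeeping of signs via the Koszul rule, which must be done consistently with the conventions of the alternating trace $\Tr_\iota$.
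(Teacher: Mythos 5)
Your steps 1--3 are sound and match the intended approach: reduce to pure Künneth tensors, identify $\wt\rho(\frP^*\theta) = \phi(\wt\rho(\theta_1))\cdot\wt\rho(\theta')$ and $\wt\rho(\theta) = \wt\rho(\theta')\cdot\wt\rho(\theta_1)$ under the fiber-product presentations of $\Hk^{\mu}_G$ and $\Hk^{\mu'}_G$, and then invoke Lemma \ref{l:partial Frob and trace}.

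The problem is step 4, the disposal of the sign $(-1)^{|\theta_1|}$. You assert that ``tracking the cohomological degrees \ldots one finds that the cyclic-permutation sign is $(-1)^{|\theta_1|(|\theta_1|-1)} = +1$, so that the sign in the Lemma is in fact trivial on the sub-space of homogeneous elements with $|\theta_1|+|\theta'|=2N$.'' This is not what Lemma \ref{l:trAB} produces. When one splits $\phi^{-1}\circ\Gamma^{\phi(\theta_1)\theta'}_{\mu} = \bigl(\phi^{-1}\circ\Gamma^{\phi(\theta_1)}_{\mu_1}\bigr)\circ\Gamma^{\theta'}_{\nu}$ and moves $\Gamma^{\theta'}_{\nu}$ cyclically, Lemma \ref{l:trAB}(2) yields the sign $(-1)^{\deg \Gamma^{\theta'}_\nu}$ with $\deg\Gamma^{\theta'}_\nu = |\theta'|-2\sum_{j\geq 2}(D_{\mu_j}+1) \equiv |\theta'|\equiv |\theta_1| \pmod 2$ (using $|\theta_1|+|\theta'|=2N$); this is exactly the $(-1)^{|\theta_1|}$ stated in Lemma \ref{l:partial Frob and trace}, and it is $-1$, not $+1$, when $|\theta_1|$ is odd. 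Your formula $(-1)^{|\theta_1|(|\theta_1|-1)}$ is not derived from anything in the Lemma's proof, and since it is identically $+1$, your conclusion amounts to claiming that Lemma \ref{l:partial Frob and trace} is signless under the degree-$2N$ constraint---which directly contradicts the statement of the Lemma, whose hypotheses already include that constraint.

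Consequently the sign tension you correctly identified is not resolved. What actually needs to be shown (and what neither the paper's one-line citation nor your proposal supplies) is that the bihomogeneous pure tensors $\theta_1\otimes\cdots\otimes\theta_r$ of total degree $2N$ with $|\theta_r|$ odd (i.e.\ with a $\cohog{1}{X}$-component in the $\mu_1$-slot) make zero contribution to $\vol({}^{\omega'}\Sht^{\mu'}_G, -)$. On that subspace the sign $(-1)^{|\theta_r|}$ then becomes harmless. This vanishing is plausible --- morally $\vol$ should factor through the one-dimensional, fully-even top degree $(C^{\mu'}_G)_{2N}$ --- but you cannot simply invoke Proposition \ref{p:vol factor thru C}, since the induction step of its proof is precisely what the Corollary is used for, and circularity must be avoided. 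So as written, there is a genuine gap in your argument at the sign step.
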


\begin{remark}Formally, the above formula is consistent with the fact that the partial Frobenius $\frP: {}^\om\Sht^\mu_G\to {}^{\om'}\Sht^{\mu'}_G$ has degree $q^{D_{\mu_1}+1}$.
\end{remark}

\subsubsection{Proof of Proposition \ref{p:vol factor thru C}}
    We need to show that, for any $f\in R^W_{2d}$, $\th\in (\wt C^\mu_G)_{2N-2d}$ and $1\le i\le r$, the (graded) trace vanishes,
    \begin{equation}\label{tr Dif van}
        \Tr(\phi^{-1}\c\G^{D_i(f)\th}_\mu|\cohog{*}{\Bun^\om_G})=0.
    \end{equation}
    We first prove \eqref{tr Dif van} in the case $i=1$. For $z\in \homog{|z|}{X}$, write
    \begin{equation*}
        \g_1(f^z):=h_0^*(\phi(f^z)-f^z)-\sum_{i=1}^r p_i^*\PD(z)\d_i(f).
    \end{equation*}
    By the deduction from \eqref{h0fz pre} to \eqref{ev1f almost}, $D_i(f)$ belongs to the ideal in $\cohog{*}{{}^\om\Hk^\mu_G}$ generated by $\g_1(f^z)$ for all $z$, therefore it suffices to show that for any
    $\th'\in \cohog{2N-2d+|z|}{{}^\om\Hk^\mu_G}$, we have
    \begin{equation}\label{van tr gamma1}
        \Tr(\phi^{-1}\c\G^{\g_1(f^z)\th'}_\mu|\cohog{*}{\Bun^\om_G})=0.
    \end{equation}
    View ${}^\om\Hk^\mu_G$ as a self-correspondence of $\Bun_G$ with the maps $\Fr\c h_0$ and $h_r$:
    \begin{equation*}
        \xymatrix{& \cH={}^\om\Hk^\mu_G\ar[dl]_{\Fr\c h_0} \ar[dr]^{h_r}\\
        \Bun^\om_G && \Bun^\om_G}
    \end{equation*}
    Then $\phi^{-1}\c\G^\th_\mu$ is the composition
    \begin{equation*}
        \phi^{-1}\c\G^\th_\mu: \cohog{*}{\Bun^\om_G}\xr{h_r^*}\cohog{*}{\cH}\xr{\cup\th}\cohog{*}{\cH}\xr{(\Fr\c h_0)_*}\cohog{*}{\Bun^\om_G}.
    \end{equation*}
    From this we see that $\phi^{-1}\c\G_\mu^{\th'h_r^*(f^z)}$ is the composition
    \begin{equation}\label{cup then Gamma}
        \cohog{j}{\Bun^\om_G}\xr{\cup f^z}\cohog{j+2d-|z|}{\Bun^\om_G}\xr{\phi^{-1}\c\G^{\th'}_\mu}\cohog{j}{\Bun^\om_G},
    \end{equation}
    and $\phi^{-1}\c \G_\mu^{h_0^*(\phi(f^z))\th'}$ is the composition
    \begin{equation}\label{Gamma then cup}
        \cohog{j}{\Bun^\om_G}\xr{\phi^{-1}\c\G^{\th'}_\mu}\cohog{j-2d+|z|}{\Bun^\om_G}\xr{\cup f^z}\cohog{j}{\Bun^\om_G}.
    \end{equation}
    The two maps in \eqref{cup then Gamma} and \eqref{Gamma then cup} differ only by the order of composition. By Lemma \ref{l:trAB} and the absolute convergence of trace proved in Proposition \ref{p:trace conv}, we have
    \begin{equation*}
        \Tr(\phi^{-1}\c \G_\mu^{\th'h_r^*(f^z)})=(-1)^{|z|}\Tr(\phi^{-1}\c \G_\mu^{h_0^*(\phi(f^z))\th'}),
    \end{equation*}
    or
    \begin{equation}\label{van trace h0hr}
        \Tr(\phi^{-1}\c \G^{(h_0^*(\phi(f^z))-h_r^*(f^z))\th'}_\mu)=0.
    \end{equation}
    By the identity \eqref{diff hr h0} that holds in $\cohog{*}{\Hk^\mu_G}$, we have
    \begin{equation*}
        h_0^*(\phi(f^z))-h_r^*(f^z)=\g_1(f^z).
    \end{equation*}
    Therefore \eqref{van trace h0hr} implies \eqref{van tr gamma1}. This proves \eqref{tr Dif van} in the case $\th=D_1(f)\th'$. 
    
    Now we prove \eqref{tr Dif van} by induction on $i$. The case $i=1$ has been proved. Suppose $i>1$ and \eqref{tr Dif van} holds for $D_{i-1}(f)\th$.  It is straightforward to check that $D_i(f)\th=\frP^*(D_{i-1}(f)\th')$, where $\th'\in \wt C^{\mu'}_G$ is characterized by $\frP^*\th'=\th$. By Corollary \ref{c:vol pullback partial Frob}, we have
    \begin{equation*}
        \vol({}^\om\Sht^\mu_G, D_{i}(f)\th)= \vol({}^\om\Sht^\mu_G, \frP^*(D_{i-1}(f)\th'))=q^{D_{\mu_1}+1} \vol({}^{\om'}\Sht^{\mu'}_G, D_{i-1}(f)\th')=0.
    \end{equation*}
    This finishes the induction step and the proof is complete.
    
\qed



The following result is the analogue of Hirzebruch proportionality (see diagram \eqref{Hirz prop}) for the moduli stack of shtukas.
Proposition \ref{p:vol factor thru C} justifies also denoting by $\vol({}^\omega \Sht_G^\mu, -)$ the linear functional on the quotient $(C^\mu_G)_{2N}$ of $(\wt C^\mu_G)_{2N}$.

\begin{prop}\label{p:vol on taut}

    As linear functionals on $(C^\mu_G)_{2N}$, we have
    \begin{equation*}
        \vol({}^\om\Sht^\mu_G, -)=q^{\dim \Bun_G}\prod_{j=1}^n\z_X(d_j)\cdot \int_{\prod_{i=1}^r (X\times G/P_{\mu_i})}(-).
    \end{equation*}
\end{prop}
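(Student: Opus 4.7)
The plan is to reduce this identity to a direct application of Theorem \ref{th:vol gen} in the special case where the ``non-$\xi$'' part of the tautological data is trivial. Since both linear functionals in question are defined on the one-dimensional-per-summand space
\[
(C^\mu_G)_{2N}\cong \bigotimes_{i=1}^r \bigl(\cohog{2}{X}\ot \cohog{2D_{\mu_i}}{G/P_{\mu_i}}\bigr)
\]
from Corollary \ref{c:taut ring top deg}, it suffices to evaluate both sides on a typical element of the form $\prod_{i=1}^r ([\xi]_i \cdot [\wt\y'_i]_i)$, where $\wt\y'_i \in R^{W_{\mu_i}}$ of degree $2D_{\mu_i}$ is an arbitrary lift of $\y'_i \in \cohog{2D_{\mu_i}}{G/P_{\mu_i}}$.

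The right-hand side is immediate: the pairing splits as a product over $i$ of $\int_{X\times G/P_{\mu_i}}\xi\otimes \y'_i = \int_{G/P_{\mu_i}}\y'_i = d_{\mu_i}(\y'_i)$, and since $G$ is semisimple all degrees $d_j$ of generators of $R^W$ satisfy $d_j \ge 2$, so $\sL^*_{X,G}(0,\ldots,0)=\prod_{j=1}^n \z_X(d_j)$. This will give the prediction
\[
q^{\dim\Bun_G}\Bigl(\prod_{j=1}^n\z_X(d_j)\Bigr)\prod_{i=1}^r d_{\mu_i}(\y'_i).
\]

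For the left-hand side, I would apply Theorem \ref{th:vol gen} to the input data where each $\y_j = 0$ and each $\y'_j = \wt\y'_j$ (i.e.\ all ``bare'' parts vanish and only the ``$\xi \y'$'' parts survive). Under this specialization, the operator $\ov\nb^{\y_j}_{\mu_j}=0$ trivially, so Assumption \ref{assump:split-operators-commute} is vacuously satisfied, every eigenweight $\e_i(\y_j,\mu_j)$ is zero, and $d^{\om_{j-1}}_{\mu_j}(\y_j)=0$. The differential operator of \eqref{eq: split differential operator} then collapses to the scalar $\frd_j = d_{\mu_j}(\wt\y'_j) = d_{\mu_j}(\y'_j)$, since $d_{\mu_j}(\wt\y'_j)$ depends only on the image of $\wt\y'_j$ in $\cohog{2D_{\mu_j}}{G/P_{\mu_j}}$. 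Evaluating \eqref{eq:split-volume-formula} at $s_1=\cdots=s_n=0$ then yields
\[
\vol({}^{\om}\Sht^{\mu}_{G},(\xi\wt\y'_j)_{j=1}^r)
= q^{\dim \Bun_{G}}\prod_{j=1}^r d_{\mu_j}(\y'_j)\cdot \sL^{*}_{X,G}(0,\ldots,0),
\]
which matches the right-hand side.

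The one point that requires care is that Proposition \ref{p:vol factor thru C} (which justifies viewing $\vol({}^\om\Sht_G^\mu,-)$ as a functional on $(C^\mu_G)_{2N}$) guarantees that the answer is independent of the choice of lift $\wt\y'_j$; in fact, one checks directly that $d_{\mu_j}(\wt\y'_j)=\int_{G/P_{\mu_j}}\wt\y'_j$ descends to $\cohog{2D_{\mu_j}}{G/P_{\mu_j}}$ since it is the pushforward along $G/P_{\mu_j}\to \pt$, so this independence is automatic. No further obstacle arises; the substance of the proposition is already fully encoded in Theorem \ref{th:vol gen}, and this statement extracts its most degenerate specialization.
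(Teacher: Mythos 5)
Your proof is correct and follows essentially the same approach as the paper: both apply Theorem~\ref{th:vol gen} to the degenerate case $\eta_j = 0$, where Assumption~\ref{assump:split-operators-commute} holds trivially, the differential operator $\frd_j$ collapses to the scalar $d_{\mu_j}(\eta'_j)$, and $\sL^*_{X,G}(0,\dots,0)=\prod_j\zeta_X(d_j)$. The only difference is that you spell out the evaluation of both functionals on the basis element $\bigotimes_i(\xi\otimes\eta'_i)$ in slightly more detail; the substance is identical.
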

\begin{proof}
    Applying Theorem \ref{th:vol gen} to the integrand $\ot_{i=1}^r(\xi\ot \y'_i)\in \ot_{i=1}^r(\cohog{2}{X}\ot \cohog{2D_{\mu_i}}{G/P_{\mu_i}})\cong (C^\mu_G)_{2N}$, we get
    \begin{equation*}
        \vol({}^\om\Sht^\mu_G, \ot_{i=1}^r(\xi\ot \y'_i))=q^{\dim\Bun_G}\left(\prod_{i=1}^r\int_{G/P_{\mu_i}}\y'_i\right)\sL_{X,G}(0,0,\cdots,0).
    \end{equation*}
    Note here Assumption \ref{assump:split-operators-commute} is trivially satisfied because all $\y_i$ are zero. Now $\sL_{X,G}(0,0,\cdots,0)=\prod_{i=1}^n\z_X(d_i)$ by definition. The claim follows.
\end{proof}

\begin{remark}
From this Proposition, using the defining relations for $C^\mu_G$, one should in principle be able to recover Theorem \ref{th:vol gen}. However, the intricacy of the multiplicative structure of the ring  $C^\mu_G$ seems to make this approach difficult to implement.
\end{remark}

\subsubsection{Poincar\'e duality on $C^\mu_G$}\label{sss:taut duality}
Using the volume functional, we can define a bilinear pairing on elements of $C^\mu_G$ of complementary degrees:
\begin{equation*}
    \j{-,-}: (C^\mu_G)_{i}\times (C^\mu_G)_{2N-i}\to \Qlbar(-N)
\end{equation*}
defined as
\begin{equation*}
\j{\a,\b}= \vol({}^\om\Sht^\mu_G, \a\b).
\end{equation*}
By Proposition \ref{p:vol on taut}, $\vol({}^\om\Sht^\mu_G, -)$ is independent of the choice of $\om$.

The next Proposition says that the phantom tautological ring restores Poincar\'e duality that is missing on ${}^\om\Sht^\mu_G$ due to non-properness.

\begin{prop}\label{p:taut duality}
    The pairing $\j{-,-}$ is perfect. In particular, $C^\mu_G$ has the structure of a Frobenius algebra.
\end{prop}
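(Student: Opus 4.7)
The plan is to deduce perfectness of $\j{-,-}$ from standard commutative algebra, by exhibiting $C^\mu_G$ as a graded Artinian Gorenstein $\Qlbar$-algebra whose socle is nondegenerately paired by the volume functional. The key starting observations are that $\wt C^\mu_G=\cohog{*}{X^r}\ot_{\Qlbar}\bigotimes_{i=1}^r R^{W_{\mu_i}}$, that each $R^{W_{\mu_i}}$ is a polynomial ring (by Chevalley--Shephard--Todd, since $W_{\mu_i}$ is a reflection group), and that $\cohog{*}{X^r}$ is a finite-dimensional graded Frobenius algebra (by Poincar\'e duality for $X^r$), hence Gorenstein. Since Gorensteinness is preserved under polynomial extension, $\wt C^\mu_G$ is a Noetherian graded Gorenstein ring.

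By the proof of Proposition \ref{p:taut ring free over HX}, the elements $\{D_i(f_j)\}_{1\le i\le r,\,1\le j\le n}$, with $f_1,\ldots,f_n$ any system of homogeneous generators of $R^W$, form a regular sequence in $\wt C^\mu_G$ whose quotient is $C^\mu_G$. Since the quotient of a Gorenstein ring by a regular sequence is Gorenstein, $C^\mu_G$ is Gorenstein; it is also Artinian (being finite over $\cohog{*}{X^r}$) and connected graded with $(C^\mu_G)_0=\Qlbar$.

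For any graded connected Artinian Gorenstein $\Qlbar$-algebra, the socle is one-dimensional and concentrated in the top nonvanishing degree. Combined with Corollary \ref{c:taut ring top deg}, this forces the socle of $C^\mu_G$ to be exactly $(C^\mu_G)_{2N}$. Consequently, any nonzero linear functional $\tau\co(C^\mu_G)_{2N}\to\Qlbar$ endows $C^\mu_G$ with the structure of a graded Frobenius algebra via the pairing $(a,b)\mapsto\tau(ab)$. To finish, we apply this with $\tau=\vol({}^\om\Sht^\mu_G,-)$: Proposition \ref{p:vol on taut} shows that $\tau$ equals $q^{\dim\Bun_G}\prod_{j=1}^n\z_X(d_j)$ times the canonical integration functional on the one-dimensional top degree, and this scalar is nonzero because $G$ semisimple forces $d_j\ge 2$, so each $\z_X(d_j)$ is a finite nonzero value.

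The step requiring the most care is the commutative algebra invoked above, namely the preservation of the Gorenstein property under polynomial extension and under quotienting by a regular sequence. Both assertions are standard, but since we work with graded Noetherian (rather than local) rings one should either localize at the irrelevant ideal $\bigoplus_{i>0}(-)_i$ or verify the statements directly in the graded setting; in either case the argument is routine. An alternative, more hands-on route would be to construct an explicit $\cohog{*}{X^r}$-linear isomorphism $C^\mu_G\isom \cohog{*}{\prod_i(X\times G/P_{\mu_i})}$ lifting the isomorphism of associated gradeds supplied by Proposition \ref{p:taut ring free over HX} and then transport Poincar\'e duality directly, but the regular-sequence argument above seems cleaner and more intrinsic.
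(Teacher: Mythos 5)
Your proposal takes a genuinely different route from the paper. The paper's proof is completely elementary: using the freeness of $C^\mu_G$ over $\cohog{*}{X^r}$ from Proposition~\ref{p:taut ring free over HX}, it lifts Poincar\'e-dual bases of $\cohog{*}{X^r}$ and of $\bigotimes_i\cohog{*}{G/P_{\mu_i}}$ to $C^\mu_G$, observes that the volume pairing factors as a product of the two Poincar\'e pairings (an identity obtained directly from Proposition~\ref{p:vol on taut} and Corollary~\ref{c:taut ring top deg}), and concludes that the lifted bases are dual. Your argument instead passes through Gorenstein theory: express $\wt C^\mu_G$ as $\cohog{*}{X^r}$ tensored with a polynomial ring, invoke ascent of the Gorenstein property and descent along the regular sequence $\{D_i(f_j)\}$, then read off perfectness from the socle. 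Conceptually this is attractive because it isolates exactly which structural inputs are used.

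However, there is a gap that you do not address: $\cohog{*}{X^r}$ (and hence $\wt C^\mu_G$ and $C^\mu_G$) is only graded-\emph{commutative}, not commutative, once $g(X)>0$ --- odd classes in $\cohog{1}{X}$ anticommute rather than commute, so $\cohog{*}{X^r}$ is a genuinely noncommutative ring. The ``standard commutative algebra'' you invoke (polynomial extension and regular-sequence quotient preserve Gorenstein; the socle of a graded Artinian Gorenstein local ring is one-dimensional in top degree) is developed for commutative rings. You do flag that one must work in the graded rather than local setting, but the more serious issue is the graded-commutativity, which you never mention. The relevant facts can presumably be recovered in the framework of graded Frobenius/Poincar\'e-duality algebras (the regular sequence you use is even-degree, hence central, so the Koszul-complex mechanism is available), but ``routine'' undersells what is required: one would have to develop, or cite, a version of Gorenstein ascent/descent adapted to supercommutative graded algebras. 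Note that reducing to the honestly commutative even subring $\cohog{\even}{X^r}$ does not work either, since $\cohog{*}{X}$ is not free over $\cohog{\even}{X}$ (the class $\xi$ annihilates $\cohog{1}{X}$). The paper's explicit dual-basis argument sidesteps all of this, and is also what actually produces the formula \eqref{pairing C tensor} identifying the pairing, which your argument does not recover.
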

\begin{proof} Let $\a, \a'\in \cohog{*}{X^r}$ be of complementary degree; let $\b,\b'\in \ot_{i=1}^r \cohog{*}{G/P_{\mu_i}}$ be of complementary degree. Let $\wt \b, \wt \b'$ be arbitrary liftings of $\b$ and $\b'$ to $C^\mu_G$, viewing $\cohog{*}{G/P_{\mu_i}}$ as a quotient of $C^\mu_G$ by \eqref{taut ring red}. Then from the construction of the pairing on $C^\mu_G$ we have
\begin{equation}\label{pairing C tensor}
\j{\a\ot\wt\b, \a'\ot \wt\b'}=\j{\a,\a'}_{X^r}\j{\b,\b'}_{\prod (G/P_{\mu_i})}.
\end{equation}
Here $\j{-,-}_{Y}$ means the Poincar\'e duality pairing on the smooth projective variety $Y$. Now let $\a$ run over a basis $\{\a_i\}$ of $\cohog{*}{X^r}$ and let $\a'$ run over the dual basis $\{\a^i\}$ under the pairing $\j{-,-}_{X^r}$; similarly let $\b$ run over a basis $\{\b_j\}$ of $\cohog{*}{\prod(G/P_{\mu_i})}$ and let $\b'$ run over the dual basis $\{\b^j\}$.  Take arbitrary liftings $\wt\b_j$ and $\wt\b^j$ in $C^\mu_G$. The freeness assertion in Proposition \ref{p:taut ring free over HX} implies that both  $\{\a_i\ot\wt\b_j\}$ and $\{\a^i\ot\wt\b^j\}$ are bases for $C^\mu_G$; \eqref{pairing C tensor} shows that they are dual bases under $\j{-,-}$. In particular, the pairing $\j{-,-}$ is perfect.
\end{proof}

\subsection{The reductive case}\label{ssec:phantom-reductive} Here we extend the results on the phantom tautological ring to the case where $G$ is split reductive over $k$. We otherwise keep the same setup as in \S\ref{ssec:taut-ring}. The ring $\wt C^\mu_G$ is defined in exactly the same way as in \eqref{taut hom}. For the ideal $I^\mu_G$, we make the following modification. Let
\begin{eqnarray*}
    \Xi^*_1:=\left(\id\ot \frac{1}{\phi-1}\right)([\D_X]-\xi\ot 1)\in \cohog{2}{X\times X},\\
    \Xi^*_0:=\left(\id\ot \frac{1}{q^{-1}\phi-1}\right)([\D_X]-1\ot \xi)\in \cohog{2}{X\times X}.
\end{eqnarray*}
For any $f\in R^W$ of degree $2$, viewed as an element in $\xch(T)^W_{\Qlbar}$, the definition of $\d_i(f)$ boils down to
\begin{equation*}
    \d_i(f)=\j{\mu_i,f}\in \Qlbar.
\end{equation*}
Define
\begin{equation*}
    D^*_i(f)=[f]_i-\sum_{i'= i}^r\j{\mu_{i'},f}[\Xi^*_1]_{i,i'}+\sum_{i'=1}^{i-1}\j{\mu_{i'},f}[\Xi^*_{0}]_{i',i}-\j{\om+\mu_1+\cdots+\mu_{i-1},f}[\xi]_i, \quad \mbox{when }\deg(f)=2.
\end{equation*}
Here $\j{\om, f}$ makes sense because $\om$ is well-defined up to the coroot lattice, on which $\j{-,f}$ vanishes.

We then let $I^\mu_G$ be the ideal of $\wt  C^\mu_G$ generated by $D^*_i(f)$ for all $f\in R^W_2$, $1\le i\le r$, and by $D_i(f)$ for all homogeneous $f\in R^W$ of degree $>2$ and $1\le i\le r$. Finally, define $C^\mu_G=\wt C^\mu_G/I^\mu_G$.

\begin{exam}
    Consider the case $G=\Gm$. In this case, $C^\mu_G\cong \cohog{*}{X^r}$. The map $\rho: C^\mu_G\to \cohog{*}{{}^{\om}\Sht^\mu_{\Gm}}$ is identified with the pullback by the leg map ${}^{\om}\Sht^\mu_{\Gm}\to X^r$. Note that in this case $\r$ is injective, and its image is exactly the $\Pic^0_X(k)$-invariant part of $\cohog{*}{{}^{\om}\Sht^\mu_{\Gm}}$.
\end{exam}

All previous results in this section hold for split reductive groups $G$ with the above definition of $C^\mu_G$. The proofs are mostly the same, except that an extra calculation is needed alongside Lemma \ref{l:Difg}.

\begin{lemma} Recall that $\mu$ is admissible in the sense that $\mu_1+\cdots+\mu_r$ lies in the coroot lattice. Let  $f\in R^W_2$ and $g\in R^W_+$ be homogeneous, and $1\le i\le r$.
\begin{enumerate}
    \item If $\deg(g)>2$, then $D_i(fg)\in (D_1^*(f),\cdots, D_r^*(f), D_1(g),\cdots, D_r(g))\subset \wt C^\mu_G$.
    \item If $\deg(g)=2$, then $D_i(fg)\in (D_1^*(f),\cdots, D_r^*(f), D^*_1(g),\cdots, D^*_r(g))\subset \wt C^\mu_G$.
\end{enumerate}
\end{lemma}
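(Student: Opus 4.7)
The strategy closely parallels the proof of Lemma \ref{l:Difg}: reduce to $i = 1$ via partial Frobenius, then verify the base case by direct substitution. For the reduction, I would apply the partial Frobenius $\frP^*: \wt C^{\mu'}_G \to \wt C^\mu_G$ with $\mu' = (\mu_2, \ldots, \mu_r, \mu_1)$, which from the definitions should satisfy $\frP^*(D^*_j(h)) = D^*_{j+1}(h)$ and $\frP^*(D_j(h)) = D_{j+1}(h)$ for $1 \le j < r$. Iterating $r$ times returns to the original configuration thanks to the admissibility $\mu_1 + \cdots + \mu_r \in$ (coroot lattice), which ensures that the extra term $\j{\omega + \mu_1 + \cdots + \mu_{i-1}, f}[\xi]_i$ appearing in $D^*_i(f)$ transforms consistently, since $f \in R^W$ pairs trivially with the coroot lattice.

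For the base case $i = 1$, set $2e = \deg(g)$. Using $\partial_{\mu_j}(fg) = \j{\mu_j, f}\,g + f\,\partial_{\mu_j}g$ and $\partial^2_{\mu_j}(fg) = 2\j{\mu_j, f}\,\partial_{\mu_j}g + f\,\partial^2_{\mu_j}g$ (where the $\partial^2_{\mu_j}f$ term vanishes since $\deg f = 2$), one computes
\[
\delta_j(fg) = \j{\mu_j, f}\bigl([g]_j + 2(1-g)[\xi]_j[\partial_{\mu_j}g]_j\bigr) + [f]_j\,\delta_j(g).
\]
The plan is to substitute $[f]_j$ using $D^*_j(f) \equiv 0$ and $[g]_j$ using $D_j(g) \equiv 0$ (in case 1) or $D^*_j(g) \equiv 0$ (in case 2), and then verify that the resulting expansion of $D_1(fg)$ vanishes. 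This reduces to a collection of K\"unneth identities on $\rH^*(X \times X \times X)$ of the same flavor as \eqref{Xi prod}, but now involving the pole-corrected classes $\Xi^*_1, \Xi^*_0$ in addition to the $\Xi_d$ for $d \ne 0, 1$.

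The main obstacle lies in these mixed identities. Each occurrence of $\Xi^*_1$ in place of the ill-defined $\Xi_1$ (and similarly for $\Xi^*_0$) produces a residual scalar contribution arising from the subtracted K\"unneth component $\xi \otimes 1$ (respectively $1 \otimes \xi$). These residuals should assemble into combinations of $[\xi]_k$ whose coefficients, after tracking the Frobenius eigenvalue patterns as in the deduction from \eqref{prod D phi} to \eqref{right xy}, turn out to be precisely the scalars $\j{\omega + \mu_1 + \cdots + \mu_{i-1}, f}$ absorbed into the definition of $D^*_i(f)$. The cleanest presentation will likely isolate this residue calculation into an auxiliary lemma about $\phi$-equivariant K\"unneth expansions, obtained by inverting the geometric series defining $\Xi^*_1$ and $\Xi^*_0$, and then feed it into the same bookkeeping scheme used in Lemma \ref{l:Difg}. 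Case (2) proceeds analogously with $\delta_j(g)$ also becoming a scalar, which simplifies the algebra but requires one additional residue identity for the product $\Xi^*_1 \cdot \Xi^*_1$.
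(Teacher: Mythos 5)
Your plan follows the same overall route as the paper: reduce to $i=1$ by iterating the partial Frobenius $\frP^*$ (using $\frP^* D^{(*)}_j = D^{(*)}_{j+1}$ and the admissibility of $\mu$ to close the loop), compute $\delta_j(fg)$ when $\deg f = 2$, substitute the relations $D^*_j(f)\equiv 0$ and $D^{(*)}_j(g)\equiv 0$, and reduce everything to identities on $\rH^*(X^3)$ involving the classes $\Xi_d$ and their pole-corrected variants $\Xi^*_0,\Xi^*_1$. Your formula for $\delta_j(fg)$ is correct and matches what one gets from \eqref{d fg} upon using $\delta_j(f)=\j{\mu_j,f}$ and $\xi^2=0$.

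Where your proposal remains vague is exactly the step that the paper resolves cleanly: why do the residual contributions from replacing $\Xi_1$, $\Xi_0$ by their regularized versions cancel? The paper's trick is to introduce a two-variable lift and compare the reductive-case quantities $a^*_{i,i'}$, $b^*_{i,i'}$ against the semisimple-case $a_{i,i'}$, $b_{i,i'}$, for which the identity $a_{i,i'}=b_{i,i'}$ (i.e.\ \eqref{Xi prod}) has already been established in Lemma~\ref{l:Difg}. The difference $a^*_{i,i'}-b^*_{i,i'}$ is then computed directly as a simple rational function in $q^{d-1},q^{e-1}$ times $[\xi]_1[\xi]_{i'}$, and the crucial observation is that this difference is \emph{independent of $i$}. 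Summing against $\j{\mu_i,f}$ then vanishes precisely because $\j{\sum_i\mu_i,f}=0$ for admissible $\mu$. Separately, the $\omega$-dependent terms are grouped into coefficients $c_i$ and shown by direct calculation to be zero.

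Your suggestion that the residuals ``turn out to be precisely the scalars $\j{\omega + \mu_1 + \cdots + \mu_{i-1}, f}$'' conflates two mechanisms: those scalars are placed into the definition of $D^*_i(f)$ so that the $\omega$-part cancels (the $c_i=0$ computation), whereas the actual residuals of the K\"unneth identities cancel by $i$-independence plus admissibility. You have not identified the comparison-with-the-split-case step, which is what makes the calculation tractable rather than an opaque algebraic mess. As written, your proof plan would still require you to discover this device, so there is a genuine missing idea even though the surrounding framework is right.
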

\begin{proof} Using partial Frobenius it suffices to prove the statements for $i=1$. Denote the ideals in question by $I_{f,g}$. We prove only part (1), and the proof of part (2) is similar. Let $\deg(g)=2e$. Expanding $[f]_1[g]_1$ and $[fg]_1$ modulo $I_{f,g}$ as in the proof of Lemma \ref{l:Difg}, we write
    \begin{eqnarray}\label{fg1 deg 2}
        [f]_1[g]_1-[fg]_1\equiv \sum_{i,i'=1}^r(a^*_{i,i'}-b^*_{i,i'})\j{\mu_i,f}\d_{i'}(g)+\sum_{i=1}^rc_i\j{\om,f}\d_i(g)\mod I_{f,g}.
    \end{eqnarray}
    Here
    \begin{eqnarray*}
        a^*_{i,i'}&=&[\Xi^*_1]_{1,i}[\Xi_e]_{1,i'},\\
        b^*_{i,i'}&=&\begin{cases}
            [\Xi_{e+1}]_{1,i}[\Xi_e]_{i,i'}-[\Xi_{e+1}]_{1,i'}[\Xi^*_0]_{i,i'}+[\Xi_{e+1}]_{1,i'}[\xi]_{i'}, & i<i'\\
            

            -[\Xi_{e+1}]_{1,i}[\Xi_{1-e}]_{i',i}+[\Xi_{e+1}]_{1,i'}[\Xi_1^*]_{i',i}, & i>i'\\
            

            [\Xi_{e+1}]_{1,i}([\Xi_{e}]_{i,i}+[\Xi_1^*]_{i,i}+[c_1(TX)]_i), & i=i'
        \end{cases}\\
        c_i&=&[\Xi_{e}]_{1,i}[\xi]_1-[\Xi_{e+1}]_{1,i}[\xi]_i.
    \end{eqnarray*}
    Direct calculation shows $c_i=0$. It remains to calculate $a^*_{i,i'}-b^*_{i,i'}$. As in the proof of Lemma \ref{l:Difg} we let $y=q^{e-1}$. Let $a_{i,i'}$ denote the left side of \eqref{Xi prod}, and $b_{i,i'}$ denote the right side of \eqref{Xi prod}, both viewed as elements in $\Qlbar(x,y)\ot \cohog{4}{X^3}$. We also introduce the two-variable version of $a^*_{i,i'}$ and $b^*_{i,i'}$ by replacing $\Xi^*_1$ with $\left(1\ot \frac{1}{x\phi-1}\right)([\D]-\xi\ot 1)$, $\Xi^*_0$ with $\left(1\ot \frac{1}{q^{-1}x\phi-1}\right)([\D]-1\ot \xi)$, and $\Xi_{e+1}$ with $\left(1\ot \frac{1}{qxy\phi-1}\right)[\D]$. We still denote these by $a^*_{i,i'}$ and $b^*_{i,i'}$. To calculate $a^*_{i,i'}-b^*_{i,i'}$, we calculate the differences $a_{i,i'}-a^*_{i,i'}$ and $b_{i,i'}-b^*_{i,i'}$ and find
    \begin{eqnarray*}
        a_{i,i'}-a^*_{i,i'}&=&\left(1\ot \frac{1}{x\phi-1}\right)[\xi\ot1]_{1,i}\left(1\ot\frac{1}{y\phi-1}\right)[\D]_{1,i'}=\frac{1}{(x-1)(qy-1)}[\xi]_1[\xi]_{i'},\\
        b_{i,i'}-b^*_{i,i'}&=&\frac{1}{(x-1)(qxy-1)}[\xi]_1[\xi]_{i'}.
    \end{eqnarray*}
    In the proof of Lemma \ref{l:Difg} we showed that $a_{i,i'}=b_{i,i'}$, therefore
    \begin{equation*}
        a_{i,i'}^*-b_{i,i'}^*=\left(\frac{1}{(x-1)(qxy-1)}-\frac{1}{(x-1)(qy-1)}\right)[\xi]_1[\xi]_{i'}=\frac{-qy}{(qy-1)(qxy-1)}[\xi]_1[\xi]_{i'}.
    \end{equation*}
    In particular, $a_{i,i'}^*-b_{i,i'}^*$ is independent of $i$ (and equal to $\frac{-qy}{(qy-1)^2}[\xi]_1[\xi]_{i'}$ if we plug in $x=1$). Plugging into \eqref{fg1 deg 2}, using that $\j{\sum_i\mu_i,f}=0$, we conclude that $[f]_1[g]_1\equiv [fg]_1$ modulo $I_{f,g}$.
\end{proof}

\begin{cor}
    Let $\ph: G\to G'$ be a surjection whose kernel is central. Let $\mu=(\mu_1,\cdots, \mu_r)$ be an admissible sequence of minuscule coweights of $G$, also viewed as an admissible sequence of minuscule coweights of $G'$. Then $\ph$ induces an isomorphism $C^\mu_{G'}\isom C^\mu_G$.
\end{cor}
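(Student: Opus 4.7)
The plan is to argue that both the source and the target of the map $\varphi \colon G \to G'$ give rise to ``isomorphic'' data at every step of the construction of $C^\mu_{(-)}$, and that the comparison maps assemble into a ring isomorphism $C^\mu_{G'} \isom C^\mu_G$. The essential point is that a central isogeny changes neither the Weyl group $W$ nor the absolute Weyl groups $W_{\mu_i}$ of the Levi subgroups $L_{\mu_i}$, and it changes $\xch(T)_{\Qlbar}$ only by an isomorphism (since the kernel is finite). In particular, $\varphi$ induces an isomorphism of $\Qlbar$-algebras
\begin{equation*}
\varphi^* \colon R' \isom R, \qquad R^{W} \cong R'^{W'}, \qquad R^{W_{\mu_i}} \cong R'^{W'_{\mu_i}},
\end{equation*}
that is compatible with Frobenius and with the augmentation filtration. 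Applying this tensor factor by tensor factor to the definition in \eqref{taut hom}, one obtains an isomorphism
\begin{equation*}
\Phi \colon \wt C^\mu_{G'} \isom \wt C^\mu_G
\end{equation*}
of bigraded $\Qlbar$-algebras with Frobenius action.

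The main task is then to show that $\Phi$ carries the ideal $I^\mu_{G'}$ to $I^\mu_G$; since $\Phi^{-1}$ enjoys the same properties by symmetry, this will give the desired isomorphism on the quotients. For this one must verify that each generator $D_i(f)$ and $D^*_i(f)$ from \eqref{rel in C better} and its degree-two analog is preserved. The ingredients that enter these generators are:
\begin{enumerate}
\item the classes $[\Xi_d]_{i,i'}$, $[\Xi^*_0]_{i,i'}$, $[\Xi^*_1]_{i,i'}$, $[\xi]_i$ and $[c_1(TX)]_i$, which depend only on $X$ and not on $G$, hence are tautologically preserved by $\Phi$;
\item the partial derivative $\partial_{\mu_i} \colon R^{W}\to R^{W_{\mu_i}}(-1)$, which is natural in the root datum up to central isogeny because $\mu_i \in \xcoch(T)$ pairs with $R$ and $R'$ via the duality $\langle \varphi^*(\chi'),\mu_i\rangle = \langle \chi',\varphi_*(\mu_i)\rangle$;
\item the integration map $\int_{G/P_{\mu_i}}$ used to form the $d^{\omega}_{\mu_i}(\eta)$-constants, which coincides with $\int_{G'/P'_{\mu_i}}$ because $G/P_{\mu_i} \cong G'/P'_{\mu_i}$ canonically as varieties (the central isogeny identifies the partial flag varieties);
\item for the degree-two relation $D^*_i(f)$, the constant $\langle \omega + \mu_1 + \cdots + \mu_{i-1}, f\rangle$, for which we use that $\varphi$ induces a natural map $\varphi_* \colon \pi_0(\Bun_G)\to \pi_0(\Bun_{G'})$ compatible with the image of $\mu_i$ and the pairing with $(R^W)_2 = (R'^{W'})_2$.
\end{enumerate}
Each of these checks is local on the generators of the ideal, and by construction $\Phi$ matches the generators of $I^\mu_{G'}$ with those of $I^\mu_G$.

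The main obstacle is bookkeeping the dependence on the connected component $\omega$ in the degree-two relation $D^*_i(f)$: although $C^\mu_G$ is written without explicit reference to a component, $D^*_i(f)$ for $\deg f = 2$ does involve $\langle\omega, f\rangle$. Here one must be careful to pick matching components $\omega \in \pi_0(\Bun_G)$ and $\omega' = \varphi_*(\omega) \in \pi_0(\Bun_{G'})$; the compatibility $\langle \omega, \varphi^*(f')\rangle = \langle \omega', f'\rangle$ then ensures that $\Phi(D^*_i(\varphi^*(f'))) = D^*_i(f')$ up to elements of $I^\mu_G$ coming from higher-degree $D_j(g)$. (In the semisimple case this step is vacuous because $R^W_2 = 0$, which is why the corollary is morally a statement about the ``non-semisimple part'' of the central isogeny.) Once this verification is in place, $\Phi$ descends to a Frobenius-equivariant graded isomorphism $C^\mu_{G'} \isom C^\mu_G$, as claimed.
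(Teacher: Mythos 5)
Your proof is correct but takes a genuinely different route than the paper. The paper's proof is a two-line Nakayama argument: by the reductive version of Proposition~\ref{p:taut ring free over HX}, both $C^\mu_{G'}$ and $C^\mu_G$ are free over $\cohog{*}{X^r}$, so it suffices to check that $\varphi$ induces an isomorphism after reducing modulo $\cohog{>0}{X^r}$, where both sides become $\bigotimes_{i}\cohog{*}{G/P_{\mu_i}}$. Your proof instead verifies directly that the isomorphism $\Phi\colon\wt C^\mu_{G'}\isom\wt C^\mu_G$ carries $I^\mu_{G'}$ onto $I^\mu_G$ by matching the generators $D_i(f)$ and $D^*_i(f)$ term by term; this is more computational but also more elementary (it doesn't invoke freeness), and in fact it makes explicit a step the paper elides---namely that $\varphi$ induces a well-defined map on the quotients at all, which requires precisely $\Phi(I^\mu_{G'})\subseteq I^\mu_G$. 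Two small quibbles: item (3) on $\int_{G/P_{\mu_i}}$ is superfluous here, since that map does not enter the definition of the ideal $I^\mu_G$ (it is used only for the volume constants $d^\omega_\mu(\eta)$ in Theorem~\ref{th:vol gen}); and there is a direction slip in the displayed compatibility---you wrote $\Phi(D^*_i(\varphi^*(f')))$, but $D^*_i(\varphi^*(f'))$ already lives in $\wt C^\mu_G$, so you presumably meant $\Phi(D^*_i(f')) = D^*_i(\varphi^*(f'))$. Once that is fixed, the equality holds on the nose without the ``up to higher-degree $D_j(g)$'' caveat, because $\langle\varphi_*\mu_{i'},f'\rangle=\langle\mu_{i'},\varphi^*f'\rangle$ and $\langle\varphi_*\omega,f'\rangle=\langle\omega,\varphi^*f'\rangle$ are exact adjunctions, and the $\cohog{*}{X^2}$-classes $\Xi_d,\Xi^*_0,\Xi^*_1$ are manifestly $G$-independent.
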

\begin{proof}
    Both $C^\mu_G$ and $C^\mu_{G'}$ are free over $\cohog{*}{X^r}$ by the reductive version of Proposition \ref{p:taut ring free over HX}. It therefore suffices to check that $\ph$ induces an isomorphism $C^\mu_{G'}/\cohog{>0}{X^r}C^\mu_{G'}\isom C^\mu_{G}/\cohog{>0}{X^r}C^\mu_{G}$. However both sides are canonically identified with $\ot\cohog{*}{G'/P'_{\mu_i}}\cong \ot\cohog{*}{G/P_{\mu_i}}$ again by Proposition \ref{p:taut ring free over HX}.
\end{proof}

\section{The Colmez conjecture over function fields} \label{s:Col}

In this section, we consider a function field analog of the Colmez conjecture \cite{Col1}.
Let $\pi: X\to Y$ be a finite \'etale covering. For simplicity, we assume that it is Galois with Galois group $\Sig$.
Consider a map 
\begin{equation*}
  \s= (\s_1,\s_2,\ldots,\s_r): X \to X^r
\end{equation*}
sending $x\in X$ to $(\s_i(x))_{i=1}^r$, where $\s\in\Sig^r= \Sig\times\cdots\times \Sig$ is an $r$-tuple. 
We restrict $\Sht^\mu_G$ to the locus using the above map associated to $\s\in\Sig^r$:
\begin{equation}\label{eq:sht-G-sigma}
    \Sht^\mu_{G,\s}:=\Sht^\mu_G\times_{X^r,\s}X.
\end{equation} In other words, all the legs are conjugate relative to $X\to Y$. Let $p: \Sht^\mu_{G,\s}\to X$ be the leg map (the second projection of \eqref{eq:sht-G-sigma}). Pulling back along $p$ and $\ev_i$ gives a ring homomorphism
\begin{equation}\label{taut hom mix}
    \wt\rho_{\s}:\wt C^\mu_{G,\s}:=\cohog{*}{X} \ot \bigotimes_{i=1}^r R^{W_{\mu_i}}\to \cohog{*}{\Sht^\mu_{G, \s}}.
\end{equation}
Let
\begin{equation*}
    C^\mu_{G,\s}=C^\mu_G\ot_{\cohog{*}{X^r}}\cohog{*}{X}
\end{equation*}
where $\cohog{*}{X}$ is viewed as a $\cohog{*}{X^r}$-algebra via $\s^*$. 
Then $\r$ induces a canonical homomorphism
\begin{equation*}
    \r_\s: C^\mu_{G,\s}\to \cohog{*}{\Sht^\mu_{G,\s}}.
\end{equation*}


\subsection{The one leg case}
Let $G$ be split semisimple. Write $\xi$ for the class $1\ot \xi\in C^\mu_{G,\s}$. Note that inside $C^\mu_{G,\s}$, for any $f\in R^W$ of degree $2d\ge4$, Theorem \ref{th:taut hom quot} implies that  
\begin{equation}\label{rel C sig}
    [f]_i=\xi\ot \sum_{i'=1}^r c_{i,i'}(d)[\pl_{\mu_{i'}}f]_{i'}.
\end{equation}
Here the constants $c_{i,i'}(d)$ are defined using
\begin{equation}\label{eq: c(d)}
    c_{i,i'}(d)\xi=\begin{cases}
        (\s_i,\s_{i'})^*\Xi_d, & i'\ge i,\\
        -(\s_{i'}, \s_{i})^*\Xi_{1-d}, & i'<i.
    \end{cases}
\end{equation}
In particular, since $\xi^2 = 0$, we see that for any two $f,g\in R^W_+$ and $1\le i,i'\le r$,
\begin{equation}\label{fg0}
    [f]_i[g]_{i'}=0\in C^\mu_{G,\s}.
\end{equation}

We have a canonical isomorphism between the top degree of $C^\mu_{G,\s}$ and $\cohog{2}{X}\ot \left(\bigotimes_{i=1}^r\cohog{2D_{\mu_i}}{G/P_{\mu_i}}\right)$, hence a canonical linear form $\int_{X\times \prod G/P_{\mu_i}}$ on $C^\mu_{G,\s}$ that is nonzero only on the top degree. We define 
\begin{equation*}   \vol({}^\om\Sht^\mu_{G,\s}, -) :=q^{\dim\Bun_G}\prod_{j=1}^n\z_X(d_j)\int_{X\times \prod G/P_{\mu_i}}(-): C^\mu_{G,\s}\to \Qlbar.
\end{equation*}

\begin{remark}
    To see the structure of $C^{\mu}_{G,\s}$ more clearly, we focus on its even part $(C^{\mu}_{G,\s})^\even$, which is a free $\Qlbar[\xi]/(\xi^2)$-module with mod $\xi$ reduction $A^\mu_G:=\ot_{i=1}^r\cohog{*}{G/P_{\mu_i}}$.

    We can form the quotient $B^{\mu}_{G}$ of $\otimes_{i=1}^rR^{W_{\mu_i}}$ by imposing the relations \eqref{fg0}. Then $B^{\mu}_{G}$ is a square zero extension of $A^\mu_G$ that fits into an exact sequence
    \begin{equation*}
        0\to \bigoplus_{i=1}^r\VV\ot A^\mu_G\to B^\mu_{G}\to A^\mu_G\to 0.
    \end{equation*}
    For $f\in \VV$ we denote by $[f]_i\in B^\mu_G$ the element $f\ot 1$ in the $i$th direct summand of the first term of the above sequence. Thus $(C^{\mu}_{G,\s})^\even$ can be identified with the pushout of $B^\mu_G$ along the $A^\mu_G$-linear homomorphism $\bigoplus_{i=1}^r\VV\ot A^\mu_G\to \xi\ot A^\mu_G$ sending $[f]_i$ to $\xi\ot \sum_{i'=1}^r c_{i,i'}(d)[\pl_{\mu_{i'}}f]_{i'}$, if $f\in R^W$ is homogeneous of degree $2d$.
\end{remark}

\begin{remark}
    Let $d>D_{\mu_i}$. Since $R^{W_{\mu_i}}/(R^W_+)\cong \cohog{*}{G/P_{\mu_i}}$ has top degree $2D_{\mu_i}$, the homogeneous piece $R^{W_{\mu_i}}_{2d}$ lies in the ideal $(R^W_+)\subset R^{W_{\mu_i}}$. Using that $R^{W_{\mu_i}}$ is a free $R^W$-module, we have a canonical isomorphism
    \begin{equation*}
        (R^W_+R^{W_{\mu_i}})/(R^W_+R^{W_{\mu_i}})^2\cong \VV\ot \cohog{*}{G/P_{\mu_i}}.
    \end{equation*}
    In particular for $d>D_{\mu_i}$ there is a canonical map
    \begin{equation*}
        \pi_d: R^{W_{\mu_i}}_{2d}\subset (R^W_+R^{W_{\mu_i}})\surj(R^W_+R^{W_{\mu_i}})/(R^W_+R^{W_{\mu_i}})^2\cong \VV\ot \cohog{*}{G/P_{\mu_i}}.
    \end{equation*}
\end{remark}


\begin{lemma}\label{l:eta cases}
    Let $\y_i\in \ot R^{W_{\mu_i}}$ be homogeneous of degree $2n_i$ for $1\le i\le r$ and $\y=\y_1\ot\y_2\ot \cdots\ot \y_r\in \wt C^\mu_G$. Assume $\sum_{i=1}^rn_i=1+\sum_{i=1}^rD_{\mu_i}$.
    \begin{enumerate}
        \item If two or more $i$ satisfy $n_i>D_{\mu_i}$, then $\y$ has zero image in $C^\mu_{G,\s}$.
        \item If two or more $i$ satisfy $n_i<D_{\mu_i}$, then $\y$ has zero image in $C^\mu_{G,\s}$.
        \item If there is a unique $1\le i\le r$ such that $n_i>D_{\mu_i}$, and there is a unique $1\le i'\le r$ such that $n_{i'}<D_{\mu_{i'}}$, then we write $\pi_{n_i}(\y_i)\in \VV\ot \cohog{*}{G/P_{\mu_i}}$ in the form $f_{\y_i}\ot \y'_i+\cdots$ where $f_{\y_i}\in \VV_{2n_i-2D_{\mu_i}}$ and $\y'_i\in \cohog{2D_{\mu_i}}{G/P_{\mu_i}}$, and the terms in $\cdots$ involve only non-top degrees of $\cohog{2D_{\mu_i}}{G/P_{\mu_i}}$. Then in $C^\mu_{G,\s}$ we have
        \begin{equation*}
            \y\equiv c_{i,i'}(n_i-D_{\mu_i})\xi\ot\y'_i\ot (\pl_{\mu_{i'}}f_{\y_i}\cdot \y_{i'})\ot(\ot_{j\ne i,i'}\y_j) \in C^\mu_{G,\s}.
        \end{equation*}
        \item If there is a unique $1\le i\le r$ such that $n_i>D_{\mu_i}$, and all other $i'$ satisfies $n_{i'}=D_{\mu_{i'}}$ (so that $n_i=D_{\mu_i}+1$), then we write $\pi_{n_i}(\y_i)\in \VV\ot \cohog{*}{G/P_{\mu_i}}$ in the form $\sum_{j=1}^nf_j\ot \y^{(j)}_{i}$ using a homogeneous basis $\{f_j\}$ (of degree $d_j$) of $\VV$, where $\y^{(j)}_{i}\in \cohog{*}{G/P_{\mu_i}}$. Then
        \begin{equation*}
            \y\equiv \xi\ot\sum_{j=1}^nc_{i,i}(d_j)  (\pl_{\mu_{i}}f_{j}\cdot \y^{(j)}_{i})\ot(\ot_{i'\ne i}\y_{i'}) \in C^\mu_{G,\s}.
        \end{equation*}
        
    \end{enumerate}
\end{lemma}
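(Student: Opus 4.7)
The plan is to exploit two structural facts about $C^\mu_{G,\sigma}$: relation \eqref{rel C sig}, which converts $[f]_i$ with $f \in R^W_+$ into a $\xi$-multiple, and the vanishing $\xi^2 = 0$, already recorded as \eqref{fg0}. Note first that if $n_i > D_{\mu_i}$ then $\eta_i \in R^W_+ \cdot R^{W_{\mu_i}}$, since $\cohog{*}{G/P_{\mu_i}}$ vanishes in degrees $>2D_{\mu_i}$; consequently $[\eta_i]_i \in \xi \cdot C^\mu_{G,\sigma}$ by \eqref{rel C sig}. This immediately yields Case (1), because the product of two such classes lies in $\xi^2 \cdot C^\mu_{G,\sigma} = 0$.

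For the remaining cases I would assume a unique surplus position $i$ (else Case (1) gives $\eta = 0$). For each $j \neq i$ I write $\eta_j = \tilde\eta_j + \rho_j$ with $\tilde\eta_j$ a chosen lift of $\bar\eta_j \in \cohog{*}{G/P_{\mu_j}}$ and $\rho_j \in R^W_+ R^{W_{\mu_j}}$; since $[\rho_j]_j \in \xi \cdot C^\mu_{G,\sigma}$ and $[\eta_i]_i \in \xi \cdot C^\mu_{G,\sigma}$, the contribution from $\rho_j$ dies against $[\eta_i]_i$ by $\xi^2 = 0$. So modulo $\xi^2$ one can replace each $[\eta_j]_j$ by $[\tilde\eta_j]_j$. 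Next, I decompose $\eta_i = \sum_a f_a g_a$ with $f_a \in R^W_+$ of degree $2d_a$ and $g_a \in R^{W_{\mu_i}}$ a lift of a homogeneous basis element of $\cohog{*}{G/P_{\mu_i}}$; applying \eqref{rel C sig} gives
\[
\eta \equiv \xi \cdot \sum_{a} \sum_{i''} c_{i,i''}(d_a)\, [\partial_{\mu_{i''}} f_a]_{i''}\, [g_a]_i \cdot \prod_{j \neq i} [\tilde\eta_j]_j \pmod{\xi^2}.
\]

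Since $\eta$ has total degree $2(1 + \sum_j D_{\mu_j})$, which is exactly the top degree of $C^\mu_{G,\sigma}$ (Corollary \ref{c:taut ring top deg}), any nonzero contribution must hit the top class $\xi \otimes \bigotimes_j \cohog{2D_{\mu_j}}{G/P_{\mu_j}}$. A degree count at each position forces: $\bar g_a$ is top at position $i$, hence $d_a = d^* := n_i - D_{\mu_i}$; at $i'' \neq i$ the factor $\partial_{\mu_{i''}} f_a \cdot \bar\eta_{i''}$ must be top, forcing $n_{i''} = D_{\mu_{i''}} + 1 - d^*$; and $\bar\eta_j$ must already be top for all other $j \neq i, i''$, forcing $n_j = D_{\mu_j}$. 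Alternatively, $i'' = i$ is allowed only when $d^* = 1$ (which forces $d_a \geq 2$ to be summed over, giving Case (4)).

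In Case (2) the constraint $n_j = D_{\mu_j}$ for $j \neq i, i''$ cannot hold, since there are at least two deficient positions and $i''$ is a single index; hence no top-degree contribution and $\eta = 0$. In Case (3), only $i'' = i'$ and $d_a = d^* \geq 2$ survive; the partial sum $\sum_{a \,:\, d_a = d^*, \deg g_a = 2D_{\mu_i}} [f_a] \otimes [\bar g_a]$ is precisely the component of $\pi_{n_i}(\eta_i)$ in $\VV_{2d^*} \otimes \cohog{2D_{\mu_i}}{G/P_{\mu_i}}$, which the statement calls $f_{\eta_i} \otimes \eta'_i$ (interpreted as a sum of pure tensors by linearity). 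In Case (4), only $i'' = i$ contributes and summing over a homogeneous basis $\{f_j\}$ of $\VV$ recovers the stated formula. The main obstacle is the degree bookkeeping in the last step (in particular verifying that $d^* \geq 2$ in Case (3) from the surplus-deficit balance $d^* = D_{\mu_{i'}} - n_{i'} + 1$, and that $i'' = i$ is forced in Case (4) because $R^W$ has no generators in degree $2$ for semisimple $G$), but once set up these are routine consequences of the Atiyah--Bott-style freeness of $R^{W_{\mu_i}}$ over $R^W$.
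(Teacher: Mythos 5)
Your argument is correct and follows essentially the same route as the paper's proof: reduce via $\xi^2=0$ to a single surplus index $i$, expand $\eta_i \in R^W_+ R^{W_{\mu_i}}$ and apply \eqref{rel C sig}, then do a degree count in $\xi \cdot C^\mu_{G,\sigma} \cong \xi \otimes \bigotimes_j \cohog{*}{G/P_{\mu_j}}$ to see which summands hit the top class. The only cosmetic difference is that you explicitly split each $\eta_j = \tilde\eta_j + \rho_j$ ($j \neq i$) and kill the $\rho_j$ against $[\eta_i]_i$ via $\xi^2 = 0$, whereas the paper carries $\eta_{i''}$ along and discards excess-degree factors at the end when projecting to $\cohog{*}{G/P_{\mu_{i''}}}$ — the two bookkeepings are equivalent, and your decomposition $\eta_i = \sum_a f_a g_a$ matches the paper's use of $\pi_{n_i}(\eta_i)$ once one notes that terms in $(R^W_+)^2$ contribute an extra factor of $\xi$ and hence vanish.
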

\begin{proof}
    (1) Since $R^{W_{\mu_i}}/(R^W_+)\cong \cohog{*}{G/P_{\mu_i}}$ has top degree $2D_{\mu_i}$, and element $\y_i\in R^{W_{\mu_i}}$ of degree $>2D_{\mu_i}$ lies in the ideal $(R^W_+)$, hence its image in $C^\mu_{G,\s}$ is divisible by $\xi$. Since $\xi^2=0$, if two $\y_i$ has degree $>2D_{\mu_i}$, the image of $\y$ in $C^\mu_{G,\s}$ is zero.

    (2)(3)(4) By (1) we may assume there is a unique $i$ such that $n_i>D_{\mu_i}$. Write $\pi_{n_i}(\y_i)\in \VV\ot \cohog{*}{G/P_{\mu_i}}$ as $\sum_j f_j\ot \y^{(j)}_i$ using a homogeneous basis $\{f_j\}$ (of degree $d_j$) of $\VV$. Using the relation \eqref{rel C sig} for $[f_j]_i$ in $C^\mu_{G,\s}$, we have
    \begin{equation}\label{eta expansion}
        \y\equiv \xi\ot \sum_{j=1}^n\sum_{i'=1}^rc_{i,i'}(d_j)\y_i^{(j)}\ot [\pl_{\mu_{i'}}f_j]_{i'}\ot(\ot_{i''\ne i}\y_{i''})\in C^\mu_{G,\s}.
    \end{equation}
    We analyze the summand according as $i=i'$ or $i\ne i'$.
    \begin{itemize}
        \item Suppose the summand corresponding to $i=i'$ is nonzero in $C^\mu_{G,\s}$. Now $\sum_{j=1}^n c_{i,i}(d_j)\y_i^{(j)}\ot \pl_{\mu_{i}}f_j\in R^{W_{\mu_i}}$ has degree $n_i-1$. If $n_i-1>D_{\mu_i}$, then this element lies in the ideal $(R^W_+)$ hence its image in $C^\mu_{G,\s}$ is divisible by $\xi$, making the right side of \eqref{eta expansion} zero. Therefore we must have $n_i=D_{\mu_i}+1$. In this case $\sum_{i'\ne i}n_{i'}=\sum_{i'\ne i}D_{\mu_{i'}}$. By part (1), $\y$ is zero in $C^\mu_{G,\s}$ unless $n_{i'}=D_{\mu_{i'}}$ for all $i'\ne i$. This is exactly the situation described in part (4). To show part (4), we only need to check that the $i'\ne i$ summands on the right of \eqref{eta expansion} vanish in $C^\mu_{G,\s}$. Indeed, for $i'\ne i$, the $i'$ factor of that summand has degree $d_j-1+D_{\mu_{i'}}>D_{\mu_{i'}}$ ($d_j>1$ since $G$ is assumed to be semisimple). This proves part (4).

        \item If for some $i'\ne i$ and some $j$, the summand $c_{i,i'}(d_j)\y_i^{(j)}\ot [\pl_{\mu_{i'}}f_j]_{i'}\ot(\ot_{i''\ne i}\y_{i''})$ is nonzero in $C^\mu_{G,\s}$, then the same degree analysis above forces that $\y_i^{(j)}$ has degree $D_{\mu_i}$, $f_j$ thus has degree $d_j=n_i-D_{\mu_i}$, $\y_{i'}$ has degree $n_{i'}=D_{\mu_{i'}}-d_j+1$, and all other $\y_{i''}$ have degree $D_{\mu_{i''}}$. This is exactly the situation described in  part (3).
    \end{itemize}
    When two or more $i$ has $n_i<D_{\mu_i}$ we are in neither the situation of (3) nor (4), and the above analysis shows that $\y$ is zero in $C^\mu_{G,\s}$.
\end{proof}

\subsection{The case $G=\PGL_n$} We now apply the above lemma to a particular example. Let $G=\PGL_n$. For $R$ it is more convenient to identify it with the $R$ for $\SL_n$, i.e., $R=\Qlbar[x_1,\cdots, x_n]/(x_1+\cdots+x_n)$ with $W=S_n$ acting by permuting variables. Let $f_2,\cdots, f_n\in R^W$ be images of elementary symmetric polynomials in $R^W$. 

We consider the case where each $\mu_i$ is either $\mu_+=(1,0,\cdots, 0)$ or $\mu_-=(0,0,\cdots, 0,-1)$. Note that $G/P_{\mu_i}$ is isomorphic to $\PP^{n-1}$. Let $t_i\in R^{W_{\mu_i}}_2=\upH^{2}_G(G/P_{\mu_i})$ be $1/n$ of the $G$-equivariant Chern class of 
$\cO_{\PP^{n-1}}(n)$. When $\mu_i=\mu_+$, $G/P_{\mu_i}$ classifies lines, and $t_i=-x_1$ (as $x_1$ is the Chern class of $\cO(-1)$); when $\mu_i=\mu_-$, $G/P_{\mu_i}$ classifies hyperplanes, and $t_i=x_n$. We have $\int_{G/P_{\mu_i}}t_i^{n-1}=1$.

Note that $\dim \Sht^\mu_{G,\s}=(n-1)r+1$. Let
\begin{equation*}
\y=(t_1+t_2+\cdots+t_r)^{(n-1)r+1}\in C^\mu_{G,\s}.
\end{equation*}

\begin{prop}\label{prop: vol PGL}
The volume $\vol({}^\om\Sht^\mu_{G,\s},\y)$ is equal to $q^{(n^2-1)(g-1)}\z_X(2)\cdots\z_X(n)$ multiplied by the sum of the following quantities:
\begin{eqnarray*}
    &&-\sum_{i=1}^r\sum_{j=2}^n c_{i,i}(j)\binom{(n-1)r+1}{n,n-1,\cdots, n-1} \\
    && -\sum_{1\le i\ne i'\le r}\sum_{j=2}^{n}(-1)^{j\nu(i,i')}c_{i,i'}(j)\binom{(n-1)r+1}{n+j-1,n-j,n-1,\cdots, n-1}.
\end{eqnarray*}
Here
\begin{equation*}
    \nu(i,i')=\begin{cases}
        0 & \mbox{ if } \mu_i=\mu_{i'}\\
        1 & \mbox{ if } \mu_i\ne\mu_{i'}.
    \end{cases}
\end{equation*}
\end{prop}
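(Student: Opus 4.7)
The volume $\vol({}^{\omega}\Sht^\mu_{G,\s}, \eta)$ is, by definition, $q^{\dim\Bun_G} \zeta_X(2)\cdots\zeta_X(n)$ times the image of $\eta = (t_1+\cdots+t_r)^{(n-1)r+1}$ under $\int_{X\times\prod G/P_{\mu_i}}$, so the task is purely combinatorial: reduce each multinomial monomial modulo the ideal $I^\mu_G$ (equivalently, modulo the relations $[f]_i \equiv \xi \otimes \sum_{i'} c_{i,i'}(d) [\pl_{\mu_{i'}} f]_{i'}$), then extract the coefficient of $\xi \otimes t_1^{n-1} \otimes \cdots \otimes t_r^{n-1}$. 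The key selection principle is Lemma~\ref{l:eta cases}: writing $\eta = \sum_{\vec n} \binom{(n-1)r+1}{\vec n} t_1^{n_1}\cdots t_r^{n_r}$ with $D_{\mu_j}=n-1$ for all $j$, the total degree $\sum n_i = (n-1)r+1$ forces at least one $n_i \ge n$, and the lemma rules out all monomials except those falling into exactly two families: \emph{(a)} one $n_i = n$ with all other $n_{i'}=n-1$ (governed by part (4) of the lemma), and \emph{(b)} one $n_i = n-1+d$ and one $n_{i'} = n-d$ with $d\in\{2,\ldots,n\}$ and the remaining $n_{i''}=n-1$ (governed by part (3)). These will match, term by term, the diagonal and off-diagonal sums in the statement, with $d=j$.

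The needed reduction of $t_i^m$ for $m\ge n$ comes from the characteristic identity $\prod_j(s-x_j) = s^n + \sum_{k=2}^n(-1)^k e_k s^{n-k}$ (using $e_1=0$ for $\PGL_n$), evaluated at $s=x_1$ or $s=x_n$. Substituting $t_i = -x_1$ or $t_i = x_n$ gives
\[
t_i^n \;=\; \begin{cases} -\sum_{k=2}^n e_k\, t_i^{n-k}, & \mu_i=\mu_+,\\ \sum_{k=2}^n (-1)^{k-1} e_k\, t_i^{n-k}, & \mu_i=\mu_-.\end{cases}
\]
For family (a), this is exactly the decomposition $\pi_n(t_i^n) = \sum_k \epsilon^\sharp_i(k)\, e_k \otimes t_i^{n-k}$ needed in Lemma~\ref{l:eta cases}(4). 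Applying the lemma yields the contribution $\xi \otimes \sum_{k=2}^n c_{i,i}(k)\epsilon^\sharp_i(k)\,(\pl_{\mu_i} e_k)\cdot t_i^{n-k}$, which must then be integrated against $\cohog{*}{G/P_{\mu_i}}$. Using $\pl_{\mu_+}=\pl_{x_1}$, $\pl_{\mu_-}=-\pl_{x_n}$, together with the reductions $\hat e^{(1)}_{k-1} \equiv t_i^{k-1}$ and $\hat e^{(n)}_{k-1}\equiv (-1)^{k-1} t_i^{k-1}$ in $\cohog*{G/P_{\mu_\pm}}$, one checks in both cases that $\epsilon^\sharp_i(k)\cdot\pl_{\mu_i}(e_k)\cdot t_i^{n-k} \equiv -\, t_i^{n-1}$, which integrates to $-1$. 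Summed over $i$ and $k$, this produces the diagonal sum in the statement. For family (b) one iterates once: $t_i^{n-1+d} = t_i^{d-1} t_i^n \equiv \epsilon^\sharp_i(d)\, e_d\, t_i^{n-1} \pmod{(R^W_+ R^{W_{\mu_i}})^2+ (\text{non-top})}$, isolating $f_{\eta_i} = \epsilon^\sharp_i(d)\, e_d$ and $\eta'_i = t_i^{n-1}$ as in Lemma~\ref{l:eta cases}(3). Its application gives a contribution $c_{i,i'}(d)\,\epsilon^\sharp_i(d)\cdot (\pl_{\mu_{i'}} e_d)\cdot t_{i'}^{n-d}$, and the same cohomological identities evaluate this to $-(-1)^{d\,\nu(i,i')}\, t_{i'}^{n-1}\otimes t_i^{n-1}$, after a four-way case check on $(\mu_i,\mu_{i'})\in\{\mu_+,\mu_-\}^2$.

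The main obstacle is sign bookkeeping: the decomposition of $t_i^n$ acquires a different pattern of signs for $\mu_+$ and $\mu_-$, and the directional derivative $\pl_{\mu_-} = -\pl_{x_n}$ introduces a further sign that is easy to miss. The payoff is that these competing signs cancel in pairs for the diagonal terms (giving a uniform $-1$) but conspire for the off-diagonal terms to produce precisely the $(-1)^{j\nu(i,i')}$ factor, distinguishing the $\mu_i=\mu_{i'}$ and $\mu_i\ne\mu_{i'}$ cases. Once all four sign combinations have been verified, summing the contributions of families (a) and (b) and multiplying by the prefactor $q^{(n^2-1)(g-1)} \zeta_X(2)\cdots\zeta_X(n)$ yields the claimed formula.
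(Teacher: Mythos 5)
Your proposal is correct and follows essentially the same approach as the paper's proof: the same expansion of $\eta$ into multinomial monomials, the same reliance on Lemma~\ref{l:eta cases} to isolate the two contributing families, the same characteristic-polynomial relation for $t_i^n$, and the same case-by-case sign check across $(\mu_i,\mu_{i'})\in\{\mu_+,\mu_-\}^2$. The only cosmetic difference is that you phrase the $\cohog{*}{G/P_{\mu_i}}$-reductions directly in terms of elementary symmetric polynomials $\hat e^{(1)}_{k-1}$, $\hat e^{(n)}_{k-1}$, whereas the paper routes through the universal sub/quotient bundles $S_{n-1}$, $Q_{n-1}$ and their Chern classes — these are equivalent computations.
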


\begin{proof}

Expand $\y$ in monomials $t_1^{n_1}\ot \cdots \ot t_r^{n_r}$ with total degree $(n-1)r+1$.  By Lemma \ref{l:eta cases}, this monomial has nonzero image in $C^\mu_{G,\s}$ only in the following cases:
\begin{itemize}
    \item There is exactly one $i$ such that $n_i=n$, and the rest $i'$ satisfies $n_{i'}=n-1$. When $\mu_i=\mu_+$ we have
    \begin{equation*}
        t_i^n=-(f_2 t_i^{n-2}+f_3t_i^{n-3}+\cdots+f_n).
    \end{equation*}
    The right side may be considered as the image of $t_i^n$ in $\VV\ot \cohog{*}{G/P_{\mu_i}}$.
    By Lemma \ref{l:eta cases}(4), we have\footnote{The astute reader may notice a subtle issue here: since we identified $R^W$ with $\cohog{*}{\BB\SL_n}$ rather than $\cohog{*}{\BB \PGL_n}$, $\mu_+=(1,0,\cdots, 0)\in \ZZ^{n}/\D(\ZZ)$ should be identified with the rational coweight $(\frac{n-1}{n}, -\frac{1}{n},\cdots, -\frac{1}{n})$ of $\SL_n$, hence $\pl_{\mu_+}f=\frac{n-1}{n}\pl_{x_1}f-\frac{1}{n}\pl_{x_2}f-\cdots-\frac{1}{n}\pl_{x_n}f$. However, because $\pl_{\mu_+}f\equiv\pl_{x_1}f\mod (R^W_+)$ and we only care about the image of $\pl_{\mu_+}f_j$ in $\cohog{*}{G/P_{\mu_i}}$ for the this calculation, we may replace $\pl_{\mu_+}f$ with $\pl_{x_1}f$ in the calculation.}
    \begin{equation*}
        t_1^{n_1}\ot \cdots \ot t_r^{n_r}=t_i(t_1\cdots t_r)^{n-1}\equiv \xi\ot\sum_{j=2}^nc_{i,i}(j)[-\pl_{\mu_+}f_j \cdot t_i^{n-j}]_{i}\ot (\ot_{i'\ne i} t_{i'}^{n-1})\in C^\mu_{G,\s}.
    \end{equation*}
    
    We have
    $\pl_{\mu_+}f_j \cdot t_i^{n-j}=\pl_{x_1}f_j\cdot t_i^{n-j}=c_{j-1}(Q_{n-1})t_i^{n-j}\in \cohog{2n-2}{\PP^{n-1}}$, where $Q_{n-1}\cong \cO^n/\cO(-1)$ is the universal quotient bundle. The relation $c(Q_{n-1})(1-t_i)=1$ in $\cohog{*}{\PP^{n-1}}$ implies $c_{j-1}(Q_{n-1})=t_i^{j-1}$ for $2\le j\le n$. Therefore $\pl_{x_1}f_j \cdot t_i^{n-j}=t_i^{n-1}\in \cohog{2n-2}{\PP^{n-1}}$. We conclude that when $\mu_i=\mu_+$
    \begin{equation}\label{t monomial case 1}
        t_1^{n_1}\ot \cdots\ot t_r^{n_r}\equiv -\sum_{j=2}^nc_{i,i}(j)\cdot \xi\ot t_1^{n-1}\ot\cdots \ot t_r^{n-1}\in C^\mu_{G,\s}.
    \end{equation}
    When $\mu_i=\mu_-$ we have
    \begin{equation*}
        t_i^n=-f_2 t_i^{n-2}+f_3t_i^{n-3}+\cdots+(-1)^{n-1}f_n.
    \end{equation*}
    Similar calculation gives the same formula as \eqref{t monomial case 1}.

    \item There is exactly one $i$ such that $n_i\ge n$, and exactly one $i'$ such that $n_{i'}<n-1$, and the rest of $n_{i''}$ are equal to $n-1$. Note that $n_i-n+n_{i'}=n-1$. When $\mu_{i}=\mu_+$ we have $\pi_{n_i}(t_i^{n_i})=-f_{n_i-n+1}t_i^{n-1}+\mbox{lower powers of } t_i\in \VV\ot \cohog{*}{\PP^{n-1}}$. Lemma \ref{l:eta cases}(3) implies
    \begin{equation*}
        t_1^{n_1}\ot \cdots \ot t_r^{n_r}\equiv -c_{i,i'}(n_i-n+1)\xi\ot [\pl_{\mu_{i'}}f_{n_i-n+1}\cdot t_{i'}^{n_{i'}}]_{i'}\ot (\ot_{i''\ne i'}t_{i''}^{n-1})\in C^\mu_{G,\s}.
    \end{equation*}
    When $\mu_{i'}=\mu_+$ we have $\pl_{\mu_+}f_{n_i-n+1}\cdot t_{i'}^{n_{i'}}=\pl_{x_1}f_{n_i-n+1}\cdot t_{i'}^{n_{i'}}=c_{n_i-n}(Q_{n-1})t_{i'}^{n_{i'}}=t_{i'}^{n-1}$. Thus when $\mu_{i}=\mu_{i'}=\mu_+$ we have
    \begin{equation*}
        t_1^{n_1}\ot \cdots \ot t_r^{n_r}\equiv -c_{i,i'}(n_i-n+1)\xi\ot t_1^{n-1}\ot\cdots \ot t_r^{n-1}\in C^\mu_{G,\s}.
    \end{equation*}    
    When $\mu_{i'}=\mu_-$ we have $\pl_{\mu_-}f_{n_i-n+1}\cdot t_{i'}^{n_{i'}}=-\pl_{x_n}f_{n_i-n+1}\cdot t_{i'}^{n_{i'}}=-c_{n_i-n}(S_{n-1})t_{i'}^{n_{i'}}$ where $S_{n-1}=\ker(\cO^n\to \cO(1))$ is the universal hyperplane bundle over $\PP^{n-1}$. The relation $c(S_{n-1})(1+t_{i'})=1\in \cohog{*}{\PP^{n-1}}$ implies $c_{n_i-n}(S_{n-1})=(-1)^{n_i-n}t_{i'}^{n_i-n}$. Thus in this case $\pl_{\mu_-}f_{n_i-n+1}\cdot t_{i'}^{n_{i'}}=(-1)^{n_i-n+1}t_{i'}^{n-1}$. Thus when $\mu_i=\mu_+$ and $\mu_{i'}=\mu_-$ we have
    \begin{equation*}
        t_1^{n_1}\ot \cdots \ot t_r^{n_r}\equiv (-1)^{n_i-n}c_{i,i'}(n_i-n+1)\xi\ot t_1^{n-1}\ot\cdots \ot t_r^{n-1}\in C^\mu_{G,\s}.
    \end{equation*}  
    Similar calculations show:
    when $\mu_i=\mu_-$ and $\mu_{i'}=\mu_+$
    \begin{equation*}
        t_1^{n_1}\ot \cdots \ot t_r^{n_r}\equiv (-1)^{n_i-n}c_{i,i'}(n_i-n+1)\xi\ot t_1^{n-1}\ot\cdots \ot t_r^{n-1}\in C^\mu_{G,\s}.
    \end{equation*}  
    When $\mu_i=\mu_-$ and $\mu_{i'}=\mu_-$
    \begin{equation*}
        t_1^{n_1}\ot \cdots \ot t_r^{n_r}\equiv -c_{i,i'}(n_i-n+1)\xi\ot t_1^{n-1}\ot\cdots \ot t_r^{n-1}\in C^\mu_{G,\s}.
    \end{equation*} 
\end{itemize}

Summarizing the contribution of all monomials, we get that the image of $\y$ in  $C^\mu_{G,\s}$ is  $\xi\ot t_1^{n-1}\ot\cdots\ot t_r^{n-1}$ multiplied by the constants in the statement of the proposition. This completes the proof.


\end{proof}

\subsubsection{Connection to Artin L-values}
We now rewrite the result in Proposition \ref{prop: vol PGL} in terms of special values of Artin L-functions.
For an irreducible representation $\rho$ of $\Sig$, we have an Artin $L$-function defined by
\begin{eqnarray*}
L_{Y,\rho}(s)=\det(1-q^{-s}\phi| \cohog{*}{Y,\LL_\rho} )  = \prod_i \det(1-q^{-s}\phi| \cohog{i}{Y,\LL_\rho} )^{(-1)^{i+1}} ,
\end{eqnarray*}
where $\LL_\rho$ is the local system on $Y$ corresponding to $\rho$. 

We first relate the constants \eqref{eq: c(d)} to such L-values. We have the following generalization of Lemma \ref{l:diag Xid}.
\begin{lemma}\label{l: X s} 
Let $\s_X:X\to X\times X$ be the graph\footnote{To be clear, we mean $\s_X(x)=(x,\s(x))$.} For any integer $d\ne0,1$ we have 
\begin{eqnarray*}
\s_X ^*\Xi_d=-\sum_{\rho} \chi_{\rho^\vee}(\s)\frac{L'_{Y,\rho}(d)}{\log(q)L_{Y,\rho}(d)}\xi \in \cohog{2}{X}
\end{eqnarray*}
where the sum runs over all irreducible representations $\rho$ of $\Sig$, and $ \chi_{\rho^\vee}$ denotes the character of $\rho^\vee$, the dual representation to $\rho$.
\end{lemma}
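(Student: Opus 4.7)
The plan is to follow the proof strategy of Lemma \ref{l:diag Xid} closely, with the key additional ingredient being a decomposition of $\cohog{*}{X}$ according to the characters of $\Sig$. Since $\cohog{2}{X}$ is one-dimensional spanned by $\xi$, we have $\sigma_X^* \Xi_d = (\int_X \sigma_X^* \Xi_d) \cdot \xi$, and by adjunction $\int_X \sigma_X^* \Xi_d = \int_{X \times X} [\Gamma_{\sigma_X}] \cdot \Xi_d$, where $[\Gamma_{\sigma_X}] \in \cohog{2}{X \times X}$ is the cycle class of the graph of $\sigma_X$. The Lefschetz-type identity used in the proof of Lemma \ref{l:diag Xid} (which produces a trace on $\cohog{*}{X}$ from pairing $[\Delta_X]$ with $\Xi_d$, viewing $[\Delta_X]$ as representing the identity endomorphism) generalizes immediately: since $[\Gamma_{\sigma_X}]$ represents the endomorphism $\sigma^*$ of $\cohog{*}{X}$ under the correspondence-to-operator dictionary, the same argument yields
\begin{equation*}
\int_{X \times X} [\Gamma_{\sigma_X}] \cdot \Xi_d = \Tr\left(\sigma^* \cdot (q^d \phi^{-1} - 1)^{-1} \mid \cohog{*}{X}\right),
\end{equation*}
with $\Tr$ denoting the alternating trace.

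Next, I would decompose $\cohog{*}{X} = \cohog{*}{Y, \pi_* \Qlbar}$ according to the irreducible representations of $\Sig$. Since $\pi : X \to Y$ is a $\Sig$-Galois \'etale cover, there is a canonical $\Sig$-equivariant decomposition $\pi_* \Qlbar \cong \bigoplus_\rho \LL_\rho \otimes V_{\rho^\vee}$ as local systems on $Y$, in which $\Sig$ acts on the multiplicity space $V_{\rho^\vee}$ through $\rho^\vee$. This gives
\begin{equation*}
\cohog{*}{X} \cong \bigoplus_\rho \cohog{*}{Y, \LL_\rho} \otimes V_{\rho^\vee}.
\end{equation*}
Since $\phi$ commutes with the $\Sig$-action and preserves each summand, the trace decomposes as
\begin{equation*}
\Tr\left(\sigma^* (q^d \phi^{-1} - 1)^{-1} \mid \cohog{*}{X}\right) = \sum_\rho \chi_{\rho^\vee}(\sigma) \cdot \Tr\left((q^d \phi^{-1} - 1)^{-1} \mid \cohog{*}{Y, \LL_\rho}\right).
\end{equation*}

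Finally, I would apply the identity \eqref{log det} (from the proof of Lemma \ref{l:diag Xid}) summand-by-summand to the graded vector space $\cohog{*}{Y, \LL_\rho}$ with its Frobenius action. Using the paper's convention $L_{Y, \rho}(s) = \prod_i \det(1 - q^{-s} \phi \mid \cohog{i}{Y, \LL_\rho})^{(-1)^{i+1}}$, the alternating version of \eqref{log det} gives
\begin{equation*}
\Tr\left((q^d \phi^{-1} - 1)^{-1} \mid \cohog{*}{Y, \LL_\rho}\right) = -\frac{L'_{Y, \rho}(d)}{\log(q) \cdot L_{Y, \rho}(d)},
\end{equation*}
and combining the three displays produces the claimed formula. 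The main points that require careful checking are: (i) the sign bookkeeping when upgrading \eqref{log det} to its alternating form on a graded vector space, which one verifies by matching the specialization $\sigma = e$, $\Sig = \{e\}$ to Lemma \ref{l:diag Xid}; and (ii) the fact that $\Sig$ acts on the multiplicity space of $\LL_\rho$ through $\rho^\vee$ rather than $\rho$, ensuring that $\chi_{\rho^\vee}(\sigma)$ appears in the final answer. For (ii) one unwinds the regular representation decomposition of the fiber $\pi_* \Qlbar|_y = \Qlbar[\pi^{-1}(y)]$, and checks the result against the Lefschetz count of $\sigma$-fixed points on $\pi^{-1}(y)$, which recovers the orthogonality relation $\sum_\rho \dim(\rho)\, \chi_{\rho^\vee}(\sigma)$.
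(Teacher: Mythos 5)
Your proposal is correct and follows essentially the same route as the paper's proof: reduce $\s_X^*\Xi_d$ to the alternating trace of $\s^*\circ(q^d\phi^{-1}-1)^{-1}$ on $\cohog{*}{X}$, decompose $\pi_*\Qlbar\cong\bigoplus_\rho\rho^\vee\boxtimes\LL_\rho$ so the trace factors as $\sum_\rho\chi_{\rho^\vee}(\s)\Tr((q^d\phi^{-1}-1)^{-1}\mid\cohog{*}{Y,\LL_\rho})$, and invoke the logarithmic-determinant identity \eqref{log det} from Lemma \ref{l:diag Xid}. The additional checks you flag (sign bookkeeping against the $\Sig=\{e\}$ specialization, and that the multiplicity space carries $\rho^\vee$) are sound and match the paper's computation.
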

\begin{proof}
We have
\begin{eqnarray*}
    \s_X ^*\Xi_d&=&\xi\int_{X\times X}[\s_X(X)]\cdot \Xi_d=\xi\int_{X\times X}[\s_X(X)]\cdot \left(\frac{1}{q^d\phi^{-1}-1}\ot\id\right)[\D_X]\\
    &=&\Tr(\s^* (q^d\phi^{-1}-1)^{-1}|\cohog{*}{X})\xi.
\end{eqnarray*}
Here we note that $\s$ and $\phi$ commute as endomorphisms of $\cohog{*}{X}$.

Decompose the local system $\pi_\ast\Qlbar$ on $Y$, as representations of $\Sig$, 
$$
\pi_\ast\Qlbar=\bigoplus_{\rho} \rho^\vee\boxtimes\LL_\rho,
$$ where the sum runs over all irreducible representations $\rho$ of $\Sig$.
Accordingly, we have
$$
\cohog{*}{X}=\bigoplus_{\rho}\rho^\vee\boxtimes \cohog{*}{Y,\LL_\rho} 
$$
as $\Sig\times \j{\phi}$-modules.
We then have
\begin{eqnarray*}
\Tr(\s^*(q^d\phi^{-1}-1)^{-1}|\cohog{*}{X})&=&\sum_{\rho} \chi_{\rho^\vee}(\s)\Tr((q^d\phi^{-1}-1)^{-1}|\cohog{*}{Y,\LL_\rho}) \\
&=&-\sum_{\rho} \chi_{\rho^\vee}(\s)\frac{L'_{Y,\rho}(d)}{\log(q)L_{Y,\rho}(d)}
\end{eqnarray*}
as desired.

\end{proof}


\begin{defn}
We define the logarithmic Artin L-function associated to a class function $\varphi$ on $\Sigma$. Let $\varphi =\sum_\rho a_\rho \chi_\rho$ be any class function, where $a_\rho\in\Qlbar$ and $\rho$ are irreducible representations of $\Sig$. Then we define
$$\frac{L'_{Y, \varphi}(s)}{L_{Y, \varphi}(s)}:=\sum_{\rho} a_\rho \frac{L'_{Y, \rho}(s)}{L_{Y, \rho}(s)}.
$$
\end{defn}

\subsubsection{Colmez's Conjecture} We have an involution on $\Qlbar[\Sig]$ induced by the inversion on $\Sigma$: for any element $\varphi \in\Qlbar[\Sig]$, define  $\varphi ^\vee(g)= \varphi (g^{-1}).$
Define the convolution of two functions
$$
(\varphi\ast \varphi')(g):=\int_{\Sig} \varphi(gh^{-1})\varphi'(h)\,dh.
$$
Here the Haar measure on $\Sig$ is chosen such that $\vol(\Sig)=1$.

For $j\in\Z$, we define 
$$
\Phi_{j}=\sum_{i=1}^r {\rm sign}(\mu_i)^j \s_i\in\Qlbar[\Sig], \quad \text{ where } {\rm sign}(\mu_i) = \begin{cases} +1 & \mu_i = \mu_+, \\ -1 & \mu_i = \mu_-. \end{cases}
$$ 
In particular, $\Phi_{j}$ depends only on the parity of $j$, and if $j$ is even, we have 
$$
\Phi_{j}=\Phi=\sum_{i=1}^r  \s_i.
$$

For any $\varphi\in \Qlbar[\Sig]$, let $\varphi^\natural$ 
be the projection to the space of class functions on $\Sig$. In terms of the characters we have 
$$
\varphi^\natural=\sum_\rho \langle\varphi,\chi_{\rho^\vee} \rangle\chi_\rho.
$$ 
Here the pairing $\langle\varphi,\varphi' \rangle$ is bilinear and we have $\langle\chi_{\rho},\chi_{\rho^\vee} \rangle=1$.

\begin{theorem}\label{th:vol 1-leg PGL}
The volume  $\vol({}^\om\Sht^\mu_{\PGL_n,\s}, \eta) $ is equal to $q^{\dim\Bun_{\PGL_n}} \prod_{d=2}^n\z_X(d) $ multiplied by
    \begin{eqnarray*}
   &-&\sum_{j=2}^n\left(\binom{(n-1)r+1}{n,n-1,\cdots, n-1}-\binom{(n-1)r+1}{n+j-1,n-j,n-1,\cdots, n-1}\right) \frac{\z'_X(j)}{\log q\,\z_X(j)}
 \\ 
      &-&|\Sig|^2\sum_{j=2}^n  \binom{(n-1)r+1}{n+j-1,n-j,n-1,\cdots, n-1} \frac{L'_{Y,(\Phi_j\ast\Phi_j^\vee)^\natural}(j)}{\log q\,L_{Y,(\Phi_j\ast\Phi_j^\vee)^\natural}(j)} \\
   &-&(g_Y-1) |\Sigma|^2 \sum_{j=2}^n  \binom{(n-1)r+1}{n+j-1,n-j,n-1,\cdots, n-1} ((\Phi_j\ast\Phi_j^\vee)(1)-r/|\Sigma|) .
   \end{eqnarray*}
\end{theorem}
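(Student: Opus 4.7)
The plan is to start from Proposition \ref{prop: vol PGL}, which already expresses $\vol({}^\om\Sht^\mu_{\PGL_n,\s},\eta)$ as a linear combination of the structure constants $c_{i,i'}(j)$ with explicit binomial coefficients, and to convert each $c_{i,i'}(j)$ into special values of Artin $L$-functions using Lemmas \ref{l:diag Xid} and \ref{l: X s}. The final step is a combinatorial rearrangement that produces the $L$-function of the class function $(\Phi_j\ast \Phi_j^\vee)^\natural$ and isolates the $(g_Y-1)$ constant term.

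First I would treat the ``diagonal'' contributions $c_{i,i}(j)$. Here the defining pullback $(\s_i,\s_i)^\ast\Xi_j$ reduces to $\D_X^\ast\Xi_j$ after factoring through the graph of the identity, so Lemma \ref{l:diag Xid} directly gives $c_{i,i}(j)=-\zeta_X'(j)/(\log q\cdot\zeta_X(j))$. For the off-diagonal $c_{i,i'}(j)$ with $i<i'$, I would factor $(\s_i,\s_{i'})=(\s_i\times\s_i)\circ(\mathrm{id},\s_i^{-1}\s_{i'})$; the $\Sigma$-diagonal invariance of $\Xi_j$ (coming from the fact that $\Sigma$ commutes with Frobenius and preserves $\Delta_X$) kills the first factor, and Lemma \ref{l: X s} yields
\[
c_{i,i'}(j)=-\sum_\rho \chi_{\rho^\vee}(\s_i^{-1}\s_{i'})\frac{L'_{Y,\rho}(j)}{\log q\cdot L_{Y,\rho}(j)}.
\]
For $i>i'$, the same factorization applied to $(\s_{i'},\s_i)^\ast\Xi_{1-j}$ gives an analogous expression but evaluated at $1-j$, which the functional equation $L_{Y,\rho}(1-s)=q^{(g_Y-1)\dim\rho\,(2s-1)}L_{Y,\rho^\vee}(s)$ (a consequence of Poincaré duality on $Y$ together with the relation $\pi_\ast\Qlbar=\bigoplus_\rho \rho^\vee\boxtimes\LL_\rho$) converts into a constant of the shape $(g_Y-1)|\Sigma|\,\delta_{\s_i,\s_{i'}}$ plus an $L$-value at $j$ with the character $\chi_\rho(\s_i\s_{i'}^{-1})=\chi_{\rho^\vee}(\s_{i'}\s_i^{-1})$.

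Next I would assemble these pieces and recognize $\Phi_j\ast\Phi_j^\vee$ in the resulting sums. The key identities are the computation $(\Phi_j\ast\Phi_j^\vee)(g)=|\Sigma|^{-1}\sum_{i,i'}\mathrm{sign}(\mu_i)^j\mathrm{sign}(\mu_{i'})^j\,\delta_{g,\s_i\s_{i'}^{-1}}$, which together with $(-1)^{j\nu(i,i')}=\mathrm{sign}(\mu_i)^j\mathrm{sign}(\mu_{i'})^j$ yields
\[
\sum_{i,i'}(-1)^{j\nu(i,i')}\chi_{\rho^\vee}(\s_i\s_{i'}^{-1})=|\Sigma|^2\langle \Phi_j\ast\Phi_j^\vee,\chi_{\rho^\vee}\rangle,
\]
and the orthogonality identity $\sum_\rho \chi_{\rho^\vee}(g)\dim\rho=|\Sigma|\delta_{g,1}$, which converts the $(g_Y-1)$ pieces into sums of $\delta_{\s_i,\s_{i'}}$ and hence into $(\Phi_j\ast\Phi_j^\vee)(1)$. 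Restricting the double sum to $i\ne i'$ produces a correction term of the form $r\dim\rho$, which when summed against $L'_{Y,\rho}/L_{Y,\rho}$ recombines (using $\zeta_X=\prod_\rho L_{Y,\rho}^{\dim\rho}$) into a multiple of $\zeta_X'(j)/\zeta_X(j)$. This is precisely the mechanism by which the coefficient of the $\zeta_X'/\zeta_X$ term becomes the difference $A_j-B_j$ of the two binomials appearing in Proposition \ref{prop: vol PGL}: the $A_j$ part comes from the diagonal and the $-B_j$ part from the ``missing diagonal'' in $\sum_{i\ne i'}$.

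The main obstacle, in my view, is bookkeeping. The proof itself does not use any conceptually new input beyond Proposition \ref{prop: vol PGL}, Lemma \ref{l: X s}, and the functional equation, but the signs and choice of conventions (left vs.\ right translation in the convolution $\Phi_j\ast\Phi_j^\vee$, $\s_i^{-1}\s_{i'}$ vs.\ $\s_i\s_{i'}^{-1}$, the sign of the eigenvalue of Frobenius on $H^2(X)$, the normalization of the pairing $\langle\cdot,\cdot\rangle$ on $\Qlbar[\Sigma]$, and the sign and scaling in the functional equation) all have to be carefully matched to the theorem statement. In particular, one must verify that the symmetry $\chi_{\rho^\vee}(g^{-1})=\chi_\rho(g)$ makes the double sum $\sum_{i,i'}(-1)^{j\nu(i,i')}\chi_{\rho^\vee}(\s_i^{-1}\s_{i'})$ agree with $|\Sigma|^2\langle\Phi_j\ast\Phi_j^\vee,\chi_{\rho^\vee}\rangle$ up to the correct sign, and that the $r/|\Sigma|$ shift in the last line of the theorem arises exactly from subtracting the $i=i'$ diagonal from the full double sum.
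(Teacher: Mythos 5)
Your proposal is correct and follows essentially the same route as the paper: apply Lemma \ref{l: X s} to write each $c_{i,i'}(j)$ as a sum of logarithmic derivatives $L'_{Y,\rho}/L_{Y,\rho}$ weighted by $\chi_{\rho^\vee}$, use the functional equation to move the $\Xi_{1-j}$ terms from argument $1-j$ to argument $j$ at the cost of a $(2g_Y-2)\dim\rho$ constant, then resum over $\rho$ and over the pairs $(i,i')$ to recognize $|\Sigma|^2\langle \Phi_j*\Phi_j^\vee,\chi_{\rho^\vee}\rangle$, the orthogonality identity $\sum_\rho\chi_{\rho^\vee}(g)\dim\rho=|\Sigma|\delta_{g,1}$, and the factorization $\zeta_X=\prod_\rho L_{Y,\rho}^{\dim\rho}$. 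The one small point you make explicit that the paper leaves implicit is the factoring $(\s_i,\s_{i'})=(\s_i\times\s_i)\circ(\id,\s_i^{-1}\s_{i'})$ together with the $(\s\times\s)$-invariance of $\Xi_j$ (which holds because $\s^*$ commutes with $\phi$ and fixes $[\Delta_X]$); this is a worthwhile clarification but not a different method. Your cautions about the $\s_i^{-1}\s_{i'}$ vs.\ $\s_i\s_{i'}^{-1}$ bookkeeping are apt — the two conventions agree only after summing over the symmetric range of $(i,i')$ and using that $\chi_{\rho^\vee}$ is a class function — and the paper's statement uses $\s_i^{-1}\s_{i'}$, so you would need to stick with that normalization throughout.
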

\begin{remark}
    In the number field case, Colmez conjecture \cite{Col1} relates the stable Faltings height of an abelian variety with CM by the ring of integers of a CM field to the special value at $s=1$ (or equivalently at $s=0$ by functional equation) of the logarithmic Artin $L$-function attached to a class function arising from the corresponding CM type. The recipe of the class function in Colmez conjecture (cf. \cite[\S2]{Col2} for a more explicit formula involving $\Phi\ast\Phi^\vee$) is completely analogous to the one above, except that here we have the special value at $s=j\geq 2$. 
\end{remark}
\begin{proof}
We apply Lemma \ref{l: X s} to get
\begin{eqnarray*}
    c_{i,i}(j) &=&-\sum_{\rho} \chi_{\rho^\vee}(1)\frac{L'_{Y,\rho}(j)}{\log(q)L_{Y,\rho}(j)}.
    \end{eqnarray*}
We note that, by the factorization $\z_X(s)=\prod_\rho L_{Y,\rho}(s)^{\dim\rho}$,
$$\sum_{\rho} \chi_{\rho^\vee}(1)\frac{L'_{Y,\rho}(j)}{L_{Y,\rho}(j)}=\frac{\z'_X(j)}{\z_X(j)}.
$$
Similarly, we have
\begin{eqnarray*}
      c_{i,i'}(j) &=&-\sum_{\rho} \chi_{\rho^\vee}(\s_i^{-1}\s_{i'})\frac{L'_{Y,\rho}(j)}{\log(q)L_{Y,\rho}(j)}  
\end{eqnarray*}
when $i'>i$, and
\begin{eqnarray*}c_{i,i'}(j) &=&\sum_{\rho} \chi_{\rho^\vee}(\s_{i'}^{-1}\s_{i})\frac{L'_{Y,\rho}(1-j)}{\log(q)L_{Y,\rho}(1-j)} 
\end{eqnarray*}
when $i'<i$.

By the functional equation
$$L_{Y,\rho}(s)\ep(\rho,1/2) q^{s(2g_Y-2)\dim\rho}=L_{Y,\rho^\vee}(1-s)
$$
we have the following relation between logarithmic derivatives of $L_{Y,\rho}(s)$ at $d$ and $1-d$,
by 
$$
\frac{L'_{Y,\rho}(d)}{\log(q)L_{Y,\rho}(d)}+\frac{L'_{Y,\rho^\vee}(1-d)}{\log(q)L_{Y,\rho^\vee}(1-d)}=-(2g_Y-2)\dim\rho.
$$
We may rewrite the case  $i'<i$ as
\begin{eqnarray*}
c_{i,i'}(j)  =-\sum_{\rho} \chi_{\rho^\vee}(\s_{i}^{-1}\s_{i'})\frac{L'_{Y,\rho}(j)}{\log q\,L_{Y,\rho}(j)}-(2g_Y-2)\sum_{\rho} \chi_{\rho^\vee}(\s_{i'}^{-1}\s_{i}) \dim\rho.
\end{eqnarray*}

Then the displayed expression in Proposition \ref{prop: vol PGL} is the sum 
\begin{eqnarray*}
&-&\sum_{j=2}^n\left(\binom{(n-1)r+1}{n,n-1,\cdots, n-1}-\binom{(n-1)r+1}{n+j-1,n-j,n-1,\cdots, n-1}\right) \frac{\z'_X(j)}{\log q\,\z_X(j)}\\
   &-&\sum_{j=2}^n  \binom{(n-1)r+1}{n+j-1,n-j,n-1,\cdots, n-1}\sum_{\rho}\frac{L'_{Y,\rho}(j)}{\log q \,L_{Y,\rho}(j)} \sum_{(i,i')} \chi_{\rho^\vee}(\s_i^{-1}\s_{i'}) (-1)^{j\nu(i,i')}  \\
   &-&
      \sum_{j=2}^n  \binom{(n-1)r+1}{n+j-1,n-j,n-1,\cdots, n-1} \sum_{\rho} \sum_{(i, i'), i> i'}\chi_{\rho^\vee}(\s_{i'}^{-1}\s_{i})  (-1)^{j\nu(i,i')}  \dim\rho.
\end{eqnarray*}
Now we note that
$$
\sum_{(i,i')} \chi_{\rho^\vee}(\s_i^{-1}\s_{i'}) (-1)^{j\nu(i,i')}=|\Sigma|^2\langle{\Phi_j\ast\Phi_j^\vee, \chi_{\rho^\vee} \rangle}
$$
and
\begin{eqnarray*}
\sum_{\rho} \sum_{(i, i'), i> i'}\chi_{\rho^\vee}(\s_{i'}^{-1}\s_{i}) (-1)^{j\nu(i,i')}  \dim\rho
&=&\sum_{\rho} \sum_{(i, i'), i> i'}\chi_{\rho^\vee}(\s_{i'}^{-1}\s_{i}) (-1)^{j\nu(i,i')}  \chi_{\rho}(1)\\
&=&|\Sigma|\cdot\sum_{ i> i', \s_i=\s_{i'}} (-1)^{j\nu(i,i')} \\
&=&|\Sigma|(|\Sigma|(\Phi_j\ast\Phi_j^\vee)(1)-r)/2.
\end{eqnarray*}
These identities together complete the proof.

\end{proof}


\bibliographystyle{amsalpha}
\bibliography{Bibliography}

\providecommand{\bysame}{\leavevmode\hbox to3em{\hrulefill}\thinspace}
\providecommand{\MR}{\relax\ifhmode\unskip\space\fi MR }
\providecommand{\MRhref}[2]{%
  \href{http://www.ams.org/mathscinet-getitem?mr=#1}{#2}
}
\providecommand{\href}[2]{#2}
\begin{thebibliography}{AGHMP18}

\bibitem[AB83]{AB83}
M.~F. Atiyah and R.~Bott, \emph{The {Y}ang-{M}ills equations over {R}iemann
  surfaces}, Philos. Trans. Roy. Soc. London Ser. A \textbf{308} (1983),
  no.~1505, 523--615.

\bibitem[AGHMP18]{AGHMP}
Fabrizio Andreatta, Eyal~Z. Goren, Benjamin Howard, and Keerthi Madapusi~Pera,
  \emph{Faltings heights of abelian varieties with complex multiplication},
  Ann. of Math. (2) \textbf{187} (2018), no.~2, 391--531.

\bibitem[BD09]{BD09}
Kai Behrend and Ajneet Dhillon, \emph{Connected components of moduli stacks of
  torsors via {T}amagawa numbers}, Canad. J. Math. \textbf{61} (2009), no.~1,
  3--28.

\bibitem[BH]{BH}
Jan~Hendrik Bruinier and Benjamin Howard, \emph{Arithmetic volumes of unitary
  {S}himura varieties}, to appear in Memoirs of the AMS.

\bibitem[Col93]{Col1}
Pierre Colmez, \emph{P\'{e}riodes des vari\'{e}t\'{e}s ab\'{e}liennes \`a
  multiplication complexe}, Ann. of Math. (2) \textbf{138} (1993), no.~3,
  625--683.

\bibitem[Col98]{Col2}
\bysame, \emph{Sur la hauteur de {F}altings des vari\'{e}t\'{e}s ab\'{e}liennes
  \`a multiplication complexe}, Compositio Math. \textbf{111} (1998), no.~3,
  359--368.

\bibitem[DCP83]{DP}
C.~De~Concini and C.~Procesi, \emph{Complete symmetric varieties}, Invariant
  theory ({M}ontecatini, 1982), Lecture Notes in Math., vol. 996, Springer,
  Berlin, 1983, pp.~1--44.

\bibitem[EH17]{EH}
H\'{e}l\`ene Esnault and Michael Harris, \emph{Chern classes of automorphic
  vector bundles}, Pure Appl. Math. Q. \textbf{13} (2017), no.~2, 193--213.

\bibitem[Fen26]{FAI}
Tony Feng, \emph{Eigenweights for {A}rithmetic {H}irzebruch {P}roportionality},
  2026.

\bibitem[FHM25]{FHM25}
Tony Feng, Benjamin Howard, and Mikayel Mkrtchyan, \emph{{Higher Siegel--Weil
  formula for unitary groups II: corank one terms}}, 2025.

\bibitem[FYZ24]{FYZ}
Tony Feng, Zhiwei Yun, and Wei Zhang, \emph{Higher {S}iegel--{W}eil formula for
  unitary groups: the non-singular terms}, Invent. Math. \textbf{235} (2024),
  no.~2, 569--668.

\bibitem[FYZ25]{FYZ2}
\bysame, \emph{Higher theta series for unitary groups over function fields},
  Ann. Sci. \'{E}c. Norm. Sup\'{e}r. (4) \textbf{58} (2025), no.~2, 275--388.

\bibitem[Gai19]{Gai19}
Dennis Gaitsgory, \emph{The {A}tiyah-{B}ott formula for the cohomology of the
  moduli space of bundles on a curve}, 2019.

\bibitem[GL14]{GL14}
Dennis Gaitsgory and Jacob Lurie, \emph{Weil's {C}onjecture for {F}unction
  {F}ields}, 2014.

\bibitem[Gro68]{Gro68}
Alexander Grothendieck, \emph{Le groupe de {B}rauer. {III}. {E}xemples et
  compl\'{e}ments}, Dix expos\'{e}s sur la cohomologie des sch\'{e}mas, Adv.
  Stud. Pure Math., vol.~3, North-Holland, Amsterdam, 1968, pp.~88--188.

\bibitem[Gro97]{Gro97}
Benedict~H. Gross, \emph{On the motive of a reductive group}, Invent. Math.
  \textbf{130} (1997), no.~2, 287--313.

\bibitem[Guo25]{Guo}
Ziqi Guo, \emph{{Modular Heights of Unitary Shimura Varieties}}, 2025.

\bibitem[H\"14]{Hor}
Fritz H\"{o}rmann, \emph{The geometric and arithmetic volume of {S}himura
  varieties of orthogonal type}, CRM Monograph Series, vol.~35, American
  Mathematical Society, Providence, RI, 2014.

\bibitem[Hei10]{Hei10}
Jochen Heinloth, \emph{Uniformization of {$\mathscr{G}$}-bundles}, Math. Ann.
  \textbf{347} (2010), no.~3, 499--528.

\bibitem[Hir58]{Hir}
Friedrich Hirzebruch, \emph{Automorphe {F}ormen und der {S}atz von
  {R}iemann-{R}och}, Symposium internacional de topolog\'{\i}a algebraica
  {I}nternational symposium on algebraic topology, Universidad Nacional
  Aut\'{o}noma de M\'{e}xico and UNESCO, M\'{e}xico, 1958, pp.~129--144.

\bibitem[Ho21]{Ho21}
Quoc~P. Ho, \emph{The {A}tiyah-{B}ott formula and connectivity in chiral
  {K}oszul duality}, Adv. Math. \textbf{392} (2021), Paper No. 107992, 71.

\bibitem[HS10]{HS10}
Jochen Heinloth and Alexander H.~W. Schmitt, \emph{The cohomology rings of
  moduli stacks of principal bundles over curves}, Doc. Math. \textbf{15}
  (2010), 423--488.

\bibitem[K\"05]{Koh}
Kai K\"{o}hler, \emph{A {H}irzebruch proportionality principle in {A}rakelov
  geometry}, Number fields and function fields---two parallel worlds, Progr.
  Math., vol. 239, Birkh\"{a}user Boston, Boston, MA, 2005, pp.~237--268.

\bibitem[LZ17]{LiuZheng}
Yifeng Liu and Weizhe Zheng, \emph{Enhanced six operations and base change
  theorem for higher artin stacks}, 2017.

\bibitem[MR02]{MR}
Vincent Maillot and Damien Roessler, \emph{Conjectures sur les d\'{e}riv\'{e}es
  logarithmiques des fonctions {$L$} d'{A}rtin aux entiers n\'{e}gatifs}, Math.
  Res. Lett. \textbf{9} (2002), no.~5-6, 715--724.

\bibitem[Mum77]{Mum}
D.~Mumford, \emph{Hirzebruch's proportionality theorem in the noncompact case},
  Invent. Math. \textbf{42} (1977), 239--272.

\bibitem[Tel98]{Tel98}
Constantin Teleman, \emph{Borel-{W}eil-{B}ott theory on the moduli stack of
  {$G$}-bundles over a curve}, Invent. Math. \textbf{134} (1998), no.~1, 1--57.

\bibitem[vdG99]{vdG}
Gerard van~der Geer, \emph{Cycles on the moduli space of abelian varieties},
  Moduli of curves and abelian varieties, Aspects Math., E33, Friedr. Vieweg,
  Braunschweig, 1999, pp.~65--89.

\bibitem[WZ23]{WZ}
Torsten Wedhorn and Paul Ziegler, \emph{Tautological rings of {S}himura
  varieties and cycle classes of {E}kedahl-{O}ort strata}, Algebra Number
  Theory \textbf{17} (2023), no.~4, 923--980.

\bibitem[YZ18]{YZ18}
Xinyi Yuan and Shou-Wu Zhang, \emph{On the averaged {C}olmez conjecture}, Ann.
  of Math. (2) \textbf{187} (2018), no.~2, 533--638.

\end{thebibliography}

\appendix

\end{document}